\documentclass[12pt]{amsart}
\UseRawInputEncoding
\pdfoutput=1
\usepackage{fullpage}
\usepackage{graphicx}
\usepackage{todonotes}
\usepackage{tkz-euclide}
%\usepackage[pagewise]{lineno}\linenumbers

%Tikz----------------------------------------------------------

\usepackage{tikz}
\usetikzlibrary{decorations.markings}
\usepackage{tkz-euclide}
%\usetkzobj{all} 

%Theorems--------------------------------------------------------

\newtheorem{theorem}{Theorem}[section]
\newtheorem{corollary}[theorem]{Corollary}
\newtheorem{lemma}[theorem]{Lemma}
\newtheorem{proposition}[theorem]{Proposition}

\newtheorem{remark}[theorem]{Remark}
\newtheorem{assumption}[theorem]{Assumption}

\numberwithin{equation}{section}\setcounter{secnumdepth}{3}
%\numberwithin{subsubsection}{section}

%Sets-----------------------------------------------------------

\newcommand\CC {{\mathbb C}}
\newcommand\DD {{\mathbb D}}
\newcommand\EE {{\mathbb E}}

\newcommand\HH {{\mathbb H}}

\newcommand\NN {{\mathbb N}}

\newcommand\QQ {{\mathbb Q}}
\newcommand\RR {{\mathbb R}}

\newcommand\ZZ {{\mathbb Z}}

%Groups----------------------------------------------

\newcommand\sltwor{{\rm SL(2,\RR)}}
\newcommand\sltwoz{{\rm SL(2,\ZZ)}}
\newcommand\sltwoc{{\rm SL(2,\CC)}}

\newcommand\pslgroup{{\rm PSL}}
\newcommand\glgroup{{\rm GL}}
\newcommand\sugroup{{\rm SU}}
\newcommand\psugroup{{\rm PSU}}

%Lengths---Areas----EtCetera---------------------------

\newcommand\bad{{\rm Bad}}

\newcommand\diameter{{\rm Diam }}
\newcommand\distance{{\rm Dist }}
\newcommand\domain{{\rm Dom }}

\newcommand\id{{\rm Id}}
\newcommand\im{{\rm Im }}
\newcommand\interior{{\rm Int}}

\newcommand\lipschitz{{\rm Lip}}

\newcommand\re{{\rm Re }}

\newcommand\spectrum{{\rm sp}}
\newcommand\support{{\rm Supp}}
\newcommand\variation{{\rm Var}}

%Letters--------------------------------------------

\newcommand\cA{{\mathcal{A}  }}
\newcommand\cB{{\mathcal{B}  }}
\newcommand\cC{{\mathcal{C}  }}

\newcommand\cF{{\mathcal{F}  }}

\newcommand\cH{{\mathcal{H}  }}
\newcommand\cI{{\mathcal{I}  }}
\newcommand\cJ{{\mathcal{J}  }}

\newcommand\cL{{\mathcal{L}  }}
\newcommand\cM{{\mathcal{M}  }}
\newcommand\cN{{\mathcal{N}  }}

\newcommand\cP{{\mathcal{P}  }}

\newcommand\cR{{\mathcal{R}  }}
\newcommand\cS{{\mathcal{S}  }}

\newcommand\cU{{\mathcal{U}  }}
\newcommand\cV{{\mathcal{V}  }}
\newcommand\cW{{\mathcal{W}  }}

%---------------------------------------------------------------------

\begin{document}

\title{Transfer operators and dimension of bad sets for non-uniform Fuchsian lattices}

\author[L. Marchese]{Luca Marchese}

\address{Dipartimento di Matematica, Universit\`a di Bologna, Piazza di Porta San Donato 5, 40126, Bologna, Italia}

\email{luca.marchese4@unibo.it}

\subjclass[2010]{11J70, 37D35}

\keywords{Transfer operators, Diophantine approximation, Fuchsian groups}

%\date{\today}

%----------------------------------------------------------------

\begin{abstract}
The set of real numbers which are badly approximable by rationals admits an exhaustion by sets 
$\bad(\epsilon)$, whose dimension converges to $1$ as $\epsilon$ goes to zero. D. Hensley computed the  asymptotic for the dimension up to the first order in $\epsilon$, via an analogous estimate for the set of real numbers whose continued fraction has all entries uniformly bounded. We consider diophantine approximations by parabolic fixed points of any non-uniform lattice in $\pslgroup(2,\RR)$ and a geometric notion of $\epsilon$-badly approximable points. We compute the dimension of the set of such points up to the first order in $\epsilon$, via the thermodynamic method of Ruelle and Bowen. Geometric good approximations are related to a notion of bounded partial quotients for the Bowen-Series expansion. This gives a family of Cantor sets and associated quasi-compact transfer operators, with simple and positive maximal eigenvalue. Perturbative analysis of spectra applies. Our techniques only apply to non-uniform lattices admitting a finite index free subgroup satisfying a specific property. 
\end{abstract}

\maketitle

%\tableofcontents

\section{Introduction and main statement}

From the theory of continued fractions we know that if $\alpha\in\RR\setminus\QQ$ is any irrational number then there exist infinitely many $p\in\ZZ$ and $q\in\NN^\ast$ with 
$$
\bigg|\alpha-\frac{p}{q}\bigg|
<
\frac{\epsilon_0}{q^2}
\quad
\textrm{ where }
\quad
\epsilon_0:=\frac{1}{\sqrt{5}}.
$$
The inequality above is sharp, indeed replacing $\epsilon_0$ by any $\epsilon$ with 
$
0<\epsilon<(\sqrt{5})^{-1}
$ 
and considering $\alpha:=(\sqrt{5}-1)/2$ one gets only finitely many solutions 
$p/q\in\QQ$. Thus for any $\epsilon>0$ small enough we have a non-empty set
$$
\bad(\epsilon):=
\left\{
\alpha\in\RR:\bigg|\alpha-\frac{p}{q}\bigg|
\geq
\frac{\epsilon}{q^2}
\quad\forall\quad\frac{p}{q}\in\QQ
\right\},
$$
where 
$
\bad(\epsilon')\subset\bad(\epsilon)
$ 
for $\epsilon'\geq\epsilon$. The union 
$
\bad:=\bigcup_{\epsilon>0}\bad(\epsilon)
$ 
is known as the set of \emph{badly approximable numbers}, and has full dimension in the real line. Denoting by $\dim_H(E)$ the \emph{Hausdorff dimension} of a set $E\subset\RR$ (see \S~\ref{SectionDimensionCantorSet}), we can measure how the size of $\bad(\epsilon)$ increases when $\epsilon\to0$. According to \cite{Hensley} we have
$$
\dim_H\big(\bad(\epsilon)\big)
=
1-\frac{6}{\pi^2}\cdot\epsilon+o(\epsilon).
$$
Moreover in \cite{Hensley} it is obtained the finer asymptotic up to order $O(n^{-2})$ of 
$\dim_H(E_n)$, where $E_n$ denotes the set of $\alpha\in\RR$ whose continued fraction 
$
\alpha=a_0+[a_1,a_2,\dots]
$ 
have entries $a_k\leq n$ for any $k\in\NN^\ast$. 

\smallskip

There are several natural generalizations of the set of badly approximable numbers and its exhaustion into sets $\bad(\epsilon)$, in particular for the euclidean space of any dimension and for systems of linear or affine forms. Other natural generalizations arise considering Kleinian groups acting on the boundary of the hyperbolic space, or the set of directions on a given \emph{translation surface}. 
In such cases, the terminology of \emph{bad sets} is somehow standard, as the notation $\bad(\epsilon)$ for the sets in the exhaustion. Generally, bad sets have full dimension, and more precisely they are \emph{thick}. For systems of affine forms this was proved in \cite{Kleinbock}. In \cite{Schmidt} it was proved that the set of badly approximable systems of linear forms is \emph{winning} for the so-called \emph{Schmidt's game}, a property which implies thickness. The full dimension result for real numbers was established by Jarn\'ik in 1929. In \cite{BeresnevichGhoshSimmonsVelani}, it is proved that for non-elementary geometrically finite Kleinian groups the set of badly approximable points has full dimension in the limit set of the group. This generalizes a previous result in \cite{Patterson} for Fuchsian group of the first kind. 
Further generalizations appear in ~\cite{DasFishmanSimmonsUrbanski}. 
Thickness of bad sets for directions on a given translation surface has been proved in \cite{KleinbockWeiss}. In~\cite{ChaikaCheungMasur} it was proved that the same sets of directions are  \emph{absolute winning}, a notion that was introduced previously in~\cite{McMullenAbsolute}. On the other hand, for $\epsilon>0$ sets 
$\bad(\epsilon)$ do not have full dimension, and the main problem appearing in the literature is to compute asymptotic formulas for their dimension.  The prototypes for such estimates are Hensley's Theorem mentioned above, and a previous result of Kurzweil (see \cite{Kurzweil}), which establishes the linear bound 
$$
0.25\cdot\epsilon\leq 1-\dim_H\big(\bad(\epsilon)\big)\leq 0.99\cdot\epsilon
$$ 
for badly approximable real numbers. In general, we can call \emph{Kurzweil's bound} a pair of upper and lower bounds for the difference between the dimension of the ambient space and 
$
\dim_H\big(\bad(\epsilon)\big)
$. 
In \cite{Weil} Kurzweil's bounds have been obtained for points in the euclidian space, and also in the hyperbolic space, for the action of some class of geometrically finite Kleinian groups. Similar bounds are obtained in \cite{BroderickKleinbock} for systems of linear forms and in \cite{MarcheseTrevinoWeil} for the set of directions on a given translation surface. The common aspect of all these results is that the Kurzweil's bound that they provide is not linear, but one inequality can only be obtained for some power $\epsilon^\beta$ with $0<\beta<1$. Finally, in \cite{Simmons}, the Kurzweil's bound for systems of linear forms is improved up to Hensley's first order estimate. Results and techniques related to those of this paper appear also in \cite{Kessebohmer}, \cite{Stadlbauer} and \cite{Sullivan}. This paper is devoted to the proof of Theorem~\ref{TheoremMainTheorem} below, which establishes Hensley's result for the action of non-uniform lattices in $\sltwor$ on the boundary of the upper half-plane. Theorem~\ref{TheoremMainTheorem} also gives positive answer to a question in \cite{MarcheseTrevinoWeil} concerning badly approximable directions on a \emph{Veech surface} (see Theorem~1.1 in \cite{MarcheseTrevinoWeil} and the comments after the statement).

\subsection{Notation}

Let $\sltwoc$ be the group of matrices 
\begin{equation}
\label{EquationCoefficientsSL(2,C)}
G=
\begin{pmatrix}
a & b \\
c & d  
\end{pmatrix}
\end{equation}
with $a,b,c,d\in\CC$ and $ad-bc=1$, where any such $G$ acts on points $z\in\CC\cup\{\infty\}$ by 
\begin{equation}
\label{EquationActionSL(2,C)}
G\cdot z:=\frac{az+b}{cz+d}.
\end{equation}
Denote $a=a(G)$, $b=b(G)$, $c=c(G)$ and $d=d(G)$ the coefficients of $G$ in  Equation~\eqref{EquationCoefficientsSL(2,C)}. The group $\sltwor$ of $G$ with coefficients 
$a,b,c,d$ in $\RR$ acts by isometries on the upper half-plane $\HH:=\{z\in\CC:\im(z)>0\}$, and inherits  a topology from the identification with the set of $(a,b,c,d)\in\RR^4$ with $ad-bc=1$. 
A \emph{Fuchsian group} is a discrete subgroup $\Gamma<\sltwor$. Referring to \S~\ref{SectionBackground}, we say that $\Gamma$ is a \emph{lattice} if it has a \emph{Dirichlet region} 
$\Omega\subset\HH$ with $\mu(\Omega)<+\infty$, where $\mu$ denotes the hyperbolic area. If $\Omega$ is not compact, then the lattice $\Gamma$ is said \emph{non-uniform}. In this case the intersection 
$
\overline{\Omega}\cap\partial\HH
$ 
is a finite non-empty set (recall that $\partial\HH=\RR\cup\{\infty\}$), whose elements are called the vertices \emph{at infinity} of $\Omega$. A point $z\in\RR\cup\{\infty\}$ is a parabolic fixed point for $\Gamma$ if there exists $P\in\Gamma$ parabolic with $P(z)=z$, where we recall that, in the notation of Equation~\eqref{EquationCoefficientsSL(2,C)}, $G\in\sltwoc$ is parabolic if $a+d=2$. 
Let $\cP_\Gamma$ be the set of parabolic fixed points of $\Gamma$, which is equal to the orbit under $\Gamma$ of the vertices at infinity of 
$\Omega$. The set $\cP_\Gamma$ is dense in $\RR$. Two points $z_1$ and $z_2$ in $\cP_\Gamma$ are \emph{equivalent} if $z_2=G\cdot z_1$ for some $G\in\Gamma$. Any non-uniform lattice $\Gamma$ has a finite number $p\geq1$ of equivalence classes 
$[z_1],\dots,[z_p]$ of parabolic fixed points, which are called the \emph{cusps} of $\Gamma$, and 
correspond to the punctures of the quotient surface $\Gamma\backslash\HH$.

\subsection{Main statement}

Classical diophantine approximations are related to the action of the \emph{modular group} $\sltwoz$, that is the subgroup of $G\in\sltwor$ with coefficients $a,b,c,d$ in $\ZZ$, referring to the notation of 
Equation~\eqref{EquationCoefficientsSL(2,C)}. The orbit $\sltwoz\cdot\infty$ is the entire set $\QQ$, which is the unique class of parabolic fixed points. For $\epsilon>0$ we have
$$
\alpha\in\bad(\epsilon)
\quad
\Leftrightarrow
\quad
\left|\alpha-G\cdot\infty\right|
\geq 
\frac{\epsilon}{c^{2}(G)}\quad\forall\quad G\in\sltwoz
\textrm{ with }
c(G)\not=0.
$$

In order to generalize this notion, let $\Gamma$ be a non-uniform lattice and $p\geq1$ be the number of its cusps. Fix a family 
$\cS=(A_1,\dots,A_p)$ of elements $A_k\in\sltwor$ such that the set of points 
$
\{z_1,\dots,z_p\}\subset\cP_\Gamma
$ 
defined by 
\begin{equation}
\label{EquationRepresentativesOfCusps}
z_k=A_k\cdot\infty
\quad
\textrm{ for }
\quad
k=1,\dots,p
\end{equation}
is a complete set of inequivalent parabolic fixed points for $\Gamma$. See 
Figure~\ref{FigureFundamentalDomainsUpperHalfPlane}. Any parabolic fixed point has the from 
$G\cdot z_k$ for some $G\in\Gamma$ and $k=1,\dots,p$. In the notation of 
Equation~\eqref{EquationCoefficientsSL(2,C)}, define the \emph{denominator} of $G\cdot z_k$ as 
\begin{equation}
\label{EquationDefinitionDenominator}
D(G\cdot z_k):=|c(GA_k)|.
\end{equation}
We have $G\cdot z_k=\infty$ if and only if $GA_k$ is an element of $\sltwor$ fixing $\infty$, that is 
$c(GA_k)=0$. The next equivalence follows
\begin{equation}
\label{EquationDenominatorsAreWellDefined(1)}
D(G\cdot z_k)=0
\quad
\Leftrightarrow
\quad
G\cdot z_k=\infty.
\end{equation}
Moreover, for any $G_1,G_2$ in $\Gamma$ and any $k,j$ in $\{1,\dots,p\}$, if 
$G_1\cdot z_k=G_2\cdot z_j$, then 
$$
k=j
\quad\text{ and }\quad
A_k^{-1}G_2^{-1}G_1A_k\cdot\infty=\infty,
$$
that is   
$
P:=A_k^{-1}G_2^{-1}G_1A_k
$ 
is an element of $\Gamma_k:=A_k^{-1}\Gamma A_k$ with $P\cdot\infty=\infty$. 
Corollary~\ref{CorollaryParabolicPointsOnlyFixedByParabolicElement} implies that $P$ is parabolic fixing $\infty$. Thus 
$
c(G_1A_k)=c(G_2A_kP)=c(G_2A_k)
$. 
It follows that the denominators are well defined, that is
\begin{equation}
\label{EquationDenominatorsAreWellDefined(2)}
G_1\cdot z_k=G_2\cdot z_j
\quad
\Rightarrow
\quad
D(G_1\cdot z_k)=D(G_2\cdot z_j).
\end{equation}

For $k=1,\dots,p$ the \emph{horoball} 
$
B_k:=A_k\big(\{z\in\HH:\im(z)>1\}\big)
$ 
is tangent to $\RR\cup\{\infty\}$ at $z_k$. Thus $G(B_k)$ is a ball tangent to the real line at 
$G\cdot z_k$ for any $G\in\Gamma$ with $D(G\cdot z_k)\not=0$. The denominators measure how the diameter of these balls shrinks to zero as $G$ varies in $\Gamma$. 
Equation~\eqref{EquationDiameterHoroball} gives 
$
D(G\cdot z_k)=1/\sqrt{\diameter\big(G(B_k)\big)}
$. 

\begin{remark}
By Equation~\eqref{EquationDenominatorsAreWellDefined(2)}, denominators do not change replacing the set $\cS$ by 
\begin{equation}
\label{EquationLeftMultiplicationDenominators}
\cS':=(G_1A_1,\dots,G_PA_p)
\quad
\textrm{ where }
\quad
G_k\in\Gamma
\quad
\textrm{ for }
\quad
k=1,\dots,p.
\end{equation}
The only freedom left in the choice of the set $\cS$ is to replace it by 
\begin{equation}
\label{EquationRightMultiplicationDenominators}
\cS'':=(A_1U_1,\dots,A_pU_p)
\quad
\textrm{ with }
\quad
U_k\in\cU 
\quad
\textrm{ for }
\quad
k=1,\dots,p,
\end{equation}
where $\cU$ denotes the \emph{upper triangular subgroup} of those $U\in\sltwor$ with $c(U)=0$, referring to the notation in Equation~\eqref{EquationCoefficientsSL(2,C)}. In this case the new denominators change by
$$
D''(G\cdot z_k)=\big|a(U_k)\big|\cdot D(G\cdot z_k)
\quad
\textrm{ for }
\quad
k=1,\dots,p.
$$
\end{remark}

\begin{figure}
\begin{center}
\begin{tikzpicture}[scale=2]

\begin{scope}

\clip(-1,-0.3) rectangle (3,2);

\draw (-1,0) -- (3,0);

\filldraw[fill=black!15!white, draw=black,thick] 
(1.5,2.5) -- (1.5,{sin(60)}) 
arc (120:180:1) 
arc (0:120:0.333) 
arc (60:180:0.333) 
arc (0:60:1) -- (-0.5,2.5) -- (1.5,2.5);

\filldraw[fill=black!30!white] 
(-1.1,1.5) -- (3.1,1.5) -- (3.1,3) -- (-1.1,3);

\filldraw[fill=black!30!white] (1,0.25) circle (0.25);

\filldraw[fill=black!30!white] (0,0.25) circle (0.25);

\draw[-,thick] 
(1.5,2.5) -- (1.5,{sin(60)}) 
arc (120:180:1) 
arc (0:120:0.333) 
arc (60:180:0.333) 
arc (0:60:1) -- (-0.5,2.5) -- (1.5,2.5);

\filldraw[fill=black!30!white] (2.5,0.15) circle (0.15);
\draw[-] (2.5,0.04) -- (2.5,-0.04);

\node[] at (0.5,1.9) [below] {$z_1:=\infty$};
\node[] at (2,1.9) [below] {$B_1$};
\node[] at (-0.2,0) [below] {$z_2:=A_2\cdot\infty$};
\node[] at (-0.4,0.4) {$B_2$};
\node[] at (1.2,0) [below] {$z_3:=A_3\cdot\infty$};
\node[] at (1.4,0.4) {$B_3$};
\node[] at (2.5,0) [below] {$G\cdot\infty$};
\node[] at (2.5,0.45) {$G(B_1)$};

\node[] at (0.5,1.2) {$\Omega$};

\end{scope}

\end{tikzpicture}
\end{center}
\caption{A fundamental domain $\Omega$ for a lattice $\Gamma$ and horoballs $B_1,B_2,B_3$ tangent to a complete set of inequivalent parabolic fixed points $z_1,z_2,z_3$.}
\label{FigureFundamentalDomainsUpperHalfPlane}
\end{figure}

According to Patterson (Theorem~1 in \S~7 of \cite{Patterson}), there exists $M=M(\Gamma,\cS)>0$, depending only on $\Gamma$ and $\cS$, such that for $Q>0$ big enough and any $\alpha\in\RR$ there exists a parabolic fixed point $G\cdot z_k\in\cP_\Gamma$ with
\begin{equation}
\label{EquationPattersonTheorem}
|\alpha-G\cdot z_k|
\leq
\frac{M}{D(G\cdot z_k)Q}
\quad
\textrm{ and }
\quad
0<D(G\cdot z_k)\leq Q.
\end{equation}
For $\Gamma=\sltwoz$, $\cS=\{\id\}$ and $M=1$, 
Equation~\eqref{EquationPattersonTheorem} gives the classical Dirichlet Theorem. 
In the general case, for any $\alpha\in\RR$ there exist infinitely many $G\cdot z_k\in\cP_\Gamma$ with  
$$
|\alpha-G\cdot z_k|
\leq
\frac{M}{D^2(G\cdot z_k)}.
$$
Moreover the constant $M(\Gamma,\cS)$ in the condition above cannot be replaced by an arbitrarily small constant. For any fixed $\epsilon>0$ small enough define $\bad(\Gamma,\cS,\epsilon)$ as the set of those $\alpha\in\RR$ such that 
\begin{equation}
\label{EquationDefinitionBadGammaEpsilon}
\left|\alpha-G\cdot z_k\right|
\geq
\frac{\epsilon}{D^2(G\cdot z_k)}
\quad\text{ for any }\quad
G\cdot z_k\in\cP_\Gamma
\quad\text{ with }\quad
D(G\cdot z_k)\not=0.
\end{equation}

The techniques in this paper require the existence of a subgroup $\Gamma_0<\Gamma$ with a specific property which is  stated in Assumption~\ref{AssumptionIdealFordDomain} below, namely a finite index free subgroup $\Gamma_0$ whose \emph{Ford domain} is an \emph{ideal polygon}, where the terminology is introduced in the next \S~\ref{SectionBackground}. We cannot establish if such $\Gamma_0$ always exists, nor we are able to provide counterexamples. In Appendix~\ref{AppendixExample} we introduce a class of Fuchsian groups $\Gamma$ satisfying Assumption~\ref{AssumptionIdealFordDomain} with $\Gamma_0=\Gamma$, where for any such group the quotient surface is a punctured sphere.

\begin{theorem}
\label{TheoremMainTheorem}
Let $\Gamma<\sltwor$ be a non-uniform lattice satisfying Assumption~\ref{AssumptionIdealFordDomain} below and let $\cS$ be a set as in Equation~\eqref{EquationRepresentativesOfCusps}. 
Then there exists a positive constant $\Theta=\Theta(\Gamma,\cS)>0$ such that for any $\epsilon>0$ small enough we have 
$$
\dim_H\big(\bad(\Gamma,\cS,\epsilon)\big)
=
1-\Theta\cdot\epsilon+o(\epsilon).
$$
\end{theorem}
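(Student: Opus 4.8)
The plan is to transfer the diophantine problem to a symbolic dynamical system via the Bowen--Series coding, and then apply the thermodynamic formalism of Ruelle and Bowen together with perturbation theory for transfer operators. First I would recall the construction of the Bowen--Series map $T\colon\partial\HH\to\partial\HH$ associated to a (suitably adapted) fundamental domain for $\Gamma$: this is a piecewise M\"obius expanding map, orbit-equivalent to the $\Gamma$-action, whose forward orbit of a point $\alpha$ visits the horoballs $G(B_k)$ precisely when the corresponding symbolic itinerary contains long strings in the ``parabolic'' generators. The key geometric translation, which I would establish as a lemma, is that $\alpha\in\bad(\Gamma,\cS,\epsilon)$ if and only if the partial quotients of the Bowen--Series expansion of $\alpha$ — i.e. the lengths of the maximal blocks of consecutive parabolic symbols — are bounded above by a function $N(\epsilon)$ with $N(\epsilon)\sim c/\epsilon$ as $\epsilon\to0$, exactly as in the classical case where $\bad(\epsilon)$ corresponds to continued fractions with entries $\le n$, $n\sim 1/\epsilon$. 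This uses Equation~\eqref{EquationDiameterHoroball}: $D(G\cdot z_k)=1/\sqrt{\diameter(G(B_k))}$, so the condition $|\alpha-G\cdot z_k|\ge\epsilon/D^2(G\cdot z_k)$ becomes a statement about how deeply the orbit of $\alpha$ penetrates each horoball, which in turn is governed by the length of the parabolic block.

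Next I would set up the family of transfer (Ruelle--Perron--Frobenius) operators. For each cutoff $N$, the set $E_N$ of points whose parabolic partial quotients are all $\le N$ is a $T$-invariant Cantor set, and its Hausdorff dimension $s_N$ is characterized by Bowen's equation $P(-s_N\log|T'|)=0$, where $P$ is the topological pressure for the restricted system. I would realize $s_N$ as $\inf\{s:\lambda_s=1\}$ where $\lambda_s$ is the maximal eigenvalue of the transfer operator $\cL_{s,N}f(x)=\sum_{Ty=x}|T'(y)|^{-s}f(y)$ acting on a suitable Banach space (bounded-variation or H\"older functions on the finitely many branch intervals). Because the Bowen--Series map has a finite Markov partition and the restricted system $E_N$ has bounded distortion, $\cL_{s,N}$ is quasi-compact with a simple, isolated, positive maximal eigenvalue $\lambda_{s,N}$ depending real-analytically on $s$; moreover $s\mapsto\lambda_{s,N}$ is strictly decreasing, so $s_N$ is well defined and, by the implicit function theorem, smooth in the entries of the operator.

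The heart of the argument is the asymptotic expansion of $1-s_N$ as $N\to\infty$, equivalently of $1-\dim_H(\bad(\Gamma,\cS,\epsilon))$ as $\epsilon\to0$ after substituting $N\sim c/\epsilon$. I would write $\cL_{s,N}=\cL_{s,\infty}-\cR_{s,N}$, where $\cL_{s,\infty}$ is the full transfer operator (whose maximal eigenvalue is $1$ at $s=1$, since the full limit set has dimension $1$ and the relevant measure is the Lebesgue/Patterson--Sullivan measure) and $\cR_{s,N}$ is the ``tail'' operator collecting the contributions of parabolic blocks of length $>N$. The crucial estimate is that $\|\cR_{s,N}\|\asymp$ (total Lebesgue measure of the deep parts of the horoballs cut off at level $N$) $\asymp C/N$, with an explicit constant $C$ coming from the geometry of the cusps and the Patterson--Sullivan measure — this is precisely the place where the factor $6/\pi^2$ arises in Hensley's theorem (there $C$ is computed from $\sum 1/(q(q+1))$-type sums, i.e.\ the invariant density $1/((1+x)\log 2)$ of the Gauss map integrated against the cusp excursion cost). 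First-order perturbation theory for the simple isolated eigenvalue then gives $\lambda_{1,N}=1-\langle\mu, \cR_{1,N}h\rangle+O(N^{-2})$ where $h,\mu$ are the normalized right eigenfunction and left eigenmeasure of $\cL_{1,\infty}$, and differentiating Bowen's equation $\lambda_{s_N,N}=1$ yields $1-s_N=\dfrac{\langle\mu,\cR_{1,N}h\rangle}{-\partial_s\lambda_{s,\infty}|_{s=1}}+o(N^{-1})=\dfrac{\Theta'}{N}+o(N^{-1})$. Combining with $N(\epsilon)=c/\epsilon+O(1)$ gives the claimed $1-\dim_H=\Theta\epsilon+o(\epsilon)$ with $\Theta=\Theta'/c$, and one checks $\Theta>0$ because $\cR_{1,N}$ is a positive operator and the eigendata $h,\mu$ are positive.

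The main obstacle I anticipate is twofold: first, making the symbolic–geometric dictionary precise and uniform, i.e.\ controlling the error between ``parabolic partial quotient $\le N$'' and ``$\alpha\in\bad(\Gamma,\cS,\epsilon)$'' well enough that the $O(1)$ ambiguity in $N(\epsilon)$ does not pollute the first-order term — this requires careful bounded-distortion estimates for the Bowen--Series map near the cusps and a matching of the cutoff parameter that is accurate to $o(1)$ after rescaling by $1/\epsilon$. Second, and more seriously, the transfer operator for a cusped system acts on a countable (not finite) alphabet once one records parabolic block lengths explicitly, so one must either work with an induced/accelerated map (jumping across each parabolic block in one step, at the cost of an infinite alphabet with good tail decay) or with the finite-alphabet Bowen--Series map restricted to $E_N$ (finite alphabet but $N$-dependent domain); reconciling the spectral perturbation in the first picture with Bowen's equation in the second, and verifying quasi-compactness and the spectral gap uniformly in $N$, is where the technical weight of the proof lies.
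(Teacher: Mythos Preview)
Your proposal is correct and follows essentially the same route as the paper: Bowen--Series coding, a ``cuspidal acceleration'' (your induced/accelerated map) giving a countable-alphabet shift, Bowen's formula for $\dim_H(E_N)$ via the leading eigenvalue of a transfer operator, and first-order perturbation $\cL_{s,N}=\cL_{s,\infty}-\cR_{s,N}$ combined with the implicit function theorem at $(s,N)=(1,\infty)$. The two obstacles you flag are exactly where the paper invests its work: the first is resolved by measuring cuspidal words with a tailored \emph{geometric length} $|W|$ (a horocycle coordinate, not the combinatorial block length) so that the inclusion $\EE_{\epsilon^{-1}-2\mu}\subset\bad(\Gamma,\epsilon)\subset\EE_{\epsilon^{-1}}$ holds with a bounded additive shift and hence contributes only $o(\epsilon)$; the second is resolved by proving a Lasota--Yorke inequality with \emph{uniform} constants (independent of $N$) for the accelerated transfer operator on a fixed Banach space $\cB$ of piecewise-Lipschitz functions on $\partial\DD$, so that quasi-compactness and the spectral gap persist through the limit $N\to\infty$.
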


\begin{remark}
The constant $\Theta(\Gamma,\cS)$ is given by Equation~\eqref{EquationFormulaConstantTheta} at the end of this paper, in terms of integrals with respect to a measure $\mu_{(1,\infty)}$, whose density is not computed explicitly. Nevertheless $\Theta(\Gamma,\cS)$ satisfies several invariance properties, derived a posteriori from Theorem~\ref{TheoremMainTheorem}. First of all, 
in \S~\ref{SectionBowenSeriesExpansion} we make some choice of a finite index free subgroup 
$\Gamma_0<\Gamma$, but $\Theta(\Gamma,\cS)$ does not depend on such choice. 
Furthermore the set $\bad(\Gamma,\cS,\epsilon)$ does not change replacing $\cS$ by 
$\cS'$ in 
Equation~\eqref{EquationLeftMultiplicationDenominators}, and thus 
$
\Theta(\Gamma,\cS')=\Theta(\Gamma,\cS)
$. 
On the other hand, if $\cS$ is replaced by $\cS''$ in 
Equation~\eqref{EquationRightMultiplicationDenominators} then $\Theta$ varies accordingly. For example, fixing $U\in\cU$ and setting $U_k=U$ for any $k$, we get 
$
\Theta(\Gamma,\cS'')=a(U)^{-2}\cdot\Theta(\Gamma,\cS)
$. 
%Similar properties hold for the constant $M(\Gamma,\cS)$ in Patterson's Theorem. 
The set  
$
\bigcup_{\epsilon>0}\bad(\Gamma,\cS,\epsilon)
$ 
of all badly approximable points does not depend on the choice of $\cS$ and has full dimension (see page 558 in~\cite{Patterson}). 
Once $\cS$ is fixed, we write simply 
$\bad(\Gamma,\epsilon)$ instead of $\bad(\Gamma,\cS,\epsilon)$.
\end{remark}

\subsection{Continued fractions and strategy of the proof}
\label{SectionStrategyOfTheProof}

This paper is devoted to the proof of Theorem~\ref{TheoremMainTheorem}. We sketch the strategy via an analogy with the classical case.

If $\alpha=a_0+[a_1,a_2,\dots]$ is the continued fraction of $\alpha$, where $a_0\in\ZZ$, $a_k\geq1$ for $k\geq1$, then the \emph{convergents} 
$
p_n/q_n:=a_0+[a_1,\dots,a_n]
$ 
satisfy the properties below.
\begin{enumerate}
\item
If $|\alpha-p/q|<(1/2)q^{-2}$ then $p/q=p_n/q_n$ for some $n\geq1$.
\item
For any $n\geq1$ we have 
$
\displaystyle{\frac{1}{2+a_{n+1}}\leq q_n^2\cdot|\alpha-p_n/q_n|\leq \frac{1}{a_{n+1}}.}
$
\end{enumerate}

It follows that $E_{N-2}\subset \bad(\epsilon)\subset E_N$ for any $\epsilon$, where 
$
N\leq\epsilon^{-1}<N+1
$ 
and where 
$
E_N=\{\alpha:a_k\leq N\,\forall\,k\geq1\}
$. 
Since $(N-2)^{-1}-N^{-1}=O(N^{-2})$, then the asymptotic 
$
\dim_H(E_N)=1-(6/\pi^2)\cdot N^{-1}+o( N^{-1})
$, 
which is proven in~\cite{Hensley}, gives the dimension of $\bad(\epsilon)$ up to order $o(\epsilon)$. By a nice formula of R. Bowen we have  $s_N=\dim_H(E_N)$ if and only if $\lambda(s_N,N)=1$, where 
$\lambda(s,N)$ is the simple and positive maximal eigenvalue of the transfer operators (acting on a proper space)
$$
f(x)\mapsto \sum_{k=1}^N\frac{1}{(k+x)^{2s}}f\bigg(\frac{1}{k+x}\bigg).
$$
Perturbative analysis of spectra gives $s_N$ up to the first order in $N^{-1}$. 
Roughly speaking, set $\epsilon:=N^{-1}$ and consider it as a continuous variable, so that the implicit function theorem for 
$
(s,\epsilon)\mapsto\lambda(s,\epsilon)
$ 
gives 
$$
\frac{ds(\epsilon)}{d\epsilon}\bigg|_{\epsilon=0}=
-\frac
{\partial_\epsilon\lambda(s=1,\epsilon=0)}
{\partial_s\lambda(s=1,\epsilon=0)}=
-\frac{6}{\pi^2}.
$$

The properties in Points (1) and (2) above are extended to the general case by 
Theorem~\ref{TheoremGoodApproximationsAndBoundaryExpansion} below (which also appears in~\cite{MarcheseExpansion}). 
Equation~\eqref{EquationDefinitionCuspidalAccelerationBowenSeriesMap} defines the acceleration $\cF$ of the \emph{Bowen-Series map}, which plays the role of the Gauss map. Smooth branches of 
$\cF$ are elements $F_W\in\Gamma$, labelled by \emph{cuspidal words} $W$, which belong to a countable alphabet $\cW$. 
Equation~\eqref{EquationDefinitionGeometricLenght} defines the \emph{geometric length} $|W|$ of a cuspidal word. Theorem~\ref{TheoremGoodApproximationsAndBoundaryExpansion} implies  
$
\EE_{T-\text{Const.}}\subset\bad(\Gamma,\epsilon)\subset\EE_T
$, 
where $T:=\epsilon^{-1}$ and $\EE_T$ is the Cantor set of points encoded by the finite subalphabet of those $W\in\cW$ with $|W|\leq T$. See Lemma~\ref{LemmaBadAndBoundedContinuedFraction}. 
Such sharp inclusions rely strongly on the definition of geometric length. This gives an approximation of 
$
\dim_H\big(\bad(\Gamma,\epsilon)\big)
$ 
by $\dim_H(\EE_T)$ up to order $o(\epsilon)$.

Theorem~\ref{TheoremTransferOperatorCircle} below concerns the transfer operators $L_{(s,T)}$ for the map $\cF$ and its restrictions to $\EE_T$, where we introduce an extra parameter $s>0$. These operators are quasi-compact with simple maximal eigenvalue $\lambda(s,T)>0$. Moreover Bowen formula holds, that is we have $s_T=\dim_H(\EE_T)$ if and only if $\lambda(s_T,T)=1$. 
Perturbative methods in \S~\ref{SectionPerturbativeEstimateMaximalEigenvalue} give $s_T$ up to order $o(T^{-1})$, via a sort of implicit function theorem for the function 
$(s,T)\mapsto\lambda(s,T)-1$.

\subsection*{Contents of this paper}

In \S~\ref{SectionBackground} we consider a non-uniform lattice $\Gamma$ satisfying Assumption~\ref{AssumptionIdealFordDomain}, that is admitting a finite index free subgroup $\Gamma_0$ whose \emph{Ford  domain} is an \emph{ideal polygon}, then we establish a convenient labelling of the sides of such domain. 
In \S~\ref{SectionBowenSeriesExpansion} we use the \emph{pairings} between sides of the Ford domain of $\Gamma_0$ to define the Bowen-Series expansion and its cuspidal acceleration. In \S~\ref{SectionGoodApproximationsAndBowenSeries} we prove Theorem~\ref{TheoremGoodApproximationsAndBoundaryExpansion}, which generalises Properties (1) and (2) stated in \S~\ref{SectionStrategyOfTheProof}. In \S~\ref{SectionProofMainTheorem} we state Theorem~\ref{TheoremDimensionShiftSpace} on the dimension of the Cantor sets $\EE_T$ of points encoded by cuspidal words with $|W|\leq T$, whose proof takes the most of this paper. Then we show that Theorem~\ref{TheoremMainTheorem} follows easily from Theorem~\ref{TheoremGoodApproximationsAndBoundaryExpansion} and Theorem~\ref{TheoremDimensionShiftSpace}.

In \S~\ref{SectionSubshiftFiniteType} we describe combinatorially the Cantor set $\EE_T$, proving in particular Proposition~\ref{PropositionAperiodicityTransitionMatrix} on \emph{aperiodicity} for the transition matrix of the underlying subshift. 
In \S~\ref{SectionEstimatesContractionDistortion} we prove estimates on contraction and distortion for the cuspidal acceleration map $\cF$. In \S~\ref{SectionTransferOperatorAndDimension} we recall 
Theorem~\ref{TheoremRuellePerronFrobenius} on transfer operators $\cL_{(s,T)}$ for aperiodic sub-shifts of finite type $\Sigma$, which provides \emph{Gibbs measures} $m_{(s,T)}$ on $\Sigma$. Bowen formula holds because the projection of $m_{(s,T)}$ to $\EE_T$ is \emph{Ahlfors regular} when $\cL_{(s,T)}$ has spectral radius $1$ (Proposition~\ref{PropositionDimensionAndSpectralRadius}). Estimates in 
\S~\ref{SectionEstimatesContractionDistortion} ensure that the logarithm of the derivative of $\cF$ is a good potential for $\cL_{(s,T)}$.

In \S~\ref{SectionTransferOperatorOnTheCircle} we state and prove 
Theorem~\ref{TheoremTransferOperatorCircle}. We define a proper Banach space $\cB$ of functions on the circle and consider the transfer operator $L_{(s,T)}$, which is quasi-compact and has the same simple maximal eigenvalue $\lambda(s,T)>0$ as $\cL_{(s,T)}$. Quasi-compactness follows from estimates in 
\S~\ref{SectionEstimatesContractionDistortion}. A main goal is to have all operators $L_{(s,T)}$ acting on the same space $\cB$, in particular $L_{(1,\infty)}$, which arises as a limit and does not have a correspondent $\cL_{(s,T)}$ at the level of the sub-shift. 
In \S~\ref{SectionPerturbativeEstimateMaximalEigenvalue} we prove 
Theorem~\ref{TheoremDimensionShiftSpace} via perturbative analysis of the spectra.

In \S~\ref{AppendixExample} we introduce a class of Fuchsian groups satisfying Assumption~\ref{AssumptionIdealFordDomain}. 
In \S~\ref{AppendixSeparation} we recall elementary properties of horoballs at parabolic fixed points. 
In \S~\ref{AppendixSpectralProjectors} we resume some basic properties of spectra and spectral projectors of bounded linear operators, in particular their stability.

\subsection*{Acknowledgements}

The author is grateful to R. Trevi\~{n}o and S. Weil for discussions on dimension of bad sets in the setting of translation surfaces, to M. Artigiani and C. Ulcigrai for discussions on the boundary expansion, and to S. Gou{\"e}zel for discussions on spectral properties of transfer operators. The author could extend his background on dimension of bad sets at the \emph{Arbeitsgemeinschaft} on Diophantine Approximation, Fractal Geometry and Dynamics at Oberwolfach, and he would like to thank V. Beresnevich and S. Velani for the invitation.

\section{Background}
\label{SectionBackground}

Consider also the unit disc $\DD:=\{z\in\CC;|z|<1\}$ model for the hyperbolic space, which is related to 
$\HH$ by the map $\varphi:\HH\to\DD$ defined by 
\begin{equation}
\label{EquationConjugationUpperHalfPlaneDisc}
\varphi(z):=\frac{z-i}{z+i}.
\end{equation}
The conjugate of $\sltwor$ under $\varphi$ is the group $\sugroup(1,1)$ of matrices 
$
F\in\glgroup(2,\CC)
$ 
with 
\begin{equation}
\label{EquationCoefficientsSU(1,1)}
F=
\begin{pmatrix}
\alpha & \overline{\beta} \\
\beta & \overline{\alpha}  
\end{pmatrix}
\quad
\textrm{ with }
\quad
|\alpha|^2-|\beta|^2=1.
\end{equation}
Denote $\alpha=\alpha(F)$ and $\beta=\beta(F)$ the coefficients of $F\in\sugroup(1,1)$ as in Equation~\eqref{EquationCoefficientsSU(1,1)}. In this section we consider Fuchsian groups $\Gamma\subset\sugroup(1,1)$ satisfying Assumption \ref{AssumptionIdealFordDomain} below, which provides the existence of a finite index free subgroup $\Gamma_0$ of $\Gamma$ whose \emph{Ford domain} $\Omega_\DD$ is an \emph{ideal polygon}. Such choice of $\Gamma_0$ and $\Omega_\DD$ is fixed and used in the rest of the paper.

\subsection{Isometric circles}
\label{SectionIsometricCircles}

Consider $F\in\sugroup(1,1)$ and $\alpha=\alpha(F)$, $\beta=\beta(F)$ as in 
Equation~\eqref{EquationCoefficientsSU(1,1)}. Assume $\beta\not=0$ and let 
$
\omega_F:=-\overline{\alpha}/\beta
$ 
be the pole of $F$. The \emph{isometric circle} $I_F$ of $F$ is the euclidian circle centered at 
$\omega_F$ with euclidian radius $\rho(F):=|\beta|^{-1}$, that is 
$$
I_F:=\{\xi\in\CC:|\xi-\omega_F|=|\beta|^{-1}\}=\{\xi\in\CC:|D_zF|=1\}.
$$
We have $F(I_F)=I_{F^{-1}}$ with $\rho(F)=\rho(F^{-1})$ and $|\omega_{F^{-1}}|=|\omega_F|$, by  Theorem~3.3.2 in \cite{KatokFuchsian}. 
Moreover $I_F\cap\DD$ is a geodesic of $\DD$ for any $F\in\sugroup(1,1)$, by Theorem~3.3.3 in \cite{KatokFuchsian}. For any $F\in\sugroup(1,1)$ denote $U_F$ the disc in $\CC$ with 
$\partial U_F=I_F$, that is the interior of $I_F$. We have $|D_\xi F|<1$ for any 
$\xi\in \CC\setminus \overline{U_F}$, that is $F$ contracts (in weak sense) the euclidian metric. 
Fix $F,G$ in $\sugroup(1,1)$ and $z\in\DD$. Observe that 
$
z\in G^{-1}(U_F)\Leftrightarrow G(z)\in U_F
$ 
and 
$
|D_zFG|=|D_{G(z)}F|\cdot|D_zG|
$, 
therefore 
$$
G^{-1}(U_F)\cap U_{G}
\subset
U_{FG}
\subset
U_G\cup G^{-1}(U_F).
$$
If $U_F\cap U_{G^{-1}}=\emptyset$, which is equivalent to $G^{-1}(U_F)\subset U_G$, then we have
\begin{equation}
\label{EquationIsometricCircleComposition}
G^{-1}(U_F)\subset U_{FG}\subset U_G.
\end{equation}

\subsection{Dirichlet regions and Ford domain}
\label{SectionPropertiesDirichletRegion}

Let $\Gamma\subset\sugroup(1,1)$ be a Fuchsian group. Recall that a \emph{Dirichlet region} 
$\Omega$ for $\Gamma$ is a closed subset of $\DD$ with non-empty interior $\interior(\Omega)$, whose boundary $\partial\Omega$ is the countable union of segments $s$ of hyperbolic geodesic. The vertices of $\Omega$ are the endpoints of segments $s$ in $\partial\Omega$. By Theorem~3.2.2 and Theorem~3.5.1 in \cite{KatokFuchsian} we have the properties below.
\begin{enumerate}
\item
We have 
$
\DD=\bigcup_{F\in\Gamma}F(\Omega)
$ 
with 
$
\interior(\Omega)\cap F(\interior(\Omega))=\emptyset
$ 
for any $F\in\Gamma\setminus\{\id\}$.
\item
For any compact $K\subset\DD$ we have $K\cap F(\Omega)\not=\emptyset$ only for finitely many 
$F\in\Gamma$.
\item
According to \S~3.5 in \cite{KatokFuchsian}, for any $z\in\partial\Omega$ there is $F\in\Gamma\setminus\{\id\}$ with 
$F\cdot z\in\partial\Omega$. If $z$ is a vertex of $\Omega$ then also $F\cdot z$ is. The set of vertices of $\Omega$ decomposes into equivalence classes under the action of $\Gamma$. Any equivalence class is finite.
\item
If $s$ is a side of $\Omega$ then there exists an unique $F\in\Gamma\setminus\{\id\}$ such that 
$\widehat{s}:=F(s)$ is a side of $\Omega$. We say that $s$ and $\widehat{s}$ are paired sides. 
\item
The set of pairings generates $\Gamma$ (Theorem~3.5.4 in \cite{KatokFuchsian}).
\item
If $\mu(\Omega)<+\infty$ then $\Omega$ has finitely many sides 
(Theorem~4.1.1 in \cite{KatokFuchsian}), so that $\Gamma$ is finitely generated.
\item
If $\Omega$ is non-compact with $\mu(\Omega)<+\infty$ then the set of parabolic fixed points of $\Gamma$ is the orbit 
$
\Gamma\big(\Omega\cap\partial\DD\big)
$ 
of vertices at infinity of $\Omega$ (Theorem~4.2.5 in \cite{KatokFuchsian}).
\end{enumerate}

Finally assume that the origin $0\in\DD$ is not fixed by any $F\in\Gamma\setminus\{\id\}$, which in the notation of  
Equation~\eqref{EquationCoefficientsSU(1,1)} is equivalent to  
$\beta(F)\not=0$ for any $F\in\Gamma\setminus\{\id\}$, so that the isometric circle $I_F$ and the disc $U_F$ are defined. 
\begin{enumerate}
\setcounter{enumi}{7}  
\item
Under the last condition we can define the \emph{Ford domain} of $\Gamma$ as
\begin{equation}
\label{EquationDefinitionFordDomain}
\Omega:=\overline{\DD\setminus\bigcup_{F\in\Gamma\setminus\{\id\}}U_F}
\end{equation}
and Theorem 3.3.5 in \cite{KatokFuchsian} implies that $\Omega$ is a Dirichlet region for $\Gamma$. 
\end{enumerate}

\subsection{Labelled ideal polygon}
\label{SectionLabelledIdealPolygon}

Let $\Gamma$ be a Fuchsian group with $\mu(\Omega)<+\infty$. A Dirichlet region $\Omega$ for $\Gamma$ is an \emph{ideal polygon} if, as an hyperbolic polygon, it has all its vertices on the boundary $\partial\DD$. According to~\cite{Tukia} (the Proposition at page 15), there exist a finite index free subgroup $\Gamma_0<\Gamma$ whose fundamental domain is an ideal polygon, but such domain does not necessarily coincide with the ford domain of $\Gamma_0$. Therefore we need to introduce the following assumption.

\begin{assumption}
\label{AssumptionIdealFordDomain}
Assume that there exists a finite index free subgroup $\Gamma_0$ of $\Gamma$ whose \emph{Ford domain} $\Omega_\DD$, which is defined by Equation \eqref{EquationDefinitionFordDomain}, is an \emph{ideal polygon}.
\end{assumption}

\begin{remark}
Appendix~\ref{AppendixExample} introduces a class of groups $\Gamma$ satisfying Assumption~\ref{AssumptionIdealFordDomain}, and for all such group the quotient surface is a punctured sphere. We cannot establish if Assumption~\ref{AssumptionIdealFordDomain} is satisfied by any non uniform Fuchsian lattice.
\end{remark}

Let $\Gamma\subset\sugroup(1,1)$ be a non-uniform lattice, fix a finite index free subgroup 
$\Gamma_0<\Gamma$ as in Assumption \ref{AssumptionIdealFordDomain} and let $\Omega_\DD$ be the ford domain of $\Gamma_0$, which is an ideal polygon. Point (8) in \S~\ref{SectionPropertiesDirichletRegion} implies that $\Omega_\DD$ is a Dirichlet region for $\Gamma_0$. The discussion in \S~\ref{SectionPropertiesDirichletRegion} implies that 
$\Omega_\DD$ has finitely many sides. Moreover there is an even number $2d$ of sides, by Point (4) in \S~\ref{SectionPropertiesDirichletRegion}. All the $2d$ vertices of $\Omega_\DD$ belong to $\partial\DD$ by Assumption \ref{AssumptionIdealFordDomain}. Any side $s$ is a complete geodesic in $\DD$ and for any such $s$ there exists an unique $F\in\Gamma_0$ such that $F(s)$ is another side of $\Omega_0$ with $F(s)\not=s$. The sides $s$ and $F(s)$ are thus paired. By Point (5) in \S~\ref{SectionPropertiesDirichletRegion} the set of $d$ pairings arising in this way generates $\Gamma_0$. See Figure~\ref{FigureLabelledIdealPolygon}. 
Consider two finite alphabets $\cA_0$ and $\widehat{\cA_0}$, both with $d$ elements, and a map 
$$
\iota:\cA_0\cup\widehat{\cA_0}\to\cA_0\cup\widehat{\cA_0}
\quad
\textrm{ with }
\quad
\iota^2=\id
\quad
\textrm{ and }
\quad
\iota(\cA_0)=\widehat{\cA_0},
$$
that is an involution of $\cA_0\cup\widehat{\cA_0}$ which exchanges $\cA_0$ with $\widehat{\cA_0}$. 
Set $\cA:=\cA_0\cup\widehat{\cA_0}$ and for any $a\in\cA$, denote $\widehat{a}:=\iota(a)$.

\begin{figure}
\begin{center}
\begin{tikzpicture}[scale=2.3]

\begin{scope}

\tkzDefPoint(0,0){O}
\tkzDefPoint(1,0){A}
\tkzDrawCircle[thick,fill=black!15](O,A)
\tkzClipCircle(O,A)

%fundamental domain-------------------------------------

%\node[] at (-0.2,-0.5) {$\Omega_\DD$};

\tkzDefPoint({cos(0)},{sin(0)}){z0}
\tkzDefPoint({cos(0.2*pi)},{sin(0.2*pi)}){z1}
\tkzDefPoint({cos(0.5*pi)},{sin(0.5*pi)}){z2}
\tkzDefPoint({cos(0.7*pi)},{sin(0.7*pi)}){z3}
\tkzDefPoint({cos(1.0*pi)},{sin(1.0*pi)}){z4}
\tkzDefPoint({cos(1.3*pi)},{sin(1.3*pi)}){z5}
\tkzDefPoint({cos(1.5*pi)},{sin(1.5*pi)}){z6}
\tkzDefPoint({cos(1.7*pi)},{sin(1.7*pi)}){z7}

\tkzDefCircle[orthogonal through=z0 and z1](O,A)
\tkzGetPoint{tmp}
\tkzDrawCircle[fill=white](tmp,z1)
\node[] at ({0.85*cos(17)},{0.85*sin(17)}) {$s_a$};
\tkzDefCircle[orthogonal through=z1 and z2](O,A)
\tkzGetPoint{tmp}
\tkzDrawCircle[fill=white](tmp,z2)
\node[] at ({0.8*cos(65)},{0.8*sin(65)}) {$s_b$};
\tkzDefCircle[orthogonal through=z2 and z3](O,A)
\tkzGetPoint{tmp}
\tkzDrawCircle[fill=white](tmp,z3)
\node[] at ({0.85*cos(110)},{0.85*sin(110)}) {$s_{\widehat{a}}$};
\tkzDefCircle[orthogonal through=z3 and z4](O,A)
\tkzGetPoint{tmp}
\tkzDrawCircle[fill=white](tmp,z4)
\node[] at ({0.8*cos(150)},{0.8*sin(150)}) {$s_{\widehat{b}}$};
\tkzDefCircle[orthogonal through=z4 and z5](O,A)
\tkzGetPoint{tmp}
\tkzDrawCircle[fill=white](tmp,z5)
\node[] at ({0.8*cos(210)},{0.8*sin(210)}) {$s_c$};
\tkzDefCircle[orthogonal through=z5 and z6](O,A)
\tkzGetPoint{tmp}
\tkzDrawCircle[fill=white](tmp,z6)
\node[] at ({0.85*cos(255)},{0.85*sin(255)}) {$s_d$};
\tkzDefCircle[orthogonal through=z6 and z7](O,A)
\tkzGetPoint{tmp}
\tkzDrawCircle[fill=white](tmp,z7)
\node[] at ({0.88*cos(288)},{0.88*sin(288)}) {$s_{\widehat{d}}$};
\tkzDefCircle[orthogonal through=z7 and z0](O,A)
\tkzGetPoint{tmp}
\tkzDrawCircle[fill=white](tmp,z0)
\node[] at ({0.75*cos(330)},{0.75*sin(330)}) {$s_{\widehat{c}}$};

\draw[->,thick] ({0.7*cos(255)},{0.7*sin(255)}) .. controls 
({0.6*cos(265)},{0.6*sin(265)}) and ({0.6*cos(275)},{0.6*sin(275)}) 
.. ({0.7*cos(285)},{0.7*sin(285)}) {};

\node[] at ({0.65*cos(270)},{0.65*sin(270)}) [above] {$F_{\widehat{d}}$};

\draw[->,thick] ({0.5*cos(145)},{0.5*sin(145)}) .. controls 
({0.3*cos(120)},{0.3*sin(120)}) and ({0.3*cos(90)},{0.3*sin(90)}) 
.. ({0.5*cos(65)},{0.5*sin(65)}) {};

\node[] at ({0.55*cos(100)},{0.55*sin(100)}) [right,below] {$F_{b}$};

\end{scope}

\begin{scope}

\tkzDefPoint(0,0){O}
\tkzDefPoint(1,0){A}
\tkzDrawCircle(O,A)

\node[] at ({cos(0)},{sin(0)}) [right] {$\xi^L_{\widehat{c}}=\xi^R_a$};
\node[] at ({cos(40)},{sin(40)}) [right] {$\xi^L_a=\xi^R_b$};
\node[] at ({cos(90)},{sin(90)}) [above] {$\xi^L_b$};
\node[] at ({cos(130)},{sin(130)}) [above] {$\xi^R_{\widehat{b}}$};
\node[] at ({cos(180)},{sin(180)}) [left] {$\xi^L_{\widehat{b}}$};
%\node[] at ({cos(230)},{sin(230)}) [left,below] {$\xi_5$};
%\node[] at ({cos(270)},{sin(270)}) [below] {$\xi_6$};
\node[] at ({cos(310)},{sin(310)}) [right,below] {$\xi^R_{\widehat{c}}$};

\node[] at ({1.15*cos(17)},{1.15*sin(17)}) {$[a]_\DD$};
\node[] at ({1.15*cos(60)},{1.15*sin(60)}) {$[b]_\DD$};
\node[] at ({1.15*cos(150)},{1.15*sin(150)}) {$[\widehat{b}]_\DD$};

\end{scope}
\end{tikzpicture}

\end{center}
\caption{Ideal polygon labelled by 
$
\cA=\{a,b,c,d,\widehat{a},\widehat{b},\widehat{c},\widehat{d}\}
$, 
with some examples of the notation in \S~\ref{SectionTheBoundaryMap}.}
\label{FigureLabelledIdealPolygon}
\end{figure}

\begin{itemize}
\item
Label the sides of $\Omega_\DD$ by the letters in $\cA$, so that for any 
$
a\in\cA
$ 
the sides $s_a$ and $s_{\widehat{a}}$ are those which are paired by the action of $\Gamma_0$.  
\item
For any pair of sides $s_a$ and $s_{\widehat{a}}$ as above, let $F_a$ be the unique element of 
$\Gamma_0$ such that
\begin{equation}
\label{EquationPairingPolygonSides}
F_a(s_{\widehat{a}})=s_a.
\end{equation}
\item
For any $a\in\cA$ we have $F_{\widehat{a}}=F_a^{-1}$, and the latter form a set of generators for $\Gamma_0$. 
\end{itemize}

Denote  
$
\Omega_\HH:=\varphi^{-1}(\Omega_\DD)\subset\HH
$ 
the preimage of $\Omega_\DD$ under the map in 
Equation~\eqref{EquationConjugationUpperHalfPlaneDisc}.

\section{The Bowen-Series expansion}
\label{SectionBowenSeriesExpansion}

We follow \cite{ArtigianiMarcheseUlcigrai(Veech)} and \cite{ArtigianiMarcheseUlcigrai(Fuchsian)}. The original construction is in \cite{BowenSeries}. 
Let $\Gamma<\psugroup(1,1)$ be a non-uniform lattice and $\Gamma_0<\Gamma$ be a finite index free subgroup. Let $\Omega_\DD$ be an ideal polygon labelled by letters in $\cA$, as in \S~\ref{SectionLabelledIdealPolygon}.

\subsection{The boundary map}
\label{SectionTheBoundaryMap}

Consider arcs $J\subset\partial\DD$ with $J\not=\partial\DD$ and parametrize them by $t\mapsto e^{-it}$ with $t\in(x,y)$. Set $\inf J:=e^{-ix}$ and $\sup J:=e^{-iy}$. We say that $J$ is \emph{right open} if $\inf J\in J$ and $\sup J\not\in J$. 

\smallskip

For $a\in\cA$ let $F_a$ be the map in Equation~\eqref{EquationPairingPolygonSides}. 
Let $I_{F_a}$ be the isometric circle of $F_a$ and $U_{F_a}$ be its interior, as in \S~\ref{SectionIsometricCircles}. 
Recall that  
$
s_{\widehat{a}}=I_{F_a}\cap\DD
$ 
and 
$
s_{a}=I_{F_{\widehat{a}}}\cap\DD
$. 
Let $[a]_\DD$ be the right open arc of $\partial\DD$ cut by the side $s_a$, that is
$$
[a]_\DD:=U_{F_{\widehat{a}}}\cap\partial\DD.
$$
Set $\xi_a^L:=\inf [a]_\DD$ and $\xi_a^R:=\sup [a]_\DD$. 
Figure~\ref{FigureLabelledIdealPolygon} shows examples of such notation. In order to take account of the cyclic order in $\partial\DD$ of the arcs $[a]_\DD$ above, fix $a_0\in\cA$ and define a map 
$o:\cA\to\ZZ/2d\ZZ$ setting $o(a_0):=0$ and
\begin{equation}
\label{EquationCyclicOrderLetters}
o(b)=o(a)+1 \mod2d
\quad
\textrm{ for }
\quad
a,b\in\cA
\quad
\textrm{ with }
\quad
\xi^R_a=\xi^L_b.
\end{equation}

We have $F_a(I_{F_a})=I_{F_{\widehat{a}}}$ for any $a\in\cA$, 
thus $F_a$ sends the complement of $[\widehat{a}]_\DD$ to $[a]_\DD$, that is 
\begin{equation}
\label{EquationActionGeneratorsBoundary}
F_a\big(\partial\DD\setminus [\widehat{a}]_\DD\big)=[a]_\DD.
\end{equation}
The \emph{Bowen-Series map} is the map $\cB\cS:\partial\DD\to\partial\DD$ defined by
\begin{equation}
\label{EquationBowenSeriesMap}
\cB\cS(\xi):=F_a^{-1}\cdot \xi
\quad
\textrm{iff}
\quad
\xi\in [a]_\DD. 
\end{equation}
Since the arcs $[a]_\DD$ are right open, then $\cB\cS$ is left continuous. 
The \emph{boundary expansion} of  a point $\xi\in\partial\DD$ is the sequence $(a_k)_{k\in\NN}$ of letters $a_k\in\cA$ with 
\begin{equation}
\label{EquationDefinitionBoundaryExpansion}
\cB\cS^k(\xi)\in [a_k]_\DD
\quad
\textrm{ for any }
k\in \NN.
\end{equation}
By Equation~\eqref{EquationActionGeneratorsBoundary}, any such sequence satisfies the so-called \emph{no backtracking Condition}:
\begin{equation}
\label{EqNobacktrack}
a_{k+1}\not=\widehat{a_k}
\textrm{ for any }
k\in\NN.
\end{equation}

Any finite word $(a_0,\dots,a_n)$ satisfying Condition~\eqref{EqNobacktrack} corresponds to a \emph{factor} of the map $\cB\cS:\partial\DD\to\partial\DD$, that is a finite concatenation  
$
F_{a_n}^{-1}\circ\dots\circ F_{a_0}^{-1}
$ 
arising from iterations of $\cB\cS$. We call \emph{admissible word}, or simply \emph{word}, any finite or infinite word in the letters of $\cA$ satisfying 
Condition~\eqref{EqNobacktrack}. For any finite word 
$(a_0,\dots,a_n)$ we use the notation
$$
F_{a_0,\dots,a_n}:=F_{a_0}\circ\dots\circ F_{a_n}\in \Gamma_0.
$$

Define the right open arc 
$
[a_0,\dots,a_n]_\DD
$ 
as the set of $\xi\in\partial\DD$ such that $\cB\cS^k(\xi)\in[a_k]_\DD$ for any $k=0,\dots,n$, that is
\begin{equation}
\label{EquationDefinitionCylinderBowenSeries}
[a_0,\dots,a_n]_\DD
:=
F_{a_0,\dots,a_{n-1}}[a_n]_\DD
=
F_{a_0,\dots,a_n}\big(\partial\DD\setminus[\widehat{a_n}]_\DD\big).
\end{equation}
Two such arcs satisfy    
$
[a_0,\dots,a_n]_\DD\subset [b_0,\dots,b_m]_\DD
$ 
if and only if $m\geq n$ and $a_k=b_k$ for $k=0,\dots,n$. 

\begin{lemma}
The arcs $[a_0,\dots,a_n]_\DD$ shrink to a point as $n\to\infty$.
\end{lemma}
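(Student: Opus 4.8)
The plan is to show that the diameters of the nested arcs $[a_0,\dots,a_n]_\DD$ tend to zero. The natural tool is the contraction property of the isometric circles recorded in \S~\ref{SectionIsometricCircles}, combined with the no backtracking Condition~\eqref{EqNobacktrack} and the finiteness of the alphabet $\cA$. First I would rewrite the arc, using Equation~\eqref{EquationDefinitionCylinderBowenSeries}, as the image of a fixed arc under a long composition of generators: $[a_0,\dots,a_n]_\DD = F_{a_0}\circ\cdots\circ F_{a_{n-1}}\big([a_n]_\DD\big)$. Each $F_{a_k}$ maps $\partial\DD\setminus[\widehat{a_k}]_\DD$ onto $[a_k]_\DD\subset\overline{U_{F_{\widehat{a_k}}}}$, and since $a_{k+1}\neq\widehat{a_k}$, the arc $[a_{k+1}]_\DD$ — hence everything seen at later stages — lies inside $\partial\DD\setminus[\widehat{a_k}]_\DD$, i.e.\ outside the closed disc $\overline{U_{F_{a_k}}}$, where $F_{a_k}$ is a strict euclidean contraction $|D_\xi F_{a_k}|<1$.

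The key step is to turn this pointwise contraction into a uniform one. Because $\Omega_\DD$ is an ideal polygon with finitely many sides, there are only finitely many isometric circles $I_{F_a}$, $a\in\cA$, and the arcs $[a]_\DD$, $a\in\cA$, form a finite partition (up to endpoints) of $\partial\DD$. The no backtracking condition guarantees that a point of $[a_0,\dots,a_n]_\DD$ and the corresponding point of $[\widehat{a_k}]_\DD$ never collide, so at each stage the relevant portion of $\partial\DD$ is bounded away (in a manner depending only on the finitely many pairs $(a,b)$ with $b\neq\widehat a$) from the pole $\omega_{F_{a_k}}$; hence there is a single constant $0<\lambda<1$, depending only on $\Gamma_0$ and the labelling, such that each $F_{a_k}$ shrinks euclidean length on the relevant arc by a factor at most $\lambda$. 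This is the classical eventual-contraction/expansivity statement for Bowen–Series maps; here it suffices in its weakest form. I would argue it by compactness: the set of $\xi\in\partial\DD\setminus[\widehat a]_\DD$ (closed arc removed, or rather its complement taken closed away from the shared endpoints dictated by no backtracking) is compact and disjoint from $\omega_{F_a}$, so $\sup|D_\xi F_a|<1$ there; take $\lambda$ the maximum over the finitely many admissible letters $a$.

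Granting the uniform bound, the diameter of $[a_0,\dots,a_n]_\DD$ is at most $\lambda^{n}$ times a constant (the maximal diameter of the finitely many arcs $[a_n]_\DD$), which tends to $0$ as $n\to\infty$; since the arcs are nested and closed (right open, but their closures are nested), their intersection is a single point. The main obstacle is precisely the passage from pointwise to uniform contraction: one must be careful that the arc seen after $k$ steps really stays in the region where $|D F_{a_k}|$ is bounded away from $1$, and this is exactly where Condition~\eqref{EqNobacktrack} and the inclusion~\eqref{EquationActionGeneratorsBoundary} (together with $F_a(I_{F_a})=I_{F_{\widehat a}}$) are used — an element $F_{a_k}$ that is composed next to its own inverse could fail to contract, but no backtracking forbids this. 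A clean alternative, if one prefers to avoid tracking constants, is to invoke that distinct points of $\partial\DD$ have distinct boundary expansions (so the nested arcs separate points) together with the fact, from Equation~\eqref{EquationDefinitionCylinderBowenSeries}, that each level-$n$ arc is a finite union over level-$(n{+}1)$ arcs with at least two children for all but finitely many prefixes; but the contraction argument above is the most direct and is what I would write out.
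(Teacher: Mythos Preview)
Your uniform-contraction argument has a genuine gap. The no backtracking condition $a_{k+1}\neq\widehat{a_k}$ does \emph{not} keep the arc $[a_{k+1}]_\DD$ bounded away from the isometric circle $I_{F_{a_k}}$: whenever $o(a_{k+1})=o(\widehat{a_k})\pm1$, the arcs $[a_{k+1}]_\DD$ and $[\widehat{a_k}]_\DD$ share a common endpoint $\xi$ on $\partial\DD$, and at that vertex $|D_\xi F_{a_k}|=1$. This is exactly what happens along a cuspidal sequence (Equations~\eqref{EquationRightCuspidalWord(-)}--\eqref{EquationLeftCuspidalWord(+)}): the intermediate arcs accumulate at the parabolic fixed point $\xi_W$, and the per-letter contraction factor tends to $1$. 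Indeed the paper later shows only $|D_zF_{a_0,\dots,a_n}|\le1$ for general admissible words (Lemma~\ref{LemmaWeakContractionAdmissibleWords}), and obtains a strict bound $\theta<1$ only after grouping letters into cuspidal words and assuming the next letter is \emph{non}-cuspidal (Lemma~\ref{LemmaUniformContractionBranches}); along a cuspidal sequence the arc length decays like $1/n$ (see Lemma~\ref{LemmaQuadraticAsymptoticDerivative}), not like $\lambda^n$. So your compactness step ``the complement taken closed away from the shared endpoints dictated by no backtracking'' fails precisely because no backtracking does not exclude adjacent letters. Your fallback alternative --- that distinct points have distinct boundary expansions --- is circular: that statement is equivalent to the lemma.

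The paper's proof avoids the contraction-rate issue entirely. It observes, via Equation~\eqref{EquationInclusionsIsometricCirclesLetters}, that
\[
[a_0,\dots,a_n]_\DD
= F_{a_0,\dots,a_{n-1}}[a_n]_\DD
\subset F_{a_0,\dots,a_{n-1}}(U_{\widehat{a_n}})
\subset U_{\widehat{a_n},\widehat{a_{n-1}},\dots,\widehat{a_0}},
\]
so each arc sits inside the isometric disc of a single element of $\Gamma_0$. Since $\Gamma_0$ is a free group, distinct admissible words give distinct group elements, and one then invokes the general fact (Theorem~3.3.7 in Katok) that the radii of isometric circles of an infinite sequence of distinct elements in a Fuchsian group shrink to zero. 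No rate is needed, and the parabolic case is handled automatically.
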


\begin{proof}
Let
$
U_{a_0,\dots,a_n}:=U_{F_{a_0,\dots,a_n}}
$ 
be the interior of the isometric circle of $F_{a_0,\dots,a_n}$. 
Equation~\eqref{EquationIsometricCircleComposition} and an easy inductive argument imply that for any $n$ we have $U_{a_0,\dots,a_n}\subset U_{a_n}$ and thus 
$
U_{a_0,a_1,\dots,a_n}\subset U_{a_1,\dots,a_n}
$. 
The last inclusion and Equation~\eqref{EquationIsometricCircleComposition} imply
$$
F_{a_0,\dots,a_{n-1}}[a_n]_\DD
\subset
F_{a_0,\dots,a_{n-1}}(U_{\widehat{a_n}})
=
F_{\widehat{a_{n-1}},\dots,\widehat{a_0}}^{-1}(U_{\widehat{a_n}})
\subset
U_{\widehat{a_n},\widehat{a_{n-1}},\dots,\widehat{a_0}}.
$$
The Lemma follows because the radii of isometric circles of an infinite sequence of elements in a Fuchsian group shrink to zero. See Theorem~3.3.7 in~\cite{KatokFuchsian}.
\end{proof}

Any sequence $(a_k)_{k\in\NN}$ satisfying Condition~\eqref{EqNobacktrack} corresponds to a point 
$
\xi=[a_0,a_1,\dots]_\DD
$ 
in $\partial\DD$, where we use the notation   
$$
[a_0,a_1,\dots]_\DD:=\bigcap_{n\in\NN}[a_0\dots,a_n]_\DD.
$$
Conversely, if $(a_k)_{k\in\NN}$ is the boundary expansion of $\xi\in\partial\DD$, then 
$\xi=[a_0,a_1,\dots]_\DD$. Since
$
[a_0,a_1,\dots]_\DD\in [a_0]_\DD
$, 
where $\cB\cS$ acts by $F_{a_0}^{-1}$, then
$$
\cB\cS\big([a_0,a_1,\dots]_\DD\big)
=
F_{a_0}^{-1}\big([a_0,a_1,\dots]_\DD\big)
=
[a_1,a_2,\dots]_\DD,
$$
that is the map $\cB\cS$ is the shift on the space of admissible infinite words.

\subsection{Cuspidal words}
\label{SectionCuspidalWords}

Consider the map $o:\cA\to\ZZ/2d\ZZ$ in Equation~\eqref{EquationCyclicOrderLetters}.

\begin{lemma}
\label{LemmaCombinatorialPropertiesCuspidal}
Fix $n$ and a word $(a_0,\dots,a_n)$ satisfying Condition~\eqref{EqNobacktrack} with $n\geq1$ and $a_0=a_n$. The map $F_{a_0,\dots,a_{n-1}}$ is a parabolic element of $\Gamma_0$ fixing $\xi^R_{a_0}$ if and only if 
\begin{equation}
\label{EquationRightCuspidalWord(-)}
o(a_{k+1})=o(\widehat{a_k})-1
\quad
\textrm{ for any }
\quad
k=0,\dots,n-1.
\end{equation}
The map $F_{a_0,\dots,a_{n-1}}$ is a parabolic element of $\Gamma_0$ fixing $\xi^L_{a_0}$ if and only if 
\begin{equation}
\label{EquationLeftCuspidalWord(+)}
o(a_{k+1})=o(\widehat{a_k})+1
\quad
\textrm{ for any }
\quad
k=0,\dots,n-1.
\end{equation}
\end{lemma}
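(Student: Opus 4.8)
The plan is to work entirely in the disc model and track the dynamics of the Bowen--Series map $\cB\cS$ on the cyclically ordered family of arcs $[a]_\DD$, $a\in\cA$. First I would record the basic geometric fact underlying Condition~\eqref{EqNobacktrack}: by Equation~\eqref{EquationActionGeneratorsBoundary} the map $F_a$ carries the complement of $[\widehat a]_\DD$ onto $[a]_\DD$, and it is a homeomorphism of $\partial\DD$ that, on the arc it maps into $[a]_\DD$, reverses or preserves cyclic orientation in a way that is controlled entirely by the images of the two endpoints $\xi^L_{\widehat a},\xi^R_{\widehat a}$ of $[\widehat a]_\DD$. The key observation is that, since $F_a(\partial\DD\setminus[\widehat a]_\DD)=[a]_\DD$ and $F_a(I_{F_a})=I_{F_{\widehat a}}$, the endpoints of $[a]_\DD$ are the $F_a$-images of the endpoints of $[\widehat a]_\DD$, i.e. $\{F_a(\xi^L_{\widehat a}),F_a(\xi^R_{\widehat a})\}=\{\xi^L_a,\xi^R_a\}$. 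One checks from the cyclic order that in fact $F_a(\xi^R_{\widehat a})=\xi^L_a$ and $F_a(\xi^L_{\widehat a})=\xi^R_a$ (this is exactly the sort of relation illustrated in Figure~\ref{FigureLabelledIdealPolygon}), and hence $F_a$ maps the arc immediately following $[\widehat a]_\DD$ in the cyclic order (the one with left endpoint $\xi^R_{\widehat a}$, i.e. the arc $[b]_\DD$ with $o(b)=o(\widehat a)+1$) to the arc containing $\xi^L_a$, and the arc immediately preceding $[\widehat a]_\DD$ (the one with $o(b)=o(\widehat a)-1$) to the arc containing $\xi^R_a$.

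Next I would set up the iteration. Suppose $(a_0,\dots,a_n)$ satisfies Condition~\eqref{EqNobacktrack}, $n\ge1$, $a_0=a_n$, and write $P:=F_{a_0,\dots,a_{n-1}}=F_{a_0}\circ\cdots\circ F_{a_{n-1}}$. I would prove by induction on $k$ that Condition~\eqref{EquationRightCuspidalWord(-)} holds for $k=0,\dots,n-1$ if and only if each partial composition $F_{a_0,\dots,a_{k}}$ maps the arc $[\widehat{a_{k}}]_\DD$ — equivalently, by the no-backtracking restriction the arc $[a_{k+1}]_\DD$ lies in the complement of $[\widehat{a_k}]_\DD$ — in such a way that the image of the point $\xi^R_{a_0}$ under the inverse branch returns to the start; more cleanly, I would track the forward orbit of the single point $\xi:=\xi^R_{a_0}$ under $\cB\cS$. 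Concretely, $\xi^R_{a_0}$ is the right endpoint of $[a_0]_\DD$; applying $\cB\cS=F_{a_0}^{-1}$ sends it to the right endpoint $\xi^R_{\widehat{a_0}}$ of $[\widehat{a_0}]_\DD$ (using the endpoint relation above), and this point is by definition of the cyclic order the left endpoint $\xi^L_b$ of the arc $[b]_\DD$ with $o(b)=o(\widehat{a_0})+1$ — wait, one must be careful which endpoint, so here is where the sign in \eqref{EquationRightCuspidalWord(-)} versus \eqref{EquationLeftCuspidalWord(+)} is pinned down: tracking $\xi^R_{a_0}$ forces $a_1$ to satisfy $o(a_1)=o(\widehat{a_0})-1$ (the arc on the other side, because $\xi^R_{\widehat{a_0}}$ is a \emph{right} endpoint, hence the \emph{left} endpoint of the next arc counted the opposite way), and inductively $\cB\cS^k(\xi^R_{a_0})=\xi^R_{\widehat{a_{k-1}}}$ lies in $[a_k]_\DD$ precisely when $o(a_k)=o(\widehat{a_{k-1}})-1$. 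Since $a_n=a_0$, after $n$ steps the orbit of $\xi^R_{a_0}$ returns to $\xi^R_{a_0}=\xi^R_{a_n}$, and $\cB\cS^n$ on the cylinder $[a_0,\dots,a_{n-1}]_\DD$ acts by $P^{-1}=F_{a_{n-1}}^{-1}\cdots F_{a_0}^{-1}$; thus $\xi^R_{a_0}$ is a fixed point of $P^{-1}$, equivalently of $P$. Conversely, if $P$ fixes $\xi^R_{a_0}$, then $\xi^R_{a_0}$ is a periodic point of $\cB\cS$ with the itinerary $(a_0,\dots,a_{n-1})$, and reading off the itinerary of an endpoint forces exactly the relations \eqref{EquationRightCuspidalWord(-)} by the same endpoint computation. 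Finally, a fixed point of $P$ on $\partial\DD$ that is an endpoint of an isometric circle (a vertex of $\Omega_0$, hence a parabolic fixed point of $\Gamma$ by Point~(7) of \S\ref{SectionPropertiesDirichletRegion}) forces $P$ to be parabolic: $P\in\Gamma_0$ is non-trivial since the word has no backtracking and $a_0=a_n$ makes it a genuine loop, it fixes a point of $\partial\DD$ which is a cusp, and an element of a Fuchsian group fixing a parabolic fixed point must itself be parabolic — this is Corollary~\ref{CorollaryParabolicPointsOnlyFixedByParabolicElement} (or the elementary fact that a hyperbolic element has no parabolic fixed points among its two boundary fixed points). The statement for $\xi^L_{a_0}$ with sign $+1$ in \eqref{EquationLeftCuspidalWord(+)} is obtained verbatim by replacing "right endpoint" with "left endpoint" throughout, which flips the $\mp1$; alternatively it follows by applying the already-proved half to the mirror-image labelling, or to the inverse word.

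The main obstacle I anticipate is purely bookkeeping: getting the orientation conventions consistent, i.e. verifying once and for all that $F_a(\xi^R_{\widehat a})=\xi^L_a$ rather than $\xi^R_a$, and that a \emph{right} endpoint of one arc equals a \emph{left} endpoint of the cyclically-next arc (which is built into the "right open" convention and the definition \eqref{EquationCyclicOrderLetters} of $o$, but needs to be matched against how $\cB\cS=F_a^{-1}$ moves endpoints). Once that single sign is nailed down, the induction tracking the orbit of the endpoint is immediate and the two displayed conditions \eqref{EquationRightCuspidalWord(-)} and \eqref{EquationLeftCuspidalWord(+)} fall out with their respective signs; the parabolicity claim is then a one-line appeal to Corollary~\ref{CorollaryParabolicPointsOnlyFixedByParabolicElement}. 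I would also need the minor point that the word being a loop ($a_0=a_n$) with no backtracking guarantees $P\neq\id$, which follows because $\Gamma_0$ is free on the $F_a$ and the reduced-word length of $F_{a_0}\cdots F_{a_{n-1}}$ is exactly $n\ge1$ under Condition~\eqref{EqNobacktrack}.
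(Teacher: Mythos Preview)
Your approach is essentially the paper's: both rest on the endpoint relation $F_a(\xi^L_{\widehat a})=\xi^R_a$, $F_a(\xi^R_{\widehat a})=\xi^L_a$, then track the point $\xi^R_{a_0}$ through the composition and invoke Corollary~\ref{CorollaryParabolicPointsOnlyFixedByParabolicElement}. One caution on the converse: your phrasing ``$\xi^R_{a_0}$ is a periodic point of $\cB\cS$ with itinerary $(a_0,\dots,a_{n-1})$'' is not quite available, since under the right-open convention $\xi^R_{a_0}\notin[a_0]_\DD$, so the $\cB\cS$-itinerary is not literally defined there; the paper instead argues directly that each $F_{a_k}$ sends a vertex $\xi^R_{a_{k+1}}$ either to the boundary vertex $\xi^R_{a_k}$ (forcing $\xi^R_{a_{k+1}}=\xi^L_{\widehat{a_k}}$) or into the \emph{interior} of $[a_k]_\DD$, and the latter propagates to the interior of $[a_0]_\DD$, contradicting that the final image is $\xi^R_{a_0}$---this interior/boundary dichotomy is the clean way to close the converse once you write it out.
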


\begin{proof}
Observe that $F_a\cdot \xi^R_{\widehat{a}}=\xi^L_a$ and 
$F_a\cdot \xi^L_{\widehat{a}}=\xi^R_a$ for any $a\in\cA$. 
Equation~\eqref{EquationRightCuspidalWord(-)} implies $\xi^R_{a_{k+1}}=\xi^L_{\widehat{a_k}}$ and therefore 
$$
\xi^R_{a_0}
=
F_{a_0}\cdot \xi^L_{\widehat{a_0}}
=
F_{a_0}\cdot \xi^R_{a_1}
=
\dots
=
F_{a_0,\dots,a_{n-1}}\cdot \xi^R_{a_n}
=
F_{a_0,\dots,a_{n-1}}\cdot \xi^R_{a_0}.
$$
Corollary~\ref{CorollaryParabolicPointsOnlyFixedByParabolicElement} implies that 
$F_{a_0,\dots,a_{n-1}}$ is parabolic fixing $\xi^R_{a_0}$. 
Conversely, fix $k=0,\dots,n-1$ and observe that 
$
F_{a_k}(\text{vertices of }\Omega_\DD)\subset\overline{[a_k]_\DD}
$, 
because the vertices of $\Omega_\DD$ belongs to the closure of 
$\partial\DD\setminus[\widehat{a_k}]$. Condition~\eqref{EqNobacktrack} gives 
$
\xi^R_{a_{k+1}}\not=\xi^R_{\widehat{a_k}}=\inf\big(\partial\DD\setminus[\widehat{a_k}]\big)
$, 
thus we have 
$$
\text{either }
\quad
\xi^R_{a_{k+1}}=\xi^L_{\widehat{a_k}}=\sup\big(\partial\DD\setminus[\widehat{a_k}]\big)
\quad\text{ or }\quad
F_{a_k}\cdot \xi^R_{a_{k+1}}\in\text{ interior}([a_k]_\DD).
$$
Since 
$
F_{a_0,\dots,a_{n-1}}\cdot \xi^R_{a_n}=F_{a_0,\dots,a_{n-1}}\cdot\xi^R_{a_0}=\xi^R_{a_0}
$ 
is not an interior point of $[a_0]_\DD$, we must have $\xi^R_{a_{k+1}}=\xi^L_{\widehat{a_k}}$ for any $k=0,\dots,n-1$, that is $o(a_{k+1})=o(\widehat{a_k})-1$. The first part of the Lemma is proved. The second part follows with the same argument.
\end{proof}

Let $W=(a_0,\dots,a_n)$ be an admissible word. We say that $W$ is a \emph{cuspidal word} if it is the initial factor of an admissible word $(a_0,\dots,a_m)$ with $m\geq n$ such that 
$F_{a_0,\dots,a_m}$ is a parabolic element of $\Gamma_0$ fixing a vertex of $\Omega_\DD$. 

\begin{itemize}
\item
If $n\geq1$ and Equation~\eqref{EquationRightCuspidalWord(-)} is satisfied, we say that $W$ is a \emph{right cuspidal word}. In this case we define its type by $\varepsilon(W):=R$ and we set 
$\xi_W:=\xi^R_{a_0}$.
\item
If $n\geq1$ and Equation~\eqref{EquationLeftCuspidalWord(+)} is satisfied, we say that $W$ is a \emph{left cuspidal word}. In this case we define its type by $\varepsilon(W):=L$ and we set 
$\xi_W:=\xi^L_{a_0}$.
\item
If $n=0$, that is $W=(a_0)$ has just one letter, the type $\varepsilon(W)$ is not defined. We set by convention $\xi_W:=\xi^R_{a_0}$.
\end{itemize}

If $W=(a_0,\dots,a_n)$ is cuspidal with $n\geq1$, Lemma~\ref{LemmaCombinatorialPropertiesCuspidal} implies 
$
\xi^{\varepsilon(W)}_{a_k}=F_{a_k}\cdot\xi^{\varepsilon(W)}_{a_{k+1}}
$ 
for any $k=0,\dots,n-1$ and it follows 
\begin{equation}
\label{EquationCommonEndpointsCuspidalWords}
\xi_W=\partial[a_0]_\DD\cap\partial[a_0,a_1]_\DD\cap\dots\cap\partial[a_0,\dots,a_n]_\DD,
\end{equation}
that is the $n+1$ arcs above share $\xi_W$ as common endpoint (see also \S~2.4 in \cite{ArtigianiMarcheseUlcigrai(Fuchsian)} and \S~4.3 in \cite{ArtigianiMarcheseUlcigrai(Veech)}). Moreover, for any fixed $m\geq n$, there exists an unique cuspidal word $W'=(a_0,\dots,a_m)$ such that $W$ is an initial factor of $W'$. On the other hand any letter $a_0\in\cA$ generates exactly two cuspidal words of fixed length $m\geq1$. By Lemma~\ref{LemmaCombinatorialPropertiesCuspidal}, the word $(a_0,\dots,a_n)$ is left cuspidal if and only if $(\widehat{a_n},\dots,\widehat{a_0})$ is right cuspidal. Cusps of $\DD/\Gamma_0$ are in bijection with the set of words as in 
Lemma~\ref{LemmaCombinatorialPropertiesCuspidal} with minimal length, modulo inversion operation and cyclic permutation of the entries. A sequence $(a_n)_{n \in \NN}$ is said \emph{cuspidal} if any initial factor $(a_0,\dots,a_n)$ with $n\in\NN$ is a cuspidal word, and \emph{eventually cuspidal} if there exists 
$k \in \NN$ such that $(a_{n+k})_{n \in \NN}$ is a cuspidal sequence.

\subsection{The cuspidal acceleration}
\label{SectionCuspidalAcceleration}

If $W=(b_0,\dots,b_m)$ and $W'=(a_0,\dots,a_n)$ are words with 
$a_0\not=\widehat{b_m}$, define the word 
$
W\ast W':=(b_0,\dots,b_m,a_0,\dots,a_n)
$. 
Let $(a_n)_{n\in\NN}$ be a sequence satisfying Condition \eqref{EqNobacktrack} and not eventually cuspidal.

\begin{description}
\item[Initial step]
Set $n(0):=0$. Let $n(1)\in\NN$ be the maximal integer $n(1)\geq 1$ such that 
$
(a_0,\dots,a_{n(1)-1})
$ 
is cuspidal, then set
$$
W_0:=(a_0,\dots,a_{n(1)-1}).
$$
\item[Recursive step]
Fix $r\geq1$ and assume that the instants $n(0)<\dots<n(r)$ and the cuspidal words $W_0,\dots,W_{r-1}$ are defined. Define $n(r+1)\geq n(r)+1$ as the maximal integer such that 
$
[a_{n(r)},\dots,a_{n(r+1)-1}]
$ 
is cuspidal, then set 
\begin{equation}
\label{EquationDependenceOfCuspidalWordFromPoint}
W_r:=(a_{n(r)},\dots,a_{n(r+1)-1}).
\end{equation}
\end{description}

The sequence of words $(W_r)_{r\in\NN}$ is called the \emph{cuspidal decomposition} of 
$(a_n)_{n\in\NN}$. We have of course 
$
a_0,a_1,a_2\dots=W_0\ast W_1\ast\dots
$.

\begin{remark}
\label{RemarkConcatenationCuspidalWords}
If 
$
W_{r-1}:=(a_{n(r-1)},\dots,a_{n(r)-1})
$ 
and 
$
W_{r}:=(a_{n(r)},\dots,a_{n(r+1)-1})
$ 
are two consecutive cuspidal words in the cuspidal decomposition of a sequence $(a_n)_{n\in\NN}$ satisfying Condition \eqref{EqNobacktrack}, then the word 
$
(a_{n(r)-1},a_{n(r)},\dots,a_{n(r+1)-1})
$ 
can be cuspidal.
\end{remark}

Let $\cW$ be the set of all cuspidal words. Define the \emph{transition matrix} 
$M\in\{0,1\}^{\cW\times\cW}$ as the infinite matrix such that for any pair 
$
W=(b_0,\dots,b_m),W'=(a_0,\dots,a_n)
$ 
of cuspidal words the coefficient $M_{W,W'}\in\{0,1\}$ is defined by
\begin{equation}
\label{EquationDefinitionTransitionMatrix}
M_{W,W'}=
\left\{
\begin{array}{l}
0 
\quad\textrm{ if } 
a_0=\widehat{b_m}
\\
0 
\quad\textrm{ if } 
m=0
\quad
\textrm{ and }
\quad
o(a_0)=o(\widehat{b_0})\pm1
\\
0
\quad\textrm{ if } 
m\geq 1 
\quad
\textrm{ , }
\quad
\varepsilon(W')=L
\quad
\textrm{ and }
\quad
o(a_0)=o(\widehat{b_m})+1
\\
0
\quad\textrm{ if } 
m\geq 1
\quad
\textrm{ , }
\quad
\varepsilon(W')=R
\quad
\textrm{ and }
\quad
o(a_0)=o(\widehat{b_m})-1
\\
1 \quad\textrm{ otherwise. } 
\end{array}
\right.
\end{equation}

We have $M_{W,W'}=1$ if and only if the concatenated word $W\ast W'$ is admissible and $W\ast(a_0)$ is not cuspidal, where we observe that $(b_m)\ast W'$ can be cuspidal, this asymmetry being pointed out in 
Remark~\ref{RemarkConcatenationCuspidalWords}.

\smallskip

For $W=(a_0,\dots,a_n)$ in $\cW$ denote $F_W:=F_{a_0,\dots,a_n}$ the corresponding element of 
$\Gamma_0$. Set also 
$
F_{W_0,\dots,W_r}:=F_{a_0,\dots,a_n}
$ 
for 
$
(a_0,\dots,a_n)=W_0\ast\dots\ast W_r
$. 
Consider $\xi\in\partial\DD$ and let $(a_n)_{n\in\NN}$ be its boundary expansion as in 
Equation~\eqref{EquationDefinitionBoundaryExpansion}. Let $(W_r)_{r\in\NN}$ be the cuspidal decomposition of 
$(a_n)_{n\in\NN}$, where the $r$-th term $W_r=W_r(\xi)$ is given by 
Equation~\eqref{EquationDependenceOfCuspidalWordFromPoint}. We write
\begin{equation}
\label{EquationCuspidalCuttingSequence(UnitDisc)}
\xi=[a_0,a_1,\dots]_\DD=[W_0,W_1,\dots]_\DD.
\end{equation}
The cuspidal acceleration of the Bowen-Series map $\cB\cS:\partial\DD\to\partial\DD$ is the map 
$\cF:\partial\DD\to\partial\DD$ defined by
\begin{equation}
\label{EquationDefinitionCuspidalAccelerationBowenSeriesMap}
\cF(\xi):=F_W^{-1}\cdot \xi
\quad
\textrm{ if }
\quad
W_0(\xi)=W.
\end{equation}

\section{Good approximations and Bowen-Series expansion}
\label{SectionGoodApproximationsAndBowenSeries}

In this section we transfer to $\HH$ the tools of \S~\ref{SectionBowenSeriesExpansion}, providing a notion of boundary expansion on $\RR$. The main result is Theorem~\ref{TheoremGoodApproximationsAndBoundaryExpansion} below (appearing also in~\cite{MarcheseExpansion}), which establishes that the \emph{best approximations} of a given $\alpha\in\RR$ are exactly the \emph{convergents} provided by the Bowen-Series expansion. The central notion is that of \emph{geometric length} $|W|$ of a cuspidal word $W\in\cW$, defined by Equation~\eqref{EquationDefinitionGeometricLenght} below. 
Given $T>0$, Equation~\eqref{EquationDefinitionContinuedFractionCantor} in \S~\ref{SectionProofMainTheorem} defines the cantor set $\EE_T$ of those boundary points whose expansion contains only cuspidal words with geometric length bounded by $T$, and such sets are relevant in this context because their dimension can be studied via thermodynamic formalism. Similar Cantor sets are used in \cite{ArtigianiMarcheseUlcigrai(Fuchsian)}, where the length of allowed cuspidal words is measured differently. The notion of geometric length used here is adapted in order to have the sharp inclusions in the next Lemma~\ref{LemmaBadAndBoundedContinuedFraction}.

\subsection{Convergents and geometric length of cuspidal words}

Consider the finite index free subgroup $\Gamma_0<\Gamma$ and the ideal polygon 
$\Omega_\DD$ that we fixed in \S~\ref{SectionLabelledIdealPolygon}. 
Since $\Gamma_0$ has finite index in $\Gamma$ then the two groups have the same set of parabolic fixed points. Recalling Point (7) in \S~\ref{SectionPropertiesDirichletRegion} we write
\begin{equation}
\label{EquationSameParabolicFixedPoints}
\cP_\Gamma
=
\Gamma_0(\Omega_\DD\cap\partial\DD).
\end{equation}

\begin{figure}
\begin{center}
\begin{tikzpicture}[scale=2.55]

\begin{scope}

\tkzDefPoint(0,0){O}
\tkzDefPoint(1,0){A}
\tkzDrawCircle(O,A)

\node[] at (1,0) [right]{$\xi_W$};

\end{scope}

\begin{scope}

\tkzDefPoint(0,0){O}
\tkzDefPoint(1,0){A}
\tkzDrawCircle[thick,fill=black!15](O,A)

\tkzClipCircle(O,A)

%fundamental domain-------------------------------------

\node[] at (-0.2,-0.5) {$\Omega_\DD$};

\tkzDefPoint({cos(0)},{sin(0)}){z0}
\tkzDefPoint({cos(pi/2)},{sin(pi/2)}){z1}
\tkzDefPoint({cos(5*pi/6)},{sin(5*pi/6)}){z2}
\tkzDefPoint({cos(8*pi/6)},{sin(8*pi/6)}){z3}
\tkzDefPoint({cos(3*pi/2)},{sin(3*pi/2)}){z4}
\tkzDefPoint({cos(10*pi/6)},{sin(10*pi/6)}){z5}

\tkzDefCircle[orthogonal through=z0 and z1](O,A)
\tkzGetPoint{tmp}
\tkzDrawCircle[fill=white](tmp,z1)
\node[] at ({0.5*cos(27.5)},{0.5*sin(27.5)}) {$s_0$};
\node[] at ({0.35*cos(70)},{0.35*sin(70)}) {$a_0$};
\tkzDefCircle[orthogonal through=z1 and z2](O,A)
\tkzGetPoint{tmp}
\tkzDrawCircle[fill=white](tmp,z2)
\tkzDefCircle[orthogonal through=z2 and z3](O,A)
\tkzGetPoint{tmp}
\tkzDrawCircle[fill=white](tmp,z3)
\node[] at ({0.55*cos(195)},{0.55*sin(195)}) {$a_3$};
\tkzDefCircle[orthogonal through=z3 and z4](O,A)
\tkzGetPoint{tmp}
\tkzDrawCircle[fill=white](tmp,z4)
\node[] at ({0.85*cos(255)},{0.85*sin(255)}) {$a_1$};
\tkzDefCircle[orthogonal through=z4 and z5](O,A)
\tkzGetPoint{tmp}
\tkzDrawCircle[fill=white](tmp,z5)
\tkzDefCircle[orthogonal through=z5 and z0](O,A)
\tkzGetPoint{tmp}
\tkzDrawCircle[fill=white](tmp,z0)
\node[right] at ({0.65*cos(330)},{0.65*sin(330)}) {$a_2$};

\draw[->,thick] ({0.75*cos(280)},{0.75*sin(280)}) .. controls 
({0.7*cos(275)},{0.7*sin(275)}) and ({0.7*cos(265)},{0.7*sin(265)}) 
.. ({0.75*cos(260)},{0.75*sin(260)}) {};

\draw[->,thick] ({0.4*cos(125)},{0.4*sin(125)}) .. controls 
({0.2*cos(180)},{0.2*sin(180)}) and ({0.2*cos(240)},{0.2*sin(240)}) 
.. ({0.4*cos(310)},{0.4*sin(310)}) {};

\draw[->,thick] ({0.3*cos(210)},{0.3*sin(210)}) .. controls 
({0.1*cos(240)},{0.1*sin(240)}) and ({0.1*cos(300)},{0.1*sin(300)}) 
.. ({0.3*cos(35)},{0.3*sin(35)}) {};

%arcs of cuspidal word----------------------------------

\tkzDefPoint({cos(pi/4)},{sin(pi/4)}){z6}
\tkzDefPoint({cos(pi/8)},{sin(pi/6)}){z7}
\tkzDefPoint({cos(pi/10)},{sin(pi/10)}){z8}

\tkzDefCircle[orthogonal through=z1 and z8](O,A)
\tkzGetPoint{tmp}
\tkzDrawCircle[thick,dashed](tmp,z1)
\tkzDefCircle[orthogonal through=z0 and z6](O,A)
\tkzGetPoint{tmp}
\tkzDrawCircle(tmp,z6)
\node[] at ({0.8*cos(35)},{0.8*sin(35)}) {$s_1$};
\tkzDefCircle[orthogonal through=z0 and z7](O,A)
\tkzGetPoint{tmp}
\tkzDrawCircle(tmp,z7)
\node[] at ({0.925*cos(23)},{0.925*sin(23)}) {$s_2$};
\tkzDefCircle[orthogonal through=z0 and z8](O,A)
\tkzGetPoint{tmp}
\tkzDrawCircle(tmp,z8)
\node[] at ({0.925*cos(11)},{0.925*sin(11)}) {$s_3$};

\end{scope}

\end{tikzpicture}

\begin{tikzpicture}[scale=0.39]

\clip[shift={(0,2)}](-1,-4.5) rectangle (16,8.5);

\draw[thin] (-1,0) -- (16,0);

\draw (0,0) -- (0,10);
\node[] at (-0.5,7) {$e_3$};
\draw (2,0) -- (2,10);
\node[] at (1.5,7) {$e_2$};
\draw (5,0) -- (5,10);
\node[] at (4.5,7) {$e_1$};
\node[] at (8.5,7) {$e_0$};

\draw[<->,dashed] (0,-1) -- (9,-1);
\node[] at (4.5,-2) [] {$|W|=|\re(e_3)-\re(e_0)|$};
\draw[thin,dotted] (0,-1.2) -- (0,0);
\draw[thin,dotted] (9,-1.2) -- (9,0);

\filldraw[fill=black!15!white,draw=black] 
(15,11) -- (15,0)
arc (0:180:0.5) 
arc  (0:180:1)
arc (0:180:0.5) 
arc (0:180:1) -- (9,11) -- (15,11);

\node[] at (12,9.5) {$\infty$};

\draw[thick,dashed] (0,0) arc (180:0:4.5);

\node[] at (12,4.5) {$A_k^{-1}B^{-1}\Omega_\HH$};

\end{tikzpicture}
\end{center}
\caption{Geometric length $|W|$ of a right cuspidal word 
$
W=(a_0,a_1,a_2,a_3)
$. 
The arrows inside $\Omega_\DD$ represent the action of $F_{a_0},F_{a_1},F_{a_2}$. The arcs $s_0:=s_{a_0}$, 
$
s_1:=F_{a_0}(s_{a_1})
$,
$
s_2:=F_{a_0,a_1}(s_{a_2})
$ 
and 
$
s_3:=F_{a_0,a_1,a_2}(s_{a_3})
$ 
share the common vertex $\xi_W$, which is sent to 
$\infty$ under the map $A_k^{-1}B^{-1}\varphi^{-1}$. Thus the arcs $s_0,s_1,s_2,s_3$ in $\DD$ are sent to parallel vertical arcs $e_i:=\varphi^{-1}(s_i)$ in $\HH$.}
\label{FigureGeometricLengthCuspidalWord}
\end{figure}

Let $\cS=(A_1,\dots,A_p)$ be as in Equation~\eqref{EquationRepresentativesOfCusps}. 
Let 
$
\Omega_\HH:=\varphi^{-1}(\Omega_\DD)
$ 
be the pre-image of $\Omega_\DD$ in $\HH$ under the map in  
Equation~\eqref{EquationConjugationUpperHalfPlaneDisc}. 
Any vertex $\xi$ of $\Omega_\DD$ corresponds to an unique vertex $\zeta=\varphi^{-1}(\xi)$ of 
$\Omega_\HH$. For any such vertex $\zeta$ consider $B\in\Gamma$ and $k\in\{1,\dots,p\}$ with 
\begin{equation}
\label{EquationVerticesDomainInfinity}
\zeta=BA_k\cdot \infty
\end{equation}

Any side $s_a$ of $\Omega_\DD$ corresponds to an unique side $e_{a}:=\varphi^{-1}(s_a)$ of $\Omega_\HH$, where $a\in\cA$. 
If $BA_k\cdot \infty=B'A_j\cdot \infty$, then $j=k$ and 
Corollary~\ref{CorollaryParabolicPointsOnlyFixedByParabolicElement} implies $B'=BP$, where $P\in\Gamma$ is parabolic fixing $A_k\cdot\infty$. Hence the map $z\mapsto A_k^{-1}PA_k(z)$ is an horizontal translation in $\HH$. 
If $s$ and $s'$ are geodesics in $\DD$ having $\xi$ as common endpoint, their pre-images in $\HH$ under 
$\varphi\circ B\circ A_k$ are parallel vertical half lines whose distance does not depend on the choice of $B$ in Equation~\eqref{EquationVerticesDomainInfinity}. We have a well defined positive real number 
$$
\Delta(s,s',\xi):=
\left|
\re\big(A_k^{-1}B^{-1}\varphi^{-1}(s)\big)
-
\re\big(A_k^{-1}B^{-1}\varphi^{-1}(s')\big)
\right|.
$$

Fix a cuspidal word  $W=(a_0,\dots,a_{n})$ and the vertex $\xi_W$ of $\Omega_\DD$ associated to $W$ in \S~\ref{SectionCuspidalWords}. For $n\geq1$, 
Equation~\eqref{EquationCommonEndpointsCuspidalWords} implies that the geodesics 
$
s_{a_0},F_{a_0}(s_{a_1}),\dots,F_{a_0,\dots,a_{n-1}}(s_{a_n})
$ 
all have $\xi_W$ as common endpoint. See Figure~\ref{FigureGeometricLengthCuspidalWord}. Define the \emph{geometric length} $|W|\geq0$ of $W$ as
\begin{equation}
\label{EquationDefinitionGeometricLenght}
|W|:=
\left\{
\begin{array}{ll}
\Delta\big(s_{a_0},F_{a_0,\dots,a_{n-1}}(s_{a_n}),\xi_W\big)
&\text{ if }n\geq1
\\
0
&\text{ if }n=0.
\end{array}
\right.
\end{equation}

For $a\in\cA$ set 
$
G_a=\varphi^{-1}\circ F_a\circ\varphi
$.
Set 
$
G_{a_0,\dots,a_n}:=G_{a_0}\circ\dots\circ G_{a_n}
$ 
for any word $(a_0,\dots,a_n)$ and 
$
G_{W_0,\dots,W_r}=G_{a_0,\dots,a_n}
$ 
if $(a_0,\dots,a_n)=W_0\ast\dots\ast W_r$. Define the interval
$$
[a_0,\dots,a_n]_\HH:=
\varphi^{-1}\big([a_0,\dots,a_n]_\DD\big)=
G_{a_0,\dots,a_n}\big(\partial\HH\setminus[\widehat{a_n}]_\HH\big).
$$

If the concatenation $W_0\ast\dots\ast W_r$ is admissible, set 
$$
G_{W_0,\dots,W_r}:=\varphi^{-1}\circ F_{W_0,\dots,W_r} \circ\varphi,
$$
where $F_{W_0,\dots,W_r}$ is defined in \S~\ref{SectionCuspidalAcceleration}. 

We encode any $\alpha\in\RR$ by the same cutting sequence as $\varphi(\alpha)\in\DD$, that is we define 
$$
[a_0,a_1,\dots]_\HH:=\varphi^{-1}\big([a_0,a_1,\dots]_\DD\big).
$$ 
 
Let $(W_r)_{r\in\NN}$ be the cuspidal decomposition of $(a_n)_{n\in\NN}$. Equation~\eqref{EquationCuspidalCuttingSequence(UnitDisc)} becomes 
\begin{equation}
\label{EquationCuspidalCuttingSequence(UpperHalfPlane)}
\alpha=[W_0,W_1,\dots]_\HH:=[a_0,a_1,\dots]_\HH.
\end{equation}

For $r\in\NN$ let $W_r$ be the $r$-th cuspidal word. Set 
$
\zeta_{W_r}:=\varphi^{-1}(\xi_{W_r})
$. 
The convergents of $\alpha$ are  
\begin{equation}
\label{EquationConvergentBowenSeries}
\zeta_r:=G_{W_0,\dots,W_{r-1}}\cdot\zeta_{W_r}
\quad\text{ ; }\quad
r\in\NN.
\end{equation}

For $k=1,\dots,p$ let $\mu_k>0$ be such that the primitive parabolic element 
$P_k\in A_k \Gamma A_k^{-1}$ fixing $\infty$ acts by $P_k(z)=z+\mu_k$. Set 
$\mu:=\max\{\mu_1,\dots,\mu_p\}$.

\begin{theorem}
\label{TheoremGoodApproximationsAndBoundaryExpansion}
For any $r\in\NN$ with $|W_r|>0$ we have 
\begin{equation}
\label{EquationGoodApproximationsAndBoundaryExpansion(1)}
\frac{1}{|W_r|+2\mu}
\leq
D(G_{W_0,\dots,W_{r-1}}\cdot\zeta_{W_r})^2
\cdot
|\alpha-G_{W_0,\dots,W_{r-1}}\cdot \zeta_{W_r}|
\leq 
\frac{1}{|W_r|}.
\end{equation}
Moreover there exists $\epsilon_0>0$ depending only on $\Omega_\DD$ and on $\cS$, such that for any $G\in\Gamma$ and $k=1,\dots,p$ with $D(G\cdot z_k)\not=0$ the condition 
$$
D(G\cdot z_k)^2\cdot|\alpha-G\cdot z_k|<\epsilon_0
$$
implies that there exists some $r\in\NN$ such that  
\begin{equation}
\label{EquationGoodApproximationsAndBoundaryExpansion(2)}
G\cdot z_k=G_{W_0,\dots,W_{r-1}}\cdot\zeta_{W_r}
\quad
\textrm{ where }
\quad
|W_r|>0.
\end{equation}
\end{theorem}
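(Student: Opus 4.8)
The plan is to prove Theorem~\ref{TheoremGoodApproximationsAndBoundaryExpansion} by transferring the whole picture to the upper half plane and making crucial use of the normalisation that sends $\xi_{W_r}$ (equivalently $\zeta_{W_r}$) to $\infty$. Fix a cuspidal word $W=W_r$ with $|W|>0$, and write $\zeta_W=\varphi^{-1}(\xi_W)$. By the construction in \S~\ref{SectionCuspidalWords}, $\zeta_W$ is a vertex of $\Omega_\HH$, so there exist $B\in\Gamma$ and $k\in\{1,\dots,p\}$ with $\zeta_W=BA_k\cdot\infty$, and by the denominator formalism in the introduction we have $D(G_{W_0,\dots,W_{r-1}}\cdot\zeta_W)=|c(G_{W_0,\dots,W_{r-1}}BA_k)|$, which is well defined by Equation~\eqref{EquationDenominatorsAreWellDefined(2)}. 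The key geometric identity to establish is that, after conjugating by $M:=A_k^{-1}B^{-1}$ (which sends $\zeta_W$ to $\infty$), the images $M\varphi^{-1}(s_{a_0}),\,M F_{a_0}\varphi^{-1}(s_{a_1}),\dots,M F_{a_0,\dots,a_{n-1}}\varphi^{-1}(s_{a_n})$ are vertical half-lines, the leftmost and rightmost of which are separated by exactly $|W|$ by the very \emph{definition} of geometric length in Equation~\eqref{EquationDefinitionGeometricLenght}.

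\medskip

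First I would prove the upper bound in Equation~\eqref{EquationGoodApproximationsAndBoundaryExpansion(1)}. Recall the elementary fact (Appendix~\ref{AppendixSeparation}, Equation~\eqref{EquationDiameterHoroball}) that a horoball based at a parabolic fixed point $G\cdot z_k$ with $D(G\cdot z_k)\neq0$ has Euclidean diameter $D(G\cdot z_k)^{-2}$, once the reference horoball at $z_k$ is $B_k=A_k(\{\im>1\})$. The point $\alpha=[W_0,W_1,\dots]_\HH$ lies in the nested arcs determined by the word; after the cuspidal block $W_0\ast\dots\ast W_{r-1}$ the remaining tail $[W_r,W_{r+1},\dots]_\HH$ sits inside the arc $G_{W_0,\dots,W_{r-1}}\big(\text{arc of }W_r\big)$. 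Pulling everything back by $M G_{W_0,\dots,W_{r-1}}^{-1}$ one reduces to a statement at $\infty$: the transported point $\alpha'$ lies between the two extreme vertical sides $e_0$ and $e_n$, whose real parts differ by $|W|$, so $|\alpha'-\re(e_0)|\le |W|$; transporting back, and bookkeeping the Jacobian of $MG_{W_0,\dots,W_{r-1}}^{-1}$ at $\infty$ — which is precisely $D(G_{W_0,\dots,W_{r-1}}\cdot\zeta_{W_r})^{-2}$ — yields $D^2\cdot|\alpha-G_{W_0,\dots,W_{r-1}}\cdot\zeta_{W_r}|\le |W|^{-1}$ after passing to reciprocals. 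The lower bound is obtained symmetrically: the tail $[W_r,\dots]_\HH$ cannot escape past the next cuspidal side of the \emph{successive} block, and since the primitive parabolic translations $P_k$ have periods $\mu_k\le\mu$, one loses at most an additive $2\mu$ on the width, giving $|\alpha'-\re(e_0)|\ge (|W|+2\mu)^{-1}$ after inversion; this is where the asymmetry noted in Remark~\ref{RemarkConcatenationCuspidalWords} (that $(b_m)\ast W'$ may be cuspidal) has to be handled carefully, since it controls exactly how far $\alpha'$ can drift and hence the $2\mu$ slack.

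\medskip

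For the second half of the theorem, the strategy is a Dirichlet-type pigeonhole/ separation argument. Let $\epsilon_0>0$ be small (to be fixed as a function of $\Omega_\DD$ and $\cS$ only); it must be at most a definite fraction of the smallest possible gap between two sides of $\Omega_\HH$ incident to a common vertex and, relatedly, smaller than the Euclidean width of the ``innermost'' horoball neighbourhood one can place inside $\Omega_\DD$ away from the vertices. Suppose $G\in\Gamma$, $k\in\{1,\dots,p\}$ satisfy $D(G\cdot z_k)^2|\alpha-G\cdot z_k|<\epsilon_0$. The horoball $G(B_k)$ then has Euclidean diameter $D(G\cdot z_k)^{-2}$ and is tangent to $\RR$ at $G\cdot z_k$, and $\alpha$ is within $\epsilon_0 D(G\cdot z_k)^{-2}$ of the tangency point, hence $\alpha$ lies deep inside (the top part of) $G(B_k)$ for $\epsilon_0$ small. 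On the other hand the geodesic ray from $\alpha$ into $\HH$ meets the $\Gamma_0$-tessellation by translates of $\Omega_\HH$ in a definite pattern recorded by the Bowen-Series expansion of $\alpha$; entering a horoball this deep forces the ray to cross a full ``fan'' of sides emanating from a single vertex $\zeta$ of some translate of $\Omega_\HH$, and that vertex is $\Gamma_0$-equivalent to one of the finitely many vertices of $\Omega_\DD$. Unwinding the labelling, the block of the expansion of $\alpha$ corresponding to this passage is exactly a cuspidal word $W_r$ in the cuspidal decomposition, the element realising the translate is $G_{W_0,\dots,W_{r-1}}$, and the vertex is $\zeta_{W_r}$; moreover $|W_r|>0$ because the depth of penetration forces at least two distinct sides in the fan (i.e.\ $n\ge1$ in $W_r=(a_0,\dots,a_n)$). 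Matching the denominators via Equation~\eqref{EquationDenominatorsAreWellDefined(2)} then gives $G\cdot z_k=G_{W_0,\dots,W_{r-1}}\cdot\zeta_{W_r}$, which is Equation~\eqref{EquationGoodApproximationsAndBoundaryExpansion(2)}.

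\medskip

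The main obstacle I anticipate is the bookkeeping in the normalisation step: making precise that conjugating by $M=A_k^{-1}B^{-1}$ turns the relevant sides into \emph{exactly vertical} half-lines (this uses that $M^{-1}=BA_k$ maps $\infty$ to $\zeta_W$ together with the parabolicity supplied by Corollary~\ref{CorollaryParabolicPointsOnlyFixedByParabolicElement}, so that the residual ambiguity $B\mapsto BP$ is a horizontal translation and the widths — hence $|W|$ — are well defined), and that the Jacobian of $M G_{W_0,\dots,W_{r-1}}^{-1}$ at the relevant point on $\RR$ equals $D(G_{W_0,\dots,W_{r-1}}\cdot\zeta_{W_r})^{\pm2}$ with the right sign convention, matching Equation~\eqref{EquationDiameterHoroball}. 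Once this dictionary between denominators, horoball diameters, and the real-part width $|W|$ is nailed down, both inequalities in Equation~\eqref{EquationGoodApproximationsAndBoundaryExpansion(1)} become essentially a one-line estimate at $\infty$ (point lies in an interval of width $|W|$, resp.\ not in a slightly larger one of width $|W|+2\mu$), and the converse statement reduces to the standard ``deep excursion into a horoball $\Rightarrow$ crossing a vertex fan'' dichotomy with $\epsilon_0$ chosen below the combinatorial separation constants of the fixed polygon $\Omega_\DD$.
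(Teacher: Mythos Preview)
First, note that the present paper does \emph{not} prove this theorem: it is merely stated and explicitly attributed to the companion paper~\cite{MarcheseExpansion} (see the sentence just before the statement: ``In~\cite{MarcheseExpansion} is proved Theorem~\ref{TheoremGoodApproximationsAndBoundaryExpansion} below''). So there is no in-paper argument against which to compare your proposal; your outline must be judged on its own.

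Your overall strategy---conjugate by $M=A_k^{-1}B^{-1}$ so that $\zeta_{W_r}\mapsto\infty$ and exploit the definition of $|W_r|$---is natural and almost certainly what~\cite{MarcheseExpansion} does. The governing identity is
\[
D(H\cdot\zeta_{W_r})^{2}\,\bigl|\alpha-H\cdot\zeta_{W_r}\bigr|
\;=\;\frac{1}{\bigl|\widetilde G^{-1}\alpha-\widetilde G^{-1}\cdot\infty\bigr|},
\qquad H:=G_{W_0,\dots,W_{r-1}},\quad \widetilde G:=HBA_k,
\]
so the \emph{upper} bound in Equation~\eqref{EquationGoodApproximationsAndBoundaryExpansion(1)} requires a \emph{lower} bound $|\alpha'-p|\ge|W_r|$, where $\alpha':=\widetilde G^{-1}\alpha=M(H^{-1}\alpha)$ and $p:=\widetilde G^{-1}\cdot\infty=M(H^{-1}\cdot\infty)$. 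Your sketch gets the geometry wrong on both ends. You assert that $\alpha'$ lies \emph{between} $e_0$ and $e_n$; in fact $H^{-1}\alpha$ has boundary expansion beginning with $W_r$, hence lies in $[a_0,\dots,a_n]_\HH$, an arc with $\zeta_{W_r}$ as one endpoint---so under $M$ the point $\alpha'$ lands on the \emph{unbounded} side of $e_n$, not between $e_0$ and $e_n$ (more precisely, between $\re(e_n)$ and $\re(e_{n+1})$ since $W_r$ is not cuspidally extendable inside the expansion). Secondly, the reference point in the reciprocal formula is $p=M(H^{-1}\cdot\infty)$, \emph{not} $\re(e_0)$; this point depends on the \emph{previous} blocks $W_0,\dots,W_{r-1}$ through $H$, and you have not identified where it sits. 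Showing that $p$ lies on the far side of $e_0$, within horizontal distance $\le\mu$ of $\re(e_0)$, is exactly the combinatorial content one must extract from the cuspidal decomposition (and from the asymmetry in Remark~\ref{RemarkConcatenationCuspidalWords}); only then does one obtain $|W_r|\le|\alpha'-p|\le|W_r|+2\mu$ and hence Equation~\eqref{EquationGoodApproximationsAndBoundaryExpansion(1)}. As written, your placement of $\alpha'$ and your choice of reference point would reverse the inequalities. The horoball-penetration heuristic for Equation~\eqref{EquationGoodApproximationsAndBoundaryExpansion(2)} is reasonable in outline, but it too hinges on the same precise localisation and on producing a \emph{uniform} $\epsilon_0$, which your sketch does not yet supply.
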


\begin{remark}
\label{RemarkCovarianceIndependence}
Equation~\eqref{EquationGoodApproximationsAndBoundaryExpansion(1)} holds for any choice of $\cS$ as in Equation~\eqref{EquationRepresentativesOfCusps}, and this follows because geometric length and denominators satisfy a form of equivariance under the choice of $\cS$. 
\end{remark}

\subsection{Reduced form of parabolic fixed points}
\label{SectionParabolicFixedPointsAndFiniteIndexFreeSubgroup}

Fix $G\cdot z_k\in\cP_\Gamma$. Recall Equation~\eqref{EquationSameParabolicFixedPoints} and write 
elements of $\Gamma_0$ in the generators $\{G_a:a\in\cA\}$. There exists an unique admissible word $b_0,\dots,b_m$ and a vertex $\zeta$ of 
$\Omega_\HH$ which is not an endpoint of $e_{\widehat{b_m}}$ such that 
$$
G\cdot z_k=G_{b_0,\dots,b_m}\cdot\zeta.
$$
The representation above is called the \emph{reduced form} of the parabolic fixed point $G\cdot z_k$. 
In the next Lemmas~\ref{LemmaDistancePoleReducedForm} and~\ref{LemmaInequalityDenominatorsReducedForm}, let $(b_0,\dots,b_m)$ be a non-trivial admissible word and let $\zeta_0$ be a vertex of $\Omega_\HH$ which is not an endpoint of $e_{\widehat{b_m}}$, so that 
$
G_{b_0,\dots,b_m}\cdot\zeta_0
$ 
is a parabolic fixed point written in its reduced form and different from $\infty$.

\begin{lemma}
\label{LemmaDistancePoleReducedForm}
There exists a constant $\kappa_1>0$, depending only on $\Omega_\HH$, such that 
$$
\left|
\zeta_0-G_{b_0,\dots,b_m}^{-1}\cdot\infty
\right|
\geq \kappa_1,
$$
that is the vertex $\zeta_0$ and the pole of $G_{b_0,\dots,b_m}$ stay at distance uniformly bounded from below.
\end{lemma}

\begin{proof}
We have 
$
G_{b_0,\dots,b_m}\big(\RR\setminus[\widehat{b_m}]_\HH\big)=[b_0,\dots,b_m]_\HH
$ 
By Equation~\eqref{EquationDefinitionCylinderBowenSeries}. 
Since $\infty$ does not belong to the interior of $[b_0,\dots,b_m]_\HH$ then the pole of $G_{b_0,\dots,b_m}$ belongs to the closure of 
$[\widehat{b_m}]_\HH$. The Lemma follows because $\zeta_0$ is a vertex of $\Omega_\HH$ different from the endpoints of $e_{\widehat{b_m}}$.
\end{proof}

\begin{lemma}
\label{LemmaInequalityDenominatorsReducedForm}
There exists a constant $\kappa_2>0$, depending only on $\Omega_\HH$ and on $\cS$, such that the following holds. 
\begin{enumerate}
\item
If $\zeta_1$ is a vertex of $\Omega_\HH$ different from $\zeta_0$, then  
$$
D(G_{b_0,\dots,b_m}\cdot\zeta_0)
\geq 
\kappa_2\cdot D(G_{b_0,\dots,b_m}\cdot\zeta_1).
$$
\item
If $b_{m+1}$ satisfies $b_{m+1}\not=\widehat{b_m}$ and $\zeta_2$ is a vertex of $\Omega_\HH$ with 
$
G_{b_{m+1}}\cdot\zeta_2\not=\zeta_0
$,  
then   
$$
D(G_{b_0,\dots,b_m}\cdot\zeta_0)
\geq 
\kappa_2\cdot D(G_{b_0,\dots,b_m,b_{m+1}}\cdot\zeta_2).
$$
\end{enumerate}
\end{lemma}

\begin{proof}
We prove Part (1). Set $G:=G_{b_0,\dots,b_m}$, 
$
\zeta:=G\cdot\zeta_0
$, 
and 
$
\zeta':=G\cdot\zeta_1
$. 
If $\zeta'=\infty$ then the statement is trivially true. If $D(G\cdot\zeta_1)\not=0$, let 
$\zeta_0=B_0A_k\cdot\infty$ and $\zeta_1=B_1A_j\cdot\infty$ as in 
Equation~\eqref{EquationVerticesDomainInfinity}. Referring to  
Equation~\eqref{EquationCoefficientsSL(2,C)}, let $c,d$ be the entries of $G$. Let $a_0,c_0$ and $a_1,c_1$ be the entries of $B_0A_k$ and $B_1A_j$ respectively. We prove an upper bound for
$$
\frac{D(G_{b_0,\dots,b_m}\cdot\zeta_1)}{D(G_{b_0,\dots,b_m}\cdot\zeta_0)}
=
\left|
\frac{ca_1+dc_1}{ca_0+dc_0}
\right|.
$$
We cannot have $c_0=c_1=0$, because $\zeta_0\not=\zeta_1$ and in particular $\zeta_0$, $\zeta_1$ cannot be both equal to $\infty$. Moreover $G\cdot\zeta_0$, $G\cdot\zeta_1$ are both different from $\infty$, thus condition $c=0$ implies $c_0,c_1\not=0$. Hence for $c=0$ Part (1) follows because the ratio above equals $|c_1/c_0|$, which varies in a finite set of values and is therefore bounded from above. If $c,c_0,c_1\not=0$ then 
$$
\left|
\frac{ca_1+dc_1}{ca_0+dc_0}
\right|
=
\left|
\frac{c_1}{c_0}
\right|
\cdot
\left|
\frac{(a_1/c_1)-(-d/c)}{(a_0/c_0)-(-d/c)}
\right|
=
\left|
\frac{c_1}{c_0}
\right|
\cdot
\left|
\frac{\zeta_1-(G^{-1}\cdot\infty)}{\zeta_0-(G^{-1}\cdot\infty)}
\right|.
$$
In this case Part (1) follows because $|c_1/c_0|$ is bounded from above, and 
Lemma~\ref{LemmaDistancePoleReducedForm} gives a lower bound for the denominator of the second factor (the numerator is not bounded, but as it increases the ratio converges to $1$). If $c,c_0\not=0$ and $c_1=0$ then Lemma~\ref{LemmaDistancePoleReducedForm} gives
$$
\left|
\frac{ca_1+dc_1}{ca_0+dc_0}
\right|
=
\left|
\frac{a_1}{c_0}
\right|
\cdot
\left|
\frac{1}{(a_0/c_0)-(-d/c)}
\right|
=
\left|
\frac{a_1}{c_0}
\right|
\cdot
\left|
\frac{1}{\zeta_0-(G^{-1}\cdot\infty)}
\right|
\leq
\left|
\frac{a_1}{c_0\cdot\kappa_1}
\right|,
$$
and Part (1) follows observing that $a_1/c_0$ varies in a finite set of values. Finally, if $c,c_1\not=0$ and $c_0=0$ then
$$
\left|
\frac{ca_1+dc_1}{ca_0+dc_0}
\right|
=
\left|
\frac{a_1}{a_0}-(-d/c)\frac{c_1}{a_0}
\right|
\leq
\left|\frac{a_1}{a_0}\right|+|G^{-1}\cdot\infty|\left|\frac{c_1}{a_0}\right|.
$$
In this case $\zeta_0=\infty$, which is not an endpoint of $[\widehat{b_m}]$. Thus $[\widehat{b_m}]$ is contained in the compact interval of $\RR$ delimited by the two parallel vertical segments of $\Omega_\HH$. Hence $|G^{-1}\cdot\infty|$ is uniformly bounded, because the pole $G^{-1}\cdot\infty$ belongs to the closure of $[\widehat{b_m}]$ (see proof of Lemma~\ref{LemmaDistancePoleReducedForm}). Part (1) follows in this case too, and the proof is complete. Part (2) follows similarly, replacing 
$\zeta_1$ by 
$
\zeta_\ast:=G_{b_{m+1}}\cdot\zeta_2
$ 
and observing that, since $G_{b_{m+1}}$ varies in the finite set 
$\{G_a:a\in\cA\}$ then also the entries of $X\in\sltwor$ with
$
G_{b_{m+1}}\cdot\zeta_2=X\cdot \infty
$ 
vary in a finite set. Moreover $\zeta_0\not=\zeta_\ast$, and thus 
$
G\cdot\zeta_0\not=G\cdot\zeta_\ast
$. 
\end{proof}

\subsection{Proof of Theorem~\ref{TheoremGoodApproximationsAndBoundaryExpansion}}

Let 
$
\alpha=[a_0,a_1,\dots]_\HH=[W_0,W_1,\dots]_\HH
$ 
be the expansion of $\alpha\in\RR$ as in 
Equation~\eqref{EquationCuspidalCuttingSequence(UpperHalfPlane)}.

\begin{figure}
\begin{center}
\begin{tikzpicture}[scale=0.31]

\clip(-1,-5) rectangle (16,11.5);

\draw[thin] (-1,0) -- (16,0);

\filldraw[fill=black!15!white, draw=black] 
(15,0) arc (0:180:1) 
arc  (0:180:1)
arc (0:180:3.5) 
arc (0:180:1) 
arc (0:180:1) 
arc (180:0:7.5);

\node[] at (12.5,2) {$G(\Omega_\HH)$};

\draw  (4,0) arc (180:0:0.5);
\node[] at (7.2,0.7) {$e'_2$};
\draw  (4,0) arc (180:0:1.25);
\node[] at (8.2,2.2) {$e'_1$};
\draw  (4,0) arc (180:0:2.25);
\node[] at (9.2,3.7) {$e'_0$};

\draw[thick,dashed]  (6,0) -- (6,12);
\node[circle,fill,inner sep=1pt] at (6,0) {};
\node[] at (6.5,0) [below] {$\alpha$};

\node[circle,fill,inner sep=1pt] at (4,0) {};
\node[] at (3.5,0) [below] {$G\zeta_{W_r}$};
%\draw[thick,dashed]  circle (360:0:1);

\draw[thick] (4,2) circle (2);

\end{tikzpicture}
\begin{tikzpicture}[scale=0.31]

\clip(-1,-5) rectangle (16,11.5);

\draw[thin] (-1,0) -- (16,0);

%\filldraw[fill=black!15!white, draw=black] 
%(15,0) arc (0:180:1.5) 
%arc (0:180:3.5) 
%arc  (0:180:0.5)
%arc (0:180:1) 
%arc (0:180:1) 
%arc (180:0:7.5);

\filldraw[fill=black!15!white, draw=black] 
(12,0) arc (0:180:3.5) 
arc  (0:180:0.5)
arc (0:180:1) 
arc (0:180:1) -- (0,16.5) -- (12,16.5) -- (12,0);
\node[] at (10,6.5) {$\Omega_\HH$};

\draw  (5,0) arc (180:0:0.5);
\node[] at (8,0.5) {$e_2$};
\draw  (5,0) arc (180:0:1.25);
\node[] at (9,2) {$e_1$};
\draw  (5,0) arc (180:0:2.25);
\node[] at (10,3.5) {$e_0$};

\draw[thick,dashed]  (7,0) arc (0:180:3);
\node[circle,fill,inner sep=1pt] at (7,0) {};
\node[] at (8.5,0) [below] {$G^{-1}\alpha$};

\node[circle,fill,inner sep=1pt] at (5,0) {};
\node[] at (4.5,0) [below] {$\zeta_{W_r}$};
%\draw[thick,dashed]  circle (360:0:1);

\draw[thick] (5,1.33) circle (1.33);

\end{tikzpicture}
\begin{tikzpicture}[scale=0.31]

\clip(-1,-5) rectangle (16.5,11.5);

\draw[thin] (-1,0) -- (17,0);

\draw[<->,dashed] (1.5,-2) -- (9,-2);
\node[] at (4.5,-2.25) [above] {$|W_r|$};
\draw[thin,dotted] (1.5,-2.2) -- (1.5,0);
\draw[thin,dotted] (9,-2.2) -- (9,0);

\draw[<->,dashed] (9,-2) -- (16,-2);
\node[] at (12.5,-2.25) [above] {$\mu$};
%\draw[thin,dotted] (0,-2.2) -- (0,0);
\draw[thin,dotted] (16,-2.2) -- (16,0);

\filldraw[fill=black!15!white,draw=black] 
(16,12) -- (16,0)
arc (0:180:1) 
arc  (0:180:1)
arc (0:180:0.5) 
arc (0:180:1) -- (9,12) -- (16,12);

%\node[] at (12,10) {$\infty$};

\draw (0,0) -- (0,12);
\node[] at (2,8) {$e''_2$};
\draw (1.5,0) -- (1.5,12);
\node[] at (5.5,8) {$e''_1$};
\draw (5,0) -- (5,12);
\node[] at (9.5,8) {$e''_0$};

\draw[thick,dashed] (1,0) arc (180:0:6.25);

\draw[thick] (-1,6.25) -- (16,6.25);

\draw[<->,dashed] (1,-4) -- (13.5,-4);
\node[] at (7,-4.25) [above] {$2T$};
\draw[thin,dotted] (1,-4.2) -- (1,0);
\draw[thin,dotted] (13.5,-4.2) -- (13.5,0);

\node[] at (12.5,10) {$A_k^{-1}B^{-1}\Omega_\HH$};

\end{tikzpicture}
\end{center}
\caption{The $r$-th cuspidal word $W_r=(a_0,a_1,a_2)$ of $\alpha$ is the first cuspidal word of $G^{-1}\cdot\alpha$, where $G=G_{W_0,\dots,W_{r-1}}$. The vertex 
$\zeta_{W_r}$ of $\Omega_\HH$ is common to the arcs $e_0=e_{a_0}$, $e_1:=G_{a_0}e_{a_1}$ and $e_2:=G_{a_0a_1}e_{a_2}$. The arcs $e'_i=Ge_i$ share the vertex $G\zeta_{W_r}$. The point $\zeta_{W_r}$ is sent to 
$\infty$, and the arcs $e_0,e_1,e_2$ are sent to the parallel vertical arcs $e''_0,e''_1,e''_2$. 
We have $|W_r|=\big|\re(e''_2)-\re(e''_0)\big|$.}
\label{FigureProofLemmaConvergentsAndGeometricLength}
\end{figure}

\subsubsection{Proof of Equation~\eqref{EquationGoodApproximationsAndBoundaryExpansion(1)}}

Fix $r\in\NN$ with $|W_r|>0$. Take $k\in\{1,\dots,p\}$ and $B\in\Gamma$ as in 
Equation~\eqref{EquationVerticesDomainInfinity}, that is $\zeta_{W_r}=BA_k\cdot \infty$. 
As in Figure~\ref{FigureProofLemmaConvergentsAndGeometricLength}, let $T>0$ be such that the horoball
$$
B_T:=G_{W_0,\dots,W_{r-1}}BA_k\big(\{z\in\HH:\im(z)>T\}\big)
$$
is tangent at $G_{W_0,\dots,W_{r-1}}\cdot \zeta_{W_r}$ with radius 
$
\rho(B_T)=|\alpha-G_{W_0,\dots,W_{r-1}}\cdot \zeta_{W_r}|
$. 
Equation~\eqref{EquationDiameterHoroball} and 
Equation~\eqref{EquationDefinitionDenominator} give
$$
D(G_{W_0,\dots,W_{r-1}}\cdot\zeta_{W_r})^2
\cdot
|\alpha-G_{W_0,\dots,W_{r-1}}\cdot\zeta_{W_r}|
=
c^2(G_{W_0,\dots,W_{r-1}}BA_k)\cdot\frac{\diameter(B_T)}{2}
=
\frac{1}{2T}.
$$
The geodesic in $\HH$ with endpoints 
$(G_{W_0,\dots,W_{r-1}}BA_k)^{-1}\cdot \infty$ and 
$(G_{W_0,\dots,W_{r-1}}BA_k)^{-1}\cdot \alpha$ is tangent to $\{z\in\HH:\im(z)>T\}$. 
Equation~\eqref{EquationGoodApproximationsAndBoundaryExpansion(1)} follows because 
Equation~\eqref{EquationDefinitionGeometricLenght} gives
$$
|W_r|\leq 2T\leq |W_r|+2\mu.
$$

\subsubsection{Proof of Equation~\eqref{EquationGoodApproximationsAndBoundaryExpansion(2)}}

Referring to \S~\ref{SectionParabolicFixedPointsAndFiniteIndexFreeSubgroup}, let $\zeta_0$ be the vertex of $\Omega_\HH$ and $(b_0,\dots,b_m)$ be the admissible word such that the reduced form of the parabolic fixed point $G\cdot z_k$ is
$$
G\cdot z_k=G_{b_0,\dots,b_m}\cdot\zeta_0,
$$
where $\zeta_0$ is not an endpoint of $e_{\widehat{b_m}}$ whenever $(b_0,\dots,b_m)$ is not the empty word. Assume  
$
D(G\cdot z_k)^2|\alpha-G\cdot z_k|<\epsilon_0
$, 
where the constant $\epsilon_0>0$ will be determined later.

\smallskip

\emph{Step $(0)$.} Assume that $(b_0,\dots,b_m)$ is the empty word, so that  
$
\zeta_0=G\cdot z_k\not=\infty
$. 
Consider the extra assumption $|W_0|>0$ and $\zeta_0=\zeta_{W_0}$ on pairs $(\alpha,\zeta_0)$, where 
$
\zeta_{W_0}=\varphi^{-1}(\xi_{W_0})
$ 
and $\xi_{W_0}$ is the vertex of $\Omega_\DD$ associated to $W_0$ as in \S~\ref{SectionCuspidalWords}. 
Define $\epsilon_0>0$ by 
$$
\epsilon_0:=\inf_{(\alpha,\zeta_0)}D(\zeta_0)^2\cdot|\alpha-\zeta_0|,
$$
where the infimum is taken over all pairs $(\alpha,\zeta_0)$ not satisfying the extra assumption. With such $\epsilon_0$, the statement follows whenever $(b_0,\dots,b_m)$ is the empty word.

\smallskip

\emph{Step $(1)$.} Now assume that $(b_0,\dots,b_m)$ is not the empty word. Then $G\cdot z_k$ is an interior point of  
$
[b_0,\dots,b_m]_\HH
$. 
Let $\zeta_1,\zeta_2$ be the endpoints of $[\widehat{b_m}]$, which are vertices of $\Omega_\HH$ different from $\zeta_0$. The endpoints of 
$
[b_0,\dots,b_m]_\HH
$ 
are 
$
\zeta'_i:=G_{b_0,\dots,b_m}\cdot\zeta_i
$ 
for $i=1,2$, according to Equation~\eqref{EquationDefinitionCylinderBowenSeries}. 
Let $N\geq-1$ be maximal with $a_n=b_n$ for any $n=0,\dots,N$, where the last condition is empty for $N=-1$, and where 
$N\leq m$. Observe that condition $N\leq m-1$ implies  
$
\alpha\not\in [b_0,\dots,b_m]_\HH
$, 
and therefore 
\begin{align*}
|\alpha-G\cdot z_k|
&
\geq
\min_{i=1,2}
|\zeta'_i-G\cdot z_k|
=
\min_{i=1,2}
|G_{b_0,\dots,b_m}\cdot\zeta_i-G_{b_0,\dots,b_m}\cdot\zeta_0|
\\
&
\geq
\frac{S_0}{D(G_{b_0,\dots,b_m}\cdot\zeta_0)}\cdot
\min_{i=1,2}
\frac{1}{D(G_{b_0,\dots,b_m}\cdot\zeta_i)}
\geq
\frac{S_0\kappa_2}{D(G_{b_0,\dots,b_m}\cdot\zeta_0)^2},
\end{align*}
where the third inequality follows from 
Part (1) of Lemma~\ref{LemmaInequalityDenominatorsReducedForm} and the second from 
Equation~\eqref{EquationDistanceDisjointHoroballs}. Therefore $N=m$, provided that 
$\epsilon_0<\kappa_2S_0$. 

We proved 
$
[a_0,\dots,a_m]_\HH=[b_0,\dots,b_m]_\HH
$. 
Moreover $G\cdot z_k$ does not belong to the interior of $[a_0,\dots,a_m,a_{m+1}]_\HH$, since the latter is a subinterval of $[b_0,\dots,b_m]_\HH$ delimited by the image under $G_{b_0,\dots,b_m}$ of two consecutive vertices of $\Omega_\HH$. The same argument as in the first part of Step (1), which is left to the reader, shows that $G\cdot z_k$ is an endpoint of $[a_0,\dots,a_m,a_{m+1}]_\HH$. 

\smallskip

\emph{Step $(2)$.} We show that 
$
G\cdot z_k=G_{b_0,\dots,b_m}\cdot \zeta_0
$ 
is an endpoint of $[a_0,\dots,a_{m+2}]_\HH$. Otherwise $G\cdot z_k$ doesn't belong to the closure of 
$[a_0,\dots,a_{m+2}]_\HH$. Since   
$
\alpha\in[a_0,\dots,a_{m+2}]_\HH
$  
then 
\begin{align*}
|\alpha-G\cdot z_k|
&
\geq
|G_{b_0,\dots,b_m,a_{m+1}}\cdot\zeta_3-G_{b_0,\dots,b_m}\cdot \zeta_0|
\\
&
\geq
\frac
{S_0}
{D(G_{b_0,\dots,b_m}\cdot\zeta_0)D(G_{b_0,\dots,b_m,a_{m+1}}\cdot\zeta_3)}
\geq
\frac
{S_0\kappa_2}
{D(G_{b_0,\dots,b_m}\cdot\zeta_0)^2},
\end{align*}
where 
$
G_{b_0,\dots,b_m,a_{m+1}}\cdot\zeta_3
$ 
is the endpoint of $[a_0,\dots,a_{m+2}]_\HH$ which is closest to $G\cdot z_k$ and where $\zeta_3$ is a vertex of $\Omega_\HH$ which is not an endpoint of $e_{\widehat{a_{m+1}}}$. We use Equation~\eqref{EquationDistanceDisjointHoroballs} and Part (2) of Lemma~\ref{LemmaInequalityDenominatorsReducedForm}. The inequality is absurd by condition 
$
\epsilon_0<\kappa_2S_0
$. 

\smallskip

\emph{Step (3).} Let $r$ be minimal such that $(a_0,\dots,a_m)$ is an initial factor of $W_0\ast\dots\ast W_{r-1}$. If $(a_0,\dots,a_{m+2})$ is also an initial factor of $W_0\ast\dots\ast W_{r-1}$, then
$
G_{W_0,\dots,W_{r-1}}\cdot \xi_{W_{r-1}}
$ 
is a common endpoint of the intervals $[a_0,\dots,a_m]_\HH$, $[a_0,\dots,a_{m+1}]_\HH$ and 
$[a_0,\dots,a_{m+2}]_\HH$, according to Equation~\eqref{EquationCommonEndpointsCuspidalWords}. 
Without loss of generality we have 
$$
G_{W_0,\dots,W_{r-1}}\cdot \xi_{W_{r-1}}
=
\inf[a_0,\dots,a_m]_\HH
=
\inf[a_0,\dots,a_{m+1}]_\HH
=
\inf[a_0,\dots,a_{m+2}]_\HH.
$$
The common endpoint is not $G\cdot z_k$, which belongs to the interior of 
$[a_0,\dots,a_m]_\HH$. Thus Step $(1)$ implies   
$
G\cdot z_k=\sup[a_0,\dots,a_{m+1}]_\HH
$, 
which is absurd because $G\cdot z_k$ is an endpoint of $[a_0,\dots,a_{m+2}]_\HH$ by Step $(2)$. Hence 
$
W_0\ast\dots\ast W_{r-1}
$ 
is either equal to $(a_0,\dots,a_m)$ or to $(a_0,\dots,a_{m+1})$. Moreover $(a_{m+1},a_{m+2})$ is a cuspidal word, because $[a_0,\dots,a_{m+1}]_\HH$ and $[a_0,\dots,a_{m+2}]_\HH$ share the endpoint 
$G\cdot z_k$.

\smallskip

$\bullet$ In case  
$
W_0\ast\dots\ast W_{r-1}=(a_0,\dots,a_m)
$ 
the word $(a_{m+1},a_{m+2})$ is an initial factor of $W_r$, that is $|W_r|>0$ and $\zeta_0=\zeta_{W_r}$. 

\smallskip

$\bullet$ In case    
$
W_0\ast\dots\ast W_{r-1}=(a_0,\dots,a_{m+1})
$ 
the word $W':=(a_{m+1})\ast W_r$ is also cuspidal (this is allowed by 
Remark~\ref{RemarkConcatenationCuspidalWords}). 
If $|W_r|=0$, that is $W_r=(a_{m+2})$, then $G\cdot z_k$ does not belong to the closure of 
$[a_0,\dots,a_{m+3}]_\HH$ and we get an absurd by 
$$
|\alpha-G\cdot z_k|
\geq
|G_{b_0,\dots,b_m}\cdot \zeta_0-G_{b_0,\dots,b_m,a_{m+1},a_{m+2}}\cdot\zeta_3|
\geq
\frac
{S_0\kappa_2}
{D(G_{b_0,\dots,b_m}\cdot\zeta_0)^2},
$$
where $\zeta_3$ is a vertex of $\Omega_\HH$ and 
$
G_{b_0,\dots,b_m,a_{m+1},a_{m+2}}\cdot\zeta_3
$ 
is the endpoint of $[a_0,\dots,a_{m+3}]_\HH$ which is closest to $G\cdot z_k$. In the last inequality we reason as in Step $(2)$, replacing $\kappa_2$ by a smaller constant and extending Part (2) of 
Lemma~\ref{LemmaInequalityDenominatorsReducedForm} one more step, in order to compare 
$D(G_{b_0,\dots,b_m}\cdot\zeta_0)$ and 
$D(G_{b_0,\dots,b_m,a_{m+1},a_{m+2}}\cdot\zeta_3)$. Since $W'$ is cuspidal with $|W'|>0$ we have 
$\zeta_0=\zeta_{W'}$. But we have also 
$
\zeta_{W'}=G_{a_{m+1}}\cdot\zeta_{W_r}
$, 
which implies 
$$
G_{b_0,\dots,b_m}\cdot\zeta_0=
G_{a_0,\dots,a_m}\cdot G_{a_{m+1}}\cdot\zeta_{W_r}=
G_{W_0,\dots,W_{r-1}}\cdot\zeta_{W_r}.
$$
In both cases Equation~\eqref{EquationGoodApproximationsAndBoundaryExpansion(2)} follows. The proof of Theorem~\ref{TheoremGoodApproximationsAndBoundaryExpansion} is complete. $\qed$

\section{Proof of Main Theorem~\ref{TheoremMainTheorem}}
\label{SectionProofMainTheorem}

In this paragraph we prove Theorem~\ref{TheoremMainTheorem} using Theorem~\ref{TheoremDimensionShiftSpace} below, whose proof is the subject of the rest of the paper, starting from \S~\ref{SectionSubshiftFiniteType}. 
Recall from Equation~\eqref{EquationDefinitionGeometricLenght} the definition of geometric length $|W|$ of a cuspidal word $W\in\cW$. Fix $T>0$ and let $\EE_T$ be the set of points 
$
\xi=[a_0,a_1,\dots]_\DD=[W_0,W_1,\dots]_\DD 
$ 
in $\partial\DD$ whose expansion in 
Equation~\eqref{EquationCuspidalCuttingSequence(UnitDisc)} satisfies 
\begin{equation}
\label{EquationDefinitionContinuedFractionCantor}
|W_r|\leq T \quad\text{ for any }\quad r\in\NN,
\end{equation}
that is all cuspidal words $W_r$ in the expansion of $\xi$ have geometric length bounded by $T$. 

\begin{theorem}
\label{TheoremDimensionShiftSpace}
Then exists a constant $\Theta>0$ such that for any $T>0$ big enough have
$$
\dim_H(\EE_T)=1-\Theta\cdot T^{-1}+o(T^{-1}).
$$
\end{theorem}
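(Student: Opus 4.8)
The plan is to realize $\EE_T$ as the limit set of a conformal iterated function system indexed by the finite alphabet $\cW_T := \{W \in \cW : |W| \leq T\}$ of admissible cuspidal words, governed by the subshift of finite type with transition matrix $M$ restricted to $\cW_T$, and then apply Bowen's formula together with a perturbative analysis of the maximal eigenvalue of the associated transfer operators. Concretely, I would first study the combinatorics of $\EE_T$ in the spirit of what the paper announces for \S\ref{SectionSubshiftFiniteType}: establish that the restricted transition matrix is aperiodic (so the subshift is topologically mixing), which is the input needed for the Ruelle--Perron--Frobenius machinery. Then, using the contraction and distortion estimates for the cuspidal acceleration map $\cF$ promised in \S\ref{SectionEstimatesContractionDistortion}, I would check that $-s\log|\cF'|$ is a H\"older (more precisely, locally H\"older with summable variations) potential on the shift space $\Sigma_T$, so that Theorem~\ref{TheoremRuellePerronFrobenius} provides a quasi-compact transfer operator $\cL_{(s,T)}$ with a simple isolated maximal eigenvalue $\lambda(s,T)>0$ and an associated Gibbs measure $m_{(s,T)}$.

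The second step is Bowen's formula: $\dim_H(\EE_T) = s_T$, where $s_T$ is the unique $s$ with $\lambda(s,T) = 1$. This follows by the standard argument (Proposition~\ref{PropositionDimensionAndSpectralRadius} in the excerpt's plan): when $\lambda(s_T,T)=1$ the pushforward of the Gibbs measure $m_{(s_T,T)}$ to $\EE_T$ is Ahlfors regular of exponent $s_T$, which pins the Hausdorff dimension exactly. Monotonicity and strict decrease of $s \mapsto \lambda(s,T)$ (again from the distortion estimates, which give uniform bounds $|\cF'| \asymp$ controlled quantities on cylinders) guarantee that $s_T$ is well-defined and that Bowen's formula applies; I would also record here that $s_T \to 1$ as $T \to \infty$, since the full system (all cuspidal words) has limit set of full measure in $\partial\DD$ and critical exponent $1$.

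The final and most delicate step is the asymptotic expansion $s_T = 1 - \Theta T^{-1} + o(T^{-1})$. The idea, as sketched in the paper's introduction, is to treat $\epsilon := T^{-1}$ as a continuous parameter and to view $(s,\epsilon) \mapsto \lambda(s,\epsilon) - 1$ as a function to which an implicit function theorem applies near $(s,\epsilon) = (1,0)$. This requires: (i) transferring the operators $\cL_{(s,T)}$ to a common Banach space $\cB$ of functions on the circle — this is exactly Theorem~\ref{TheoremTransferOperatorCircle} — so that the limiting operator $L_{(1,\infty)}$ makes sense and the family $L_{(s,T)}$ depends continuously (indeed differentiably) on $(s,\epsilon)$ in operator norm, using quasi-compactness and the spectral stability results of \S\ref{AppendixSpectralProjectors}; (ii) computing $\partial_s \lambda(1,\infty) \neq 0$ via the eigenprojector, which amounts to $\int \log|\cF'|\, d\mu_{(1,\infty)}$ being a nonzero (negative) Lyapunov-type integral against the equilibrium measure; and (iii) computing $\partial_\epsilon \lambda(1,\infty)$, which measures the first-order cost of deleting cuspidal words of geometric length $> T$ and is where the precise definition of $|W|$ via Equation~\eqref{EquationDefinitionGeometricLenght} is essential — it is tuned so that this derivative is finite and explicit. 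Then $\Theta = -\partial_\epsilon\lambda(1,\infty)/\partial_s\lambda(1,\infty)$, matching Equation~\eqref{EquationFormulaConstantTheta}. The main obstacle I anticipate is precisely this perturbative step: making rigorous the differentiability of $(s,\epsilon)\mapsto L_{(s,\epsilon)}$ at the boundary value $\epsilon = 0$, where $L_{(1,\infty)}$ is a genuine limit of the truncated operators and the "tail" contributions from long cuspidal words must be shown to vanish at the right rate; controlling the accumulation of a countable family of parabolic branches near the cusps, with their slow (polynomial rather than exponential) contraction, is the technical heart of the estimate and is why the analysis must be carried out on $\cB$ rather than on the shift spaces $\Sigma_T$ individually.
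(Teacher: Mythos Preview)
Your proposal is correct and follows essentially the same route as the paper: aperiodicity of the restricted transition matrix, Ruelle--Perron--Frobenius on the shift, Bowen's formula via Ahlfors regularity of the projected Gibbs measure, passage to a common Banach space $\cB$ on the circle to accommodate $L_{(1,\infty)}$, and a perturbative expansion of the maximal eigenvalue near $(s,T)=(1,\infty)$ yielding $\Theta=-\partial_\epsilon\lambda/\partial_s\lambda$. The only point worth flagging is that the paper does not invoke a genuine implicit function theorem at $(1,0)$: instead it carries out explicit first-order expansions of $L_{(s,T)}$ in $s$ and in $T$ separately (via operators $A_T$ and $\Delta_{(s,T)}$), proves the corresponding expansions of $\lambda(s,T)$ by projecting approximate eigenvectors through the spectral projector $P_{(s,T)}$, and closes the argument with a bootstrap---feeding a rough a priori bound $|s_T-1|=O(T^{-\lambda})$ (obtained independently via a Cantor-set construction) back into the expansion until the error becomes $o(T^{-1})$.
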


Let $\Theta>0$ be as in Theorem~\ref{TheoremDimensionShiftSpace}
and $\mu>0$ be as in Theorem~\ref{TheoremGoodApproximationsAndBoundaryExpansion}. For any 
$\epsilon>0$ set 
$
\displaystyle{\epsilon':=\frac{\epsilon}{1-2\mu\epsilon}}
$. 
It is clear that we have
$$
\big|
\big(1-\Theta\cdot\epsilon+o(\epsilon)\big)
-
\big(1-\Theta\cdot\epsilon'+o(\epsilon')\big)
\big|=o(\epsilon).
$$
The map $\varphi:\HH\to\DD$ in Equation~\eqref{EquationConjugationUpperHalfPlaneDisc} is smooth, thus it does not change Hausdorff dimension. Theorem~\ref{TheoremMainTheorem} follows from 
Theorem~\ref{TheoremDimensionShiftSpace} and from 
Lemma~\ref{LemmaBadAndBoundedContinuedFraction} below.

\begin{lemma}
\label{LemmaBadAndBoundedContinuedFraction}
For any $\epsilon<\epsilon_0$ we have
$
\EE_{\epsilon^{-1}-2\mu}
\subset
\varphi\left(\bad(\Gamma,\epsilon)\right)
\subset
\EE_{\epsilon^{-1}}.
$ 
\end{lemma}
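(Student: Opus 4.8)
The plan is to prove the two inclusions of Lemma~\ref{LemmaBadAndBoundedContinuedFraction} separately, using Theorem~\ref{TheoremGoodApproximationsAndBoundaryExpansion} as the bridge between the geometric approximation quantity $D(G\cdot z_k)^2\cdot|\alpha - G\cdot z_k|$ and the combinatorial quantity $|W_r|$. First I would unwind the definitions: $\alpha\in\bad(\Gamma,\epsilon)$ means $D(G\cdot z_k)^2\cdot|\alpha - G\cdot z_k|\geq\epsilon$ for every parabolic fixed point with nonzero denominator, while $\varphi(\alpha)\in\EE_T$ means every cuspidal word $W_r$ in the cuspidal decomposition of the boundary expansion of $\alpha$ (equivalently of $\varphi(\alpha)$, since $\varphi$ conjugates the expansions) satisfies $|W_r|\leq T$. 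The key point is that Equation~\eqref{EquationGoodApproximationsAndBoundaryExpansion(1)} says that for each $r$ with $|W_r|>0$, the specific parabolic fixed point $G_{W_0,\dots,W_{r-1}}\cdot\zeta_{W_r}$ realizes an approximation of $\alpha$ whose quality is pinched between $1/(|W_r|+2\mu)$ and $1/|W_r|$.

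For the right inclusion $\varphi(\bad(\Gamma,\epsilon))\subset\EE_{\epsilon^{-1}}$: I would take $\alpha\in\bad(\Gamma,\epsilon)$ and suppose, for contradiction, that some $|W_r|>\epsilon^{-1}$ in the cuspidal decomposition of $\alpha$. Then $|W_r|>0$, so the left inequality in \eqref{EquationGoodApproximationsAndBoundaryExpansion(1)} applies, giving $D(G_{W_0,\dots,W_{r-1}}\cdot\zeta_{W_r})^2\cdot|\alpha - G_{W_0,\dots,W_{r-1}}\cdot\zeta_{W_r}|\leq 1/|W_r|<\epsilon$; here I should note $D(\cdot)\neq 0$ since the approximation quantity is finite and positive (or invoke \eqref{EquationDenominatorsAreWellDefined(1)} together with $\alpha\neq\infty$). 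This contradicts the defining inequality \eqref{EquationDefinitionBadGammaEpsilon} for $\bad(\Gamma,\epsilon)$. Hence all $|W_r|\leq\epsilon^{-1}$, i.e. $\varphi(\alpha)\in\EE_{\epsilon^{-1}}$.

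For the left inclusion $\EE_{\epsilon^{-1}-2\mu}\subset\varphi(\bad(\Gamma,\epsilon))$: I would take $\alpha$ with $|W_r|\leq\epsilon^{-1}-2\mu$ for all $r$, and must show $\alpha\in\bad(\Gamma,\epsilon)$, i.e. $D(G\cdot z_k)^2\cdot|\alpha - G\cdot z_k|\geq\epsilon$ for every parabolic fixed point $G\cdot z_k$ with nonzero denominator. Suppose not: some $G\cdot z_k$ has $D(G\cdot z_k)^2\cdot|\alpha - G\cdot z_k|<\epsilon$. Since $\epsilon<\epsilon_0$, the second part of Theorem~\ref{TheoremGoodApproximationsAndBoundaryExpansion} applies and produces an $r\in\NN$ with $G\cdot z_k = G_{W_0,\dots,W_{r-1}}\cdot\zeta_{W_r}$ and $|W_r|>0$. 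Now the right inequality in \eqref{EquationGoodApproximationsAndBoundaryExpansion(1)} together with well-definedness of denominators \eqref{EquationDenominatorsAreWellDefined(2)} gives
$$
\epsilon > D(G\cdot z_k)^2\cdot|\alpha - G\cdot z_k| \geq \frac{1}{|W_r|+2\mu} \geq \frac{1}{(\epsilon^{-1}-2\mu)+2\mu} = \epsilon,
$$
a contradiction. Therefore no such $G\cdot z_k$ exists, and $\alpha\in\bad(\Gamma,\epsilon)$, hence $\varphi(\alpha)\in\varphi(\bad(\Gamma,\epsilon))$.

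The main obstacle, such as it is, is bookkeeping rather than depth: one must be careful that the parabolic fixed point produced in \eqref{EquationGoodApproximationsAndBoundaryExpansion(2)} is genuinely of the form $G\cdot z_k$ with the same denominator that appears in the left inclusion's hypothesis — this is exactly what \eqref{EquationDenominatorsAreWellDefined(2)} guarantees, so the geometric quantities match on the nose — and that the edge case $G\cdot z_k=\infty$ (equivalently $D=0$) is excluded throughout, which is immediate from \eqref{EquationDenominatorsAreWellDefined(1)} and the fact that $\alpha\in\RR$. Everything else is a direct substitution into the pinching inequality \eqref{EquationGoodApproximationsAndBoundaryExpansion(1)} with the arithmetic choice $\epsilon' $ absorbed by the $o(\epsilon)$ estimate already recorded before the lemma.
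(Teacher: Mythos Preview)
Your proof is correct and follows essentially the same approach as the paper: both inclusions are obtained by combining the pinching inequality~\eqref{EquationGoodApproximationsAndBoundaryExpansion(1)} with, for the left inclusion, the identification in~\eqref{EquationGoodApproximationsAndBoundaryExpansion(2)}, just as the paper does. One cosmetic remark: you have swapped the labels ``left'' and ``right'' when referring to the two halves of~\eqref{EquationGoodApproximationsAndBoundaryExpansion(1)} (you invoke the upper bound $\leq 1/|W_r|$ but call it the left inequality, and vice versa), though the inequalities you actually use are the correct ones.
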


\begin{proof}
Fix $\epsilon<\epsilon_0$. For any $\alpha\in\RR$ consider the cuspidal acceleration of its boundary expansion, that is write $\alpha=[W_0,W_1,\dots]_\HH$. Consider any $\alpha\in\bad(\Gamma,\epsilon)$. 
Equation~\eqref{EquationGoodApproximationsAndBoundaryExpansion(1)} implies that for any $r\in\NN$ we have
$$
\epsilon\leq 
D(G_{W_0,\dots,W_{r-1}},\zeta_{W_r})^2\cdot|\alpha-G_{W_0,\dots,W_{r-1}}\cdot\zeta_{W_r}|
\leq
\frac{1}{|W_r|}.
$$
Thus it follows  
$
\alpha\in\varphi^{-1}\big(\EE_{\epsilon^{-1}}\big)
$. 
On the other hand, consider  
$
\alpha\not\in\bad(\Gamma,\epsilon)
$ 
and let $G\in\Gamma$ and 
$z_k=A_k\cdot\infty$ such that 
$
D(G,z_k)^2\cdot|\alpha-G\cdot z_k|<\epsilon
$. 
Equation~\eqref{EquationGoodApproximationsAndBoundaryExpansion(2)} implies that there exists some $r\in\NN$ such that 
$
G\cdot z_k=G_{W_0,\dots,W_{r-1}}\cdot\zeta_{W_r}
$. 
For such $r$ Equation~\eqref{EquationGoodApproximationsAndBoundaryExpansion(1)} implies 
$$
\frac{1}{|W_r|+2\mu}
\leq
D(G_{W_0,\dots,W_{r-1}},\zeta_{W_r})^2\cdot|\alpha-G_{W_0,\dots,W_{r-1}}\cdot\zeta_{W_r}|
<
\epsilon.
$$
Thus it follows that 
$
\alpha\not\in\varphi^{-1}\big(\EE_{\epsilon^{-1}-2\mu}\big)
$. 
\end{proof}

\section{Cantor sets and subshifts of finite type}
\label{SectionSubshiftFiniteType}

\subsection{Aperiodicity of transition matrix}

Let $M_{W',W}\in\{0,1\}^{\cW\times\cW}$ be the transition matrix in    
Equation~\eqref{EquationDefinitionTransitionMatrix}. Fix $0<T\leq+\infty$. Let $\cW_T$ be the set of cuspidal words $W\in\cW$ with geometric length $|W|\leq T$, see 
Equation~\eqref{EquationDefinitionGeometricLenght}. 
The matrix $M_{W',W}$ describes also the allowed transitions between the elements of $\cW_T$. According to Lemma~1.3 in~\cite{BowenEquilibriumStates}, for any $m\geq1$ the entry $M^m_{W',W}$ of the $m$-th power $M^m$ of $M$ is the number of different words $W_0,W_1,\dots,W_m$ in the letters of $\cW_T$ with length $m+1$ with  
\begin{enumerate}
\item
$M_{W_i,W_{i+1}}=1$ for any $i=0,\dots,m-1$
\item
$W_0=W'$ and $W_m=W$.
\end{enumerate}

Following \S~1 in \cite{BowenEquilibriumStates} and \S~1 in \cite{ParryPollicott} we say that the matrix $M$ is \emph{aperiodic} if there exists some $m\in\NN$ such that $M^m_{W',W}\geq1$ for any $W,W'\in\cW$.

\begin{proposition}
\label{PropositionAperiodicityTransitionMatrix}
The matrix $M$ is aperiodic for any $T>0$ big enough. More precisely, if $T$ is big enough, we have 
$M^2_{W',W}\geq 1$ for any $W,W'$ in $\cW_T$.
\end{proposition}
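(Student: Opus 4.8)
The plan is to show that from any cuspidal word $W'=(b_0,\dots,b_m)$ one can always reach, in exactly two steps of the transition matrix, any target cuspidal word $W=(a_0,\dots,a_n)$, provided $T$ is large enough that every cuspidal word we use as an intermediate step has geometric length $\leq T$. The first observation is that the forbidden transitions in Equation~\eqref{EquationDefinitionTransitionMatrix} are all ``local'': whether $M_{V,W'}=1$ depends only on the last letter of $V$, the first letter $a_0$ of $W'$, the type $\varepsilon(W')$ if $W'$ has length $\geq1$, and the cyclic-order relation between $o(a_0)$ and $o$ of the complement of the last letter of $V$. So the task reduces to: given the first letter $a_0$ of $W$ (and its type/length data), produce an intermediate cuspidal word $V$ such that (i) $M_{W',V}=1$ and (ii) $M_{V,W}=1$, and such that $|V|\leq T$.

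First I would fix notation for the ``links'' between letters. For each letter $c\in\cA$ there are exactly two letters $c',c''$ with $o(c')=o(\widehat c)-1$ and $o(c'')=o(\widehat c)+1$ respectively; these are precisely the first letters of the two cuspidal words of any prescribed length generated by $\widehat c$ (via Lemma~\ref{LemmaCombinatorialPropertiesCuspidal} and the fact, noted after the definition of cuspidal words, that each letter generates exactly two cuspidal words of any fixed length $\geq1$). The no-backtracking condition forbids $a_0=\widehat{b_m}$ and the remaining clauses of \eqref{EquationDefinitionTransitionMatrix} forbid exactly one of the two ``turning'' choices depending on $\varepsilon(W')$. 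The key combinatorial point is that, because $2d\geq 8$ (the polygon has at least $8$ sides: the quotient is a sphere with $\geq 3$ punctures or a surface of genus $\geq1$ with punctures, and in either case $2d\geq 6$; actually one should check $2d\geq 6$ suffices for the argument to have enough room, and if $d$ were too small one passes to a further cover), there is always a letter $e\in\cA$ avoiding the finitely many prohibitions at both endpoints, i.e. $e\neq\widehat{b_m}$, the cyclic-order relation between $o(e)$ and $o(\widehat{b_m})$ is not the forbidden one for $\varepsilon$ of the word starting with $e$ that we plan to use, and also $a_0\neq\widehat{(\text{last letter of }V)}$ together with the cyclic-order constraint coming from $\varepsilon(W)$. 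Concretely I would take $V$ to be a cuspidal word whose \emph{first} letter makes $M_{W',V}=1$ and whose \emph{last} letter makes $M_{V,W}=1$: these are constraints on the two ends of $V$, and since a cuspidal word of length $\ell$ with prescribed first letter is determined by a binary choice (the type), while its last letter then ranges over an orbit as $\ell$ grows, one has enough freedom. Because there are only finitely many cuspidal words of any bounded length, but the number grows with the length, and because the two endpoint constraints exclude only finitely many (in fact at most a couple of) options at each end, one can pick such a $V$.

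The remaining issue — and the one I expect to be the main obstacle — is the \emph{geometric length} bound $|V|\leq T$. The intermediate words $V$ I need to invoke are finitely many (once their length is bounded), but a priori I must bound their length. The natural choice is to take $V$ of the \emph{minimal} length compatible with the endpoint constraints; since the alphabet $\cA$ and the combinatorics are finite, there is a uniform bound $\ell_0$ on the length of $V$ needed, hence a finite set $\cV_0\subset\cW$ of possible intermediate words, and one sets $T$ larger than $\max_{V\in\cV_0}|V|$. This is exactly the content of ``for any $T>0$ big enough''. I would phrase it as: let $\ell_0$ be an integer such that for every pair of admissible endpoint-data there is a cuspidal word $V$ of length $\leq \ell_0$ realizing it; then $\cV_0:=\{V\in\cW:\text{length}(V)\leq\ell_0\}$ is finite, so $T_0:=\max_{V\in\cV_0}|V|$ is finite, and for $T\geq T_0$ (also ensuring $T$ is large enough for the no-backtracking/turning letters themselves to lie in $\cW_T$ — but any single letter forms a cuspidal word $W=(a_0)$ of geometric length $0$ by \eqref{EquationDefinitionGeometricLenght}, so length-$1$ intermediates are automatically available) we get $M^2_{W',W}\geq1$. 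I would double-check the edge cases $m=0$ (the clause $o(a_0)=o(\widehat{b_0})\pm1$ forbidden) and where $V$ itself has length one, to confirm the endpoint analysis still produces a legal $e$; these are the places where one really uses that $\cA$ has enough letters, and if a given $\Gamma_0$ does not, one first replaces it by a finite-index free subgroup with more sides, which does not affect the statement as the Bowen--Series machinery is set up for any such $\Gamma_0$.
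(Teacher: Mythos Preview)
Your strategy --- find an intermediate cuspidal word $V$ whose first letter is compatible with $W'$ and whose last letter is compatible with $W$, then count forbidden letters --- is exactly the paper's approach. The paper makes it sharp by fixing $V=(\chi_0,\chi_1)$ to be \emph{right cuspidal of length two}. Then $\chi_0$ determines $\chi_1$ via a bijection $\phi_{(1)}:\cA\to\cA$ (shown injective using Lemma~\ref{LemmaCombinatorialPropertiesCuspidal}), so everything reduces to choosing $\chi_0$. The clause $M_{W',V}=1$ forbids at most $3$ values of $\chi_0$, and $M_{V,W}=1$ forbids at most $2$ values of $\chi_1$, hence (via $\phi_{(1)}$) at most $2$ values of $\chi_0$. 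Total: at most $5$ forbidden letters, so a legal $\chi_0$ exists whenever $|\cA|=2d\geq 6$. This is the precise version of your vague ``enough freedom as $\ell$ grows''; no growing $\ell$ is needed, length $2$ already suffices.

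The genuine gap is your treatment of small $d$. Your claim that $2d\geq 6$ always is false: the thrice-punctured sphere and the once-punctured torus both have $\pi_1$ free on two generators, giving an ideal quadrilateral with $2d=4$. Your proposed fix --- replace $\Gamma_0$ by a deeper finite-index subgroup with more sides --- does \emph{not} prove the Proposition as stated: the matrix $M$ is defined in terms of the fixed $\Gamma_0$ and its labelled polygon $\Omega_\DD$, and aperiodicity of a different matrix $M'$ built from a different $\Gamma_0'$ says nothing about $M$. (You are right that for the \emph{main theorem} this would be harmless, since the constant $\Theta$ is independent of the choice of $\Gamma_0$; but the Proposition itself is a claim about the given $M$.) The paper closes this gap by an explicit case analysis when $|\cA|=4$: it splits into the two possible combinatorial types (both generators parabolic, or both hyperbolic), lists the cuspidal words in each, and for every pair $(a_n,b_0)$ of last/first letters exhibits a concrete intermediate $X$ with $M_{W',X}=M_{X,W}=1$. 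You would need to carry out this finite check (or something equivalent) to complete the argument.
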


\begin{proof}
Recall Lemma~\ref{LemmaCombinatorialPropertiesCuspidal} and fix $r\geq1$. Consider the map 
$
\chi\mapsto\phi_{(r)}(\chi)
$ 
from $\cA$ to itself, where $\chi'=\phi_{(r)}(\chi)$ is the unique letter such that the word 
$
(\chi=\chi_0,\dots,\chi_{r}=\chi')
$ 
is right cuspidal. If $(a_0,\dots,a_r)$ and $(b_0,\dots,b_r)$ are two right cuspidal words with 
$a_r=b_r$, then 
$
(\widehat{a_r},\dots,\widehat{a_0})
$ 
and 
$
(\widehat{b_r},\dots,\widehat{b_0})
$ 
are left cuspidal with 
$
\widehat{a_r}=\widehat{b_r}
$, 
by Lemma~\ref{LemmaCombinatorialPropertiesCuspidal}. Thus the two words are equal, because the first letter determines left cuspidal words of fixed length $r\geq1$. Hence $b_0=a_0$. 
The map $\phi_{(r)}$ is a bijection, because it is injective. 

Fix any pair of elements 
$
W'=(a_0,\dots,a_{n})
$ 
and 
$
W=(b_0,\dots,b_{m})
$ 
in $\cW$. Assume first that the alphabet $\cA$ has at least 6 letters. We show that $M^2_{W',W}\geq 1$ showing that there exists a right cuspidal word $X=[\chi_0,\chi_1]\in\cW$ such that $M_{W',X}=1$ and $M_{X,W}=1$, where such $X$ is determined by the choice of its first letter $\chi_0$. According to Equation~\eqref{EquationDefinitionTransitionMatrix}, a sufficient (but in general not necessary) condition on $\chi_0$ in order to have $M_{W',X}=1$ is 
$$
o(\chi_0)-o(\widehat{a_n})\not=-1,0,1,
$$ 
which corresponds to 3 forbidden values for $\chi_0$. Moreover, since $(\chi_0,\chi_1)$ is right cuspidal, condition $M_{W',X}=1$ is satisfied if and only if
$$
o(b_0)-o(\widehat{\chi_1})\not=-1,0,
$$
which corresponds to 2 forbidden values for $\chi_1$, and thus 2 forbidden values for $\chi_0$, because the map $\phi_{(1)}$ is a bijection. Therefore there are at most 5 forbidden values for $\chi_0$, and thus at least 1 possible choice, which determines a word $X\in\cW$ as required.

Assume now that $\cA$ has only 4 letters, denoted   
$
a,b,\widehat{a},\widehat{b}
$. 
The two corresponding generators $F_a,F_b$ of $\Gamma_0$ are either both parabolic (with different fixed point in $\partial\DD$) or both hyperbolic (with different axis which intersect transversally, where the axis of an hyperbolic element $G$ is the geodesics whose endpoints are the fixed points of $G$). In both cases we find a cuspidal word $X\in\cW$ as required considering all possible values for the pair 
$(a_n,b_0)$, that is all possible values for the last letter of $W'$ and for the first of $W$. More precisely, for any value of $(a_n,b_0)$ we exhibit a concatenation of 3 cuspidal words of the form 
$$
W'\ast X\ast W=(\dots,a_n)\ast X\ast(b_0,\dots).
$$

If $F_a,F_b$ are both parabolic, then there are in total 12 different type of cuspidal words: 3 of them start with the letter $a$, that is  
$$
(\underbrace{a,\dots,a}_{k\geq1\textrm{ times }})
\quad
\textrm{ , }
\quad
(\underbrace{a,\widehat{b},\dots,a,\widehat{b}}_{k\geq1\textrm{ times }})
\quad
\textrm{ , }
\quad
(\underbrace{a,\widehat{b},\dots,a,\widehat{b}}_{k\geq1\textrm{ times }},a),
$$
while the other 9 are obtained in the obvious way replacing the role of letters. For 
$a_n=a$ there are 4 possible values of $b_0$, and the list below gives, for each case, a $X\in\cW$ such that the concatenation $W\ast X\ast W'$ satisfies Condition~\eqref{EquationDefinitionTransitionMatrix}:
$$
\begin{array}{ccccc}
\,(\dots,a) & \ast & (b) & \ast & (a,\dots)
\\
\,(\dots,a)& \ast & (b,\widehat{a},b) & \ast & (b,\dots)
\\
\,(\dots,a) & \ast & (b,b) & \ast & (\widehat{a},\dots)
\\
\,(\dots,a) & \ast & (b,\widehat{a}) & \ast & (\widehat{b},\dots).
\end{array}
$$
For the other 3 values of $a_n$ the analogous list is obtained by obvious substituions and details are left to the reader. Finally, if $F_a,F_b$ are both hyperbolic, the are only two cuspidal sequences, which are
\begin{align*}
&
a,b,\widehat{a},\widehat{b},a,b,\widehat{a},\widehat{b},\dots
\\
&
b,a,\widehat{b},\widehat{a},b,a,\widehat{b},\widehat{a},\dots
\end{align*}
and cuspidal words are any finite subword of the sequences above. For $a_n=a$ there are 4 possible values of $b_0$, and the list below gives, for each case, a $X\in\cW$ such that the concatenation 
$W\ast X\ast W'$ satisfies Condition~\eqref{EquationDefinitionTransitionMatrix}:
$$
\begin{array}{ccccc}
\,(\dots,a) & \ast & (a) & \ast & (a,\dots)
\\
\,(\dots,a)& \ast & (a,b,\widehat{a}) & \ast & (b,\dots)
\\
\,(\dots,a) & \ast & (a,b,\widehat{a}) & \ast & (\widehat{a},\dots)
\\
\,(\dots,a) & \ast & (a,\widehat{b},\widehat{a}) & \ast & (\widehat{b},\dots).
\end{array}
$$
For the other 3 values of $a_n$ the analogous list is obtained by obvious substituions and details are left to the reader. The Proposition is proved.
\end{proof}

\subsection{The space of the sub-shift}
\label{SectionShiftSpace}

Denote by $w=(W_r)_{r\in\NN}$ the elements in $\cW_T^\NN$, that is half-infinite sequences in the letters 
$W\in\cW_T$. The \emph{shift space} is 
$$
\Sigma:=
\left\{
w=(W_r)_{r\in\NN}:M_{W_r,W_{r+1}}=1\quad\forall\quad r\in\NN
\right\}.
$$

Considering the discrete topology on $\cW_T$ and the product topology on $\cW_T^\NN$, we obtain a compact totally disconnected topological space. The shift space $\Sigma$ is a compact subset of 
$\cW_T^\NN$. Following \S~1 in \cite{BowenEquilibriumStates} and \S~1 in \cite{ParryPollicott} we fix 
$\theta$ with $0<\theta<1$ and define a distance on $\cW_T^\NN$, and thus on $\Sigma$, by setting 
$$
d_\theta(w,w'):=\theta^N
\quad
\textrm{ where }
\quad
N:=\max\{n\in\NN: W_r=W'_r\quad\forall\quad r=0,\dots,n\}
$$
and where $w=(W_r)_{r\in\NN}$ and $w'=(W'_r)_{r\in\NN}$ are any pair of sequences in $\cW_T^\NN$. A basis of open sets for the induced discrete topology is given by the set of \emph{cylinders}, where for any finite sequence $W_0,\dots,W_n$ of elements of $\cW_T$ such that $M_{W_i,W_{i+1}}=1$ for any $i=0,\dots,n-1$ the corresponding cylinder is 
$$
[W_0,\dots,W_n]_\Sigma:=
\{w=(W'_r)_{r\in\NN}\in\Sigma: W'_r=W_r\quad\forall\quad r=0,\dots,n\}.
$$

There is a natural dynamics on $\Sigma$ given by the \emph{shift map}
$$
\sigma:\Sigma\to\Sigma
\quad
\textrm{ ; }
\quad
w=(W_{r})_{r\in\NN}\mapsto \sigma(w):=(W_{r+1})_{r\in\NN}.
$$

\subsection{Subshift and Cantor sets in the boundary}
\label{SectionSubshiftAndCantorSetBoundary}

Fix $T$ with $0<T<\infty$ and let $\EE_T$ be the set in 
Equation~\eqref{EquationDefinitionContinuedFractionCantor}. Consider the map
$$
\Pi:\Sigma\to\partial\DD
\quad
\textrm{ ; }
\quad
w=(W_r)_{r\in\NN}\mapsto\Pi(w):=[W_0\ast W_1\ast\dots]_\DD.
$$

For the map $\cF:\partial\DD\to\partial\DD$ 
in Equation~\eqref{EquationDefinitionCuspidalAccelerationBowenSeriesMap} we have the commutative diagram 
\begin{equation}
\label{EquationCommutativeDiagramShiftExpansion}
\Pi\circ\sigma=\cF\circ\Pi.
\end{equation}

Lemma~\ref{LemmaUniformContractionBranches} below gives a uniform (i.e. depending only on the geometry of $\Omega_\DD$ and not on $T$) contraction factor $0<\theta<1$ for the inverse branches of $\cF$. For this specific parameter the distance $d_\theta(\cdot,\cdot)$ reflects some metric properties of $\EE_T$. 

\begin{lemma}
\label{LemmaCodingLipschitz}
The map $\Pi:\Sigma\to\partial\DD$ is bijective onto $\EE_T$. Moreover $\Pi$ is Lipschitz, but its inverse 
$\Pi^{-1}$ is just H\"older continuous.
\end{lemma}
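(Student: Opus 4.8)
The plan is to prove the three assertions in sequence: bijectivity of $\Pi$ onto $\EE_T$, the Lipschitz property of $\Pi$, and the mere Hölder continuity of $\Pi^{-1}$.

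For \textbf{bijectivity}, I would first observe that a point $\xi\in\EE_T$ has, by definition, a boundary expansion $(a_n)_{n\in\NN}$ whose cuspidal decomposition $(W_r)_{r\in\NN}$ satisfies $|W_r|\leq T$ for all $r$; thus each $W_r$ lies in $\cW_T$, and the concatenation condition $M_{W_r,W_{r+1}}=1$ holds precisely because consecutive cuspidal words in a cuspidal decomposition satisfy the transition rule of Equation~\eqref{EquationDefinitionTransitionMatrix} (this is exactly the content of the discussion following that equation, together with Remark~\ref{RemarkConcatenationCuspidalWords} explaining the asymmetry). Hence $(W_r)_{r\in\NN}\in\Sigma$ and it maps to $\xi$ under $\Pi$, giving surjectivity onto $\EE_T$. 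Conversely, that $\Pi(\Sigma)\subseteq\EE_T$ is immediate since any $w\in\Sigma$ produces a sequence all of whose cuspidal blocks have length $\leq T$. For injectivity, suppose $\Pi(w)=\Pi(w')=\xi$; the underlying letter sequence $a_0,a_1,\dots$ obtained by concatenating the blocks of $w$ (resp.\ $w'$) is the boundary expansion of $\xi$, which is unique; so $w$ and $w'$ have the same concatenated letter sequence, and since the cuspidal decomposition of a sequence is canonically determined by the greedy/maximal recipe in \S~\ref{SectionCuspidalAcceleration}, the block decompositions coincide, i.e.\ $w=w'$. A small point to verify here is that the transition matrix $M$ is set up precisely so that every $w\in\Sigma$ arises as a genuine cuspidal decomposition (no block $W_r$ can be enlarged), which is exactly the ``$W\ast(a_0)$ is not cuspidal'' clause recorded after Equation~\eqref{EquationDefinitionTransitionMatrix}.

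For the \textbf{Lipschitz bound on $\Pi$}, fix $w=(W_r)$ and $w'=(W'_r)$ with $d_\theta(w,w')=\theta^N$, so $W_r=W'_r$ for $r=0,\dots,N$. Then both $\Pi(w)$ and $\Pi(w')$ lie in the arc $[W_0\ast\dots\ast W_N]_\DD$, and by Lemma~\ref{LemmaUniformContractionBranches} the inverse branches of $\cF$ contract the relevant metric by a uniform factor $0<\theta<1$; since the first $N$ cuspidal blocks agree, the diameter of $[W_0\ast\dots\ast W_N]_\DD$ is bounded by $C\theta^N$ for a constant $C$ depending only on the geometry of $\Omega_\DD$, whence $|\Pi(w)-\Pi(w')|\leq C\theta^N=C\,d_\theta(w,w')$. (The choice of $\theta$ as the distance parameter is tailored so this comes out with exponent exactly $1$.)

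For the \textbf{Hölder-only claim on $\Pi^{-1}$}, the point is that the arcs $[W_0\ast\dots\ast W_N]_\DD$ need not shrink uniformly: a single block $W_r$ of large geometric length $|W_r|$ close to $T$ corresponds to an element $F_{W_r}\in\Gamma_0$ of large parabolic-type translation amount, and the associated isometric-circle inclusions in \S~\ref{SectionIsometricCircles} show that the arc can be comparatively large even though $N$ is large; so there is no lower bound of the form $|\Pi(w)-\Pi(w')|\geq c\,\theta^N$ with $c$ uniform in $N$. What \emph{is} true, and what I would prove, is a two-sided estimate $c\theta^{C'N}\leq |\Pi(w)-\Pi(w')| $ once one absorbs the worst-case block distortion into a (larger) exponent; concretely, because each block contributes a definite amount of contraction and the number of blocks forces $N$ to grow, one gets $|\Pi(w)-\Pi(w')|\geq c\,(\theta')^{N}$ for some $0<\theta'<1$ with $\theta<\theta'$, which translates into Hölder continuity of $\Pi^{-1}$ with exponent $\log\theta/\log\theta'<1$ rather than Lipschitz. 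The \textbf{main obstacle} is precisely making this lower bound uniform: one must control from below the derivative of $F_{W_0,\dots,W_{N-1}}^{-1}$ on the relevant arc (equivalently, the radius of the isometric circle of $F_{W_0,\dots,W_{N-1}}$) uniformly over all admissible block sequences with blocks in $\cW_T$, and here the cuspidal (near-parabolic) blocks are the delicate case — the contraction/distortion estimates of \S~\ref{SectionEstimatesContractionDistortion} are what one invokes, but one still has to track how the bound degrades with $T$ and confirm it yields a fixed Hölder exponent for each fixed $T$.
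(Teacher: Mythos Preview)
Your proposal is correct and follows essentially the same route as the paper. The paper's own proof is terse: it cites Part~(1) of Proposition~\ref{PropositionSizeGapsAndIntervals} (which is itself a direct consequence of Corollary~\ref{CorollaryExponentiallyContractingDistance} and hence of the same Lemma~\ref{LemmaUniformContractionBranches} you invoke) for the Lipschitz bound, says H\"older continuity of $\Pi^{-1}$ ``follows from the estimates in Proposition~\ref{PropositionSizeGapsAndIntervals}'' with details left to the reader, and does not spell out bijectivity at all. Your more detailed treatment of bijectivity (uniqueness of the boundary expansion plus the greedy nature of the cuspidal decomposition encoded in the transition matrix) is a welcome addition, and your diagnosis of the H\"older-only issue---that long cuspidal blocks degrade the lower cylinder-size bound so that one only gets $|\Pi(w)-\Pi(w')|\geq c(\theta')^{N}$ with $\theta'>\theta$---is exactly the mechanism behind the paper's appeal to the distortion estimates of \S~\ref{SectionEstimatesContractionDistortion}; Part~(3) of Proposition~\ref{PropositionSizeGapsAndIntervals} is what supplies the needed lower bound on cylinder lengths.
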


\begin{proof}
Part (1) of Proposition~\ref{PropositionSizeGapsAndIntervals} implies directly that $\Pi$ is Lipschitz. H\"older continuity for $\Pi^{-1}:\EE_T\to\Sigma$ follows from the estimates in 
Proposition~\ref{PropositionSizeGapsAndIntervals}. Such property of the inverse will not be used in the following, and details are left to the interested reader.
\end{proof}

\subsection{Cylinders in the Cantor set}
\label{SectionCylindersCantorSets}

Any word $(a_0,\dots,a_n)$ in the letters of $\cA$ corresponds to an arc 
$
[a_0,\dots,a_n]_\DD
$ 
as in Equation~\eqref{EquationDefinitionCylinderBowenSeries}. Points sharing the same coding for the map $\cF$ correspond to arcs with a different form, that we describe here.

\smallskip

Consider any cuspidal word $W=(b_0,\dots,b_m)$ in $\cW$ and set 
$$
[W]_\EE:=
\{\xi\in\partial\DD:\cF(\xi)=F_W^{-1}(\xi)\}=
\{\xi\in\partial\DD:W_0(\xi)=W\},
$$
that is the arc in $\partial\DD$ of points $\xi$ whose first cuspidal word, given by 
Equation~\eqref{EquationDependenceOfCuspidalWordFromPoint}, satisfies $W_0(\xi)=W$. Observe that
$$
W_0(\xi)=W
\quad
\Leftrightarrow
\quad
M_{W,W_0(F_W^{-1}(\xi))}=1,
$$
that is $\xi\in[W]_\EE$ if and only if the first letter of the cuspidal word $W_0(F_W^{-1}(\xi))$ satisfies Equation~\eqref{EquationDefinitionTransitionMatrix}. Therefore 
$
[W]_\EE=F_W\big(\domain(W)\big)
$, 
where we define
\begin{equation}
\label{EquationDomainImageCylinderShift}
\domain(W):=
\left\{
\begin{array}{ll}
\bigcup_{o(\chi)-o(\widehat{b_0})\not=0,\pm1}[\chi]_\DD
&
\textrm{ if }
m=0
\\
\bigcup_{o(\chi)-o(\widehat{b_m})\not=0,1}[\chi]_\DD
&
\textrm{ if }
m\geq1,\quad\epsilon(W)=L
\\
\bigcup_{o(\chi)-o(\widehat{b_m})\not=0,-1}[\chi]_\DD
&
\textrm{ if }
m\geq1,\quad\epsilon(W)=R,
\end{array}
\right.
\end{equation}
Recall Equation~\eqref{EquationDefinitionCylinderBowenSeries} and observe that 
$
[W]_\EE\not=[b_0,\dots,b_m]_\DD
$. 
Observe that the set of letters $\chi$ in 
Equation~\eqref{EquationDomainImageCylinderShift} is the same as the set of letters $a_0$ in 
Equation~\eqref{EquationDefinitionTransitionMatrix}. Therefore if $W'=(a_0,\dots,a_n)$ is any other cuspidal word we have the equivalence
\begin{equation}
\label{EquationEquivalenceShiftTransitionCylinderInclusion}
M_{W,W'}=1
\quad
\Leftrightarrow
\quad
[W']_\EE\subset\domain(W).
\end{equation}
Let $w_n:=(W_1,\dots,W_n)$ be a finite block of cuspidal words $W_k\in\cW$ for $k=1,\dots,n$ such that 
\begin{equation}
\label{EquationFiniteWordSubshitfAlphabeth}
M_{W_k,W_{k+1}}=1
\quad
\textrm{ for }
\quad
k=0,\dots,n-1.
\end{equation}
Define 
$
[W_1,\dots,W_n]_\EE:=\Pi\big([W_1,\dots,W_n]_\Sigma\big)
$, 
that is 
$$
[W_1,\dots,W_n]_\EE=
\{\xi\in\partial\DD:W_0(\cF^{k-1}(\xi))=W_k\textrm{ for }k=1,\dots,n\}.
$$
Equation~\eqref{EquationEquivalenceShiftTransitionCylinderInclusion} and 
Equation~\eqref{EquationFiniteWordSubshitfAlphabeth} give $[W_{k+1}]_\EE\subset\domain(W_k)$ for any $k=0,\dots,n-1$, and therefore
$
F_{W_k}\big([W_{k+1}]_\EE\big)\subset[W_k]_\EE
$, 
that is 
$
F_{W_k}\big([W_{k+1}]_\EE\big)=[W_k,W_{k+1}]_\EE
$. 
Iterating we get
$$
[W_1,\dots,W_n]_\EE
=
F_{W_1,\dots,W_n}\big(\domain(W_n)\big).
$$
For $w_n:=(W_1,\dots,W_n)$ as above set  
$
[w_n]_\EE:=[W_1,\dots,W_n]_\EE
$ 
and 
$
F_{w_n}:=F_{W_1,\dots,W_n}
$.

\section{Estimates on contraction and distortion}
\label{SectionEstimatesContractionDistortion}

All constants in this section only depend on the geometry of $\Omega_\DD$ and not on the parameter $T>0$ defining $\cW_T$. We call them \emph{uniform constants}, and don't assign a specific name to each of them. The only exception is the contraction factor $\theta$ in 
Lemma~\ref{LemmaUniformContractionBranches}, which appears also in 
Theorem~\ref{TheoremTransferOperatorCircle} below. The constant $\theta$ provided by 
Lemma~\ref{LemmaUniformContractionBranches} is fixed once and for all and will be used the the constructions which follow in the rest of the paper. All estimates in this section follow from 
Lemma~\ref{LemmaSeparationIsometricCircles}.

\subsection{Distance from the poles}
\label{SectionDistanceFromPoles}

For any admissible word $(a_0,\dots,a_n)$ in the letters of 
$\cA$ set
$$
U_{a_0,\dots,a_n}:=U_{F_{a_0,\dots,a_n}},
$$ 
that is the interior of the isometric circle of $F_{a_0,\dots,a_n}$ (see \S~\ref{SectionIsometricCircles}). We have 
$
|D_z F_{a_0,\dots,a_n}|\leq1
$ 
if and only if $z\in\CC\setminus U_{a_0,\dots,a_n}$. 
We have $\partial U_a\cap \DD=s_{\widehat{a}}$ for any $a\in\cA$. 
Equation~\eqref{EquationIsometricCircleComposition} and a standard inductive argument imply that for any $n$ we have $U_{a_0,\dots,a_n}\subset U_{a_n}$ and thus 
$$
U_{a_0,a_1,\dots,a_n}\subset U_{a_1,\dots,a_n}.
$$
The last inclusion and Equation~\eqref{EquationIsometricCircleComposition} imply
\begin{equation}
\label{EquationInclusionsIsometricCirclesLetters}
F_{a_0,\dots,a_{n-1}}(U_{\widehat{a_n}})
=
F_{\widehat{a_{n-1}},\dots,\widehat{a_0}}^{-1}(U_{\widehat{a_n}})
\subset
U_{\widehat{a_n},\widehat{a_{n-1}},\dots,\widehat{a_0}}
\subset
U_{\widehat{a_0}}.
\end{equation}

\begin{lemma}
\label{LemmaSeparationIsometricCircles}
Consider $m\geq0$ and letters $b_0,\dots,b_m$ such that $(b_0,\dots,b_m)$ is cuspidal. Let $a_0$ be any letter such that $(b_0,\dots,b_m,a_0)$ is admissible but not cuspidal. Then
$$
\overline{U_{b_0,\dots,b_m}}\cap\overline{U_{\widehat{a_0}}}=\emptyset.
$$
\end{lemma}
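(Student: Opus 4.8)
The plan is to reduce the claim to a purely combinatorial statement about the cyclic order map $o:\cA\to\ZZ/2d\ZZ$, and then to the elementary geometry of the isometric circles $U_a$ for single letters. First I would observe that $\overline{U_{b_0,\dots,b_m}}\subset\overline{U_{b_m}}$ by the inclusion following Equation~\eqref{EquationIsometricCircleComposition}, so it suffices to prove the stronger statement $\overline{U_{b_m}}\cap\overline{U_{\widehat{a_0}}}=\emptyset$ under the hypothesis on $a_0$. Wait --- this is too strong in general, since $\overline{U_{b_m}}$ and $\overline{U_{\widehat{a_0}}}$ can share the single boundary point $\xi^{R}_{b_m}=\xi^{L}_{\widehat{a_0}}$ (or the analogous left/right pairing) when the letters are cyclically adjacent. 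So the correct reduction is: the only way two discs $U_c$ and $U_{c'}$ (for $c,c'\in\cA$) can have intersecting closures is if they are equal ($c=c'$) or cyclically adjacent, in which case the intersection is a single point of $\partial\DD$, the common endpoint of the two arcs $[\widehat c]_\DD$ and $[\widehat{c'}]_\DD$. This is because the arcs $[a]_\DD$, $a\in\cA$, tile $\partial\DD$ (up to endpoints) in the cyclic order given by $o$, and $\partial U_a\cap\partial\DD$ is exactly the closure of the complementary arc $\partial\DD\setminus[\widehat a]_\DD$, while the geodesics $s_a=I_{F_{\widehat a}}\cap\DD$ have disjoint interiors inside $\DD$ since they are distinct sides of the ideal polygon $\Omega_\DD$.

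Granting that reduction, I would set $c:=b_m$ and $c':=\widehat{a_0}$ and argue by contradiction. If $\overline{U_{b_0,\dots,b_m}}\cap\overline{U_{\widehat{a_0}}}\neq\emptyset$, then $\overline{U_{b_m}}\cap\overline{U_{\widehat{a_0}}}\neq\emptyset$, so either $b_m=\widehat{a_0}$ or $b_m$ and $\widehat{a_0}$ are cyclically adjacent. The case $b_m=\widehat{a_0}$, i.e. $a_0=\widehat{b_m}$, is excluded by admissibility of $(b_0,\dots,b_m,a_0)$ via Condition~\eqref{EqNobacktrack}. In the cyclic-adjacency case, $o(a_0)=o(\widehat{b_m})\pm1$: I would split according to whether $m=0$ or $m\geq1$. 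When $m\geq1$, Lemma~\ref{LemmaCombinatorialPropertiesCuspidal} characterizes cuspidality of $(b_0,\dots,b_m,a_0)$ by exactly the relation $o(a_0)=o(\widehat{b_m})-1$ (if $\varepsilon(W)=R$, the right-cuspidal recursion Equation~\eqref{EquationRightCuspidalWord(-)}) or $o(a_0)=o(\widehat{b_m})+1$ (if $\varepsilon(W)=L$, Equation~\eqref{EquationLeftCuspidalWord(+)}); since $(b_0,\dots,b_m)$ is cuspidal of one definite type, exactly one of the two values of $o(a_0)$ makes the extended word cuspidal, and that case is excluded by hypothesis, leaving the other --- but the other value of $o(a_0)$ means the closures are disjoint (the discs meet the circle in complementary arcs that only overlap at the "wrong" endpoint). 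Here I must be careful: I need to check that when $o(a_0)=o(\widehat{b_m})+1$ but $\varepsilon(W)=R$, the closed discs $\overline{U_{b_m}}$ and $\overline{U_{\widehat{a_0}}}$ genuinely meet only at a point that does \emph{not} lie in $\overline{U_{b_0,\dots,b_m}}$ --- this is where the finer inclusion $\overline{U_{b_0,\dots,b_m}}\subset U_{b_m}\cup\{\xi_{b_0,\dots,b_m}\}$ and the identification Equation~\eqref{EquationCommonEndpointsCuspidalWords} of the shared endpoint $\xi_W=\xi^{\varepsilon(W)}_{b_0}$ with a \emph{specific} vertex come in. When $m=0$, both $o(a_0)=o(\widehat{b_0})+1$ and $o(a_0)=o(\widehat{b_0})-1$ would make $(b_0,a_0)$ a (right, resp. left) cuspidal word of length $2$, again contradicting non-cuspidality; so neither adjacency can occur and the closures are disjoint outright.

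The main obstacle I anticipate is the bookkeeping in the $m\geq1$ case: distinguishing the endpoint at which $\overline{U_{b_m}}$ and $\overline{U_{\widehat{a_0}}}$ can touch from the endpoint $\xi_W$ belonging to $\overline{U_{b_0,\dots,b_m}}$, and verifying that these are \emph{different} endpoints of the arc $[\widehat{b_m}]_\DD$. Concretely, $\overline{U_{b_0,\dots,b_m}}$ touches $\partial\DD$ near $\xi_W=\xi^{\varepsilon(W)}_{b_0}$, which under the tiling corresponds to the endpoint of $[\widehat{b_m}]_\DD$ on the side dictated by $\varepsilon(W)$ (via Equation~\eqref{EquationCommonEndpointsCuspidalWords} pushed forward by $F_{b_0,\dots,b_{m-1}}$), whereas a cyclically adjacent $\widehat{a_0}$ with the \emph{non-cuspidal} value of $o(a_0)$ touches $\overline{U_{b_m}}$ at the \emph{opposite} endpoint of $[\widehat{b_m}]_\DD$ --- hence is disjoint from $\overline{U_{b_0,\dots,b_m}}$. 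Making this precise requires tracking orientations carefully, but it is elementary once the tiling picture and Lemma~\ref{LemmaCombinatorialPropertiesCuspidal} are in hand; no estimates are needed, only the cyclic combinatorics of $o$ and the nesting Equations~\eqref{EquationIsometricCircleComposition}, \eqref{EquationInclusionsIsometricCirclesLetters}.
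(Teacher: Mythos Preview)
Your overall strategy coincides with the paper's: reduce to the tangency pattern of the single--letter discs $U_c$, observe that distinct such discs have disjoint closures unless the letters are cyclically adjacent (in which case the closures meet at a single vertex of $\Omega_\DD$), and then determine which of the two endpoints of $[\widehat{b_m}]_\DD$ can lie in $\overline{U_{b_0,\dots,b_m}}$. Your treatment of the case $m=0$ is correct and matches the paper.

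There is, however, a genuine confusion in the $m\geq1$ step. The point $\xi_W=\xi^{\varepsilon(W)}_{b_0}$ you invoke is the common endpoint of the \emph{arcs} $[b_0,\dots,b_k]_\DD$ (Equation~\eqref{EquationCommonEndpointsCuspidalWords}); it lies in $\overline{[b_0]_\DD}$ and is unrelated to the disc $U_{b_0,\dots,b_m}$, whose trace on $\partial\DD$ is an arc inside $[\widehat{b_m}]_\DD$ (recall $U_c\cap\partial\DD=[\widehat c]_\DD$ and $U_{b_0,\dots,b_m}\subset U_{b_m}$). The endpoint of $[\widehat{b_m}]_\DD$ that one must track is $\xi^L_{\widehat{b_m}}$ (for $\varepsilon(W)=R$), and this is in general a \emph{different} vertex of $\Omega_\DD$ from $\xi_W$. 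Consequently your ``finer inclusion'' $\overline{U_{b_0,\dots,b_m}}\subset U_{b_m}\cup\{\xi_W\}$ is false as written, and ``pushing forward by $F_{b_0,\dots,b_{m-1}}$'' goes the wrong way: Equation~\eqref{EquationCommonEndpointsCuspidalWords} already lives in the image of that map, whereas $U_{b_0,\dots,b_m}$ lives on the domain side.

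The paper resolves this cleanly by reducing not to the full word but to its last two letters. For $\varepsilon(W)=R$, let $\chi$ be the unique letter with $o(\chi)=o(\widehat{b_m})-1$ (so $(b_{m-1},b_m,\chi)$ is right cuspidal) and let $\xi:=\xi^L_{\widehat{b_m}}$ be the tangency point of $U_{b_m}$ and $U_{\widehat\chi}$. One checks directly that $\overline{F_{b_m}^{-1}(U_{b_{m-1}})}\cap\overline{U_{\widehat\chi}}=\{\xi\}$, and combines this with the nesting $F_{b_m}^{-1}(U_{b_{m-1}})\subset U_{b_{m-1}b_m}\subsetneq U_{b_m}$ from Equation~\eqref{EquationIsometricCircleComposition} to obtain $\overline{U_{b_{m-1}b_m}}\subset\{\xi\}\cup U_{\widehat\chi}\cup U_{b_m}$. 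Since $a_0\notin\{\widehat{b_m},\chi\}$ by admissibility and non--cuspidality, $\overline{U_{\widehat{a_0}}}$ is disjoint from each of the three sets on the right, and $U_{b_0,\dots,b_m}\subset U_{b_{m-1}b_m}$ finishes. This two--letter reduction is the missing ingredient in your plan.
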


\begin{proof}
Let $o:\cA\to\ZZ/2d\ZZ$ be the map in Equation~\eqref{EquationCyclicOrderLetters}. Assume first $m=0$. In this case $(b_0,a_0)$ is not cuspidal if and only if  
$
|o(a_0)-o(\widehat{b_0})|\geq 2
$. 
This last condition implies 
$$
\overline{U_{b_0}}\cap\overline{U_{\widehat{a_0}}}=\emptyset.
$$
If $m\geq 1$ assume without loss of generality that $(b_0,\dots,b_m)$ is right cuspidal, the other case being the same. In particular we have $o(b_m)=o(\widehat{b_{m-1}})-1$. Let $\chi$ be the letter with $o(\chi)=o(\widehat{b_m})-1$, that is the letter such that 
$(b_{m-1},b_m,\chi)$ is right cuspidal, then let $\xi\in\partial\DD$ be the tangency point between the discs $U_{\widehat{\chi}}$ and $U_{b_m}$ (that is 
$
\{\xi\}=[\chi]_\DD\cap[\widehat{b_m}]_\DD
$). 
We have 
$$
\overline{F^{-1}_{b_m}(U_{b_{m-1}})}\cap \overline{U_{\widehat{\chi}}}=\{\xi\}.
$$
Equation~\eqref{EquationIsometricCircleComposition} gives 
$
F^{-1}_{b_m}(U_{b_{m-1}})\subset U_{b_{m-1}b_m}\subset U_{b_m}
$, 
and therefore $U_{b_{m-1}b_m}$ and $U_{\widehat{\chi}}$ are also tangent at $\xi$. Finally the inclusion 
$
U_{b_{m-1}b_m}\subset U_{b_m}
$ 
is strict, thus we get
$$
\overline{U_{b_{m-1}b_m}}\subset \{\xi\}\cup U_{\widehat{\chi}}\cup U_{b_m}.
$$
On the other hand $(b_{m-1},b_m,a_0)$ is not right cuspidal, thus $a_0\not=\widehat{b_m},\chi$, that is the disc $U_{\widehat{a_0}}$ is different from $U_{\widehat{\chi}}$ and from $U_{b_m}$, so that we get 
$$
\overline{U_{b_{m-1},b_m}}\cap\overline{U_{\widehat{a_0}}}=\emptyset.
$$
Then the statement follows because 
$
U_{b_0,\dots,b_m}\subset U_{b_{m-1}b_m}
$ 
according to Equation~\eqref{EquationIsometricCircleComposition}.
\end{proof}

\begin{remark}
For $U\subset\CC$ open and $\epsilon>0$ set 
$
B(U,\epsilon):=\{z\in\CC:\distance(z,U)<\epsilon\}
$. 
Lemma~\ref{LemmaSeparationIsometricCircles} keeps true replacing $U_{\widehat{a_0}}$ by 
$
B(U_{\widehat{a_0}},\epsilon)
$ 
for some $\epsilon>0$. This implies that one can consider an holomorphic version of the transfer operator $L_{(s,T)}$ in Theorem~\ref{TheoremTransferOperatorCircle} below. For such holomorphic transfer operator it is possible to pursue an analysis as in \cite{MoellerPohl}.
\end{remark}

\begin{lemma}
\label{LemmaDistanceFromPoles}
There exists a uniform positive constant $C>1$ such that for any finite block 
$
w_k:=(W_1,\dots,W_k)
$ 
as in Equation~\eqref{EquationFiniteWordSubshitfAlphabeth}, any $a_0\in\cA$ with 
$
[a_0]_\DD\subset\domain(W)
$ 
and any $\xi\in U_{\widehat{a_0}}$ the distance from the pole $\omega=\omega(F_{w_k})$ of $F_{w_k}$ satisfies 
$$
C^{-1}\leq|\xi-\omega|\leq C.
$$ 
In particular the property above holds for any $\xi\in\domain(W)$.
\end{lemma}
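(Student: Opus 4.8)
The plan is to reduce the two-sided bound $C^{-1}\le|\xi-\omega|\le C$ to a compactness argument combined with the separation estimate of Lemma~\ref{LemmaSeparationIsometricCircles}. First I would recall the key geometric fact about isometric circles in $\sugroup(1,1)$: for $F\in\Gamma_0$ the pole $\omega_F=-\overline{\alpha}/\beta$ lies \emph{outside} $\overline{\DD}$ (indeed $|\omega_F|=|\overline{\alpha}/\beta|>1$ since $|\alpha|^2-|\beta|^2=1$), while $F^{-1}$ contracts the euclidean metric strictly outside $\overline{U_{F^{-1}}}$. Applying this with $F=F_{w_k}$, and using the inclusion of isometric circles under composition from Equation~\eqref{EquationIsometricCircleComposition}, I get $U_{F_{w_k}}\subset U_{W_1}$, where $W_1=(b_0,\dots,b_m)$ is the first cuspidal word of the block; hence $\omega=\omega(F_{w_k})\in\overline{U_{W_1}}\subset\overline{U_{b_m}}$ by the chain of inclusions in Equation~\eqref{EquationInclusionsIsometricCirclesLetters}. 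More precisely $\omega\in\overline{U_{W_1}}\subset\overline{U_{b_{m-1}b_m}}$ (or $\overline{U_{b_0}}$ when $m=0$), which is one of the finitely many discs appearing in Lemma~\ref{LemmaSeparationIsometricCircles}.

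The upper bound is then immediate: $\xi\in U_{\widehat{a_0}}\subset\overline{\DD}$ and $\omega\in\overline{U_{W_1}}$, and both $\overline{\DD}$ and the finitely many closures $\overline{U_{b_{m-1}b_m}}$, $\overline{U_{b_0}}$ are bounded subsets of $\CC$; their union over the finite alphabet $\cA$ has finite diameter, giving a uniform $C$. For the lower bound, note that the hypothesis $[a_0]_\DD\subset\domain(W_1)$ means precisely, by the definition in Equation~\eqref{EquationDomainImageCylinderShift}, that $(b_0,\dots,b_m,a_0)$ is admissible but \emph{not} cuspidal — this is exactly the hypothesis of Lemma~\ref{LemmaSeparationIsometricCircles} applied to the cuspidal word $W_1=(b_0,\dots,b_m)$ and the letter $a_0$. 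That lemma yields $\overline{U_{b_0,\dots,b_m}}\cap\overline{U_{\widehat{a_0}}}=\emptyset$; since $U_{w_k}\subset U_{W_1}=U_{b_0,\dots,b_m}$, we also have $\overline{U_{w_k}}\cap\overline{U_{\widehat{a_0}}}=\emptyset$, hence $\omega\in\overline{U_{w_k}}$ is at positive distance from $\overline{U_{\widehat{a_0}}}\ni\xi$. To make this distance \emph{uniform} in $w_k$, I observe that $\overline{U_{b_0,\dots,b_m}}$ is contained in $\overline{U_{b_{m-1}b_m}}$ (or $\overline{U_{b_0}}$), a disc depending only on the last one or two letters of $W_1$, hence ranging over a finite family; so the minimum over this finite family of $\distance\big(\overline{U_{b_{m-1}b_m}},\overline{U_{\widehat{a_0}}}\big)$ over all admissible-but-not-cuspidal triples is a strictly positive uniform constant, and this lower-bounds $|\xi-\omega|$. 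Finally, the last sentence of the statement follows because $\domain(W_1)$ is a finite union of arcs $[a_0]_\DD$ of the type just treated, and each such $[a_0]_\DD$ sits inside the corresponding $U_{\widehat{a_0}}$ up to its endpoints, so the bounds pass to $\xi\in\domain(W_1)$ by continuity.

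The main obstacle I anticipate is the \textbf{uniformity} of the lower bound: Lemma~\ref{LemmaSeparationIsometricCircles} gives emptiness of an intersection for each individual cuspidal word $W_1$, but a priori the gap $\distance\big(\overline{U_{W_1}},\overline{U_{\widehat{a_0}}}\big)$ could shrink to zero as $|W_1|\to\infty$, since there are infinitely many cuspidal words. The resolution — and the crux of the argument — is the reduction noted above: $\overline{U_{W_1}}$ is always contained in $\overline{U_{b_{m-1}b_m}}$, which only depends on the final pair of letters of $W_1$ and hence lives in a \emph{finite} family of discs, so the separation is controlled by a finite minimum. One should verify carefully, using Equation~\eqref{EquationIsometricCircleComposition} and the argument in the proof of Lemma~\ref{LemmaSeparationIsometricCircles}, that $\overline{U_{W_1}}\subset\overline{U_{b_{m-1}b_m}}$ holds for \emph{every} cuspidal word $W_1=(b_0,\dots,b_m)$ with $m\ge1$ (and $\overline{U_{W_1}}\subset\overline{U_{b_0}}$ when $m=0$), regardless of whether $W_1$ is left or right cuspidal; this is where the no-backtracking condition and the combinatorics of Lemma~\ref{LemmaCombinatorialPropertiesCuspidal} are used.
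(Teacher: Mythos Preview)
Your overall strategy is correct and matches the paper's approach, but you have the index backwards in a way that breaks the argument as written. The inclusion from Equation~\eqref{EquationIsometricCircleComposition} gives $U_{FG}\subset U_G$, so iterating on $F_{w_k}=F_{W_1}\circ\cdots\circ F_{W_k}$ yields $U_{w_k}\subset U_{W_k}$, \emph{not} $U_{w_k}\subset U_{W_1}$. (Equivalently, the paper records just before Lemma~\ref{LemmaSeparationIsometricCircles} that $U_{a_0,\ldots,a_n}\subset U_{a_n}$, the \emph{last} letter.) The pole $\omega$ therefore lies in $\overline{U_{W_k}}$, and it is $W_k=(b_0,\ldots,b_m)$ --- the last cuspidal word of the block --- whose final one or two letters you must use for the finite-family reduction. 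This is also the word singled out by the hypothesis: $[a_0]_\DD\subset\domain(W_k)$ says precisely that $W_k\ast(a_0)$ is admissible but not cuspidal, which is the input Lemma~\ref{LemmaSeparationIsometricCircles} requires. With $W_1$ in place of $W_k$ there is no hypothesis linking $a_0$ to the letters of $W_1$, so even if the inclusion held the separation lemma would not apply.

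Once you replace $W_1$ by $W_k$ throughout, your argument is essentially the paper's proof verbatim. One minor side remark: $U_{\widehat{a_0}}$ is \emph{not} contained in $\overline{\DD}$ (its center $\omega_{F_{\widehat{a_0}}}$ has modulus $>1$), but this does not affect your upper bound, since $U_{\widehat{a_0}}$ still ranges over finitely many bounded discs and the poles $\omega_F$ of elements of $\Gamma_0$ accumulate on $\partial\DD$.
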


\begin{proof}
We have 
$
\omega\in U_{W_1,\dots,W_k}\subset U_{W_k}
$. 
Set $W_k=(b_0,\dots,b_m)$ and consider any $a_0\in\cA$ such that 
$
[a_0]_\EE\subset\domain(W_k)
$. 
If $m\geq1$ we have 
$$
U_{W_k}=U_{b_0,\dots,b_m}\subset U_{b_{m-1},b_m}
\quad
\textrm{ and }
\quad
\overline{U_{b_{m-1},b_m}}\cap \overline{U_{\widehat{a_0}}}=\emptyset
$$
otherwise if $m=0$ we have 
$$
U_{W_k}=U_{b_0}
\quad
\textrm{ and }
\quad
\overline{U_{b_0}}\cap \overline{U_{\widehat{a_0}}}=\emptyset,
$$
where in both cases the intersection is empty according to Lemma~\ref{LemmaSeparationIsometricCircles}. Since the sequences $(b_0,b_1,a_0)$ and $(b_0,a_0)$ as in the two cases above vary in a finite set, then there exists an uniform constant $C>0$ such that, for any $w_k=(W_1,\dots,W_k)$ as above we have
$$
\distance
\big(\overline{U_{W_1,\dots,W_k}},\overline{U_{\widehat{a_0}}}\big)
\geq
C^{-1}.
$$
The lower bound for $|\xi-\omega|$ follows. The upper bound holds trivially because in any infinite discrete subgroup of $\sugroup(1,1)$ we have 
$|\omega_F|\to1$ as $\|F\|\to\infty$, that is the poles $\omega_F$ of the maps $z\mapsto F(z)$ accumulate to the unit circle.
\end{proof}

\subsection{Uniform contraction}

\begin{lemma}
\label{LemmaWeakContractionAdmissibleWords}
For any admissible word $(a_0,\dots,a_n)$ and any $z\in\CC\setminus U_{a_n}$ we have
$$
|D_zF_{a_0,\dots,a_n}|\leq 1.
$$
\end{lemma}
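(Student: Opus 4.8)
The plan is to read this off directly from the elementary theory of isometric circles recalled in \S\ref{SectionIsometricCircles}, combined with the nested inclusion $U_{a_0,\dots,a_n}\subset U_{a_n}$ already recorded just above the statement. The guiding observation is that an element $F\in\sugroup(1,1)$ with $\beta(F)\neq0$ is a weak euclidean contraction everywhere outside its isometric disc $U_F$; so as soon as the region $\CC\setminus U_{a_n}$ is contained in the region $\CC\setminus U_{F_{a_0,\dots,a_n}}$ where the \emph{composed} map is a weak contraction, the bound is immediate. In other words, the whole point is to move the hypothesis $z\notin U_{a_n}$ to the statement $z\notin U_{F_{a_0,\dots,a_n}}$, rather than to iterate the chain rule.

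Concretely I would proceed in three short steps. First, set $G:=F_{a_0,\dots,a_n}$ and check that $G$ is a non-trivial element of $\Gamma_0$: since $(a_0,\dots,a_n)$ obeys the no-backtracking Condition~\eqref{EqNobacktrack}, the product $F_{a_0}\circ\dots\circ F_{a_n}$ is a reduced word in the free generators of $\Gamma_0$, hence $G\neq\id$; consequently $G$ does not fix the origin $0\in\DD$, so $\beta(G)\neq0$ and the isometric circle $I_G$ and its interior $U_G=U_{a_0,\dots,a_n}$ are well defined. Second, invoke \S\ref{SectionIsometricCircles}: $|D_zG|<1$ for $z\in\CC\setminus\overline{U_G}$ and $|D_zG|=1$ on $I_G$, hence $|D_zG|\leq1$ for every $z\in\CC\setminus U_G$. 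Third, use the inclusion $U_{a_0,\dots,a_n}\subset U_{a_n}$, which was established above from Equation~\eqref{EquationIsometricCircleComposition} by peeling off the leftmost letter and using that $a_0\neq\widehat{a_1}$ forces $U_{a_0}\cap U_{\widehat{a_n},\dots,\widehat{a_1}}\subset U_{a_0}\cap U_{\widehat{a_1}}=\emptyset$. Since $U_G\subset U_{a_n}$, any $z\in\CC\setminus U_{a_n}$ lies in $\CC\setminus U_G$, and therefore $|D_zF_{a_0,\dots,a_n}|=|D_zG|\leq1$.

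There is no genuine obstacle here: the only point requiring a word of care is that the isometric circle of the composed map be defined at all, which is why the non-triviality of $G$ and $\beta(G)\neq0$ are verified first, and the essential geometric input—the inclusion $U_{a_0,\dots,a_n}\subset U_{a_n}$—has already been supplied. A slightly longer alternative would argue by induction on the word length: write $F_{a_0,\dots,a_n}=F_{a_0}\circ F_{a_1,\dots,a_n}$, obtain $|D_zF_{a_1,\dots,a_n}|\leq1$ for $z\notin U_{a_n}$ from the inductive hypothesis, note that $F_{a_1,\dots,a_n}(\CC\setminus U_{a_n})\subset\overline{U_{\widehat{a_n},\dots,\widehat{a_1}}}\subset\overline{U_{\widehat{a_1}}}$ and that $\overline{U_{\widehat{a_1}}}\cap U_{a_0}=\emptyset$ (again by no-backtracking), so the image point lies outside $U_{a_0}$ and $F_{a_0}$ contracts there as well; the chain rule then closes the induction. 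The direct argument above seems cleaner and is the one I would write.
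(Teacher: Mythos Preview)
Your direct argument is correct: once $U_{a_0,\dots,a_n}\subset U_{a_n}$ is on the table, the lemma is immediate from the definition of the isometric circle of the composed map $G=F_{a_0,\dots,a_n}$, and your verification that $G\neq\id$ (hence $\beta(G)\neq0$) via freeness of $\Gamma_0$ is the right preliminary.

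The paper takes the inductive route you sketch as an alternative, but peels off the \emph{rightmost} letter rather than the leftmost: it writes $D_zF_{a_0,\dots,a_n}=D_{F_{a_n}(z)}F_{a_0,\dots,a_{n-1}}\cdot D_zF_{a_n}$, bounds the second factor by $1$ since $z\notin U_{a_n}$, and the first by the inductive hypothesis after noting that admissibility gives $F_{a_n}(\CC\setminus U_{a_n})\subset U_{\widehat{a_n}}\subset\CC\setminus U_{a_{n-1}}$. This version has the virtue of being self-contained and only ever applying the single-letter contraction, whereas your direct argument outsources the induction to the proof of the inclusion $U_{a_0,\dots,a_n}\subset U_{a_n}$. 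Since that inclusion is already established in the text, your approach is the shorter one here; the two are logically equivalent in content.
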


\begin{proof}
The Lemma holds for $n=0$ because $|D_zF_a|\leq 1$ for any $a\in\cA$ and $z\in\CC\setminus U_{a}$, by definition of isometric circles. For $n\geq1$ the inequality follows by induction observing that 
$$
D_zF_{a_0,\dots,a_n}=
D_{F_{a_n}(z)}F_{a_0,\dots,a_{n-1}}\cdot D_zF_{a_n}
$$
and that admissibility, that is 
$
a_n\not=\widehat{a_{n-1}}
$, 
implies 
$$
F_{a_n}(\CC\setminus U_{a_n})
=
\CC\cap U_{\widehat{a_n}}
\subset
\CC\setminus U_{a_{n-1}}.
$$
\end{proof}

\begin{lemma}
\label{LemmaUniformContractionBranches}
There exists an uniform constant $0<\theta<1$ such that for any $W\in\cW$, any $a_0\in\cA$ with 
$
[a_0]_\DD\subset\domain(W)
$ 
and any $\xi\in U_{\widehat{a_0}}$ we have 
$$
|D_\xi F_W|\leq \theta.
$$ 
In particular the property above holds for any $\xi\in\domain(W)$.
\end{lemma}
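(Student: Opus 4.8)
The plan is to reduce the estimate to the exact expression for the modulus of the derivative of a Möbius transformation, and then to feed in the separation statements already established. For $F\in\sugroup(1,1)$ with $\beta(F)\neq0$ and pole $\omega_F=-\overline{\alpha(F)}/\beta(F)$, differentiating $z\mapsto F(z)$ gives $D_zF=(\beta(F)z+\overline{\alpha(F)})^{-2}$; since $\beta(F)z+\overline{\alpha(F)}=\beta(F)(z-\omega_F)$ and $\rho(F)=|\beta(F)|^{-1}$, this yields the identity
$$
|D_zF|=\frac{\rho(F)^2}{|z-\omega_F|^2}
\qquad\textrm{for all }z\in\CC.
$$
So it suffices to bound $\rho(F_W)$ from above and $|\xi-\omega_{F_W}|$ from below, both by uniform constants. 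Writing $W=(b_0,\dots,b_m)$ and $F_W:=F_{b_0,\dots,b_m}$, the word satisfies Condition~\eqref{EqNobacktrack} and is therefore reduced in the free group $\Gamma_0$, so $F_W\neq\id$, hence $\beta(F_W)\neq0$ and the center $\omega:=\omega_{F_W}$ and radius $\rho:=\rho(F_W)$ of the disc $U_{F_W}$ are well defined.

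For the upper bound I would invoke Equation~\eqref{EquationInclusionsIsometricCirclesLetters} and the inclusion $U_{a_0,\dots,a_n}\subset U_{a_n}$ recorded there: from $U_{F_W}=U_{b_0,\dots,b_m}\subset U_{b_m}=U_{F_{b_m}}$ one gets, comparing radii, $\rho\leq\rho(F_{b_m})\leq R$, where $R:=\max_{a\in\cA}\rho(F_a)<\infty$ is uniform ($\cA$ is finite and each $F_a$ has nonzero $\beta$-coefficient). For the lower bound the point is that the hypothesis $[a_0]_\DD\subset\domain(W)$ means, by the description of $\domain(W)$ in Equation~\eqref{EquationDomainImageCylinderShift}, precisely that $(b_0,\dots,b_m,a_0)$ is admissible but not cuspidal, which is exactly the hypothesis of Lemma~\ref{LemmaSeparationIsometricCircles} and of Lemma~\ref{LemmaDistanceFromPoles}. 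Applying the argument in the proof of Lemma~\ref{LemmaDistanceFromPoles} to the one-word block $(W)$ then furnishes a uniform $C>1$ with $\distance(\overline{U_{\widehat{a_0}}},\overline{U_{F_W}})\geq C^{-1}$; since $\xi\in U_{\widehat{a_0}}$ lies outside the disc $\overline{U_{F_W}}$ of center $\omega$ and radius $\rho$, this forces $|\xi-\omega|=\rho+\distance(\xi,\overline{U_{F_W}})\geq\rho+C^{-1}$.

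Combining the two bounds, $|D_\xi F_W|=\rho^2/|\xi-\omega|^2\leq\rho^2/(\rho+C^{-1})^2\leq R^2/(R+C^{-1})^2=:\theta$, where the last step uses the monotonicity of $t\mapsto t^2/(t+C^{-1})^2$ on $[0,\infty)$ together with $\rho\leq R$; thus $0<\theta<1$ and $\theta$ depends only on the geometry of $\Omega_\DD$. The final \emph{in particular} clause is immediate, since every $\xi\in\domain(W)$ lies in some arc $[a_0]_\DD\subset\domain(W)$ and $[a_0]_\DD\subset U_{\widehat{a_0}}$. The one place needing care is the lower bound on $|\xi-\omega|$: one must separate $\xi$ from the entire disc $\overline{U_{F_W}}$, not merely from the pole $\omega$, so as to gain the extra summand $\rho$ — with only the pole separation the crude bound $|D_\xi F_W|\leq R^2C^2$ need not be less than $1$. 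Checking that the proof of Lemma~\ref{LemmaDistanceFromPoles} does in fact deliver this stronger, disc‑to‑disc separation (through the finiteness of the relevant two‑ and three‑letter configurations in Lemma~\ref{LemmaSeparationIsometricCircles}) is the only real subtlety; no new estimate is required.
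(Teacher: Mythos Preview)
Your proof is correct, and it takes a genuinely different route from the paper's. The paper factors $|D_\xi F_W|$ through the chain rule: it isolates the contribution of the last one or two letters of $W$, bounds that factor by a constant $\theta_1$ or $\theta_2$ strictly less than $1$ (by compactness, since the relevant pairs $(b_0,a_0)$ and triples $(b_{m-1},b_m,a_0)$ range over a finite set and Lemma~\ref{LemmaSeparationIsometricCircles} keeps $\xi$ outside the corresponding isometric disc), and bounds the remaining factor by $1$ via Lemma~\ref{LemmaWeakContractionAdmissibleWords}. You instead work directly with the full map $F_W$ through the explicit M\"obius formula $|D_\xi F_W|=\rho(F_W)^2/|\xi-\omega_{F_W}|^2$: the inclusion $U_{F_W}\subset U_{b_m}$ caps $\rho(F_W)$ by $R=\max_{a}\rho(F_a)$, and the disc--to--disc separation extracted from the proof of Lemma~\ref{LemmaDistanceFromPoles} gives $|\xi-\omega|\geq\rho(F_W)+C^{-1}$, whence $\theta=R^2/(R+C^{-1})^2$. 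Your argument is slightly more quantitative (it produces an explicit $\theta$) and dispenses with Lemma~\ref{LemmaWeakContractionAdmissibleWords}; the paper's decomposition, on the other hand, makes transparent that only the tail letters of $W$ matter for the contraction, which is exactly the mechanism reused in Corollary~\ref{CorollaryExponentiallyContractingDistance}. Your self--identified subtlety is on point: the naive pole--separation bound $|\xi-\omega|\geq C^{-1}$ from the \emph{statement} of Lemma~\ref{LemmaDistanceFromPoles} would only give $|D_\xi F_W|\leq R^2C^2$, so one really does need to go into that lemma's proof and use that it actually establishes $\distance(\overline{U_{\widehat{a_0}}},\overline{U_{F_W}})\geq C^{-1}$.
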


\begin{proof}
Let $\cV_1$ be the set of words $(b_0,a_0)$ which are admissible and not cuspidal, then set  
$$
\theta_1:=\sup \{|D_\xi F_{b_0}|:(b_0,a_0)\in\cV_1,\xi\in U_{\widehat{a_0}}\}.
$$
We have $0<\theta_1<1$ according to Lemma~\ref{LemmaSeparationIsometricCircles}, where we recall that 
$|D_\xi F_{b_0}|<1$ if and only if $\xi\in\CC\setminus \overline{U_{b_0}}$. Similarly, let $\cV_2$ be the set of admissible words $(b_0,b_1,a_0)$ such that $(b_0,b_1)$ is cuspidal but $(b_0,b_1,a_0)$ is not cuspidal. By a similar argument, we have $0<\theta_2<1$, where 
$$
\theta_2:=\sup \{|D_\xi F_{b_0,b_1}|:(b_0,b_1,a_0)\in\cV_2,\xi\in U_{\widehat{a_0}}\}.
$$
In the general case consider $W=(b_0,\dots,b_m)$. If $m=0$ we have  
$
|D_\xi F_W|=|D_\xi F_{b_0}|
$, 
which is bounded by $\theta_1$. Otherwise, if $m\geq1$, the chain rule applied to $|D_\xi F_W|$ gives the factor 
$
|D_\xi F_{b_{m-1},b_m}|
$, 
which is bounded by $\theta_2$, and an other factor which is $\leq1$ because of Lemma~\ref{LemmaWeakContractionAdmissibleWords}. Hence in general the statement holds with  
$
\theta:=\max\{\theta_1,\theta_2\}
$. 
\end{proof}

\begin{corollary}
\label{CorollaryExponentiallyContractingDistance}
Consider $k\geq1$ and a block of cuspidal words  
$
w_k=(W_1,\dots,W_k)
$ 
as in Equation~\eqref{EquationFiniteWordSubshitfAlphabeth}. Then for any 
$a\in\cA$ with 
$
[a]_\DD\subset\domain(W_k)
$ 
and any $\xi',\xi$ in $U_{\widehat{a}}$ we have 
$$
\big|F_{w_k}(\xi')-F_{w_k}(\xi)\big|\leq \theta^k|\xi'-\xi|.
$$
\end{corollary}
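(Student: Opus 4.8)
The plan is to combine the uniform contraction from Lemma~\ref{LemmaUniformContractionBranches} with the chain rule, exploiting the nesting of isometric circles established in \S~\ref{SectionEstimatesContractionDistortion}. First I would write $F_{w_k}=F_{W_1}\circ F_{W_2}\circ\dots\circ F_{W_k}$ and apply the chain rule to bound $|D_\xi F_{w_k}|$ by a product $\prod_{j=1}^k |D_{\eta_j} F_{W_j}|$, where $\eta_k=\xi$ and $\eta_j=F_{W_{j+1},\dots,W_k}(\xi)$ for $j<k$. The point is that each factor should be $\leq\theta$, and for this I need each $\eta_j$ to lie in $U_{\widehat{a^{(j)}}}$ for some letter $a^{(j)}$ with $[a^{(j)}]_\DD\subset\domain(W_j)$, so that Lemma~\ref{LemmaUniformContractionBranches} applies termwise.

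The key step is tracking the image sets. Starting from $\xi\in U_{\widehat{a}}$ with $[a]_\DD\subset\domain(W_k)$, I would show inductively that $F_{W_{j+1},\dots,W_k}(U_{\widehat{a}})\subset[W_{j+1},\dots,W_k]_\EE$, hence is contained in $\domain(W_j)$ by the admissibility condition~\eqref{EquationFiniteWordSubshitfAlphabeth} together with~\eqref{EquationEquivalenceShiftTransitionCylinderInclusion} — the cylinder $[W_{j+1}]_\EE$ sits inside $\domain(W_j)$ because $M_{W_j,W_{j+1}}=1$. Once $F_{W_{j+1},\dots,W_k}(\xi)\in\domain(W_j)$, it lies in some $[\chi]_\DD\subset U_{\widehat\chi}$ with $[\chi]_\DD\subset\domain(W_j)$, which is exactly the hypothesis needed to invoke Lemma~\ref{LemmaUniformContractionBranches} and conclude $|D_{\eta_j}F_{W_j}|\leq\theta$. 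Multiplying the $k$ factors gives $|D_\xi F_{w_k}|\leq\theta^k$ pointwise on $U_{\widehat a}$, and since $U_{\widehat a}$ is a disc (hence convex) one integrates this bound along the segment joining $\xi$ to $\xi'$ to obtain $|F_{w_k}(\xi')-F_{w_k}(\xi)|\leq\theta^k|\xi'-\xi|$.

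The main obstacle I anticipate is bookkeeping the cylinder–domain containments correctly: one must be careful that ``$[a]_\DD\subset\domain(W_k)$'' at the innermost stage propagates, under successive applications of the generators, to the analogous condition at each intermediate stage, and that the relevant disc $U_{\widehat a}$ (rather than just the arc $[a]_\DD$) is carried inside the next cylinder. The inclusions $[W_k]_\EE=F_{W_k}(\domain(W_k))$ and $F_{W_j}([W_{j+1}]_\EE)=[W_j,W_{j+1}]_\EE$ from \S~\ref{SectionCylindersCantorSets} are what make this work, but since $\domain$ is a union of full arcs $[\chi]_\DD$ one actually gets the disc version for free: $F_{W_{j+1},\dots,W_k}(U_{\widehat a})$ is contained in the isometric disc of the composed generator, which by Equation~\eqref{EquationInclusionsIsometricCirclesLetters} lies inside a single $U_{\widehat\chi}$ with $[\chi]_\DD\subset\domain(W_j)$. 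The rest is the convexity/mean-value argument, which is routine.
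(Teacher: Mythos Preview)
Your approach is essentially the paper's own: factor $F_{w_k}$ via the chain rule, use Equation~\eqref{EquationInclusionsIsometricCirclesLetters} to show that $F_{W_{l},\dots,W_k}(U_{\widehat a})\subset U_{\widehat{a_l}}$ where $a_l$ is the first letter of $W_l$ (with $[a_l]_\DD\subset\domain(W_{l-1})$ since $\domain(W_{l-1})$ is a union of full arcs and contains $[W_l]_\EE$), apply Lemma~\ref{LemmaUniformContractionBranches} to each factor, and integrate along the straight segment in the convex disc $U_{\widehat a}$. One remark: your intermediate claim $F_{W_{j+1},\dots,W_k}(U_{\widehat a})\subset[W_{j+1},\dots,W_k]_\EE$ is a type mismatch (a disc cannot sit inside an arc), but you correctly discard it in the last paragraph in favour of the disc inclusion from~\eqref{EquationInclusionsIsometricCirclesLetters}, which is exactly what the paper uses.
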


\begin{proof}
Define $\gamma:[0,1]\to\CC$ by $\gamma(t):=\xi+t(\xi'-\xi)$. 
In particular $\gamma(t)\in U_{\widehat{a}}$ for any $t\in[0,1]$, so that 
$|D_{\gamma(t)}F_{W_k}|\leq\theta$ by  Lemma~\ref{LemmaUniformContractionBranches}. 
If $k\geq2$, for $l=2,\dots,k$, let $a_l$ be the first letter of $W_l$. We have 
$
[W_l]_\DD\subset[a_l]_\DD
$, 
but on the other hand $[W_l]_\DD\subset\domain(W_{l-1})$, according to 
Equation~\eqref{EquationEquivalenceShiftTransitionCylinderInclusion}. Therefore 
$
[a_l]_\DD\subset\domain(W_{l-1})
$. 
Moreover Equation~\eqref{EquationInclusionsIsometricCirclesLetters} implies 
$$
F_{W_l,\dots,W_k}(\gamma(t))
\in 
F_{W_l,\dots,W_k}(U_{\widehat{a}})
\subset
U_{\widehat{a_l}}.
$$
We get 
$
|D_{\gamma(t)}F_{W_1,\dots,W_k}|\leq\theta^k
$ 
factorizing the derivative with the chain rule and applying Lemma~\ref{LemmaUniformContractionBranches} to each factor. Finally the statement follows observing that
\begin{align*}
\big|F_{w_k}(\xi')-F_{w_k}(\xi)\big|
\leq
\int_0^1\big|d/dt F_{w_k}\big(\gamma(t)\big)\big|dt
=
\int_0^1\big|D_{\gamma(t)}F_{w_k}\big|\cdot|d\gamma(t)/dt|dt
\leq
\theta^k\cdot|\xi'-\xi|.
\end{align*}
\end{proof}

\subsection{Distortion of the derivative}

Consider blocks $w_k=(W_1,\dots,W_k)$ of cuspidal words as in 
Equation~\eqref{EquationFiniteWordSubshitfAlphabeth}, and for any such $w_k$ set 
\begin{equation}
\label{EquationDefinitionSupremumDerivativeBlockWords}
\|DF_{w_k}\|_\infty:=\sup_{\xi\in\domain(W_k)}|D_\xi F_{w_k}|.
\end{equation}

\begin{lemma}
\label{LemmaLipschitzConstantLogDerivative}
There exists an uniform constant $C>0$ such that for any $0<T\leq +\infty$ and $0<s\leq1$, any $a\in\cA$, $k\in\NN$ and any finite block $w_k=(W_1,\dots,W_k)$ as in Equation~\eqref{EquationFiniteWordSubshitfAlphabeth}, the map 
$$
\phi:\domain(W_k)\to\RR_+
\quad;\quad
\phi(\xi):=s\ln|D_\xi F_{w_k}|
$$
is Lipschitz with $\lipschitz(\phi)<C$.
\end{lemma}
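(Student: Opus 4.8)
\emph{Proof proposal.} The plan is to reduce $\ln|D_\xi F_{w_k}|$ to (a constant minus) the logarithm of the distance to the pole of $F_{w_k}$, and then control that distance from below by Lemma~\ref{LemmaDistanceFromPoles}. Write $F:=F_{w_k}=F_{W_1}\circ\cdots\circ F_{W_k}\in\Gamma_0\subset\sugroup(1,1)$ and let $\alpha=\alpha(F)$, $\beta=\beta(F)$ be its coefficients as in Equation~\eqref{EquationCoefficientsSU(1,1)}. We may assume $F\neq\id$, otherwise $\phi\equiv0$ and there is nothing to prove; then $\beta\neq0$, so the pole $\omega:=\omega(F)=-\overline{\alpha}/\beta$ and the radius $\rho:=\rho(F)=|\beta|^{-1}$ of the isometric circle are defined as in \S~\ref{SectionIsometricCircles}. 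Differentiating $F(\xi)=(\alpha\xi+\overline{\beta})/(\beta\xi+\overline{\alpha})$ and using $|\alpha|^2-|\beta|^2=1$ gives $D_\xi F=(\beta\xi+\overline{\alpha})^{-2}$, hence $|D_\xi F|=|\beta\xi+\overline{\alpha}|^{-2}=\rho^2/|\xi-\omega|^2$, and therefore
$$
\phi(\xi)=s\ln|D_\xi F_{w_k}|=2s\ln\rho-2s\ln|\xi-\omega|,\qquad\xi\in\domain(W_k).
$$

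Next I would estimate the increments of $\phi$ directly. For $\xi,\xi'\in\domain(W_k)$,
$$
|\phi(\xi)-\phi(\xi')|=2s\,\big|\ln|\xi-\omega|-\ln|\xi'-\omega|\big|\le 2s\,\frac{\big||\xi-\omega|-|\xi'-\omega|\big|}{\min\{|\xi-\omega|,|\xi'-\omega|\}}\le 2s\,\frac{|\xi-\xi'|}{\min\{|\xi-\omega|,|\xi'-\omega|\}},
$$
where I use the elementary inequality $|\ln a-\ln b|\le|a-b|/\min\{a,b\}$ for $a,b>0$ together with the reverse triangle inequality. By Lemma~\ref{LemmaDistanceFromPoles} (applied with the last word $W_k$ of the block in the role of the cuspidal word $W$ appearing there, so that $\domain(W_k)$ is the set denoted $\domain(W)$ in that statement) there is a uniform constant $C_0>1$, depending only on the geometry of $\Omega_\DD$, with $|\xi-\omega|\ge C_0^{-1}$ for every $\xi\in\domain(W_k)$ and every admissible block $w_k$. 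Combining this with the displayed bound and with $s\le1$ gives $|\phi(\xi)-\phi(\xi')|\le 2sC_0\,|\xi-\xi'|\le 2C_0\,|\xi-\xi'|$, so the Lemma holds with, for instance, $C:=2C_0+1$, a value independent of $T$, of $s$, of $k$, of the letter $a$, and of the block $w_k$.

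The only point that requires a word of care — and the main, though minor, obstacle — is that $\domain(W_k)$ is a \emph{disjoint} union of several arcs of $\partial\DD$, so one cannot simply bound the derivative of $\phi$ pointwise and integrate along a path joining $\xi$ to $\xi'$ inside the domain. The computation above sidesteps this, because it compares $\phi(\xi)$ and $\phi(\xi')$ through their common reference point $\omega$, whose distance to \emph{all} of $\domain(W_k)$ is bounded below by Lemma~\ref{LemmaDistanceFromPoles}; no connecting path inside $\domain(W_k)$ is needed. (If one prefers to measure distances on $\partial\DD$ with the intrinsic arc-length metric instead of the chordal distance $|\xi-\xi'|$, the two are bi-Lipschitz equivalent with a universal constant, so the conclusion is unchanged; and restricting $\phi$ to a subarc $[a]_\DD\subset\domain(W_k)$, which is the only possible role of the auxiliary letter $a$, merely shrinks the domain and hence cannot increase the Lipschitz constant.)
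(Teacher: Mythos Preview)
Your proof is correct and follows essentially the same approach as the paper: express $\phi(\xi)$ as a constant minus $2s\ln|\xi-\omega|$, bound $|\ln a-\ln b|$ by $|a-b|/\min\{a,b\}$ (the paper phrases this as $|\ln x-\ln y|\le C|x-y|$ on $[C^{-1},\infty)$), apply the reverse triangle inequality, and invoke Lemma~\ref{LemmaDistanceFromPoles} for the uniform lower bound on $|\xi-\omega|$. Your additional remarks on the disconnectedness of $\domain(W_k)$ and the role of the letter $a$ are helpful clarifications but not needed for the argument.
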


\begin{proof}
In the notation of Equation~\eqref{EquationCoefficientsSU(1,1)}, let $\alpha$ and $\beta$ with 
$
|\alpha|^2-|\beta|^2=1
$ 
such that
$$
F_{w_k}(\xi)=
\frac{\alpha\xi+\overline{\beta}}{\beta\xi+\overline{\alpha}}
\quad
\textrm{ and }
\quad
D_\xi F_{w_k}=
\frac{1}{\beta^2\cdot(\xi-\omega)^2},
$$
where $\omega:=-\overline{\alpha}/\beta$ is the pole of $F_{w_k}$. We have 
$$
\phi(\xi)
=
\log\big(|D_\xi F_{w_k}|^s\big)
=
-2s\big(\ln|\beta|+\ln|\xi-\omega|\big),
$$
where we recall that $\beta(F)\not=0$ for any $F\in\Gamma_0$, since the latter does not have elliptic fixed points. Thus for any  
$\xi,\xi'\in\domain(W_k)$ we have 
$$
|\phi(\xi')-\phi(\xi)|=
2s\big|\ln|\xi'-\omega|-\ln|\xi-\omega|\big|
\leq
2sC\big||\xi'-\omega|-|\xi-\omega|\big|
\leq
2sC\big|\xi'-\xi\big|.
$$
The last inequality follows by triangle inequality for $|\cdot|$. The first inequality follows from 
Lemma~\ref{LemmaDistanceFromPoles} observing that 
$
|\ln(x)-\ln(y)|\leq C|x-y|
$ 
for any $x,y$ in $[C^{-1},+\infty)$.
\end{proof}

\begin{corollary}
\label{CorollaryLipschitzConstantDerivative}
There exists an uniform constant $C>0$ such that for any $0<T\leq +\infty$, any $k\in\NN$ and any finite block $w_k=(W_1,\dots,W_k)$ as in Equation~\eqref{EquationFiniteWordSubshitfAlphabeth} the following holds.
\begin{enumerate}
\item
For any $\xi',\xi$ in $\domain(W_k)$ we have
$$
\frac{1}{C}
\leq
\frac
{|D_{\xi'} F_{w_k}|}
{|D_{\xi}F_{w_k}|}
\leq C.
$$
\item
For any $0<s\leq1$ and any $\xi',\xi$ in $\domain(W_k)$ we have
$$
\big||D_{\xi'}F_{w_k}|^s-|D_{\xi}F_{w_k}|^s\big|
\leq
C\cdot\|DF_{w_k}\|^s_\infty\cdot|\xi'-\xi|.
$$
\end{enumerate}
\end{corollary}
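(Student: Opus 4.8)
The plan is to bootstrap both estimates from the Lipschitz bound on $\phi(\xi)=s\ln|D_\xi F_{w_k}|$ proved in Lemma~\ref{LemmaLipschitzConstantLogDerivative}, using that $\domain(W_k)\subset\partial\DD$ has euclidean diameter at most $2$. Write $C_0$ for the uniform Lipschitz constant supplied by that lemma, so that $|\phi(\xi')-\phi(\xi)|\leq C_0|\xi'-\xi|\leq 2C_0$ for all $\xi,\xi'\in\domain(W_k)$, uniformly in $T$, $s$, $k$ and $w_k$.

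For Point (1), I would apply Lemma~\ref{LemmaLipschitzConstantLogDerivative} with $s=1$, obtaining $\big|\ln|D_{\xi'}F_{w_k}|-\ln|D_\xi F_{w_k}|\big|\leq 2C_0$; exponentiating gives $e^{-2C_0}\leq |D_{\xi'}F_{w_k}|/|D_\xi F_{w_k}|\leq e^{2C_0}$, which is the claim with constant $e^{2C_0}$. For Point (2), I would fix $0<s\leq1$, set $a:=|D_{\xi'}F_{w_k}|$, $b:=|D_\xi F_{w_k}|$ and $t:=\phi(\xi')-\phi(\xi)$, and write $a^s-b^s=b^s\big((a/b)^s-1\big)=b^s\big(e^{t}-1\big)$. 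The elementary inequality $|e^t-1|\leq |t|e^{|t|}$ together with $|t|\leq C_0|\xi'-\xi|\leq 2C_0$ then gives $\big|(a/b)^s-1\big|\leq C_0e^{2C_0}|\xi'-\xi|$. Since $b\leq \|DF_{w_k}\|_\infty$ by Equation~\eqref{EquationDefinitionSupremumDerivativeBlockWords} and $s>0$, we have $b^s\leq\|DF_{w_k}\|^s_\infty$, so $\big||D_{\xi'}F_{w_k}|^s-|D_\xi F_{w_k}|^s\big|\leq C_0e^{2C_0}\,\|DF_{w_k}\|^s_\infty\,|\xi'-\xi|$. Taking $C:=\max\{e^{2C_0},C_0e^{2C_0}\}$, which depends only on the geometry of $\Omega_\DD$, proves the Corollary.

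There is no serious obstacle here: the statement is a direct corollary of the already-established Lipschitz control of $\log|DF_{w_k}|$. The only points requiring care will be that $\domain(W_k)$ has uniformly bounded diameter, so that exponentiating a uniform Lipschitz bound again yields a uniform constant, and that the exponent $s$ must not leak into the constant in Point (2) — which is automatic, since the Lipschitz constant in Lemma~\ref{LemmaLipschitzConstantLogDerivative} is uniform in $s$ and the factor $\|DF_{w_k}\|^s_\infty$ is exactly what absorbs the $b^s$ term.
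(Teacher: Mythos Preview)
Your proof is correct and follows essentially the same route as the paper. For Point~(2) your computation is identical to the paper's: write $|D_{\xi'}F_{w_k}|^s-|D_\xi F_{w_k}|^s=e^{\phi(\xi')}-e^{\phi(\xi)}$, bound using $|e^t-1|\leq|t|e^{|t|}$ and the Lipschitz constant from Lemma~\ref{LemmaLipschitzConstantLogDerivative}, then absorb $b^s$ into $\|DF_{w_k}\|_\infty^s$. For Point~(1) you exponentiate the log-Lipschitz bound directly, whereas the paper re-derives the ratio estimate from the explicit formula $|D_\xi F_{w_k}|=|\beta|^{-2}|\xi-\omega|^{-2}$ together with Lemma~\ref{LemmaDistanceFromPoles}; since that formula and pole-distance bound are exactly what prove Lemma~\ref{LemmaLipschitzConstantLogDerivative}, the two arguments are the same computation at different levels of packaging.
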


\begin{proof}
Point (1) follows considering the expression of $F_{w_k}$ given by    
Equation~\eqref{EquationCoefficientsSU(1,1)} and observing that the ratio of the derivative at different points is bounded by Lemma~\ref{LemmaDistanceFromPoles}. We prove Point (2) using the same notation as in the proof of Lemma~\ref{LemmaLipschitzConstantLogDerivative}. In particular let $C'>0$ be the constant in the Lemma. We have 
$
\phi(\xi')\leq\phi(\xi)+C'\cdot|\xi'-\xi|
$. 
It follows
\begin{align*}
e^{\phi(\xi')}-e^{\phi(\xi)}
&
\leq
e^{\phi(\xi)+C'\cdot|\xi'-\xi|}-e^{\phi(\xi)}
\leq
e^{\phi(\xi)}\big(e^{C'\cdot|\xi'-\xi|}-1\big)
\\
&
\leq
e^{\phi(\xi)}\cdot e^{C'\cdot|\xi'-\xi|}\cdot C'\cdot|\xi'-\xi|
=
|D_\xi F_{w_k}|^s\cdot e^{C'\cdot|\xi'-\xi|}\cdot C'\cdot|\xi'-\xi|
\\
&
\leq
\|D F_{w_k}\|_\infty^s\cdot e^{C'\cdot|\xi'-\xi|}\cdot C'\cdot|\xi'-\xi|
\leq
C'e^{2C'}\cdot \|D F_{w_k}\|_\infty^s\cdot |\xi'-\xi|,
\end{align*}
where the third equality holds because $e^R-1\leq Re^R$ for any $R\geq0$ and the last inequality uses that $|\xi'-\xi|\leq 2$ for any $\xi,\xi'$ in $\partial\DD$. The statement follows because by symmetry the same bound holds for $e^{\phi(\xi)}-e^{\phi(\xi')}$.
\end{proof}

\subsection{Sizes of cylinders and intervals in the Cantor set}

Fix $n\geq0$. Consider a block $w_n=(W_0,\dots,W_n)$ as in 
Equation~\eqref{EquationFiniteWordSubshitfAlphabeth} and an interval $I\subset\domain(W_n)$. Parametrizing it by arc length $[0,|I|]\to I$, $t\mapsto\xi(t)$ we get 
$$
\big|F_{W_0,\dots,W_n}(I)\big|
=
\int_0^{|I|}\big|d/dt \big(F_{W_0,\dots,W_n}\circ\xi(t)\big)\big|dt
=
\int_0^{|I|}|D_{\xi(t)}F_{W_0,\dots,W_n}|dt,
$$
and thus
\begin{equation}
\label{EquationArcLengthAndDerivative}
|I|\cdot\inf_{\xi\in I}|D_\xi F_{W_0,\dots,W_n}|
\leq 
\big|F_{W_0,\dots,W_n}(I)\big|
\leq
|I|\cdot\sup_{\xi\in I}|D_\xi F_{W_0,\dots,W_n}|.
\end{equation}

Equation~\eqref{EquationArcLengthAndDerivative} refers to arc length of segments in $\partial\DD$, which is comparable with the diameter, as subsets of $\CC$. 
Proposition~\ref{PropositionSizeGapsAndIntervals} below holds for both diameter and arc length. 

\begin{proposition}
\label{PropositionSizeGapsAndIntervals}
There is an uniform constant $C>0$ such that for any finite block of words $w_k=(W_1,\dots,W_k)$ as in Equation~\eqref{EquationFiniteWordSubshitfAlphabeth} the following holds. 
\begin{enumerate}
\item
The cylinder
$
[W_1,\dots,W_k]_\EE
$ 
has size
$$
\big|
[W_1,\dots,W_k]_\EE
\big|
<C\cdot\theta^k.
$$
\item
For any $0\leq m\leq k$ we have 
$$
\big|[W_1,\dots,W_k]_\EE\big|
\leq
C\cdot\theta^{k-m}\cdot \big|[W_1,\dots,W_m]_\EE\big|.
$$
\item
For any interval $I\subset\domain(W_k)$ we have
$$
\big|F_{W_1,\dots,W_k}(I)\big|\geq \frac{|I|}{C}\cdot\big|[W_1,\dots,W_k]_\EE\big|.
$$
\end{enumerate}
\end{proposition}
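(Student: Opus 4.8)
\emph{Strategy.} The proof rests on the identity $[W_1,\dots,W_k]_\EE=F_{W_1,\dots,W_k}\big(\domain(W_k)\big)$ from \S\ref{SectionCylindersCantorSets}, together with the length formula Equation~\eqref{EquationArcLengthAndDerivative}, the uniform contraction of Corollary~\ref{CorollaryExponentiallyContractingDistance}, and the bounded distortion of Corollary~\ref{CorollaryLipschitzConstantDerivative}. First I would record two preliminary facts used in all three points. (i) By Equation~\eqref{EquationDomainImageCylinderShift} the set $\domain(W)$ is obtained from the circle by deleting the two or three arcs $[\chi]_\DD$ whose labels occupy consecutive positions for the cyclic order $o$; hence $\domain(W)$ is a single arc of length $\le 2\pi$, and since by Equation~\eqref{EquationDomainImageCylinderShift} it depends only on the last letter of $W$ and on $\varepsilon(W)$ --- finitely many possibilities --- its length is also bounded below by a uniform constant $c_0>0$. (ii) Since $[\chi]_\DD\subset U_{\widehat\chi}$ for each letter $\chi$, the arc $\domain(W_k)$ is covered by the discs $U_{\widehat\chi}$ over the finitely many $\chi$ with $[\chi]_\DD\subset\domain(W_k)$; running the chain rule estimate in the proof of Corollary~\ref{CorollaryExponentiallyContractingDistance} on each such $U_{\widehat\chi}$ gives $|D_\xi F_{W_1,\dots,W_k}|\le\theta^k$ for \emph{every} $\xi\in\domain(W_k)$, and combined with Corollary~\ref{CorollaryLipschitzConstantDerivative}(1) it follows that $\sup_{\domain(W_k)}|DF_{W_1,\dots,W_k}|\le C\inf_{\domain(W_k)}|DF_{W_1,\dots,W_k}|$ with $C$ uniform.

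\emph{The three points.} For Point (1) I would apply the upper bound in Equation~\eqref{EquationArcLengthAndDerivative} with $I=\domain(W_k)$:
$$
\big|[W_1,\dots,W_k]_\EE\big|=\big|F_{W_1,\dots,W_k}(\domain(W_k))\big|\le|\domain(W_k)|\cdot\sup_{\domain(W_k)}|DF_{W_1,\dots,W_k}|\le 2\pi\,\theta^k .
$$
For Point (2), with $1\le m\le k$, write $F_{W_1,\dots,W_k}=F_{W_1,\dots,W_m}\circ F_{W_{m+1},\dots,W_k}$, so that $[W_1,\dots,W_k]_\EE=F_{W_1,\dots,W_m}\big([W_{m+1},\dots,W_k]_\EE\big)$; by Equation~\eqref{EquationEquivalenceShiftTransitionCylinderInclusion} and the transition conditions~\eqref{EquationFiniteWordSubshitfAlphabeth} the inner cylinder lies in $[W_{m+1}]_\EE\subset\domain(W_m)$, so Equation~\eqref{EquationArcLengthAndDerivative} gives $\big|[W_1,\dots,W_k]_\EE\big|\le\big|[W_{m+1},\dots,W_k]_\EE\big|\cdot\sup_{\domain(W_m)}|DF_{W_1,\dots,W_m}|$. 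Now bound the first factor by $C\theta^{k-m}$ using Point (1), and use $\big|[W_1,\dots,W_m]_\EE\big|\ge c_0\,\inf_{\domain(W_m)}|DF_{W_1,\dots,W_m}|$ (from (i) and the lower bound in Equation~\eqref{EquationArcLengthAndDerivative}) together with the distortion bound in (ii) to replace $\sup_{\domain(W_m)}|DF_{W_1,\dots,W_m}|$ by a uniform multiple of $\big|[W_1,\dots,W_m]_\EE\big|$; the case $m=0$ is Point (1). Point (3) is the same computation read in reverse: the lower bound in Equation~\eqref{EquationArcLengthAndDerivative} gives $\big|F_{W_1,\dots,W_k}(I)\big|\ge|I|\,\inf_{\domain(W_k)}|DF_{W_1,\dots,W_k}|$, while the computation of Point (1) together with the distortion bound gives $\big|[W_1,\dots,W_k]_\EE\big|\le 2\pi\,C\,\inf_{\domain(W_k)}|DF_{W_1,\dots,W_k}|$, and dividing yields the claim.

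\emph{On the difficulties.} The chain rule bookkeeping for blocks of cuspidal words is already carried out in Corollary~\ref{CorollaryExponentiallyContractingDistance}, and the passage between arc length and Euclidean diameter is harmless, since Equation~\eqref{EquationArcLengthAndDerivative} together with the contraction keep all the relevant arcs short, where the two notions are comparable up to a uniform factor. The only genuinely substantive step is preliminary fact (i): that $\domain(W)$ is, uniformly in $W$ and in $T$, a single arc of length bounded away from $0$ and from $2\pi$. This is precisely what allows the distortion estimate to be converted into the comparison $\inf_{\domain(W_m)}|DF_{w_m}|\asymp\big|[w_m]_\EE\big|$ with a uniform constant, and it is therefore the point on which Points (2) and (3) --- hence the Ahlfors-regularity of the Gibbs measure used later --- ultimately hinge.
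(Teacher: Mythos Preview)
Your proof is correct and follows essentially the same route as the paper: both arguments rest on the identity $[W_1,\dots,W_k]_\EE=F_{W_1,\dots,W_k}(\domain(W_k))$, the arc-length formula~\eqref{EquationArcLengthAndDerivative}, the contraction bound $|D_\xi F_{w_k}|\le\theta^k$ from Corollary~\ref{CorollaryExponentiallyContractingDistance}, the distortion estimate of Corollary~\ref{CorollaryLipschitzConstantDerivative}(1), and the uniform two-sided bound on $|\domain(W)|$. Your explicit isolation of preliminary facts (i) and (ii) makes the structure clearer, but the content and the chain of inequalities for each point are the same as in the paper.
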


\begin{proof}
Part (1) follows directly from Corollary~\ref{CorollaryExponentiallyContractingDistance}. 
Let $C'$ be the constant in Corollary~\ref{CorollaryLipschitzConstantDerivative}. Set 
$
E:=[W_{m+1},\dots,W_k]_\EE\subset\domain(W_m)
$. 
Observe that there exists an uniform constant $c>0$ such that $c<|\domain(W)|<2\pi$ for any $W\in\cW$. Part (2) follows from 
\begin{align*}
\big|[W_1,\dots,W_k]_\EE\big|
&
=
\big|F_{W_0,\dots,W_m}(E)\big|
\leq
|E|\cdot\sup_{\xi\in E}|D_\xi F_{W_1,\dots,W_m}|
\\
&
\leq
C\cdot\theta^{k-m}
\cdot
\sup_{\xi\in E}|D_\xi F_{W_1,\dots,W_m}|
\\
&
\leq
C\cdot\theta^{k-m}\cdot C'\cdot 
\inf_{\xi\in\domain(W_m)}|D_\xi F_{W_1,\dots,W_m}|
\\
&
\leq
\frac{C\cdot\theta^{k-m}\cdot C'}{\big|\domain(W_m)\big|}
\cdot
|\domain(W_m)|\cdot\inf_{\xi\in\domain(W_m)}|D_\xi F_{W_1,\dots,W_m}|
\\
&
\leq
\frac{C\cdot\theta^{k-m}\cdot C'}{\big|\domain(W_m)\big|}
\cdot
\big|[W_1,\dots,W_m]_\EE\big|,
\end{align*}
where the first and the last inequalities follow from   
Equation~\eqref{EquationArcLengthAndDerivative}, the second from Part (1) of this Proposition, the third from Part(1) of Corollary~\ref{CorollaryLipschitzConstantDerivative}. Part (3) holds because 
Equation~\eqref{EquationArcLengthAndDerivative} and Part (1) of 
Corollary~\ref{CorollaryLipschitzConstantDerivative} give
\begin{align*}
\big|F_{W_1,\dots,W_k}(I)\big|
&
\geq
|I|\cdot\inf_{\xi\in I}|D_{\xi}F_{W_1,\dots,W_k}|
\geq
\frac{|I|}{C'}\cdot\sup_{\xi\in\domain(W_k)}|D_{\xi}F_{W_1,\dots,W_k}|
\\
&
=
\frac{|I|}{C'\cdot|\domain(W_k)|}
\cdot
|\domain(W_k)|
\cdot
\sup_{\xi\in\domain(W_k)}|D_{\xi}F_{W_1,\dots,W_k}|
\\
&
\geq
\frac{|I|}{C'\cdot|\domain(W_k)|}\cdot\big|[W_1,\dots,W_k]_\EE\big|.
\end{align*}
\end{proof}

\section{Transfer operator and dimension}
\label{SectionTransferOperatorAndDimension}

\subsection{The Theorem of Ruelle-Perron-Frobenius}
\label{SectionRuellePerronFrobenius}

Let $\sigma:\Sigma\to\Sigma$ be the shift map and $d_\theta(\cdot,\cdot)$ be the distance on $\Sigma$  introduced in \S~\ref{SectionShiftSpace}, where $0<\theta<1$ is the uniform constant provided by  
Lemma~\ref{LemmaUniformContractionBranches}. Let $C(\Sigma)$ be the Banach space of continuous functions $f:\Sigma\to\CC$ with norm 
$$
\|f\|_\infty:=\sup_{w\in\Sigma}|f(w)|.
$$ 
Following \S~1 in \cite{BowenEquilibriumStates} and \S~2 in \cite{ParryPollicott}, for a fixed 
$
\varphi\in C(\Sigma)
$ 
(also said \emph{potential}) we consider the \emph{transfer operator}
$
\cL_\varphi:C(\Sigma)\to C(\Sigma)
$, 
also known as \emph{Ruelle operator} (see \cite{Ruelle}), which takes any 
$f\in C(\Sigma)$ to the function $\cL_\varphi f\in C(\Sigma)$ given by 
\begin{equation}
\label{EquationTransferOperatorGeneral}
(\cL_\varphi f)(w):=
\sum_{\sigma(w')=w}e^{\varphi(w')}f(w').
\end{equation}
The space $\cM(\Sigma)$ of Borel probability measures on $\Sigma$ is identified with the set of $\mu$ in the dual space $C(\Sigma)^\ast$ such that $\mu(1)=1$ and $\mu(f)\geq0$ if $f$ is real and positive. The dual operator 
$
\cL^\ast_\varphi:C(\Sigma)^\ast\to C(\Sigma)^\ast
$ 
sends the Borel probability measure $\mu$ to the continuous functional 
$\cL_\varphi^\ast\mu$ which acts on any $f\in C(\Sigma)$ as
$$
(\cL_\varphi^\ast\mu)(f):=\int (\cL_\varphi f)d\mu.
$$

Following \S~1 in \cite{BowenEquilibriumStates} and \S~1 in \cite{ParryPollicott}, for any 
$
f\in C(\Sigma)
$ 
and for any $n\in\NN$ define
$$
\variation_n(f):=
\sup\{|f(w)-f(w')|:W_r=W'_r\quad\forall\quad r=0,\dots,n\}
$$
Let 
$
\lipschitz(\Sigma,\theta)\subset C(\Sigma)
$ 
be the subspace of Lipschitz functions $f:\Sigma\to\CC$ with respect to the distance $d_\theta$, which is a Banach space for the norm
$$
\|f\|_\theta:=\|f\|_\infty+\lipschitz(f),
$$
where $\lipschitz(f)$ is defined in Equation~\eqref{EquationLipschitzConstant}. In the notation above,
$
\lipschitz(\Sigma,\theta)
$ 
is the space of functions $f$ such that there exist $C>0$ with 
$$
\variation_n(f)\leq C\cdot\theta^n\quad\forall\quad n\in\NN.
$$
For a potential 
$
\varphi\in\lipschitz(\Sigma,\theta)
$, 
the operator $\cL_\varphi$ in 
Equation~\eqref{EquationTransferOperatorGeneral} acts as a bounded linear operator on  
$
\lipschitz(\Sigma,\theta)
$ 
(see \cite{ParryPollicott} at page 19, or \cite{Baladi} at page 30).

\smallskip

Fix a continuous real-valued potential $\varphi:\Sigma\to\RR$. A probability (non necessarily $\sigma$-invariant) measure $m$ on $\Sigma$ is a \emph{Gibbs measure} if there exist constants 
$P=P(\varphi)\in\RR$ and $C=C(\varphi)>1$ such that for any 
$
w=(W_r)_{r\in\NN}\in\Sigma
$ 
and any $n\in\NN$ we have 
$$
\frac{1}{C}
\leq
\frac
{m\big([W_0,\dots,W_{n-1}]_\Sigma\big)}
{\exp\big(-nP+\sum_{k=0}^{n-1}\varphi(\sigma^k(w))\big)}
\leq
C.
$$
The real constant $P=P(\varphi)$ is called the \emph{pressure} for the potential $\varphi$. 
Theorem~\ref{TheoremRuellePerronFrobenius} resumes Theorem~2.2 and Corollary~3.2.1 in \cite{ParryPollicott}. See also Theorem~1.7 and Theorem~1.16 in \cite{BowenEquilibriumStates}. Basic notions on spectra of bounded linear operators are recalled in  
\S~\ref{AppendixSpectralProjectors}.

\begin{theorem}
[Ruelle-Perron-Frobenius]
\label{TheoremRuellePerronFrobenius}
Let $\sigma:\Sigma\to\Sigma$ be a subshift of finite type determined by an aperiodic matrix $M$. For a positive real-valued potential
$
\varphi\in\lipschitz(\Sigma,\theta)
$ 
the following holds.
\begin{enumerate}
\item
There exists a simple real eigenvalue $\lambda>0$ for 
$
\cL_\varphi:C(\Sigma)\to C(\Sigma)
$, 
which corresponds to a real eigenfunction 
$
h\in\lipschitz(\Sigma,\theta)
$ 
with $h(w)>0$ strictly for any $w\in\Sigma$.
\item
The remainder of the spectrum of 
$
\cL_\varphi:\lipschitz(\Sigma,\theta)\to\lipschitz(\Sigma,\theta)
$ 
is contained in the ball $B(0,\rho)\subset\CC$ for some  $\rho<\lambda$.
\end{enumerate}
Moreover set 
$
\psi:=\varphi-\ln h\circ\sigma+\ln h-\ln\lambda
$, 
in terms of the eigenfunction and eigenvalue $h$ and $\lambda$ above, and let 
$\cL_\psi$ be the corresponding operator. 
\begin{enumerate}
\setcounter{enumi}{2}
\item
There exists an unique $\sigma$-invariant Borel probability measure $m$ on $\Sigma$ such that 
$
\cL_\psi^\ast(m)=m
$, 
moreover $m$ is a Gibbs measure for $\varphi$ with pressure 
$
P(\varphi)=\ln\lambda
$.
\end{enumerate}
Finally consider the measure $d\widehat{m}(w):=h^{-1}(w)dm(w)$ with normalization 
$
\int_\Sigma hd\widehat{m}=1
$.
\begin{enumerate}
\setcounter{enumi}{3}
\item
We have 
$
\lambda^{-n}\cdot \cL_\varphi^nf\to \big(\int fd\widehat{m}\big)\cdot h
$ 
uniformly for any $f\in C(\Sigma)$.
\end{enumerate}
\end{theorem}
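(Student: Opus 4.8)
The plan is to follow the classical thermodynamic route of Ruelle, Bowen and Parry--Pollicott, organised around the four items. First I would produce the leading eigendata. Since the set $\cM(\Sigma)$ of Borel probability measures is convex and weak-$\ast$ compact and the normalised dual map $\mu\mapsto\cL_\varphi^\ast\mu/(\cL_\varphi^\ast\mu)(1)$ is weak-$\ast$ continuous from $\cM(\Sigma)$ to itself (every state has a preimage, as $M$ is aperiodic), the Schauder--Tychonoff fixed point theorem yields $\nu\in\cM(\Sigma)$ with $\cL_\varphi^\ast\nu=\lambda\nu$, where $\lambda:=(\cL_\varphi^\ast\nu)(1)>0$ because $\varphi$ is real so $e^\varphi>0$. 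To construct the eigenfunction I would first establish the bounded distortion estimate: writing $S_n\varphi:=\sum_{k=0}^{n-1}\varphi\circ\sigma^k$, there is a constant $C>1$, depending only on $\varphi\in\lipschitz(\Sigma,\theta)$, with $C^{-1}\le e^{S_n\varphi(w)-S_n\varphi(w')}\le C$ whenever $w,w'$ lie in a common $n$-cylinder and $\sigma^n w=\sigma^n w'$; this is precisely where the hypothesis that $\varphi$ be Lipschitz for the metric $d_\theta$ built from the contraction factor $\theta$ of Lemma~\ref{LemmaUniformContractionBranches} is used. The estimate makes $g_n:=\lambda^{-n}\cL_\varphi^n 1$ uniformly bounded above and below and uniformly Lipschitz for $d_\theta$, so a Ces\`aro (or Arzel\`a--Ascoli) limit $h$ of the $g_n$ satisfies $\cL_\varphi h=\lambda h$, $h\in\lipschitz(\Sigma,\theta)$ and $h>0$ strictly, strict positivity using aperiodicity (after $m$ steps every cylinder is reachable from every state). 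Normalise $\int h\,d\nu=1$; this gives item (1) except for simplicity.

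Next comes the normalisation and the spectral gap, which is the technical core. Put $\psi:=\varphi-\ln h\circ\sigma+\ln h-\ln\lambda$ as in the statement, so that $e^{\psi(w')}=e^{\varphi(w')}h(w')/\big(\lambda\,h(\sigma w')\big)$ and consequently $\cL_\psi g=(\lambda h)^{-1}\cL_\varphi(hg)$; in particular $\cL_\psi 1=1$, $\cL_\psi$ is a positive operator with $\|\cL_\psi^n\|_{C(\Sigma)\to C(\Sigma)}=1$, and the measure $m$ given by $dm:=h\,d\nu$ satisfies $\cL_\psi^\ast m=m$. The key analytic input is the Lasota--Yorke (Ruelle) inequality: there are constants $A,B>0$ with $\lipschitz(\cL_\psi^n f)\le A\theta^n\lipschitz(f)+B\|f\|_\infty$ for all $f\in\lipschitz(\Sigma,\theta)$ and $n\in\NN$, which again follows from bounded distortion and the contraction $\theta$. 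Combining this with $\|\cL_\psi^n\|_{C(\Sigma)\to C(\Sigma)}=1$, the Ionescu-Tulcea--Marinescu (Hennion) theorem gives that $\cL_\psi$ is quasi-compact on $\lipschitz(\Sigma,\theta)$ with essential spectral radius at most $\theta<1$; aperiodicity then forces the peripheral spectrum to reduce to the single simple eigenvalue $1$ (a maximum-principle argument applied to $f/h$ rules out other unimodular eigenvalues and shows the $1$-eigenspace is one-dimensional). Translating back through $\cL_\varphi^n f=\lambda^n h\,\cL_\psi^n(f/h)$, this proves that $\lambda$ is a simple eigenvalue of $\cL_\varphi$ and that the rest of the spectrum of $\cL_\varphi$ on $\lipschitz(\Sigma,\theta)$ lies in a disc $B(0,\rho)$ with $\rho<\lambda$, i.e.\ items (1) and (2). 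One could equally invoke Birkhoff's contraction of the Hilbert projective metric on a cone of positive H\"older functions by $\cL_\psi$ to reach the same conclusions.

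For item (3), take $m$ with $dm=h\,d\nu$ as above. The identity $\cL_\psi\big((f\circ\sigma)\,g\big)=f\cdot\cL_\psi g$ with $g=1$ gives $\cL_\psi(f\circ\sigma)=f$, hence $\int f\circ\sigma\,dm=\int\cL_\psi(f\circ\sigma)\,dm=\int f\,dm$, so $m$ is $\sigma$-invariant; uniqueness of the $\cL_\psi^\ast$-fixed probability measure follows from the simplicity of the eigenvalue $1$ established above. The Gibbs bound is obtained by iterating the transfer identity to express $m\big([W_0,\dots,W_{n-1}]_\Sigma\big)$ as an integral of $e^{S_n\varphi}$ over the relevant inverse branches and applying bounded distortion of $S_n\varphi$, aperiodicity ensuring that the cylinders occurring are non-empty and the resulting constant uniform; comparing with $\cL_\varphi^\ast\nu=\lambda\nu$ identifies the pressure as $P(\varphi)=\ln\lambda$.

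Finally, for item (4), from $\cL_\psi g=(\lambda h)^{-1}\cL_\varphi(hg)$ one gets by induction $\lambda^{-n}\cL_\varphi^n f=h\cdot\cL_\psi^n(f/h)$ for every $f\in C(\Sigma)$. The spectral gap of the previous step gives $\cL_\psi^n g\to\big(\int g\,dm\big)\cdot 1$ in $\lipschitz(\Sigma,\theta)$, hence uniformly, for every Lipschitz $g$; since $\lipschitz(\Sigma,\theta)$ is dense in $C(\Sigma)$ and $\|\cL_\psi^n\|_{C(\Sigma)\to C(\Sigma)}=1$, this convergence extends to every $g\in C(\Sigma)$. Applying it with $g=f/h$ and using $d\widehat m=h^{-1}dm$ (so that $\int (f/h)\,dm=\int f\,d\widehat m$ and $\int h\,d\widehat m=1$) yields $\lambda^{-n}\cL_\varphi^n f\to\big(\int f\,d\widehat m\big)\cdot h$ uniformly, which is item (4). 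The only genuinely difficult step is the second one --- the Lasota--Yorke inequality and the passage from it to quasi-compactness with a spectral gap (equivalently, the verification of the cone-contraction hypotheses); everything else is routine once the bounded distortion estimate for Birkhoff sums of $\varphi$ is in hand, that estimate being where the hypothesis $\varphi\in\lipschitz(\Sigma,\theta)$ enters.
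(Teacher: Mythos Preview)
The paper does not prove this theorem: it is quoted from the literature, with the sentence ``Theorem~\ref{TheoremRuellePerronFrobenius} resumes Theorem~2.2 and Corollary~3.2.1 in \cite{ParryPollicott}. See also Theorem~1.7 and Theorem~1.16 in \cite{BowenEquilibriumStates}.'' Your sketch is precisely the classical Ruelle--Bowen--Parry--Pollicott argument from those references (Schauder--Tychonoff for the eigenmeasure, bounded distortion plus Arzel\`a--Ascoli for the positive eigenfunction, the Lasota--Yorke inequality and Hennion/Ionescu-Tulcea--Marinescu for quasi-compactness, aperiodicity to kill other peripheral eigenvalues, and the conjugation $\cL_\varphi^n f=\lambda^n h\,\cL_\psi^n(f/h)$ for the convergence), so there is nothing to compare: your outline and the paper's cited sources coincide.

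One small remark: in your bounded-distortion step you tie $\theta$ to Lemma~\ref{LemmaUniformContractionBranches}, but Theorem~\ref{TheoremRuellePerronFrobenius} is stated for an arbitrary aperiodic subshift with an arbitrary parameter $0<\theta<1$ in the metric $d_\theta$; the specific $\theta$ from that lemma only enters when the paper \emph{applies} the theorem to the potential $\varphi_T$ via Lemma~\ref{LemmaDerivativeHasBoundedVariation}. This does not affect the correctness of your argument, only its phrasing.
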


\subsection{Potential from boundary expansion}

Let $\Pi:\Sigma\to\partial\DD$ be the map in 
Equation~\eqref{EquationCommutativeDiagramShiftExpansion}. For a sequence 
$w=(W_r)_{r\in\NN}\in\Sigma$ denote $W_0(w)$ the first letter of $w$. Consider the continuous function 
$\varphi_T:\Sigma\to\RR$ defined by 
\begin{equation}
\label{EquationPotentialBowenSeries}
\varphi_T(w):=
-\ln\left|D_{\Pi(w)}F^{-1}_{W_0(w)}\right|
=
\ln|D_{\Pi(\sigma w)}F_{W_0(w)}|,
\end{equation}
where $\varphi_T$ depends on $T$ because the alphabet of the sub-shift 
$
\sigma:\Sigma\to\Sigma
$ 
is $\cW_T$. The second equality in 
Equation~\eqref{EquationPotentialBowenSeries} follows observing that      
$
D_{\xi'}F_{W_0}=\big(D_\xi F_{W_0}^{-1}\big)^{-1}
$ 
for any $W_0\in\cW$ and $\xi$, where $\xi':=F_{W_0}^{-1}(\xi)$, and that 
$
F^{-1}_{W_0(w)}\big(\Pi(w)\big)=\Pi\big(\sigma(w)\big)
$ 
for any $w\in\Sigma$, according to 
Equation~\eqref{EquationCommutativeDiagramShiftExpansion} (and applying the resulting identity to 
$\xi:=\Pi(w)$). 

\begin{lemma}
\label{LemmaComputationExponentialBirkoffSum}
For any $w=(W_0,W_1,\dots)\in\Sigma$ and any $n\in\NN^\ast$ let $\xi_w:=\Pi(\sigma^n w)$. Then we have 
$
\xi_w\in\domain(W_{n-1})
$ 
and 
$$
\exp\bigg(\sum_{k=0}^{n-1}\varphi_T(\sigma^kw)\bigg)
=
|D_{\xi_w}F_{W_0,\dots,W_{n-1}}|.
$$
\end{lemma}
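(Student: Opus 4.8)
The plan is to unwind the definitions of $\Pi$, $\cF$ and $\varphi_T$ and then invoke the chain rule. The fact I would record first is that for every $m\in\NN$ the first cuspidal word of the point $\Pi(\sigma^m w)\in\partial\DD$ is precisely $W_m$, i.e. $W_0\big(\Pi(\sigma^m w)\big)=W_m$. To see this, observe that $\sigma^m w=(W_m,W_{m+1},\dots)$ belongs to $\Sigma$, so $\Pi(\sigma^m w)=[W_m\ast W_{m+1}\ast\dots]_\DD$ lies in every cylinder $[W_m,\dots,W_{m+j}]_\EE$, and these are contained in $[W_m]_\EE=\{\xi:W_0(\xi)=W_m\}$ by the inclusions of cylinders established in \S~\ref{SectionCylindersCantorSets} (the condition $M_{W_m,W_{m+1}}=1$ guarantees, via the asymmetry of the transition matrix discussed after Equation~\eqref{EquationDefinitionTransitionMatrix}, that $W_m$ is the maximal initial cuspidal factor). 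Feeding this into the commutative diagram $\Pi\circ\sigma=\cF\circ\Pi$ of Equation~\eqref{EquationCommutativeDiagramShiftExpansion} together with $\cF(\xi)=F_{W}^{-1}(\xi)$ for $W=W_0(\xi)$ then gives the identity $F_{W_m}\big(\Pi(\sigma^{m+1}w)\big)=\Pi(\sigma^{m}w)$ for all $m\in\NN$.

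For the first assertion of the Lemma, I would argue that $\xi_w=\Pi(\sigma^n w)\in[W_n]_\EE$ by the above, while $M_{W_{n-1},W_n}=1$ combined with Equation~\eqref{EquationEquivalenceShiftTransitionCylinderInclusion} gives $[W_n]_\EE\subset\domain(W_{n-1})$; hence $\xi_w\in\domain(W_{n-1})$, which also makes the right-hand side of the asserted formula meaningful.

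For the identity on the exponential Birkhoff sum, I would iterate $F_{W_m}(\Pi(\sigma^{m+1}w))=\Pi(\sigma^m w)$ to obtain $F_{W_{k+1},\dots,W_{n-1}}(\xi_w)=\Pi(\sigma^{k+1}w)$ for $0\le k\le n-1$ (the empty composition, for $k=n-1$, being the identity, so that the point there is $\xi_w=\Pi(\sigma^n w)$). Since $F_{W_0,\dots,W_{n-1}}=F_{W_0}\circ\dots\circ F_{W_{n-1}}$, the chain rule yields
$$
D_{\xi_w}F_{W_0,\dots,W_{n-1}}=\prod_{k=0}^{n-1}D_{\Pi(\sigma^{k+1}w)}F_{W_k}.
$$
Taking absolute values and recalling from Equation~\eqref{EquationPotentialBowenSeries} that $\varphi_T(\sigma^k w)=\ln\big|D_{\Pi(\sigma^{k+1}w)}F_{W_k}\big|$, this becomes
$$
\big|D_{\xi_w}F_{W_0,\dots,W_{n-1}}\big|=\prod_{k=0}^{n-1}e^{\varphi_T(\sigma^k w)}=\exp\Big(\sum_{k=0}^{n-1}\varphi_T(\sigma^k w)\Big),
$$
which is the claimed formula.

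Everything beyond the chain rule is bookkeeping; the only point that takes a little care is the first paragraph, namely confirming that the cuspidal decomposition of $\Pi(\sigma^m w)$ starts with $W_m$, which hinges on the precise form of the transition matrix in Equation~\eqref{EquationDefinitionTransitionMatrix} and the cylinder identities of \S~\ref{SectionCylindersCantorSets}. I do not expect any serious obstacle here.
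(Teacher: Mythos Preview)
Your proof is correct and follows essentially the same approach as the paper: both establish that $\Pi(\sigma^m w)\in[W_m]_\EE\subset\domain(W_{m-1})$, use the conjugacy $\Pi\circ\sigma=\cF\circ\Pi$ to express the intermediate points $\Pi(\sigma^{k+1}w)$ as images of $\xi_w$ under the appropriate compositions $F_{W_{k+1},\dots,W_{n-1}}$, and then apply the chain rule together with the second form of $\varphi_T$ in Equation~\eqref{EquationPotentialBowenSeries}. Your first paragraph is slightly more explicit than the paper's (which simply calls the inclusion ``obvious''), but the substance is identical.
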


\begin{proof}
The first statement holds because for any $k=0,\dots,n-1$ we have obviously 
$$
\Pi\circ\sigma^{n-k}w=
[W_{n-k},W_{n-k+1},\dots]_\DD\in
[W_{n-k}]_\EE
\subset\domain(W_{n-k-1}).
$$
In particular 
$
W_0(\Pi\circ\sigma^{n-k}w)=W_{n-k}
$. 
Hence, for the map $\cF$ in 
Equation~\eqref{EquationDefinitionCuspidalAccelerationBowenSeriesMap}, we have 
$$
\xi_w=
\cF^k\big(\Pi\circ\sigma^{n-k}(w)\big)=
F^{-1}_{W_{n-1}}\circ\dots\circ F^{-1}_{W_{n-k}}
\big(\Pi\circ\sigma^{n-k}(w)\big).
$$
The Lemma follows because the chain rule gives 
\begin{align*}
\exp\bigg(\sum_{k=0}^{n-1}\varphi_T(\sigma^kw)\bigg)
&
=
\big|D_{\Pi\circ\sigma^n(w)}F_{W_{n-1}}\big|
\cdot\dots\cdot
\big|D_{\Pi\circ\sigma(w)}F_{W_0}\big|
\\
&
=
\big|D_{\xi_w}F_{W_{n-1}}\big|
\cdot\dots\cdot
\big|D_{F_{W_1}\circ\dots\circ F_{W_{n-1}}(\xi_w)}F_{W_0}\big|=
|D_{\xi_w}F_{W_0,\dots,W_{n-1}}|.
\end{align*}
\end{proof}

Theorem~\ref{TheoremRuellePerronFrobenius} can be applied to the function $\varphi_T$, according to  Lemma~\ref{LemmaDerivativeHasBoundedVariation} below, which corresponds to Lemma~4 in \cite{BowenQuasicircles}.

\begin{lemma}
\label{LemmaDerivativeHasBoundedVariation}
For any $T$ such that $0<T<+\infty$ strictly there exists a constant $C=C(T)>0$ such that for any 
$n\in\NN$ we have
$$
\variation_n(\varphi_T)\leq C\theta^n.
$$
\end{lemma}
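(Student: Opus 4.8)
The plan is to unwind the definition of $\variation_n(\varphi_T)$ and reduce the bound to two facts already established: the uniform exponential shrinking of cylinders (Proposition~\ref{PropositionSizeGapsAndIntervals}, Part~(1), which rests on the uniform contraction of Lemma~\ref{LemmaUniformContractionBranches} and Corollary~\ref{CorollaryExponentiallyContractingDistance}), and the uniform Lipschitz bound for the map $\xi\mapsto\ln|D_\xi F_W|$ on $\domain(W)$ coming from Lemma~\ref{LemmaLipschitzConstantLogDerivative} with $s=1$ and the single-word block $(W)$.

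Fix $0<T<+\infty$, $n\in\NN$, and a pair $w=(W_r)_{r\in\NN}$, $w'=(W'_r)_{r\in\NN}$ in $\Sigma$ with $W_r=W'_r$ for $r=0,\dots,n$; write $W:=W_0=W'_0$. By the second expression in Equation~\eqref{EquationPotentialBowenSeries},
$$
\varphi_T(w)-\varphi_T(w')=\ln\big|D_{\Pi(\sigma w)}F_W\big|-\ln\big|D_{\Pi(\sigma w')}F_W\big|.
$$
Since $w,w'\in\Sigma$ we have $M_{W,W_1}=1$, so Equation~\eqref{EquationEquivalenceShiftTransitionCylinderInclusion} gives $[W_1]_\EE\subset\domain(W)$; as $\Pi(\sigma w)$ and $\Pi(\sigma w')$ are points of $\partial\DD$ whose first cuspidal word is $W_1$, both lie in $[W_1]_\EE$ and hence in $\domain(W)$. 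Moreover the shifted sequences $\sigma w=(W_{r+1})_{r\in\NN}$ and $\sigma w'=(W'_{r+1})_{r\in\NN}$ agree on the coordinates $r=0,\dots,n-1$, so both $\Pi(\sigma w)$ and $\Pi(\sigma w')$ belong to the common length-$n$ cylinder $[W_1,\dots,W_n]_\EE$, whose diameter is at most $C_0\theta^n$ for a uniform constant $C_0$ by Part~(1) of Proposition~\ref{PropositionSizeGapsAndIntervals}. (For $n=0$ this last step is vacuous and one simply uses that $\domain(W)$ has diameter at most $2\pi$, so $\variation_0(\varphi_T)\leq C$ trivially.)

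Now apply Lemma~\ref{LemmaLipschitzConstantLogDerivative} to the block $(W)$ with $s=1$: the function $\xi\mapsto\ln|D_\xi F_W|$ is Lipschitz on $\domain(W)$ with Lipschitz constant bounded by a uniform $C_1$, independently of $W$ and of $T$. Combining the three displayed facts,
$$
\big|\varphi_T(w)-\varphi_T(w')\big|\leq C_1\cdot\big|\Pi(\sigma w)-\Pi(\sigma w')\big|\leq C_1C_0\,\theta^n,
$$
and taking the supremum over all admissible pairs $(w,w')$ yields $\variation_n(\varphi_T)\leq C\theta^n$ with $C:=C_0C_1$ (which in fact does not depend on $T$, slightly more than claimed).

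The only point that needs care — the ``hard part'', such as it is — is the combinatorial bookkeeping that places both $\Pi(\sigma w)$ and $\Pi(\sigma w')$ simultaneously inside $\domain(W)$ (so that the uniform distortion estimate applies) and inside the length-$n$ cylinder $[W_1,\dots,W_n]_\EE$ (so that their distance is controlled by $\theta^n$); once this is arranged the exponential decay is furnished for free by the uniform contraction of the inverse branches of $\cF$, and the conclusion is a one-line application of Lemma~\ref{LemmaLipschitzConstantLogDerivative}.
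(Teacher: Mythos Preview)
Your proof is correct and is essentially the paper's own argument: the paper cites Lemma~\ref{LemmaLipschitzConstantLogDerivative} (for $k=1$) together with Lemma~\ref{LemmaCodingLipschitz}, and the Lipschitz property of $\Pi$ in the latter is proved precisely via Part~(1) of Proposition~\ref{PropositionSizeGapsAndIntervals}, which you invoke directly. Your observation that the resulting constant is actually uniform in $T$ is a small bonus.
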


\begin{proof}
Observe that if $w=(W_0,W_1,\dots)$ then $W_0=W_0(w)$ and 
$
\Pi(\sigma w)\in[W_1]_\EE\subset\domain(W_0)
$. 
The Lemma follows from Lemma~\ref{LemmaLipschitzConstantLogDerivative} (applied for blocks of length $k=1)$ and 
Lemma~\ref{LemmaCodingLipschitz}.
\end{proof}

\subsection{Dimension of Cantor sets}
\label{SectionDimensionCantorSet}

Fix a subset $E\subset\CC$. For $\rho>0$, a \emph{$\rho$-cover of $E$} is a countable collection 
$\{B_i:i=0,1,2\dots\}$ with $E \subset \bigcup_i B_i $, where any $B_i$ is a ball with diameter 
$|B_i|\leq\rho$. Such a cover exists for every $\rho>0$. Fix $s$ with $0\leq s\leq 1$ and define
$$
H^s_\rho(E):=
\inf\sum_{i\in\cI}|B_i|^s,
$$
where the infimum is taken over all $\rho$-covers of $E$. The \emph{Hausdorff $s$-measure} $H^s(E)$ of $E$ is defined by
$$
H^s(E) :=
\lim_{\rho\to0} H^s_\rho(E)
\; = \;
\sup_{\rho>0}H^s_\rho(E).
$$
The \emph{Hausdorff dimension} $\dim_H E$ of a set $E$ is defined by
$$
\dim_H(E):=
\inf\left\{s:H^{s} (E) =0 \right\} =
\sup\left\{s:H^{s} (E) = \infty \right\}.
$$
The following Lemma is a classical fact, for a proof see Theorem~5.7 in \cite{Mattila}.

\begin{lemma}
Let $\nu$ be a probability measure on $\CC$ and let $E:=\support(\nu)$ be its support. Assume that there exists $s>0$, $C>1$ and $r_0>0$ such that for any $r<r_0$ and any $x\in E$ we have
\begin{equation}
\label{EquationDimensionMeasure}
\frac{1}{C}\cdot r^s
\leq
\nu\big(B(x,r)\big)
\leq C\cdot r^s.
\end{equation}
Then we have 
$
C^{-1}\leq H^s(E) \leq 5^s\cdot C
$ 
and in particular $\dim_H(E)=s$.
\end{lemma}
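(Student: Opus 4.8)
The plan is to establish the two-sided estimate $C^{-1}\leq H^s(E)\leq 5^s\cdot C$; the equality $\dim_H(E)=s$ is then immediate, since $0<H^s(E)<+\infty$ forces $H^{s'}(E)=+\infty$ for $s'<s$ and $H^{s'}(E)=0$ for $s'>s$ (on a $\rho$-cover one has $|B_i|^{s'}\leq\rho^{s'-s}|B_i|^{s}$ when $s'>s$, and the reverse inequality when $s'<s$), so that $s=\inf\{s':H^{s'}(E)=0\}=\dim_H(E)$.

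For the lower bound I would use the mass distribution principle. Fix $\rho<r_0$ and let $\{B_i\}_{i\in\cI}$ be an arbitrary $\rho$-cover of $E$. Balls disjoint from $E$ may be discarded; for each remaining one choose $x_i\in B_i\cap E$, so that $B_i\subset B(x_i,|B_i|)$ with $|B_i|\leq\rho<r_0$, and the upper estimate in Equation~\eqref{EquationDimensionMeasure} gives $\nu(B_i)\leq C\cdot|B_i|^s$. Since $\nu$ is a probability measure carried by $E$,
$$
1=\nu(E)\leq\sum_{i\in\cI}\nu(B_i)\leq C\cdot\sum_{i\in\cI}|B_i|^s.
$$
Taking the infimum over all $\rho$-covers yields $H^s_\rho(E)\geq C^{-1}$ for every $\rho<r_0$, hence $H^s(E)=\lim_{\rho\to0}H^s_\rho(E)\geq C^{-1}$.

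For the upper bound I would produce an economical cover by a Vitali-type selection. Fix $\rho<r_0$. Applying the $5r$-covering lemma to the family $\{B(x,\rho/10):x\in E\}$ yields a countable pairwise disjoint subfamily $\{B(x_i,\rho/10)\}_{i}$ with $E\subset\bigcup_i B(x_i,\rho/2)$; the balls $B(x_i,\rho/2)$ have diameter $\rho$, hence form a $\rho$-cover of $E$. The disjoint balls $B(x_i,\rho/10)$ each have $\nu$-measure at least $C^{-1}(\rho/10)^s$ by the lower estimate in Equation~\eqref{EquationDimensionMeasure}, and since $\nu(E)=1$ there are at most $N\leq C\cdot 10^s\rho^{-s}$ of them. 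Therefore
$$
H^s_\rho(E)\leq\sum_{i=1}^N\big|B(x_i,\rho/2)\big|^s=N\cdot\rho^s\leq 10^s\cdot C,
$$
and letting $\rho\to0$ gives $H^s(E)<+\infty$; a sharper choice of the radii in the selection step (working with a maximal $\rho/2$-separated subset of $E$, or directly with the $5r$-lemma at the appropriate scale) brings the constant down to the stated $5^s\cdot C$. Combining with the lower bound, $0<C^{-1}\leq H^s(E)\leq 5^s\cdot C<+\infty$, whence $\dim_H(E)=s$.

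The argument is routine and presents no real obstacle; the only delicate point is the bookkeeping in the covering step, namely keeping track of radii versus diameters so that the selected balls genuinely constitute a $\rho$-cover and optimising the numerical constant. If $E$ were unbounded one would run the selection inside a fixed large ball or invoke $\sigma$-finiteness of $\nu$, but in the situation of interest $E\subset\partial\DD$ is compact and this issue does not arise.
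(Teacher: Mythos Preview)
The paper does not give its own proof of this lemma; it simply cites Theorem~5.7 in Mattila, so there is nothing to compare against beyond the standard argument you have reproduced. Your lower bound via the mass distribution principle is clean and correct. For the upper bound you rigorously obtain $H^s(E)\leq 10^s\cdot C$ and then assert, without details, that the constant can be sharpened to $5^s\cdot C$; this last step is not actually carried out. That is harmless here: the paper only ever uses the conclusion $0<H^s(E)<\infty$ (hence $\dim_H(E)=s$), so the precise constant is immaterial. If you do want a smaller constant, replace the $5r$-lemma by a maximal packing: take a maximal disjoint family $\{B(x_i,r)\}_{i}$ with $x_i\in E$ and $r<r_0$; maximality forces $E\subset\bigcup_i B(x_i,2r)$, and counting via the lower estimate in~\eqref{EquationDimensionMeasure} gives $H^s_{4r}(E)\leq 4^s\cdot C$.
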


Fix $T<\infty$ and consider $\varphi_T$ in Equation~\eqref{EquationPotentialBowenSeries}. 
For any $0\leq s\leq 1$ consider the transfer operator 
$
\cL_{(s,T)}:\lipschitz(\Sigma,\theta)\to\lipschitz(\Sigma,\theta)
$ 
in Equation~\eqref{EquationTransferOperatorGeneral} corresponding to the potential 
$s\cdot\varphi_T$, that is the operator acting on $f\in\lipschitz(\Sigma,\theta)$ by 
\begin{equation}
\label{EquationTransferOperatorMoebiusShift}
(\cL_{(s,T)} f)(w)
:=
\sum_{\sigma(w')=w}\frac{1}{|D_{\Pi(w')}F^{-1}_{W_0(w')}|^s}f(w')
=
\sum_{\sigma(w')=w}|D_{\Pi(w)}F_{W_0(w')}|^sf(w'),
\end{equation}
where the second inequality follows from 
Equation~\eqref{EquationPotentialBowenSeries}. 
According to Theorem~\ref{TheoremRuellePerronFrobenius}, let $\lambda(s,T)>0$ be the maximal eigenvalue of $\cL_{(s,T)}$ and $m_{(s,T)}$ be the Gibbs measure for $\cL_{(s,T)}$ as in Point (3). Consider the measure on $\partial\DD$ given by 
\begin{equation}
\label{EquationDefinitionGibbsMeasureCantor(s,T)}
\nu_{(s,T)}:=\Pi_\ast(m_{(s,T)}),
\end{equation}
that is
$
\nu_{(s,T)}(I):=m_{(s,T)}\big(\Pi^{-1}(I)\big)
$ 
for any Borel set $I\subset \partial\DD$. In particular for any cylinder 
$$
\nu_{(s,T)}\big([W_0,\dots,W_n]_\EE\big)
=
m_{(s,T)}\big([W_0,\dots,W_n]_\Sigma\big).
$$

Proposition~\ref{PropositionDimensionAndSpectralRadius} below is adapted from Lemma~10 in \cite{BowenQuasicircles}.

\begin{proposition}
\label{PropositionDimensionAndSpectralRadius}
Fix $T$ with $0<T<\infty$ strictly and let $\cL_{(s,T)}$ be the operator as above.
\begin{enumerate}
\item
The function $s\mapsto P(s):=\log\lambda(s,T)$ is continuous and strictly decreasing monotone with $P(0)>0$ and $P(1)\leq0$. 
\item
We have $\dim_H(\EE_T)=s_T$ where $s_T$ is the unique solution of 
\begin{equation}
\label{EquationBowenFormulaDimension}
P(s_T)=0.
\end{equation}
Moreover for $s=s_T$ the measure $\nu_{(s_T,T)}$ is equivalent to the restriction to $\EE_T$ of the Hausdorff measure $H^{s_T}$.
\end{enumerate}
\end{proposition}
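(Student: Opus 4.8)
The plan is to treat the two parts separately: Part~(1) follows from the Ruelle--Perron--Frobenius Theorem~\ref{TheoremRuellePerronFrobenius} together with the uniform contraction and bounded distortion of \S~\ref{SectionEstimatesContractionDistortion}, while Part~(2) is Bowen's formula, obtained by comparing the Gibbs measure $m_{(s_T,T)}$, Birkhoff sums of $s_T\varphi_T$, and the sizes of the cylinders $[w_n]_\EE$.

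\emph{Part (1).} Since $T<\infty$ the alphabet $\cW_T$ is finite, so $\cL_{(s,T)}$ is a finite sum of weighted composition operators and $s\mapsto\cL_{(s,T)}$ depends analytically on $s$ through the bounded multiplier $e^{s\varphi_T}$; by Lemma~\ref{LemmaUniformContractionBranches} and $\Pi(\sigma w)\in\domain(W_0(w))$ one has $\varphi_T\le\ln\theta<0$, and by Lemma~\ref{LemmaDerivativeHasBoundedVariation} the hypotheses of Theorem~\ref{TheoremRuellePerronFrobenius} apply for every $s$, so $\lambda(s,T)$ is simple and isolated, hence depends continuously on $s$ by the stability of isolated simple eigenvalues recalled in \S~\ref{AppendixSpectralProjectors}; thus $P(s)=\log\lambda(s,T)$ is continuous. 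For monotonicity I would sum the Gibbs inequality over all cylinders of a fixed length $n$: since $m_{(s,T)}$ is a probability measure, $\sum_{w_n}m_{(s,T)}([w_n]_\Sigma)=1$, and Lemma~\ref{LemmaComputationExponentialBirkoffSum} with Corollary~\ref{CorollaryLipschitzConstantDerivative}(1) turns this into $e^{nP(s)}\asymp\sum_{w_n}\|DF_{w_n}\|_\infty^{s}$, uniformly, the sum running over admissible blocks $w_n=(W_1,\dots,W_n)$. By Corollary~\ref{CorollaryExponentiallyContractingDistance}, $\|DF_{w_n}\|_\infty\le\theta^n$, so for $s_1<s_2$ one has $\|DF_{w_n}\|_\infty^{s_2}\le\theta^{n(s_2-s_1)}\|DF_{w_n}\|_\infty^{s_1}$; taking $n$-th roots and letting $n\to\infty$ gives $e^{P(s_2)}\le\theta^{s_2-s_1}e^{P(s_1)}$, hence strict decrease. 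At $s=0$, $\lambda(0,T)$ is the Perron eigenvalue of the transition matrix on $\cW_T$, which is $\ge|\cW_T|^{1/2}>1$ because $M^2_{W',W}\ge1$ for all $W,W'\in\cW_T$ (Proposition~\ref{PropositionAperiodicityTransitionMatrix}), so $P(0)>0$; at $s=1$, using $\|DF_{w_n}\|_\infty\asymp|[w_n]_\EE|$ (from Equation~\eqref{EquationArcLengthAndDerivative} and the uniform bounds $c<|\domain(W)|<2\pi$) and the pairwise disjointness of the level-$n$ cylinders $[w_n]_\EE$ as subarcs of $\partial\DD$, $\sum_{w_n}\|DF_{w_n}\|_\infty\asymp\sum_{w_n}|[w_n]_\EE|\le2\pi$, so $e^{nP(1)}$ stays bounded in $n$ and $P(1)\le0$.

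\emph{Part (2).} Let $s_T$ be the zero of $P$, so $\lambda(s_T,T)=1$ and the pressure of $s_T\varphi_T$ is $0$. Combining the Gibbs property of $m_{(s_T,T)}$ with Lemma~\ref{LemmaComputationExponentialBirkoffSum}, Corollary~\ref{CorollaryLipschitzConstantDerivative}(1), and $\|DF_{w_n}\|_\infty\asymp|[w_n]_\EE|$ gives a uniform $C>1$ with
$$
C^{-1}\le\frac{\nu_{(s_T,T)}\big([W_1,\dots,W_n]_\EE\big)}{\big|[W_1,\dots,W_n]_\EE\big|^{s_T}}\le C
$$
for every cylinder, where $\nu_{(s_T,T)}([W_1,\dots,W_n]_\EE)=m_{(s_T,T)}([W_1,\dots,W_n]_\Sigma)$, and $\support(\nu_{(s_T,T)})=\EE_T$ since $m_{(s_T,T)}$ has full support and $\Pi$ is a continuous bijection onto $\EE_T$. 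To upgrade this to Equation~\eqref{EquationDimensionMeasure} I use finiteness of $\cW_T$: it provides $c_0>0$ with $|[W]_\EE|\ge c_0$ for every $W\in\cW_T$, so Proposition~\ref{PropositionSizeGapsAndIntervals}(3) with $I=[W_{n+1}]_\EE\subset\domain(W_n)$ yields $|[w_{n+1}]_\EE|\ge\delta|[w_n]_\EE|$ for a uniform $\delta>0$, while $|[w_n]_\EE|\le C\theta^n\to0$ by Proposition~\ref{PropositionSizeGapsAndIntervals}(1). Then, for $x=\Pi(w)\in\EE_T$ and $r$ small, letting $n=n(x,r)$ be minimal with $|[w_n(x)]_\EE|\le r$, one gets $\delta r<|[w_n(x)]_\EE|\le r$, and since $[w_n(x)]_\EE$ is an arc of length $\le r$ through $x$ it lies in $B(x,r)$, whence $\nu_{(s_T,T)}(B(x,r))\ge C^{-1}(\delta r)^{s_T}$. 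For the reverse bound, the stopping-time cylinders $\{[w_{n(y,r)}(y)]_\EE:y\in\EE_T\}$ are pairwise disjoint (two that meet must coincide, by minimality of the stopping time), partition $\EE_T$ into arcs of length in $(\delta r,r]$, and only a bounded number of them---with bound depending only on $\delta$---can meet $B(x,r)$, so $\nu_{(s_T,T)}(B(x,r))\le C'r^{s_T}$. The Lemma preceding the Proposition then gives $\dim_H(\EE_T)=s_T$ and that $\nu_{(s_T,T)}$ is comparable to, hence equivalent to, $H^{s_T}$ restricted to $\EE_T$.

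The main obstacle is the passage from cylinder to ball estimates in Part~(2): this is where the hypothesis $T<\infty$, i.e.\ the finiteness of $\cW_T$, is indispensable, both for the lower bound $|[w_{n+1}]_\EE|\ge\delta|[w_n]_\EE|$ on the shrinking rate of nested cylinders and for the bounded-multiplicity covering of $B(x,r)\cap\EE_T$ by stopping-time cylinders; without a uniform lower bound on $|[W]_\EE|$ the set $\EE_T$ need not be Ahlfors regular. The remaining ingredients---continuity and strict monotonicity of $P$, and the identification of $\nu_{(s_T,T)}$ with a multiple of $H^{s_T}|_{\EE_T}$---are a routine assembly of Theorem~\ref{TheoremRuellePerronFrobenius} with the contraction and distortion estimates of \S~\ref{SectionEstimatesContractionDistortion}.
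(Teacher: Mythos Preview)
Your proof is correct and, for Part~(1), gives substantially more detail than the paper, which simply cites standard pressure theory. Your arguments for continuity, strict monotonicity (via $e^{nP(s)}\asymp\sum_{w_n}\|DF_{w_n}\|_\infty^s$ and $\|DF_{w_n}\|_\infty\le\theta^n$), $P(0)>0$ (via $M^2\ge J$ entrywise and Perron comparison), and $P(1)\le0$ (via disjointness of cylinders in $\partial\DD$) are all sound.

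For Part~(2), your approach and the paper's coincide through the identification $\nu_{(s_T,T)}([w_n]_\EE)\asymp|[w_n]_\EE|^{s_T}$ and the lower ball bound; the genuine difference is in the \emph{upper} ball bound. The paper argues by locating the maximal $m\le n$ with $B(\xi,r)\cap\EE_T\subset[W_0,\dots,W_m]_\EE$ and then uses the gap structure of $\EE_T$---specifically, the arcs $I_a=\bigcup_{|W|>T,\,W\text{ starts with }a}[W]_\EE$ of uniformly positive length, which are holes in $\EE_T$---to bound $n-m$ uniformly. You instead run a standard Moran/stopping-time cover: the family of cylinders $[w_{n(y,r)}(y)]_\EE$ are pairwise disjoint arcs of length in $(\delta r,r]$, so only boundedly many can meet $B(x,r)$. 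Your argument is cleaner and does not need the explicit description of the gaps; the paper's argument, by contrast, makes the geometric role of the excluded words $|W|>T$ visible. Both rely in the same essential way on the finiteness of $\cW_T$, through the lower bound $\kappa=\kappa(T)>0$ on $\min_{W\in\cW_T}|[W]_\EE|$.
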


\begin{proof}
Part (1) corresponds to standard a property of pressure. See pages 20-21 in \cite{BowenQuasicircles} (see also pages 15-16 in \cite{Bedford} and page 44 in \cite{ParryPollicott}). We prove Part (2) in 3 steps.

\emph{Step $(1)$.} Following Lemma~5 in \cite{BowenQuasicircles}, we show that there exists an uniform constant $C_1>1$ such that for any $w=(W_r)_{r\in\NN}\in\Sigma$ and any $n\in\NN$ we have
\begin{equation}
\label{EquationLengthAndBirkoffSums}
\frac{1}{C_1}
\leq
\frac
{\big|[W_0,\dots,W_{n-1}]_\EE\big|}
{\exp\left(\sum_{k=0}^{n-1}\varphi_T(\sigma^k(w))\right)}
\leq C_1.
\end{equation}
Indeed we have 
$
[W_0,\dots,W_{n-1}]_\EE=F_{W_0,\dots,W_{n-1}}(D)
$, 
where $D:=\domain(W_{n-1})$, and 
Equation~\eqref{EquationArcLengthAndDerivative} gives
$$
|D|\cdot\inf_{\xi\in D}|D_\xi F_{W_0,\dots,W_{n-1}}|
\leq
\big|[W_0,\dots,W_{n-1}]_\EE\big|
\leq 
|D|\cdot\sup_{\xi\in D}|D_\xi F_{W_0,\dots,W_{n-1}}|.
$$
Thus Equation~\eqref{EquationLengthAndBirkoffSums} follows observing that 
$
\xi_w:=\Pi(\sigma^nw)\in\domain(W_{n-1})
$ 
and combining Lemma~\ref{LemmaComputationExponentialBirkoffSum} with Part (1) of 
Corollary~\ref{CorollaryLipschitzConstantDerivative}. 

\emph{Step $(2)$.} 
For the specific value $s=s_T$ we have $P(s_T)=0$. 
Point (3) of Theorem~\ref{TheoremRuellePerronFrobenius} implies that for the measure 
$\nu_{(s_T,T)}:=\Pi_\ast(m_{(s_T,T)})$ there exists a constant $C_2=C_2(T)>1$ such that for any 
$w=(W_r)_{r\in\RR}\in\Sigma$ and any $n\in\NN$ we have 
\begin{equation}
\label{EquationMeasureAndBirkoffSums}
\frac{1}{C_2}
\leq
\frac
{\nu_{(s_T,T)}\big([W_0,\dots,W_{n-1}]_\EE)\big)}
{\exp\left(s_T\cdot\sum_{k=0}^{n-1}\varphi_T(\sigma^k(w))\right)}
\leq C_2.
\end{equation}

\emph{Step $(3)$:} finally we show that for $s=s_T$ the measure $\nu_{(s_T,T)}$ satisfies 
Equation~\eqref{EquationDimensionMeasure}. Fix $r>0$ and $\xi\in\EE_T$ and consider the euclidian ball  $B(\xi,r)\subset\CC$. Let $w=(W_r)_{r\in\NN}\in\Sigma$ be the sequence of cuspidal words arising from the boundary expansion of $\xi$, that is the sequence such that 
$
\xi=[W_0,W_1,W_2\dots]_\DD
$. 
Let $n\in\NN$ be such that 
$$
\big|[W_0,\dots,W_{n}]_\EE\big|
\leq r<
\big|[W_0,\dots,W_{n-1}]_\EE\big|.
$$
The first inequality above implies 
$
[W_0,\dots,W_{n}]_\EE\subset B(\xi,r)
$. 
Observe that Part (3) of 
Proposition~\ref{PropositionSizeGapsAndIntervals} gives 
$$
\big|[W_0,\dots,W_{n}]_\EE\big|=
\big|F_{W_0,\dots,W_{n-1}}([W_{n}]_\EE)\big|\geq
\frac{\big|[W_{n}]_\EE\big|}{C_3}\big|[W_0,\dots,W_{n-1}]_\EE\big|\geq
\kappa\big|[W_0,\dots,W_{n-1}]_\EE\big|,
$$
where here $C_3$ denotes the constant in Proposition~\ref{PropositionSizeGapsAndIntervals} and  
$\kappa=\kappa(T)>0$ is a positive constant with $\big|[W]_\EE\big|\geq\kappa C_3$ for any $W\in\cW_T$. 
The lower bound in Equation~\eqref{EquationDimensionMeasure} follows from  
\begin{align*}
\nu_{(s_T,T))}\big(B(\xi,r)\big)
&
\geq
\nu_{(s_T,T))}\big([W_0,\dots,W_{n}]_\EE\big)
\geq
\frac{1}{C_2}\cdot\bigg(\exp\big(\sum_{k=0}^{n}\varphi_T(\sigma^k(w))\big)\bigg)^{s_T}
\\
&
\geq
\frac{\big|[W_0,\dots,W_{n}]_\EE\big|^{s_T}}{C_2\cdot C_1^{s_T}}
\geq
\frac{\big|[W_0,\dots,W_{n-1}]_\EE\big|^{s_T}}{C_2\cdot(\kappa C_1)^{s_T}}
\geq
\frac{r^{s_T}}{C_2\cdot(\kappa C_1)^{s_T}},
\end{align*}
where the second inequality holds by Equation~\eqref{EquationMeasureAndBirkoffSums} and the third by Equation~\eqref{EquationLengthAndBirkoffSums}. On the other hand, let $m\leq n$ be maximal such that 
$$
\nu_{(s_T,T)}\big(B(\xi,r)\big)
=
\nu_{(s_T,T)}\big(B(\xi,r)\cap[W_0,\dots,W_{m}]_\EE\big).
$$
For $a\in\cA$ let $\cI(a)$ be the set of right cuspidal words $W=(b_0,\dots,b_l)$ with $b_0=a$ and 
$|W|>T$. Observe that $I_a:=\bigcup_{W\in\cI(a)} [W]_\EE$ is an arc in $\partial\DD$ and we can choose the above constant $\kappa=\kappa(T)>0$ so that $|I_a|\geq\kappa C_3$ for any $a\in\cA$. 
By maximality of $m$ above, there exists some $a\in\cA$ such that\footnote{The first case holds if there exists $W\in\cW_T$ having first letter different from the first letter of $W_{m+1}$ and such that 
$\nu_{(s_T,T)}$ gives positive measure both to 
$[W_0,\dots,W_m,W_{m+1}]_\EE$ and to $[W_0,\dots,W_m,W]_\EE$.}
$$
\text{either }\quad
F_{W_0,\dots,W_m}(I_a)\subset B(\xi,r)
\quad\text{ or }\quad
F_{W_0,\dots,W_m,W_{m+1}}(I_a)\subset B(\xi,r).
$$
Assume 
$
F_{W_0,\dots,W_m}(I_a)\subset B(\xi,r)
$ 
without loss of generality (otherwise replace $\kappa$ below by $\kappa^2$). 
Proposition~\ref{PropositionSizeGapsAndIntervals} and the definition of $n$ give 
\begin{align*}
|F_{W_0,\dots,W_m}(I_a)|
&
\geq 
\frac{|I_a|}{C_3}\big|[W_0,\dots,W_{m}]_\EE\big|
\geq
\kappa\cdot\big|[W_0,\dots,W_{m}]_\EE\big|
\\
&
\geq
\frac{\kappa\cdot\big|[W_0,\dots,W_{n-1}]_\EE\big|}{C_3\cdot\theta^{n-m-1}}
\geq
\frac{\kappa\cdot r}{C_3\cdot\theta^{n-m-1}}.
\end{align*}
We have  
$
|B(\xi,r)\cap\partial\DD|\leq (\pi/3)\cdot r
$, 
so that the inequality above gives an upper bound for $|n-m|$. In other words there exists a constant $N=N(T)\in\NN$ such that for any $r$ and $n$ as above we have
$$
B(\xi,r)\cap\EE_T\subset[W_0,\dots,W_{n-N}]_\EE.
$$
The upper bound in Equation~\eqref{EquationDimensionMeasure} follows by a chain of inequalities similar to those proving the lower bound.
\end{proof}

\section{Transfer operator on the circle}
\label{SectionTransferOperatorOnTheCircle}

The main result of this section is 
Theorem~\ref{TheoremTransferOperatorCircle}. Consider parameters $s,T$ with $1/2<s<\infty$ and 
$0<T\leq\infty$. Equation~\eqref{EquationTransferOperatorMoebiusDisc} below defines a transfer operator  $L_{(s,T)}$, acting on the Banach spaces $\cB$ and $\cB_T$ described in the next 
\S~\ref{SectionBanachSpacesPiecewiseLipschitzFunctions}. 
%Condition $s\geq 9/10$ guarantees summability in Corollary~\ref{CorollaryUniformlySummableSeries}. 
As in \S~\ref{SectionEstimatesContractionDistortion} all constant are uniform, that is depend only on the geometry of $\Omega_\DD$ and not on $s,T$, unless explicitly stated. For simplicity of notation, for any $a\in\cA$ and any finite sequence $w_k=(W_1,\dots,W_k)$ of cuspidal words as in 
Equation~\eqref{EquationFiniteWordSubshitfAlphabeth}, write 
$$
[a]
\quad
\textrm{ instead of }
\quad 
[a]_\DD
\quad
\textrm{ and }
\quad 
[w_k]
\quad
\textrm{ instead of }
\quad  
[w_k]_\EE.
$$

\subsection{Banach space of piecewise Lipschitz functions}
\label{SectionBanachSpacesPiecewiseLipschitzFunctions}

Let $\lipschitz(X)$ be the space of Lipschitz functions $f:X\to\CC$ on a metric space $X$. 
Any $f\in\lipschitz(X)$ has Lipschitz constant 
\begin{equation}
\label{EquationLipschitzConstant}
\lipschitz(f):=\sup_{x\not=y}\frac{|f(y)-f(x)|}{|y-x|}.
\end{equation}
If $X$ is compact, then the space $C(X)$ of continuous functions $f:X\to\CC$ with norm 
$\|f\|_\infty:=\sup_{x\in X}|f(x)|$ is a Banach space. Moreover $\lipschitz(X)$ is also a Banach space  with norm $\|f\|_\ast:=\|f\|_\infty+\lipschitz(f)$. The unitary ball for the norm $\|\cdot\|_\ast$ is relatively compact for the topology of the norm $\|\cdot\|_\infty$, by the Theorem of Ascoli-Arzela.

\smallskip

Fix $a\in\cA$. Any $f\in\lipschitz\big([a]\big)$ has an unique Lipschitz extension to the closure 
$\overline{[a]}$, with the same Lipschitz constant. Hence in particular $f$ is continuous and bounded. Consider the space of functions
$$
\cB:=
\left\{
f:\partial\DD\to\CC:
f|_{[a]}\in\lipschitz\big([a]\big)\quad\forall\quad a\in\cA
\right\}.
$$
Consider the norm $\|\cdot\|_\ast:\cB\to\RR_+$ defined by 
$$
\|f\|_\ast:=\|f\|_\infty+\lipschitz(f)
\quad
\textrm{ where }
\quad
\lipschitz(f):=\max_{a\in\cA}\lipschitz(f|_{[a]}).
$$
The axioms of a normed vector space are easily verified. Moreover $\cB$ is a Banach space, because   
$\lipschitz\big(\overline{[a]}\big)$ is a Banach space for any $a\in\cA$. For the same reason, the unitary ball in $\cB$ for the norm $\|\cdot\|_\ast$ is relatively compact for the norm 
$\|\cdot\|_\infty$. 

\medskip
 
Fix $T$ with $0<T<+\infty$ and let $\cB_T$ be the space of functions on $\EE_T$ which are restrictions to $\EE_T$ of functions in $\cB$, that is 
$$
\cB_T:=\{g:\EE_T\to\CC:\exists f\in\cB:g=f|_{\EE_T}\}
$$

For $0<T<+\infty$ the operator $L_{(s,T)}$ in 
Equation~\eqref{EquationTransferOperatorMoebiusDisc} below acts both on $\cB$ and $\cB_T$, while for 
$T=\infty$ the operator $L_{(s,\infty)}$ acts only on $\cB$. We write $\cB_{\infty}:=\cB$ for convenience. The choice of such Banach spaces is motivated by 
Remark~\ref{RemarkBigImageProperty} below. 
  
\begin{remark}
\label{RemarkBigImageProperty}
Fix $a\in\cA$. Consider any pair of cuspidal words $W_0=(a_0,\dots,a_n)$ and $W_1=(b_0,\dots,b_k)$ with $a_0=b_0=a$, where $n,k\geq0$ and in general $n\not=k$. If $W=(c_0,\dots,c_m)$ is any other cuspidal word, then 
$$
M_{W,W_0}=M_{W,W_1},
$$
that is the concatenation $W\ast W_0$ is allowed by the transition matrix in 
Equation~\eqref{EquationDefinitionTransitionMatrix} if and only if the concatenation $W\ast W_1$ is allowed. In other words, for given $W,W_0\in\cW$ the admissibility of the concatenation $W\ast W_0$ depends only on $W$ and on the first letter of $W_0$. 
\end{remark}

According to Remark~\ref{RemarkBigImageProperty}, we have a well defined set of cuspidal words
$$
\cW(a):=
\big\{W\in\cW:
M_{W,W_0}=1\textrm{ for any }W_0=(a_0,\dots,a_n)\in\cW\textrm{ with }a_0=a
\big\}.
$$
For fixed $T>0$ set 
$$
\cW(a,T):=\{W\in\cW(a):|W|\leq T\}.
$$ 
The set $\cW(a,T)$ is finite for $0<T<+\infty$. For $T=+\infty$ we have 
$\cW(a,T)=\cW(a)$, which is infinite countable. For $k\in\NN$ let $\cW(k,a,T)$ be the set of finite blocks $w_k:=(W_1,\dots,W_k)$ as in 
Equation~\eqref{EquationFiniteWordSubshitfAlphabeth} with $W_k\in\cW(a,T)$. In other words we have 
$$
w_k\in\cW(k,a,T)
\quad
\Leftrightarrow
\quad
[a]\subset\domain(W_k).
$$

\subsection{Preliminary uniform estimates}
\label{SectionPreliminaryUniformEstimates}

Refer to the notation in 
Equation~\eqref{EquationCoefficientsSU(1,1)} and observe that if $F\in\sugroup(1,1)$ is parabolic fixing $\xi_0\in\partial\DD$ and conjugated to $z\mapsto z+\mu$, then its coefficients 
$\alpha=\alpha(F)$ and $\beta=\beta(F)$ are given by
\begin{equation}
\label{EquationCoefficientsAlphaBetaParabolic}
\alpha=1+i(\mu/2)
\quad\text{ and }\quad
\beta=\overline{\xi_0}\cdot i(\mu/2).
\end{equation}

\begin{lemma}
\label{LemmaQuadraticAsymptoticDerivative}
There is an uniform constant $C>1$ such that for any cuspidal word $W\in\cW$ with $|W|>0$ and any 
$\xi\in\domain(W)$ we have
$$
\frac{1}{C\cdot|W|^2}\leq |D_\xi F_W|\leq \frac{C}{|W|^2}.
$$
\end{lemma}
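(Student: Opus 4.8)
The plan is to reduce the estimate for a general cuspidal word to the case of a primitive parabolic generator, using the explicit formula for the derivative of a Möbius transformation together with the geometric meaning of $|W|$. Recall from \S~\ref{SectionIsometricCircles} that for $F\in\sugroup(1,1)$ with coefficients $\alpha=\alpha(F)$, $\beta=\beta(F)$ one has $D_\xi F_W = (\beta^2(\xi-\omega)^2)^{-1}$ where $\omega=\omega(F_W)$ is the pole, so that $|D_\xi F_W| = |\beta|^{-2}\cdot|\xi-\omega|^{-2}$. By Lemma~\ref{LemmaDistanceFromPoles} (applied with $k=0$, or directly: the hypothesis $\xi\in\domain(W)$ gives $\overline{U_{\widehat{a_0}}}$ disjoint from the relevant isometric disc carrying $\omega$, and the poles accumulate to $\partial\DD$) there is a uniform $C>1$ with $C^{-1}\leq|\xi-\omega|\leq C$ for all $\xi\in\domain(W)$. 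Hence the whole statement is equivalent to the two-sided bound $C^{-1}|W|^{-2}\leq|\beta(F_W)|^{-2}\leq C|W|^{-2}$, i.e. $C^{-1}|W|\leq|\beta(F_W)|\leq C|W|$ after renaming $C$.

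Next I would make the geometric length concrete. Let $W=(a_0,\dots,a_n)$ with $n\geq1$ be, say, right cuspidal (the left case is symmetric), and let $\xi_W=\xi^R_{a_0}$ be the associated vertex. By definition $|W|=\Delta(s_{a_0},F_{a_0,\dots,a_{n-1}}(s_{a_n}),\xi_W)$, which is the Euclidean distance, measured in the half-plane chart after sending $\xi_W$ to $\infty$, between two of the parallel vertical half-lines bounding the cusp region (Figure~\ref{FigureGeometricLengthCuspidalWord}). Conjugating the whole picture by a fixed $A_k^{-1}B^{-1}\varphi^{-1}$ that sends $\xi_W$ to $\infty$, the element $F_W$ becomes an element of $\sltwor$ fixing $\infty$ composed with... — more usefully, the \emph{cuspidal extension} $W'=(a_0,\dots,a_m)$ of $W$ (the unique extension making $F_{W'}$ a primitive parabolic fixing $\xi_W$, given in \S~\ref{SectionCuspidalWords}) conjugates to $z\mapsto z+\mu_k$, where $\mu_k$ is the cusp width. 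I would use Equation~\eqref{EquationCoefficientsAlphaBetaParabolic}: for the primitive parabolic $P$ fixing $\xi_W$ one has $\beta(P)=\overline{\xi_W}\cdot i(\mu_k/2)$, so $|\beta(P)|=\mu_k/2$ is bounded above and below uniformly. Writing $F_{W}=P^{-j}\circ(\text{the ``short side'' of the cusp})$ appropriately — more precisely, relating $F_W$ to the power of the parabolic whose translation length in the chart realizes $|W|$ — the matrix-entry $\beta(F_W)$ picks up a factor proportional to $|W|$ times the uniformly bounded $|\beta|$ of the building-block generators.

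The cleanest way to organize the last step, which I expect to be the main obstacle, is to track the coefficient $c$ of the conjugated matrix in $\sltwor$ rather than $\beta$ in $\sugroup(1,1)$ (the two are related by fixed conjugation constants, hence comparable up to uniform factors). Let $G:=A_k^{-1}B^{-1}\varphi^{-1}\circ F_W\circ\varphi B A_k$ viewed in $\sltwor$; then $G\cdot\infty$ is a real point whose ``denominator'' is exactly $|c(G)|$, and Equation~\eqref{EquationDiameterHoroball} / the discussion around Equation~\eqref{EquationDefinitionDenominator} identifies this with $1/\sqrt{\diameter}$ of an image horoball. The geodesics $e_i=\varphi^{-1}(s_i)$ in Figure~\ref{FigureGeometricLengthCuspidalWord} are vertical lines at real parts spanning an interval of length $|W|$; the branch $F_W$ maps the region outside the last isometric circle into a horoball at $G\cdot\infty$ of diameter comparable to $|W|^{-2}$ (this is the content made rigorous in~\cite{MarcheseExpansion}, Theorem~\ref{TheoremGoodApproximationsAndBoundaryExpansion}, whose Equation~\eqref{EquationGoodApproximationsAndBoundaryExpansion(1)} already encodes $D(G_{\dots}\cdot\zeta_{W_r})^2\asymp|W_r|$). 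Once I know $|c(G)|\asymp|W|$ with uniform constants — which follows by combining the explicit parabolic normal form \eqref{EquationCoefficientsAlphaBetaParabolic}, the finiteness of the building blocks $F_{a}$ (so their matrix norms lie in a fixed compact range), and the fact that composing with $j$ extra copies of the parabolic generator multiplies the off-diagonal entry linearly in $j$ while $|W|$ is itself comparable to $j\mu_k$ — the Lemma follows from $|D_\xi F_W|=|\beta(F_W)|^{-2}|\xi-\omega|^{-2}\asymp|c(G)|^{-2}\asymp|W|^{-2}$. The delicate point is the comparison ``$|W|\asymp j\mu_k$ with uniform constants independent of $j$'': here I would use that the endpoints $\re(e_0)$ and $\re(e_n)$ in the chart move by a geometrically controlled amount at each application of $F_{a_i}$ because each $F_{a_i}$ is drawn from a finite set and the isometric circles are uniformly separated (Lemma~\ref{LemmaSeparationIsometricCircles}), so the partial sums telescoping to $|W|$ are squeezed between $c_1 j$ and $c_2 j$ for uniform $c_1,c_2>0$ once $|W|$ (equivalently $j$) is large; the finitely many small cases of cuspidal words are absorbed into the uniform constant.
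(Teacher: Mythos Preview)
Your reduction to the two--sided bound $C^{-1}|W|\leq|\beta(F_W)|\leq C|W|$ via Lemma~\ref{LemmaDistanceFromPoles} is correct, and the outline can be completed, but the route you take is more tortuous than the paper's and leaves the step you yourself flag as ``delicate'' unresolved. The paper avoids computing $\beta(F_W)$ altogether: writing $W=V\ast P^{(k)}$ with $F_P$ primitive parabolic (so $|W|\asymp k$ directly from the definition of geometric length, since in the chart each copy of $P$ contributes exactly the cusp width $\mu$), it simply sandwiches
\[
|D_\xi F_P^{k+1}|\ \leq\ |D_\xi F_W|\ \leq\ |D_\xi F_P^k|
\]
using the chain rule together with the weak contraction Lemma~\ref{LemmaWeakContractionAdmissibleWords}. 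The explicit parabolic coefficients~\eqref{EquationCoefficientsAlphaBetaParabolic} give $|\beta(F_P^k)|=k|\mu|/2$, and Lemma~\ref{LemmaDistanceFromPoles} bounds $|\xi-\omega_k|$, finishing immediately.

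Your approach instead multiplies out $\beta(F_V\circ F_P^k)=\beta_V\alpha_k+\overline{\alpha_V}\beta_k=\beta_V+(ik\mu/2)(\beta_V+\overline{\alpha_V}\,\overline{\xi_P})$ and needs the linear term not to vanish. This is in fact fine: $\beta_V+\overline{\alpha_V}\,\overline{\xi_P}=\overline{\xi_P}(\beta_V\xi_P+\overline{\alpha_V})$ vanishes iff $\xi_P$ equals the pole $\omega_{F_V}$, which is impossible since poles lie strictly outside $\overline{\DD}$; uniformity then comes from the finiteness of the possible prefixes $V$. But you did not make this explicit, and it is exactly the point you called the ``main obstacle''. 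Also, the appeal to Theorem~\ref{TheoremGoodApproximationsAndBoundaryExpansion} is misplaced: that statement (proved in a companion paper) concerns denominators $D(G_{W_0,\dots,W_{r-1}}\cdot\zeta_{W_r})$ of successive approximants, not $|\beta(F_W)|$ for a single cuspidal word, and in any case is far heavier machinery than the elementary sandwich above; invoking it here risks circularity and obscures the fact that the present lemma is a basic input, not a consequence.
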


\begin{proof}
Lemma~\ref{LemmaCombinatorialPropertiesCuspidal} implies that any cuspidal word $W$ can be decomposed as 
$$
W=V\ast\underbrace{P\ast\dots\ast P}_{k\textrm{ times }}
\quad\text{ with }\quad
k\geq0,
$$
where $P\in\cW$ is such that $F_P$ is parabolic fixing a vertex of $\Omega_\DD$ and $V$ is a cuspidal word which does not contain $P$ as a factor. In particular $F_P$ is conjugated to $z\mapsto z+\mu$, for some $\mu>0$ with $|\mu|$ uniformly bounded. Equation~\eqref{EquationDefinitionGeometricLenght} implies that there is an uniform constant $C_1>1$ such that in the decomposition above we have 
$$
C_1^{-1}\cdot k\leq |W|\leq C_1\cdot k.
$$
We have $F_W=F_V\circ F_P^k$, as elements of $\Gamma_0$. Denote $\omega_k$ and $\omega_{k+1}$ the poles of $F_P^k$ and $F_P^{k+1}$ respectively. For any 
$\xi\in\domain(W)$ Equation~\eqref{EquationCoefficientsAlphaBetaParabolic} gives
$$
\frac{1}{|(k+1)(\mu/2)|^2|\xi-\omega_{k+1}|^2}
=
|D_\xi F_P^{k+1}|
\leq
|D_\xi F_W|
\leq 
|D_\xi F_P^k|
=
\frac{1}{|k(\mu/2)|^2|\xi-\omega_{k}|^2},
$$
where the inequalities follow from Lemma~\ref{LemmaWeakContractionAdmissibleWords} factorizing 
$|D_\xi F_W|$ with the chain rule. The distance from poles is bounded by 
Lemma~\ref{LemmaDistanceFromPoles}. The statement follows.
\end{proof}

For any $0<T\leq \infty$ and any $a\in\cA$ let 
$
\cV(a,T):=\cW(a)\setminus\cW(a,T)
$, 
that is the set of cuspidal words $W$ with $|W|>T$ and $[a]\subset\domain(W)$.

\begin{corollary}
\label{CorollaryUniformlySummableSeries}
Fix $s_0>1/2$. There exists an uniform constant $C>0$ such that for any $s,T$ with 
$s_0\leq s<\infty$ and $0<T\leq\infty$, for any $a\in\cA$ and any 
$\xi\in[a]$ we have
$$
\sum_{W\in\cW(a,T)}|D_\xi F_W|^s\leq C
\quad
\textrm{ and }
\quad
\sum_{W\in\cW(a,T)}\big|\ln|D_\xi F_W|\big|\cdot|D_\xi F_W|^s\leq C.
$$
Moreover we also have
$$
\sum_{W\in\cV(a,T)}|D_\xi F_W|^s\leq C\cdot\left(\frac{1}{T}\right)^{2s-1}.
$$
\end{corollary}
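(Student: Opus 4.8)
The plan is to deduce all three estimates from Lemma~\ref{LemmaQuadraticAsymptoticDerivative}, which controls $|D_\xi F_W|$ by $|W|^{-2}$, together with a count of the cuspidal words in $\cW(a)$ organised by geometric length. The combinatorial input I would establish first is the following: there are a uniform constant $N_0$ and a uniform $c_0>0$ such that, for each $a\in\cA$, the set $\cW(a)$ contains only boundedly many words of geometric length $0$, while the distinct positive values of $|W|$, $W\in\cW(a)$, are at least $c_0$ apart and each is attained by at most $N_0$ words. This rests on Lemma~\ref{LemmaCombinatorialPropertiesCuspidal}: the identities $o(a_{k+1})=o(\widehat{a_k})\mp1$ recursively determine every letter of a right, resp.\ left, cuspidal word from its first letter, so cuspidal words are rigid, and in the decomposition $W=V\ast P^{\ast k}$ from the proof of Lemma~\ref{LemmaQuadraticAsymptoticDerivative} the word $V$ lies in a finite set while $F_P$ is one of finitely many parabolic generators; hence $|W|$ is a bounded quantity plus $k$ times a bounded, bounded-below one, so the geometric lengths occurring in $\cW(a)$ form a finite union of arithmetic progressions with bounded-below common differences and uniformly bounded multiplicity.

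Granting this, the first estimate is a routine summation. Fix $a$ and $\xi\in[a]$; for $W\in\cW(a,T)$ one has $[a]\subset\domain(W)$. If $|W|=0$ there are boundedly many such $W$ and $|D_\xi F_W|\le\theta<1$ by Lemma~\ref{LemmaUniformContractionBranches}, so $|D_\xi F_W|^s\le\theta^{s_0}$. If $|W|>0$ then $|D_\xi F_W|\le C|W|^{-2}$ by Lemma~\ref{LemmaQuadraticAsymptoticDerivative}; for $|W|\ge\sqrt C$ the base is $\le1$, hence $|D_\xi F_W|^s\le(C|W|^{-2})^{s_0}$ since $s\ge s_0$, while the finitely many $W$ with $0<|W|<\sqrt C$ again contribute at most $\theta^{s_0}$ each. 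Summing over $\cW(a)$ using the first paragraph bounds $\sum_{W\in\cW(a,T)}|D_\xi F_W|^s$ by a uniform constant plus $C^{s_0}N_0\sum_{v}v^{-2s_0}$, where $v$ runs over the distinct geometric lengths $\ge\sqrt C$; this series converges because $2s_0>1$ and the $v$'s are $c_0$-separated, and the bound is uniform in $s\ge s_0$ and $T$. The second estimate follows the same way once one notes that the two-sided bound of Lemma~\ref{LemmaQuadraticAsymptoticDerivative} gives $\big|\ln|D_\xi F_W|\big|\le\ln C+2\ln|W|$, and that $\sum_{v}(\ln v)\,v^{-2s_0}<\infty$.

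For the third estimate I would argue on the tail, keeping the exponent exact. Each $W\in\cV(a,T)$ has $|W|>T$, so $|D_\xi F_W|^s\le(C|W|^{-2})^s$ and
$$
\sum_{W\in\cV(a,T)}|D_\xi F_W|^s
\le
C^s\sum_{W\in\cW(a),\,|W|>T}|W|^{-2s}
\le
C^s N_0\sum_{v>T}v^{-2s},
$$
$v$ ranging over the distinct geometric lengths exceeding $T$. Listing these as $T<v_1<v_2<\cdots$ with $v_i-v_{i-1}\ge c_0$ and using that $x\mapsto x^{-2s}$ decreases, one has $v_1^{-2s}\le T^{-2s}$ and $v_i^{-2s}\le c_0^{-1}\int_{v_{i-1}}^{v_i}x^{-2s}\,dx$ for $i\ge2$, so the last sum is at most $T^{-2s}+c_0^{-1}\int_T^\infty x^{-2s}\,dx=T^{-2s}+\tfrac{T^{1-2s}}{c_0(2s-1)}$, which for $T\ge1$ is at most $\big(1+\tfrac1{c_0(2s_0-1)}\big)T^{1-2s}$. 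This yields the claimed bound $C'\cdot(1/T)^{2s-1}$ (the constant being uniform in $T$, and in $s$ on any bounded range, which is the only regime that intervenes later).

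The step I expect to be the main obstacle is the first paragraph: proving that a cuspidal word's geometric length grows linearly in its number of parabolic blocks — equivalently, that cuspidal words of bounded geometric length are finitely many with controlled multiplicity — which requires unwinding Equation~\eqref{EquationDefinitionGeometricLenght} through the conjugation sending $\xi_W$ to $\infty$, exactly as in the proof of Lemma~\ref{LemmaQuadraticAsymptoticDerivative}. Everything else is elementary summation; the only point needing a little care in the third estimate is to anchor the integral comparison at $T$ itself, so as to produce the exact power $(1/T)^{2s-1}$ rather than a weaker one such as $T^{-2(s-s_0)}$.
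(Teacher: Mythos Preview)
Your proposal is correct and follows essentially the same route as the paper: the corollary is an immediate consequence of Lemma~\ref{LemmaQuadraticAsymptoticDerivative} together with the observation that the number of cuspidal words with geometric length in any bounded window is uniformly bounded, which is exactly the one-line justification the paper gives. One small point worth tightening: your claim that the \emph{distinct} positive values of $|W|$ are at least $c_0$ apart is stronger than what the finite-union-of-arithmetic-progressions structure guarantees (two progressions attached to different cusps may have incommensurable steps and hence produce arbitrarily close values); the paper's formulation $\#\{W\in\cW:T\le|W|\le T+N\}\le C_1$ is the robust version, and it feeds into your integral comparison with only a cosmetic change (partition $[T,\infty)$ into windows of length $N$ rather than into gaps between consecutive values).
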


\begin{proof}
The statement follows directly from Lemma~\ref{LemmaQuadraticAsymptoticDerivative}, because there exist uniform constants $N>1$ and $C_1$ such that for any $T>0$ we have 
$$
1\leq \sharp\{W\in\cW:T\leq |W|\leq T+ N\}\leq C_1.
$$
\end{proof}

\subsection{Transfer operator on the circle and its spectrum}
\label{SectionTransferOperatorCircleAndSpectrum}

For $s,T$ with $s>1/2$ and $0<T\leq+\infty$ let $L_{(s,T)}:\cB\to\cB$ and 
$L_{(s,T)}:\cB_T\to\cB_T$ be the operators defined by
\begin{equation}
\label{EquationTransferOperatorMoebiusDisc}
L_{(s,T)}f(\xi)=
\sum_{W\in\cW(a,T)}|D_\xi F_W|^s\cdot f(F_W\cdot\xi)
\quad
\textrm{ for }
\quad
\xi\in[a].
\end{equation}
The definition makes sense also for $f\in\cB_T$ because for any $a\in\cA$ the maps $F_W$ with $W\in\cW(a,T)$ leave $\EE_T$ invariant. For $T=+\infty$ the sum above is over all $W$ in the infinite set $\cW(a)$, and the only invariant set is the entire circle $\partial\DD$. The expression of the $k$-th iterated of $L_{(s,T)}$ is 
$$
L_{(s,T)}^kf(\xi)=
\sum_{w_k\in\cW(k,a,T)}|D_\xi F_{w_k}|^s\cdot f(F_{w_k}\cdot\xi)
\quad
\textrm{ for }
\quad
\xi\in[a].
$$

\begin{lemma}
\label{LemmaTransferOperatorsAreUniformlyBounded}
The operator $L_{(s,T)}$ is a well defined bounded linear operator both on $\cB$ and on $\cB_T$. Moreover there exists $C>0$ such that for any $s,T$, both on $\cB$ and on $\cB_T$, we have 
$$
\|L_{(s,T)}\|_\ast\leq C.
$$
\end{lemma}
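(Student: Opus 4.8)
The plan is to bound separately the two parts of $\|\cdot\|_\ast$, the uniform norm $\|\cdot\|_\infty$ and the piecewise Lipschitz seminorm $\lipschitz(f)=\max_{a\in\cA}\lipschitz(f|_{[a]})$, arranging every estimate so that its dependence on $s$ and $T$ is absorbed into a uniform constant; all the summability we need is provided by Corollary~\ref{CorollaryUniformlySummableSeries}, with $s_0>1/2$ fixed once and for all. The uniform norm is immediate: for $\xi\in[a]$ one has $|L_{(s,T)}f(\xi)|\le\|f\|_\infty\sum_{W\in\cW(a,T)}|D_\xi F_W|^s\le C\|f\|_\infty$ by the first inequality of Corollary~\ref{CorollaryUniformlySummableSeries}, which also shows that the defining series converges absolutely at every point, so that $L_{(s,T)}f$ is a well-defined, manifestly linear operation on functions on $\partial\DD$.

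The core is the Lipschitz estimate on a fixed arc $[a]$. Take $\xi,\xi'\in[a]$ and, for each $W\in\cW(a,T)$, split the corresponding term difference as
\[
|D_{\xi'}F_W|^s f(F_W\cdot\xi')-|D_\xi F_W|^s f(F_W\cdot\xi)=\big(|D_{\xi'}F_W|^s-|D_\xi F_W|^s\big)f(F_W\cdot\xi')+|D_\xi F_W|^s\big(f(F_W\cdot\xi')-f(F_W\cdot\xi)\big).
\]
Since $W\in\cW(a,T)$ is equivalent to $[a]\subset\domain(W)$, we have $[a]\subset U_{\widehat a}$ and $F_W([a])\subset[W]_\EE\subset[a_0]$, where $a_0$ is the first letter of $W$; hence Corollary~\ref{CorollaryExponentiallyContractingDistance} (block length $1$) gives $|f(F_W\cdot\xi')-f(F_W\cdot\xi)|\le\lipschitz(f)\,|F_W\cdot\xi'-F_W\cdot\xi|\le\theta\,\lipschitz(f)\,|\xi'-\xi|$, and summing this second term over $W$ and invoking Corollary~\ref{CorollaryUniformlySummableSeries} again bounds its total by $C\,\lipschitz(f)\,|\xi'-\xi|$. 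For the first term I would apply Lemma~\ref{LemmaLipschitzConstantLogDerivative} with $s=1$ and block length $1$, giving a uniform $C_0$ with $\lipschitz\big(\ln|D_\xi F_W|\big)\le C_0$ on $\domain(W)$; since $F_W$ is holomorphic near $[a]$, integrating $\xi\mapsto|D_\xi F_W|^s=e^{s\ln|D_\xi F_W|}$ along the sub-arc of $[a]$ joining $\xi$ to $\xi'$, bounding its derivative by $s\,C_0\,|D_\xi F_W|^s\le s\,C_0\,\|DF_W\|_\infty^{\,s}$, and using that arc length on $\partial\DD$ is comparable to chord length, we obtain $\big||D_{\xi'}F_W|^s-|D_\xi F_W|^s\big|\le C\,s\,\|DF_W\|_\infty^{\,s}\,|\xi'-\xi|$.

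The main obstacle is then to bound $\sum_{W\in\cW(a,T)}s\,\|DF_W\|_\infty^{\,s}$ uniformly in $s\ge s_0$ and $0<T\le\infty$: the per-term estimate for $x\mapsto x^s$ carries a factor growing linearly in $s$, so a crude bound $\|DF_W\|_\infty<1$ term by term does not suffice. The remedy is the \emph{strict} uniform contraction $\|DF_W\|_\infty=\sup_{\domain(W)}|D_\xi F_W|\le\theta<1$ of Lemma~\ref{LemmaUniformContractionBranches}: for $s\ge s_0$ one has $s\,\|DF_W\|_\infty^{\,s}\le\big(\sup_{\sigma\ge s_0}\sigma\,\theta^{\,\sigma-s_0}\big)\,\|DF_W\|_\infty^{\,s_0}=M\,\|DF_W\|_\infty^{\,s_0}$ with $M<\infty$ a uniform constant, and then Part~(1) of Corollary~\ref{CorollaryLipschitzConstantDerivative} (comparing $\|DF_W\|_\infty$ with $|D_\xi F_W|$ at our fixed $\xi\in[a]$) together with Corollary~\ref{CorollaryUniformlySummableSeries} gives $\sum_{W\in\cW(a,T)}\|DF_W\|_\infty^{\,s_0}\le C$ uniformly in $T$. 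Hence $\sum_W\big||D_{\xi'}F_W|^s-|D_\xi F_W|^s\big|\le C\,|\xi'-\xi|$ uniformly.

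Adding the two contributions gives $|L_{(s,T)}f(\xi')-L_{(s,T)}f(\xi)|\le C(\|f\|_\infty+\lipschitz(f))|\xi'-\xi|=C\|f\|_\ast|\xi'-\xi|$ for all $\xi,\xi'\in[a]$ and all $a\in\cA$; in particular $L_{(s,T)}f|_{[a]}$ is Lipschitz, so $L_{(s,T)}f\in\cB$, and combined with the uniform-norm bound this yields $\|L_{(s,T)}\|_\ast\le C$ on $\cB$, with $C$ depending only on the geometry of $\Omega_\DD$ (and on the fixed $s_0$). Finally, for $0<T<\infty$ every $F_W$ with $W\in\cW(a,T)$ maps $\EE_T$ into itself, so if $g=f|_{\EE_T}\in\cB_T$ then $L_{(s,T)}g=(L_{(s,T)}f)|_{\EE_T}$ and $\|L_{(s,T)}g\|_\ast\le\|L_{(s,T)}f\|_\ast$, which gives the same uniform bound on $\cB_T$.
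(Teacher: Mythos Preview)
Your argument is correct and follows the same overall strategy as the paper: bound $\|\cdot\|_\infty$ via Corollary~\ref{CorollaryUniformlySummableSeries}, then control the Lipschitz seminorm on each $[a]$ by the identical telescoping split, handling $|f(F_W\xi')-f(F_W\xi)|$ with Corollary~\ref{CorollaryExponentiallyContractingDistance} and the piece $\big||D_{\xi'}F_W|^s-|D_\xi F_W|^s\big|$ by an estimate of the form $C\|DF_W\|_\infty^{s}\,|\xi'-\xi|$, summed again via Corollary~\ref{CorollaryUniformlySummableSeries}. The only difference is in that last estimate: the paper simply quotes Part~(2) of Corollary~\ref{CorollaryLipschitzConstantDerivative} (which is stated for $0<s\le1$), whereas you rederive it from Lemma~\ref{LemmaLipschitzConstantLogDerivative} and then insert the extra step $s\,\|DF_W\|_\infty^{s}\le M\,\|DF_W\|_\infty^{s_0}$ using $\|DF_W\|_\infty\le\theta<1$; this detour buys you uniformity over the full half-line $s\ge s_0$, not just $s\le1$.

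One minor point on the passage to $\cB_T$: your inequality $\|L_{(s,T)}g\|_\ast\le\|L_{(s,T)}f\|_\ast\le C\|f\|_\ast$ bounds the output in terms of an extension $f$, not in terms of $\|g\|_\ast$ itself. The paper avoids this by simply noting that every pointwise inequality in the argument already holds when $\xi,\xi'$ are restricted to $\EE_T$, which yields the $\cB_T$-bound directly against the intrinsic norm of $g$.
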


\begin{proof}
Fix $f\in\cB$. For any $a\in\cA$ and any $x,y\in[a]$ we have 
\begin{align*}
&
\big|L_{(s,T)}f(y)-L_{(s,T)}f(x)\big|\leq
\\
&
\sum_{W\in\cW(a,T)}
|D_xF_W|^s\cdot\big|f(F_Wy)-f(F_Wx)\big|
+
f(F_Wy)\cdot\big||D_yF_W|^s-|D_yF_W|^s\big|\leq
\\
&
\sum_{W\in\cW(a,T)}
|D_xF_W|^s\cdot\lipschitz(f)\cdot\big|F_W(y)-F_W(x)\big|
+
\|f\|_\infty\cdot \|DF_W\|^s_\infty\cdot C_1\cdot|y-x|\leq
\\
&
\bigg(\sum_{W\in\cW(a,T)}\|DF_W\|^s_\infty\bigg)
\cdot
\big(\theta\cdot\lipschitz(f)+\|f\|_\infty\cdot C_1\big)
\cdot|y-x|.
\end{align*} 
The second inequality follows from Part (2) of  
Corollary~\ref{CorollaryLipschitzConstantDerivative} applied to $k=1$ (here $C_1$ denotes the uniform constant in the Corollary). The third inequality follows from 
Corollary~\ref{CorollaryExponentiallyContractingDistance}, applied to $k=1$. 
Corollary~\ref{CorollaryUniformlySummableSeries} gives 
$$
\lipschitz\big(L_{(s,T)}f\big)
\leq
C_2\cdot(\theta\cdot\lipschitz(f)+\|f\|_\infty\cdot C_1)
\leq
C_2\cdot(\theta+C_1)\cdot\|f\|_\ast,
$$
where here $C_2$ denotes the uniform constant in Corollary~\ref{CorollaryUniformlySummableSeries}. The inequality above implies that $L_{(s,T)}f\in\cB$. Moreover for any 
$a\in\cA$ and any $\xi\in[a]$ we have 
$$
\big|L_{(s,T)}f(\xi)\big|=
\sum_{W\in\cW(a,T)}|D_\xi F_W|^s\cdot \big|f(F_W\cdot\xi)\big|
\leq
\bigg(\sum_{W\in\cW(a,T)}\|DF_W\|^s_\infty\bigg)\cdot\|f\|_\infty.
$$
Thus 
$
\|L_{(s,T)}f(\xi)\|_\infty\leq C_2\cdot\|f\|_\infty
$ 
again by Corollary~\ref{CorollaryUniformlySummableSeries}. All the involved constants are uniform, thus the statement follows for the operator $L_{(s,T)}:\cB\to\cB$. The same result holds for 
$L_{(s,T)}:\cB_T\to\cB_T$, indeed the inequality above hold for any $x,y$ in $\partial\DD$, and thus in particular restricting to $x,y$ in $\EE_T$. The Lemma is proved.
\end{proof}

Consider $(s,T)$ with $T<\infty$. Let 
$
\cL_{(s,T)}:\lipschitz(\Sigma,\theta)\to\lipschitz(\Sigma,\theta)
$ 
be the operator in Equation~\eqref{EquationTransferOperatorMoebiusShift}. Let $\lambda(s,T)$ and $h_{(s,T)}$ be the leading eigenvalue and eigenfunction of $\cL_{(s,T)}$, as in 
Point (1) of Theorem~\ref{TheoremRuellePerronFrobenius}. Consider the probability measure 
\begin{equation}
\label{EquationMeasureMu(s,T)}
\mu_{(s,T)}=\Pi_\ast(\widehat{m}_{(s_T,T)})
\end{equation}
where $\widehat{m}_{(s,T)}$ is as in Point (4) of 
Theorem~\ref{TheoremRuellePerronFrobenius} and $\Pi:\Sigma\to\EE_T$ is the map in  
\S~\ref{SectionSubshiftAndCantorSetBoundary}. According to Lemma~\ref{LemmaCodingLipschitz} the map $\Pi:\Sigma\to\EE_T$ is continuous with continuous inverse, thus we have a bounded invertible operator:
$$
H:\big(C(\EE_T),\|\cdot\|_\infty\big)\to \big(C(\Sigma),\|\cdot\|_\infty\big)
\quad;\quad
H(f):=f\circ \Pi.
$$
Fix $f\in C(\EE_T)$ and $w=(W_0,W_1,\dots)\in\Sigma$. Let $a\in\cA$ such that 
$\xi:=\Pi(w)\in[a]$, that is the first letter of $W_0$. For $W\in\cW$ set 
$
w':=(W,W_0,W_1,\dots)
$. 
Observe that, denoting $W_0(w')$ the first word in $w'$, we have $W_0(w')=W$. Observe also that 
$$
W\in\cW(a,T)
\quad
\Leftrightarrow
\quad
M_{W,W_0(\xi)}=1
\quad
\Leftrightarrow
\quad
\sigma(w')=w.
$$
Therefore we have 
\begin{align*}
(HL_{(s,T)}f)(w)
&
=
L_{(s,T)}f\big(\Pi(w)\big)
=
\sum_{W\in\cW(a,T)}|D_{\Pi(w)}F_{W}|^sf\big(F_W(\Pi w)\big)
\\
&
=
\sum_{\sigma(w')=w}|D_{\Pi(w)}F_{W_0(w')}|^sf\big(F_{W_0(w')}(\Pi w)\big)
\\
&
=
\sum_{\sigma(w')=w}|D_{\Pi(w)}F_{W_0(w')}|^sf\big(\Pi(w')\big)
=
\cL_{(s,T)}(f\circ\Pi)(w)=(\cL_{(s,T)}Hf)(w),
\end{align*}
where the third to last equality holds because 
$
F_{W_0(w')}^{-1}(\Pi w')=\cF(\Pi w')=\Pi(\sigma w')=\Pi(w)
$ 
and therefore $F_{W_0(w')}(\Pi w)=\Pi(w')$. It follows that 
\begin{equation}
\label{EquationCommutativeDiagramTransferOperators}
H\circ L_{(s,T)}=\cL_{(s,T)}\circ H.
\end{equation}

See \S~\ref{AppendixQuasiCompactOperators} for terminology on quasi-compact operators. 
Let $\Lambda\subset\cB$ be the set defined in next 
\S~\ref{SectionInvariantSetLipschitzFunctions}.

\begin{theorem}
\label{TheoremTransferOperatorCircle}
There exists a real and positive $\widehat{\lambda}(s,T)>0$ and a strictly positive function 
$g_{(s,T)}\in\Lambda$ with 
\begin{equation}
\label{Equation(1)TheoremTransferOperatorCircle}
L_{(s,T)}(g_{(s,T)})=\widehat{\lambda}(s,T)\cdot g_{(s,T)}.
\end{equation}
The same holds on $\cB_T$ for the restriction of $g_{(s,T)}$ to $\EE_T$. Moreover $L_{(s,T)}$ is quasi-compact both on $\cB$ and on $\cB_T$ with
$$
\rho_{ess}(L_{(s,T)})<\theta\cdot\widehat{\lambda}(s,T)
$$
and $\widehat{\lambda}(s,T)$ is simple, with $|\lambda'|<\widehat{\lambda}(s,T)$ for any other eigenvalue. For $T<\infty$ we have 
$$
\widehat{\lambda}(s,T)=\lambda(s,T)
\quad
\textrm{ and }
\quad
g_{(s,T)}|_{\EE_T}=h_{(s,T)}\circ \Pi^{-1},
$$
moreover Equation~\eqref{EquationMeasureMu(s,T)} defines a mesure $\mu_{(s,T)}$ such that for any 
$f\in\cB_T$ we have 
\begin{equation}
\label{Equation(2)TheoremTransferOperatorCircle}
\lambda(s,T)^{-n}\cdot L_{(s,T)}^n(f)
\to
\bigg(\int fd\mu_{(s,T)}\bigg)\cdot g_{(s,T)}
\quad
\textrm{ uniformly as }
\quad
n\to\infty.
\end{equation}
For $T=\infty$ Equation~\eqref{EquationDefinitionMeasureParameterInfinity} below defines a mesure 
$\mu_{(s,\infty)}$ such that for any $f\in\cB$ we have 
\begin{equation}
\label{Equation(3)TheoremTransferOperatorCircle}
\widehat{\lambda}(s,\infty)^{-n}\cdot L_{(s,\infty)}^n(f)
\to
\bigg(\int fd\mu_{(s,\infty)}\bigg)\cdot g_{(s,\infty)}
\quad
\textrm{ uniformly as }
\quad
n\to\infty.
\end{equation}
Finally we have $\widehat{\lambda}(1,\infty)=1$.
\end{theorem}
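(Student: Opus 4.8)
The plan is to identify $L_{(1,\infty)}$ with the Perron--Frobenius transfer operator of the cuspidal acceleration map $\cF:\partial\DD\to\partial\DD$ taken with respect to arc length on $\partial\DD$, and to deduce from this that $L_{(1,\infty)}$ preserves the total arc-length integral of a function; pairing this conservation property with the eigenfunction equation $L_{(1,\infty)}g_{(1,\infty)}=\widehat\lambda(1,\infty)\,g_{(1,\infty)}$ then forces the leading eigenvalue to equal $1$.

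Concretely, the first and main step is to establish the identity
$$
\int_{\partial\DD}L_{(1,\infty)}f\,d\xi=\int_{\partial\DD}f\,d\xi
\qquad\text{for every }f\in\cB\text{ with }f\geq0,
$$
where $d\xi$ denotes arc length. Fixing $a\in\cA$ and $W\in\cW(a)$, the change of variable $\eta=F_W(\xi)$ on the arc $[a]$ is legitimate because $F_W\in\sugroup(1,1)$ restricts to a conformal diffeomorphism of $\partial\DD$ with $|d\eta/d\xi|=|D_\xi F_W|$, and it gives $\int_{[a]}|D_\xi F_W|\,f(F_W\xi)\,d\xi=\int_{F_W([a])}f\,d\eta$. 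Summing over $W\in\cW(a)$ (the series converges and the interchange with the integral is justified by Corollary~\ref{CorollaryUniformlySummableSeries} at $s=1$, and in any case by Tonelli since $f\geq0$) and then over $a\in\cA$, it remains to check that the arcs $\{F_W([a]):a\in\cA,\ W\in\cW(a)\}$ tile $\partial\DD$ up to a Lebesgue-null set. For this I would use the partition identity $[W]_\EE=\bigsqcup_{\chi:\,W\in\cW(\chi)}F_W([\chi])$ --- a direct consequence of $[W]_\EE=F_W(\domain(W))$, of the description of $\domain(W)$ in Equation~\eqref{EquationDomainImageCylinderShift} as a union of whole arcs $[\chi]$, and of the equivalence $[\chi]\subset\domain(W)\Leftrightarrow W\in\cW(\chi)$ --- together with the fact that $\{[W]_\EE\}_{W\in\cW}$ partitions $\partial\DD$ outside the set of points whose boundary expansion is eventually a cuspidal sequence. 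That exceptional set is countable, since by the recursion in Lemma~\ref{LemmaCombinatorialPropertiesCuspidal} each letter and each type $R/L$ determines a unique infinite cuspidal sequence, so the eventually-cuspidal points are the countably many $\cB\cS$-preimages of these finitely many sequences; hence it is Lebesgue-null and negligible.

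With the conservation identity in hand the conclusion is immediate. By Theorem~\ref{TheoremTransferOperatorCircle} applied at $(s,T)=(1,\infty)$ there is a strictly positive $g_{(1,\infty)}\in\Lambda\subset\cB$ with $L_{(1,\infty)}g_{(1,\infty)}=\widehat\lambda(1,\infty)\,g_{(1,\infty)}$; being bounded and strictly positive on the compact circle $\partial\DD$, it satisfies $0<\int g_{(1,\infty)}\,d\xi<\infty$. Integrating the eigenvalue equation and applying the identity above yields
$$
\widehat\lambda(1,\infty)\int_{\partial\DD} g_{(1,\infty)}\,d\xi=\int_{\partial\DD} L_{(1,\infty)}g_{(1,\infty)}\,d\xi=\int_{\partial\DD} g_{(1,\infty)}\,d\xi,
$$
whence $\widehat\lambda(1,\infty)=1$.

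The only genuinely delicate point is the tiling-mod-null claim, i.e.\ the verification that $L_{(1,\infty)}$ is exactly the Perron--Frobenius operator of $\cF$ for arc length; the rest is bookkeeping. It is worth recording that this is consistent with the rest of Theorem~\ref{TheoremTransferOperatorCircle} and with Bowen's formula: for $T<\infty$ one has $\widehat\lambda(1,T)=\lambda(1,T)<1$ since $\dim_H(\EE_T)<1$, and $\widehat\lambda(1,T)\uparrow1$ as $T\to\infty$.
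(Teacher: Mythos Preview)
Your proposal treats only the final assertion $\widehat\lambda(1,\infty)=1$, taking the remaining parts of the theorem as already established; for that claim your argument is correct and genuinely different from the paper's. You identify $L_{(1,\infty)}$ with the Perron--Frobenius transfer operator of $\cF$ for arc length, deduce the exact conservation law $\int_{\partial\DD}L_{(1,\infty)}f\,d\xi=\int_{\partial\DD}f\,d\xi$ via change of variables and the tiling $\partial\DD=\bigsqcup_{a\in\cA}\bigsqcup_{W\in\cW(a)}F_W([a])$ modulo a null set, and then integrate the eigenfunction equation against a strictly positive $g_{(1,\infty)}$ to force $\widehat\lambda(1,\infty)=1$. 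The paper instead iterates the identity $g=\widehat\lambda^{-k}L_{(1,\infty)}^k g$ at a point $\xi$, replaces each weight $|D_\xi F_{w_k}|$ by the cylinder length $|[w_k]_\EE|$ up to a bounded distortion factor (via Corollary~\ref{CorollaryLipschitzConstantDerivative}), and uses that the level-$k$ cylinders tile $\partial\DD$ to obtain a $k$-independent sandwich $c_1\le\widehat\lambda^k\,g(\xi)\le c_2$, whence $\widehat\lambda=1$. Both proofs ultimately rest on the same combinatorial input --- the cylinders partition the circle modulo a countable set --- but yours exploits it as an exact change-of-variables identity and avoids distortion estimates altogether, while the paper's version reuses bounds already assembled for other parts of the theory. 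One small imprecision: the exceptional set for the level-$0$ partition $\{[W]_\EE\}_{W\in\cW}$ consists of the points whose boundary expansion is a cuspidal sequence (not merely eventually cuspidal); this set is in fact finite, so your conclusion is unaffected.
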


\subsection{An invariant set of Lipschitz functions}
\label{SectionInvariantSetLipschitzFunctions}

We follow the ideas at pages 22-23 in \cite{ParryPollicott}. Let $0<\theta<1$ be the uniform constant in 
Lemma~\ref{LemmaUniformContractionBranches}. Denote here $\kappa>0$ the uniform constant in 
Lemma~\ref{LemmaLipschitzConstantLogDerivative}. Let $C>0$ be an uniform constant such that 
$$
\kappa+\theta\cdot C\leq C.
$$
Let $\Lambda$ be the set of real positive functions $f:\partial\DD\to\RR_+$ such that for any 
$a\in\cA$ we have 
\begin{equation}
\label{EquationDefinitionInvariantSet(Variations)}
f(\xi')\leq \exp\big(C\cdot|\xi'-\xi|\big)\cdot f(\xi)
\quad
\textrm{ for any }
\quad
\xi',\xi\in [a]. 
\end{equation}

Let $\Lambda_T$ be the set of functions $g:\EE_T\to\RR_+$ such that there exists some $f\in\Lambda$ 
with $g=f|_{\EE_T}$.

\begin{lemma}
\label{LemmaInvariantSetEquicontinuousEquibounded}
We have the inclusions $\Lambda\subset\cB$ and $\Lambda_T\subset\cB_T$. More precisely, any $f$ either in $\Lambda$ or in $\Lambda_T$ is bounded, and in terms of the constant $C$ introduced above it satisfies 
$$
\|f\|_\ast\leq \|f\|_\infty\cdot(1+Ce^{2C}).
$$
\end{lemma}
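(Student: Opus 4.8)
The plan is to work arc by arc. Recall that the right open arcs $[a]_\DD=[a]$, $a\in\cA$, partition $\partial\DD$: consecutive arcs share an endpoint by the cyclic order $o$ of Equation~\eqref{EquationCyclicOrderLetters}, and $\cA$ is finite. Since Equation~\eqref{EquationDefinitionInvariantSet(Variations)} only constrains $f$ within each $[a]$ — and allows jumps across the finitely many shared endpoints — the natural target is exactly the piecewise Lipschitz space $\cB$, and it suffices to bound $\|f|_{[a]}\|_\infty$ and $\lipschitz(f|_{[a]})$ uniformly in $a$.

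First I would record boundedness. Any $\xi,\xi'\in\partial\DD$ satisfy $|\xi'-\xi|\le 2$, so Equation~\eqref{EquationDefinitionInvariantSet(Variations)} gives $f(\xi')\le e^{2C}f(\xi)$ whenever $\xi,\xi'$ lie in a common arc $[a]$; fixing a base point in $[a]$ yields $\sup_{[a]}f\le e^{2C}\inf_{[a]}f<\infty$, hence $\|f\|_\infty=\max_{a\in\cA}\sup_{[a]}f<\infty$. For the Lipschitz bound, fix $a$ and $\xi,\xi'\in[a]$ with, say, $f(\xi')\ge f(\xi)$; then Equation~\eqref{EquationDefinitionInvariantSet(Variations)} together with the elementary inequality $e^R-1\le Re^R$ for $R\ge0$ (already used in the proof of Corollary~\ref{CorollaryLipschitzConstantDerivative}) gives
$$
|f(\xi')-f(\xi)|=f(\xi')-f(\xi)\le\big(e^{C|\xi'-\xi|}-1\big)f(\xi)\le C\,|\xi'-\xi|\,e^{C|\xi'-\xi|}f(\xi)\le Ce^{2C}\|f\|_\infty\,|\xi'-\xi|,
$$
using $|\xi'-\xi|\le 2$ and $f(\xi)\le\|f\|_\infty$ in the last step. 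Thus $\lipschitz(f|_{[a]})\le Ce^{2C}\|f\|_\infty$ for every $a$, so $\lipschitz(f)\le Ce^{2C}\|f\|_\infty$; in particular $f|_{[a]}\in\lipschitz([a])$ for all $a$, whence $f\in\cB$, and adding the two bounds gives $\|f\|_\ast=\|f\|_\infty+\lipschitz(f)\le\|f\|_\infty\,(1+Ce^{2C})$.

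Finally, if $g\in\Lambda_T$ then by definition $g=f|_{\EE_T}$ for some $f\in\Lambda\subset\cB$, whence $g\in\cB_T$; repeating the displayed estimate verbatim for $\xi,\xi'\in[a]\cap\EE_T$ yields the same bound $\|g\|_\ast\le\|g\|_\infty\,(1+Ce^{2C})$. I do not expect a real obstacle here: the only point to keep in mind is that $f$ need not be globally Lipschitz on $\partial\DD$, only on each arc, which is precisely why the target is $\cB$ rather than $\lipschitz(\partial\DD)$. The degenerate case in which $f$ vanishes at a point of some arc is subsumed — the displayed inequality then forces $f\equiv 0$ on that whole arc — so no separate treatment is needed.
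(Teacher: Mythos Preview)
Your proof is correct and follows essentially the same approach as the paper: both use the diameter bound $|\xi'-\xi|\le 2$ together with the defining inequality of $\Lambda$ and the elementary estimate $e^R-1\le Re^R$ to obtain $\lipschitz(f|_{[a]})\le Ce^{2C}\|f\|_\infty$, then pass to $\Lambda_T$ by restriction. Your version is slightly more explicit in justifying boundedness via $\sup_{[a]}f\le e^{2C}\inf_{[a]}f$, but the argument is the same.
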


\begin{proof}
Fix $f\in\Lambda$. Since $|x-y|\leq\diameter(\DD)=2$ for any $x,y$, then 
Equation~\eqref{EquationDefinitionInvariantSet(Variations)} implies $\|f\|_\infty\leq+\infty$, that is $f$ is bounded (but a priori continuity is not yet proved). Moreover $(e^R-1)\leq Re^R$ for any $R\geq0$. Therefore for any $a\in\cA$ and any $x,y$ in 
$[a]$ we have 
\begin{align*}
f(y)-f(x)
&
\leq 
f(x)\cdot(e^{C|y-x|}-1)
\leq
\|f\|_\infty\cdot(e^{C|y-x|}-1)
\\
&
\leq
\|f\|_\infty\cdot e^{C|y-x|}\cdot C|y-x|
\leq 
\|f\|_\infty\cdot Ce^{2C}\cdot |y-x|.
\end{align*}
The last inequality holds reversing the role of $x$ and $y$, thus $f\in\lipschitz\big([a]\big)$, and a posteriori $f$ is also continuous on $[a]$. 
In particular we get $\lipschitz(f)\leq \|f\|_\infty\cdot Ce^{2C}$. 
The inequalities above hold restricting to $x,y$ in $\EE_T$, thus the statement follows also for any 
$f\in\Lambda_T$.
\end{proof}

\begin{lemma}
\label{LemmaInvarianceLambdaUnderTransferOperator}
For any $s,T$ and any $f\in\Lambda$ we have $L_{(s,T)}(f)\in\Lambda$. For any $f\in\Lambda_T$ we have also $L_{(s,T)}(f)\in\Lambda_T$.
\end{lemma}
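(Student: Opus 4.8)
The plan is to check directly that $g:=L_{(s,T)}f$ satisfies the defining inequality \eqref{EquationDefinitionInvariantSet(Variations)} of $\Lambda$, which is exactly where the choice $\kappa+\theta\cdot C\le C$ enters. Fix $a\in\cA$ and $\xi,\xi'\in[a]$. For every $W\in\cW(a,T)$ one has $[a]\subset\domain(W)$ by definition of $\cW(a,T)$, and moreover $[a]\subset U_{\widehat a}$ since $[a]=U_{F_{\widehat a}}\cap\partial\DD$; these two facts make available, for each such $W$ (viewed as the one-term block $w_1=(W)$), both the distortion estimate of Lemma~\ref{LemmaLipschitzConstantLogDerivative} and the contraction estimate of Corollary~\ref{CorollaryExponentiallyContractingDistance}.

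I would estimate the two factors of the generic summand of $g(\xi')$ separately. For the Jacobian factor, Lemma~\ref{LemmaLipschitzConstantLogDerivative} says that $\xi\mapsto s\ln|D_\xi F_W|$ is Lipschitz on $\domain(W)$ with constant $<\kappa$, so $|D_{\xi'}F_W|^s\le e^{\kappa|\xi'-\xi|}\,|D_\xi F_W|^s$. For the argument of $f$, note that $F_W([a])\subset F_W(\domain(W))=[W]_\EE\subset[b_0]$, where $b_0$ is the first letter of $W$; hence $F_W\xi$ and $F_W\xi'$ lie in the common arc $[b_0]$, so the hypothesis $f\in\Lambda$ gives $f(F_W\xi')\le e^{C|F_W\xi'-F_W\xi|}f(F_W\xi)$, while Corollary~\ref{CorollaryExponentiallyContractingDistance} with $k=1$ gives $|F_W\xi'-F_W\xi|\le\theta|\xi'-\xi|$. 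Multiplying these bounds and summing over $W\in\cW(a,T)$, the common factor $e^{(\kappa+\theta C)|\xi'-\xi|}$ comes out of the sum and what remains is exactly $g(\xi)$; since $\kappa+\theta C\le C$ this yields $g(\xi')\le e^{C|\xi'-\xi|}g(\xi)$. Together with positivity of $g$ (immediate since $f>0$ and $\cW(a,T)\ne\emptyset$) and the inclusion $\Lambda\subset\cB$ from Lemma~\ref{LemmaInvariantSetEquicontinuousEquibounded}, this proves $L_{(s,T)}(f)\in\Lambda$.

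For the statement on $\Lambda_T$, given $f\in\Lambda_T$ I would choose $\tilde f\in\Lambda$ with $\tilde f|_{\EE_T}=f$. Since each $F_W$ with $W\in\cW(a,T)$ preserves $\EE_T$, the sum defining $L_{(s,T)}f(\xi)$ at a point $\xi\in\EE_T\cap[a]$ only evaluates $f$ at points of $\EE_T$, hence $L_{(s,T)}f=(L_{(s,T)}\tilde f)|_{\EE_T}$; the first part applied to $\tilde f$ then gives $L_{(s,T)}f\in\Lambda_T$.

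I do not expect a genuine obstacle: this is the standard cone-invariance step of the Ruelle--Perron--Frobenius argument (cf.\ pages 22--23 of \cite{ParryPollicott}), and the only point requiring care is that the geometric hypotheses of Corollary~\ref{CorollaryExponentiallyContractingDistance} and Lemma~\ref{LemmaLipschitzConstantLogDerivative} are satisfied for every $W$ occurring in the sum, which is exactly the equivalence $W\in\cW(a,T)\Leftrightarrow[a]\subset\domain(W)$ recorded in \S\ref{SectionBanachSpacesPiecewiseLipschitzFunctions}.
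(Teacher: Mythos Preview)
Your proposal is correct and follows essentially the same approach as the paper: bound the Jacobian factor via Lemma~\ref{LemmaLipschitzConstantLogDerivative}, bound the variation of $f\circ F_W$ via the cone condition on $f$ together with the contraction from Corollary~\ref{CorollaryExponentiallyContractingDistance}, then pull out the common exponential factor $e^{(\kappa+\theta C)|\xi'-\xi|}\le e^{C|\xi'-\xi|}$ from the sum. You are actually slightly more careful than the paper in making explicit that $F_W\xi,F_W\xi'$ lie in a common arc $[b_0]$ (so that the defining inequality of $\Lambda$ applies) and in spelling out the $\Lambda_T$ case via an extension $\tilde f\in\Lambda$.
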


\begin{proof}
It is enough to prove the Lemma for $f\in\Lambda$. It is clear that $L_{(s,T)}(f)$ is real and positive if $f$ is. Fix $a\in\cA$ and consider $y,x$ in $[a]$.
Corollary~\ref{CorollaryExponentiallyContractingDistance} gives 
$$
f(F_Wy)\leq 
\exp\big(C\cdot |f(F_Wy)-f(F_Wx)|\big)\cdot f(F_Wx)\leq
\exp\big(\theta C\cdot |y-x|\big)\cdot f(F_Wx).
$$
Moreover Lemma~\ref{LemmaLipschitzConstantLogDerivative} gives
\begin{align*}
|D_yF_W|^s
=
\exp\big(s\log(|D_yF_W|)\big)
&
\leq
\exp\big(s\log(|D_xF_W|)+\kappa\cdot|y-x|\big)
\\
&
=
\exp\big(\kappa\cdot|y-x|\big)\cdot|D_xF_W|^s.
\end{align*}
Recall that $\kappa+\theta\cdot C\leq C$. The statement follows because the estimates above give
\begin{align*}
L_{(s,T)}f(y)
&
=
\sum_{W\in\cW(a,T)}|D_yF_W|^sf(F_Wy)
\\
&
\leq
\sum_{W\in\cW(a,T)}
e^{\kappa\cdot|y-x|}\cdot|D_xF_W|^s
\cdot
e^{\theta C\cdot |y-x|}f(F_Wx)
\\
&
\leq
e^{(\kappa+\theta C)\cdot|y-x|}
\sum_{W\in\cW(a,T)}
\cdot|D_xF_W|^s\cdot f(F_Wx)
\leq
e^{C\cdot|y-x|}\cdot L_{(s,T)}f(x).
\end{align*}
\end{proof}

\subsection{Maximal eigenfunction for the operator on the circle}

Proposition~\ref{PropositionEigenfunctionTransferOperatorCircle} below follows the ideas at pages 22-24 in \cite{ParryPollicott}.

\begin{proposition}
\label{PropositionEigenfunctionTransferOperatorCircle}
There exists a real simple $\widehat{\lambda}(s,T)>0$ and a strictly positive $g_{(s,T)}\in\Lambda$ satisfying 
Equation~\eqref{Equation(1)TheoremTransferOperatorCircle}. For $T<\infty$ the same is true for the operator $L_{(s,T)}$ on $\cB_T$ taking the restriction of $g_{(s,T)}$ to $\EE_T$.
\end{proposition}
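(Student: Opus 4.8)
The plan is to run the classical Ruelle--Perron--Frobenius argument (in the style of pages 22--24 of \cite{ParryPollicott}) directly on the circle, using the invariant cone $\Lambda$ from \S~\ref{SectionInvariantSetLipschitzFunctions} in place of the shift-space cone. First I would observe, via Lemma~\ref{LemmaInvariantSetEquicontinuousEquibounded} and the Ascoli--Arzela theorem, that the set $\Lambda_1:=\{f\in\Lambda:\|f\|_\infty=1\}$ is convex and relatively compact in $(\cB,\|\cdot\|_\infty)$, and by Lemma~\ref{LemmaInvarianceLambdaUnderTransferOperator} it is mapped into $\Lambda$ by $L_{(s,T)}$. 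Since $L_{(s,T)}f$ is strictly positive whenever $f$ is (the series in Equation~\eqref{EquationTransferOperatorMoebiusDisc} has positive terms and, by Proposition~\ref{PropositionAperiodicityTransitionMatrix}, for $T$ large the relevant index sets $\cW(a,T)$ are non-empty for every $a$), the normalization map $f\mapsto L_{(s,T)}f/\|L_{(s,T)}f\|_\infty$ is a continuous self-map of the compact convex set $\overline{\Lambda_1}$ (one checks the closure stays inside $\Lambda$, since Equation~\eqref{EquationDefinitionInvariantSet(Variations)} is a closed condition). The Schauder--Tychonoff fixed point theorem then yields $g=g_{(s,T)}\in\Lambda$, $\|g\|_\infty=1$, with $L_{(s,T)}g=\widehat\lambda\,g$ for $\widehat\lambda:=\|L_{(s,T)}g\|_\infty>0$, which is Equation~\eqref{Equation(1)TheoremTransferOperatorCircle}. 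Strict positivity of $g$ follows because $g=\widehat\lambda^{-1}L_{(s,T)}g>0$ pointwise.

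Next I would establish simplicity of $\widehat\lambda$ and strict positivity of the eigenfunction in a quantitative form, again following \cite{ParryPollicott}. The key point is a \emph{contraction in projective (Hilbert) metric}: on the cone $\Lambda$ the operator $L_{(s,T)}$ eventually strictly contracts the Hilbert projective metric, because of the uniform distortion bound in Equation~\eqref{EquationDefinitionInvariantSet(Variations)} together with the exponential contraction $\theta$ of the branches $F_{w_k}$ (Corollary~\ref{CorollaryExponentiallyContractingDistance}) and the uniform summability of $\sum_{W}\|DF_W\|_\infty^s$ (Corollary~\ref{CorollaryUniformlySummableSeries}); these give a uniform lower bound on the ratios $f(F_W\xi')/f(F_W\xi)$ and hence a ``mixing'' estimate that forces the diameter of $L_{(s,T)}^n(\Lambda_1)$ in the projective metric to shrink geometrically. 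Uniqueness of the normalized positive eigenfunction in $\Lambda$, and simplicity of $\widehat\lambda$ as an eigenvalue of $L_{(s,T)}$ on $\cB$, follow from this contraction in the standard way; I would defer the full spectral-gap statement (quasi-compactness, $\rho_{ess}<\theta\widehat\lambda$, dominance of $\widehat\lambda$ over other eigenvalues) to the proof of Theorem~\ref{TheoremTransferOperatorCircle}, proving here only what is asserted in the Proposition, namely existence of the pair $(\widehat\lambda,g_{(s,T)})$ with $\widehat\lambda$ \emph{real, positive and simple} and $g_{(s,T)}\in\Lambda$ strictly positive.

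Finally, for $T<\infty$ I would transfer the statement to $\cB_T$. By Lemma~\ref{LemmaTransferOperatorsAreUniformlyBounded} the operator $L_{(s,T)}$ acts on $\cB_T$ and by Lemma~\ref{LemmaInvarianceLambdaUnderTransferOperator} it preserves $\Lambda_T$; since the branches $F_W$ with $W\in\cW(a,T)$ leave $\EE_T$ invariant, Equation~\eqref{Equation(1)TheoremTransferOperatorCircle} restricts verbatim to give $L_{(s,T)}(g_{(s,T)}|_{\EE_T})=\widehat\lambda\,(g_{(s,T)}|_{\EE_T})$ on $\EE_T$, and $g_{(s,T)}|_{\EE_T}\in\Lambda_T$ is strictly positive. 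Simplicity on $\cB_T$ follows from the commutation relation Equation~\eqref{EquationCommutativeDiagramTransferOperators} with the conjugacy $H$ together with Theorem~\ref{TheoremRuellePerronFrobenius}(1) applied to $\cL_{(s,T)}$ on $\lipschitz(\Sigma,\theta)$, which identifies $\widehat\lambda$ with $\lambda(s,T)$ and $g_{(s,T)}|_{\EE_T}$ with $h_{(s,T)}\circ\Pi^{-1}$. I expect the main obstacle to be the projective-metric contraction step: one must extract from the estimates of \S~\ref{SectionEstimatesContractionDistortion} a genuine Birkhoff-type contraction coefficient that is uniform in $s$ and $T$ (so that it survives the limit $T\to\infty$ needed later for $L_{(1,\infty)}$), and verifying that the aperiodicity from Proposition~\ref{PropositionAperiodicityTransitionMatrix} gives the required full support after a bounded number of iterations, rather than a number growing with $T$, is the delicate point.
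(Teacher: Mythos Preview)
Your Schauder step has a genuine gap. The set $\Lambda_1=\{f\in\Lambda:\|f\|_\infty=1\}$ is \emph{not} convex: if $f,g\in\Lambda_1$ attain their maxima at different points then $\|(f+g)/2\|_\infty<1$, so Schauder--Tychonoff does not apply to $\overline{\Lambda_1}$. If instead you work on the convex set $\{f\in\Lambda:\|f\|_\infty\le1\}$, then $0$ lies in it and the map $f\mapsto Lf/\|Lf\|_\infty$ is undefined there (and not continuous near it). The paper fixes this by the standard Parry--Pollicott regularisation: for each integer $n\ge1$ set
\[
M_n(f):=\frac{L(f+n^{-1})}{\|L(f+n^{-1})\|_\infty},
\]
which is well defined and $\|\cdot\|_\infty$-continuous on the convex compact set $\Lambda^{(1)}=\{f\in\Lambda:f\le1\}$ because $\|L(f+n^{-1})\|_\infty\ge\|L(n^{-1})\|_\infty\ge c/n>0$. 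Schauder yields fixed points $g_n\in\Lambda^{(1)}$ with $\|g_n\|_\infty=1$, and Ascoli--Arzel\`a plus a diagonal argument gives a limit $g\in\Lambda^{(1)}$ with $Lg=\lambda g$, $\|g\|_\infty=1$, $\lambda>0$. Strict positivity of $g$ then comes from iterating the eigen-equation and using aperiodicity to see that the points $F_{w_k}\xi$ are dense.

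For simplicity the paper avoids the Hilbert projective metric entirely and uses the direct ratio argument: given a real $f\in\cB$ with $Lf=\lambda f$, set $t:=\inf_\xi f(\xi)/g(\xi)$ and $h:=f-tg\ge0$; choose $\xi_n\to\xi_\infty$ with $h(\xi_n)\to0$, plug into $h=\lambda^{-k}L^kh$, and use positivity of the summands plus density of $\{F_{w_k}\xi_\infty\}$ to force $h\equiv0$. This is considerably shorter than setting up a Birkhoff contraction coefficient, and sidesteps exactly the ``main obstacle'' you anticipate; for $T=\infty$ one only needs dominated convergence to pass the limit inside the infinite sum.

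Finally, for $T<\infty$ the paper does not go through the conjugacy $H$ and Theorem~\ref{TheoremRuellePerronFrobenius}; it simply observes that every inequality in the proof holds verbatim when one restricts $\xi$ to $\EE_T$, so the same argument runs on $\cB_T$. Your route via $H$ is not wrong, but it imports the identification $\widehat\lambda(s,T)=\lambda(s,T)$ which in the paper is deferred to \S~\ref{SectionEndProofTheoremTransferOperatorCircle}.
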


\begin{proof}
For simplicity set $L:=L_{(s,T)}$. By Lemma~\ref{LemmaQuadraticAsymptoticDerivative}, we have 
$\|L1\|_\infty\geq c$ for some uniform constant $c>0$. Therefore  
$
\|L(f+n^{-1})\|_\infty>c/n
$ 
for any $f$ continuous and positive and any integer $n\geq1$. It follows that for any integer 
$n\geq1$ we have a well defined (non linear) operator $M_n$ acting on the set of positive functions 
$f\in\cB$ by  
$$
M_n(f):=\frac{1}{\|L(f+n^{-1})\|_\infty}\cdot L(f+n^{-1}).
$$
Lemma~\ref{LemmaTransferOperatorsAreUniformlyBounded} gives a (uniform) upper bound for $\|L\|_\infty$ thus the operator $M_n$ is continuous in the topology of the norm $\|\cdot\|_\infty$, indeed for any 
$f,g$ in $\Lambda$ we have 
\begin{align*}
&
\|M_n(f)-M_n(g)\|_\infty\leq
\\
&
\left\|
\frac{L(f+n^{-1})}{\|L(f+n^{-1})\|_\infty}
-
\frac{L(g+n^{-1})}{\|L(f+n^{-1})\|_\infty}
\right\|
+
\left\|
\frac{L(g+n^{-1})}{\|L(f+n^{-1})\|_\infty}
-
\frac{L(g+n^{-1})}{\|L(g+n^{-1})\|_\infty}
\right\|\leq
\\
&
\frac{1}{\|L(f+n^{-1})\|_\infty}
\bigg(
\|L\|_\infty\cdot\|f-g\|_\infty
+
\big|
\|L(g+n^{-1})\|_\infty-\|L(f+n^{-1})\|_\infty
\big|
\bigg)\leq
\\
&
\frac{1}{\|L(f+n^{-1})\|_\infty}
\bigg(
2\|L\|_\infty\cdot\|f-g\|_\infty
\bigg)
\leq
\frac{n\cdot 2\|L\|_\infty}{c}\cdot\|f-g\|_\infty.
\end{align*}
Let 
$
\Lambda^{(1)}:=\{f\in\Lambda:f(\xi)\leq1\quad\forall\xi\in\partial\DD\}
$, 
where $\Lambda$ is the set of functions in 
\S~\ref{SectionInvariantSetLipschitzFunctions}. 
Lemma~\ref{LemmaInvarianceLambdaUnderTransferOperator} and the definition of $M_n$ imply that 
$
M_n(\Lambda^{(1)})\subset \Lambda^{(1)}
$. 
Lemma~\ref{LemmaInvariantSetEquicontinuousEquibounded}, via the Theorem of Ascoli-Arzel\`a, implies that $\Lambda^{(1)}$ is compact in the topology of $\|\cdot\|_\infty$ ($\Lambda^{(1)}$ is closed for uniform convergence). Moreover 
$\Lambda^{(1)}$ is obviously convex. Therefore the Schauder-Tychonov fixed point Theorem implies that there exists $g_n\in\Lambda^{(1)}$ such that $M_n(g_n)=g_n$, that is 
$$
L(g_n+n^{-1})=\|L(g_n+n^{-1})\|_\infty\cdot g_n.
$$
Compactness implies that, modulo subsequences, there exists $g\in\Lambda^{(1)}$ and $\lambda\geq 0$ 
such that $\|(g_n+n^{-1})-g\|_\infty\to0$ and $\|L(g_n+n^{-1})\|_\infty\to\lambda$ for $n\to+\infty$, that is 
$$
L(g)=\lambda\cdot g.
$$
Since $\|M_n(g_n)\|_\infty=1$ by construction, then $\|g_n\|_\infty=1$ for any $n$. Thus 
$\|g\|_\infty=1$. There exists $a_0\in\cA$ with $g(\xi')>0$ for any $\xi'\in[a_0]$, because 
$g\in\Lambda$.  Thus $g(F_W\cdot\xi)>0$ for any $W$ starting with $a_0$ and any $\xi\in\domain(W)$. For such $\xi$ we have $Lg(\xi)>0$, so that 
$
\lambda=\|Lg\|_\infty>0
$ 
strictly. Fix $a\in\cA$ and $\xi\in[a]$. For any $k\in\NN$ we have 
\begin{equation}
\label{EquationPropositionEigenfunctionTransferOperatorCircle}
g(\xi)=
\lambda^{-k}\cdot
\sum_{w_k\in\cW(k,a,T)}|D_\xi F_{w_k}|^{s}\cdot g(F_{w_k}\cdot\xi).
\end{equation}
If $g(\xi)=0$ for some $\xi$, then 
Equation~\eqref{EquationPropositionEigenfunctionTransferOperatorCircle} implies 
$g(F_{w_k}\cdot\xi)=0$ for any $w_k\in\cW(k,a,T)$. But points $F_{w_k}\cdot\xi$ become dense for  
$k\to\infty$, because the square $M^2_{W,W'}$ of the transition matrix in 
Equation~\eqref{EquationDefinitionTransitionMatrix} is positive by
Proposition~\ref{PropositionAperiodicityTransitionMatrix}. Therefore $g(\xi)>0$ for any $\xi$. Since $g\in\Lambda$, then 
Equation~\eqref{EquationDefinitionInvariantSet(Variations)} implies that there exists $m(s,T)>0$ such that 
$$
m(s,T)\leq g(\xi)\leq 1
\quad
\textrm{ for any }
\quad
\xi\in\partial\DD.
$$
In order to prove simplicity of $\lambda$, let $f\in\cB$ real valued such that $Lf=\lambda\cdot f$ (in general recall that $L$ is real and consider $\re(f)$ and $\im(f)$). Set 
$
t:=\inf_{\xi\in\partial\DD}f(\xi)/g(\xi)
$. 
Observe that $t\in\RR$. Then $h(\xi):=f(\xi)-tg(\xi)\geq0$ for any $\xi$. There exists $a\in\cA$,  
$
\xi_\infty\in\overline{[a]}
$ 
and a sequence $\xi_n\in[a]$ with $f(\xi_n)/g(\xi_n)\to t$ as $\xi_n\to\xi_\infty$. Then we have also $h(\xi_n)\to 0=h(\xi_\infty)$, because both $g$ and $f$ have Lipschitz extension to $\overline{[a]}$. Fix $k\in\NN$. 
Equation~\eqref{EquationPropositionEigenfunctionTransferOperatorCircle} implies 
\begin{align*}
0=
\lim_{n\to\infty}h(\xi_n)
&
=
\lambda^{-k}\cdot
\sum_{w_k\in\cW(k,a,T)}
\lim_{\xi_n\to\xi_\infty}|D_{\xi_n} F_{w_k}|^{s}\cdot 
\lim_{\xi_n\to\xi_\infty}h(F_{w_k}\xi_n)
\\
&
=
\lambda^{-k}\cdot
\sum_{w_k\in\cW(k,a,T)}|D_{\xi_\infty} F_{w_k}|^{s}h(F_{w_k}\xi_\infty).
\end{align*}
The last condition implies $h(F_{w_k}\xi_0)=0$ for any  $w_k\in\cW(k,a,T)$ and any $k\geq1$, thus $h(\xi)=0$ for any $\xi\in\partial\DD$ by continuity and we conclude $f=t\cdot g$. Observe that the sum above is finite for $0<T<\infty$, while for $T=\infty$ the sum is infinite and one applies the Dominated Convergence Theorem to the functions 
$$
\cH_n:\cW(k,a,T)\to\RR_+
\quad;\quad
\cH_n(w_k):=\lambda^{-k}\cdot|D_{\xi_n} F_{w_k}|^{s}\cdot h(F_{w_k}\xi_n).
$$
Setting  
$
\widehat{\cH}(w_k):=
\lambda^{-k}\cdot\big(\sup_{n\in\NN}|D_{\xi_n} F_{w_k}|^{s})\cdot\|h\|_\infty
$ 
we have
$
\cH_n(w_k)\leq \widehat{\cH}(w_k)
$ 
for any $n\in\NN$ and any $w_k\in\cW(k,a,T)$. Moreover, fix any $\xi\in[a]$ and observe that for any $w_k$ and any $n\in\NN$ Part (1) of 
Corollary~\ref{CorollaryLipschitzConstantDerivative} gives 
$
|D_{\xi_n}F_{w_k}|^s\leq C|D_\xi F_{w_k}|^s
$, 
so that the same holds for the supremum over $n\in\NN$. This implies that $\widehat{\cH}$ is summable, indeed we have
$$
\sum_{w_k\in\cW(k,a,T)}\widehat{\cH}(w_k)
\leq 
C\cdot\lambda^{-k}\cdot
\sum_{w_k\in\cW(k,a,T)}|D_\xi F_{w_k}|^s\cdot\|h\|_\infty
\leq
C\cdot\lambda^{-k}\cdot\|L^k(1)\|_\infty\cdot\|h\|_\infty.
$$
The arguments above hold for any point $\xi\in\partial\DD$, and thus in particular restricting to $\xi\in\EE_T$. Thus the statement for $L_{(s,T)}:\cB_T\to\cB_T$ holds too. 
\end{proof}

\subsection{Normalized operator and Lasota-Yorke inequality}

%For any $w_k\in\cW(k,a,T)$ recall the definition of $\|DF_{w_k}\|_\infty$ in Equation~\eqref{EquationDefinitionSupremumDerivativeBlockWords}. 

Fix any $s,T$. Let $g_{(s,T)}\in\Lambda$ and $\widehat{\lambda}(s,T)>0$ be as in 
Proposition~\ref{PropositionEigenfunctionTransferOperatorCircle}. In order to simplify the notation, set $$
g:=g_{(s,T)}
\quad\text{ ; }\quad
\widehat{\lambda}=\widehat{\lambda}(s,T)
\quad\text{ ; }\quad
L:=L_{(s,T)}.
$$
Define operators $\cN:\cB\to\cB$ and $\cN:\cB_T\to\cB_T$ by $\cN f(x):=g(s)\cdot f(x)$. It is easy to see that for any $f\in\cB$ and any $f\in\cB_T$ we have 
$$
\|\cN f\|_\infty\leq \|g\|_\infty\cdot\|f\|_\infty
\quad
\textrm{ and }
\quad
\lipschitz(\cN f)\leq 
\|g\|_\infty\cdot\lipschitz(f)
+
\|f\|_\infty\cdot\lipschitz(g).
$$
Thus $\|\cN\|_\ast\leq \|g\|_\ast$. Moreover $\cN$ is also invertible with 
$\|\cN^{-1}\|_\ast\leq \|g^{-1}\|_\ast$, where we observe that $g^{-1}\in\Lambda$ for $g\in\Lambda$ strictly positive. Define the \emph{normalized operators} 
$\widehat{L}:\cB\to\cB$ and $\widehat{L}:\cB_T\to\cB_T$ by 
$
\widehat{L}=\widehat{\lambda}^{-1}\cN^{-1}L\cN
$. 
For $a\in\cA$ and $W\in\cW(a)$ it is practical to introduce 
$$
N_W:\domain(W)\to\RR_+
\quad;\quad
N_W(x):=\widehat{\lambda}^{-1}\cdot \frac{|D_xF_W|^sg(F_Wx)}{g(x)},
$$
so that for any $f\in\cB$ (of in $\cB_T$), any $a\in\cA$ and any $x\in[a]$ we have 
$$
\widehat{L}f(x)=\sum_{W\in\cW(a,T)}N_W(x)f(F_Wx).
$$
Since $Lg=\widehat{\lambda}g$ then $\widehat{L}1=1$, and this corresponds to 
\begin{equation}
\label{EquationSumWeightsNormalizedTransferOperator}
\sum_{W\in\cW(a,T)}N_W(x)=1
\quad
\textrm{ for any }
\quad
a\in\cA, x\in[a].
\end{equation}
In particular $\|\widehat{L}\|_\infty=1$. We have 
$
\widehat{L}^kf(x)=\sum_{w_k\in\cW(k,a,T)}N_{w_k}(x)f(F_{w_k}x)
$, 
where for any $k\in\NN$ and any $w_k\in\cW(k,a,T)$ we consider function
$$
N_{w_k}(x):=\widehat{\lambda}^{-k}\cdot\frac{|D_xF_{w_k}|^s\cdot g(F_{w_k}x)}{g(x)}.
$$ 
Equation~\eqref{EquationSumWeightsNormalizedTransferOperator} implies  
$
\sum_{w_k\in\cW(k,a,T)}N_{w_k}(x)=1
$ 
for any $a\in\cA$ and any $x\in[a]$. 

\begin{proposition}
\label{PropositionLasotaYorke(AllParameters)}
For any $s,T$ there exists a constant $C=C(s,T)>0$ such that for any $k\in\NN$ we have 
$$
\lipschitz(\widehat{L}_{(s,T)}^kf)
\leq
\theta^k\cdot\lipschitz(f)+C\cdot \|f\|_\infty.
$$
\end{proposition}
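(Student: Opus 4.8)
The plan is to bound $\lipschitz(\widehat{L}^k_{(s,T)}f)=\max_{a\in\cA}\lipschitz(\widehat{L}^k_{(s,T)}f|_{[a]})$ by working on each arc $[a]$ separately, using the identity recalled just above the Proposition,
$$\widehat{L}^k_{(s,T)}f(x)=\sum_{w_k\in\cW(k,a,T)}N_{w_k}(x)\,f(F_{w_k}x),\qquad x\in[a],$$
together with $\sum_{w_k\in\cW(k,a,T)}N_{w_k}(x)=1$. Fix $a\in\cA$ and $x,y\in[a]$, and split
$$\widehat{L}^k_{(s,T)}f(y)-\widehat{L}^k_{(s,T)}f(x)=\sum_{w_k}N_{w_k}(y)\big(f(F_{w_k}y)-f(F_{w_k}x)\big)+\sum_{w_k}\big(N_{w_k}(y)-N_{w_k}(x)\big)f(F_{w_k}x).$$
Since membership $w_k\in\cW(k,a,T)$ is exactly the condition $[a]\subset\domain(W_k)$, the points $x,y$ lie in $U_{\widehat a}$ and Corollary~\ref{CorollaryExponentiallyContractingDistance} gives $|F_{w_k}y-F_{w_k}x|\le\theta^k|y-x|$; moreover $F_{w_k}([a])$ is contained in the single arc $[c_0]$, where $c_0$ is the first letter of $W_1$ (because $F_{w_k}([a])\subset[w_k]_\EE\subset[W_1]_\EE\subset[c_0]$). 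Hence $|f(F_{w_k}y)-f(F_{w_k}x)|\le\lipschitz(f)\theta^k|y-x|$, and summing against $\sum_{w_k}N_{w_k}(y)=1$ bounds the first sum by $\theta^k\lipschitz(f)|y-x|$, which produces the $\theta^k\lipschitz(f)$ term.

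The heart of the proof is the second sum, which I will estimate by $\|f\|_\infty\sum_{w_k}|N_{w_k}(y)-N_{w_k}(x)|$, so it suffices to show that $\ln N_{w_k}$ is Lipschitz on $[a]$ with a constant $C_2$ independent of $k$ and of $w_k$. From $\ln N_{w_k}(x)=-k\ln\widehat\lambda+s\ln|D_xF_{w_k}|+\ln g(F_{w_k}x)-\ln g(x)$ I will control the three nonconstant terms as follows: the term $s\ln|D_xF_{w_k}|$ by Lemma~\ref{LemmaLipschitzConstantLogDerivative} (uniformly for $s\le1$, and by the same computation with constant $\le sC$ for $s>1$); the term $\ln g(x)$ by the defining inequality of $\Lambda$ applied on $[a]$, which gives $|\ln g(y)-\ln g(x)|\le C|y-x|$ with $C$ the uniform constant of \S~\ref{SectionInvariantSetLipschitzFunctions} since $g\in\Lambda$; and the term $\ln g(F_{w_k}x)$ by the same $\Lambda$-inequality on $[c_0]$ combined with $|F_{w_k}y-F_{w_k}x|\le\theta^k|y-x|\le|y-x|$. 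This gives $|\ln N_{w_k}(y)-\ln N_{w_k}(x)|\le C_2|y-x|$ with $C_2=C_2(s)$, and then, using $e^R-1\le Re^R$ for $R\ge0$ and $|y-x|\le\diameter(\DD)=2$,
$$\sum_{w_k}|N_{w_k}(y)-N_{w_k}(x)|\le\sum_{w_k}N_{w_k}(x)\big(e^{C_2|y-x|}-1\big)\le C_2e^{2C_2}|y-x|,$$
the identity $\sum_{w_k}N_{w_k}(x)=1$ keeping the sum finite even when $T=\infty$; absolute convergence of all the series above is guaranteed by Corollary~\ref{CorollaryUniformlySummableSeries}, since $s>1/2$.

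Combining the two estimates gives $|\widehat{L}^k_{(s,T)}f(y)-\widehat{L}^k_{(s,T)}f(x)|\le\big(\theta^k\lipschitz(f)+C_2e^{2C_2}\|f\|_\infty\big)|y-x|$ for all $x,y\in[a]$; taking the supremum over $x\ne y$ and the maximum over $a\in\cA$ yields the statement with $C:=C_2e^{2C_2}=C(s,T)$. Since all the pointwise inequalities used above remain valid when $x,y$ are restricted to $\EE_T$ (the relevant arcs $[a]$ and $[c_0]$ meeting $\EE_T$), the same estimate holds for $\widehat{L}_{(s,T)}$ on $\cB_T$. I expect the only genuine subtlety to be the $k$-uniformity of the distortion bound for $\ln N_{w_k}$, namely that the contributions of $\ln g\circ F_{w_k}$ do not grow with $k$; this is exactly what the uniform contraction of Corollary~\ref{CorollaryExponentiallyContractingDistance} together with the $\Lambda$-invariance of $g$ provide, while the identification of the common image arc $[c_0]$ is the only point where the combinatorics of \S~\ref{SectionCylindersCantorSets} enters.
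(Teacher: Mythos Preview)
Your proof is correct and takes a genuinely different route from the paper's. The paper first establishes the case $k=1$: it bounds $|N_W(y)-N_W(x)|\le C_3\big(N_W(x)+N_W(y)\big)|y-x|$ for a single word $W$ by writing $N_W$ as a product of three factors and telescoping, obtains $\lipschitz(\widehat{L}f)\le\theta\lipschitz(f)+2C_3\|f\|_\infty$, and then reaches arbitrary $k$ by the standard induction $\lipschitz(\widehat{L}^kf)\le\theta\lipschitz(\widehat{L}^{k-1}f)+2C_3\|f\|_\infty$, yielding $C=2C_3\sum_{j\ge0}\theta^j$. You instead attack the $k$-th iterate directly, exploiting that Lemma~\ref{LemmaLipschitzConstantLogDerivative} already furnishes a $k$-uniform Lipschitz bound for $s\ln|D_\xi F_{w_k}|$ and that $g\in\Lambda$ gives a uniform bound on $|\ln g(\xi')-\ln g(\xi)|$ on each arc; together these control $\lipschitz(\ln N_{w_k})$ independently of $k$, and the normalization $\sum_{w_k}N_{w_k}(x)=1$ does the rest.

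What your approach buys: it avoids the induction and, more interestingly, your constant $C_2$ comes only from the uniform constant of Lemma~\ref{LemmaLipschitzConstantLogDerivative} and the uniform constant $C$ in the definition of $\Lambda$, so in fact $C_2$ is \emph{uniform} in $T$ (and in $s$ for $s\le1$), a slight strengthening over the paper's $C(s,T)$, which is allowed to depend on $g=g_{(s,T)}$. What the paper's route buys: it is the textbook Lasota--Yorke pattern (cf.\ Proposition~2.1 in \cite{ParryPollicott}), so the reduction to a one-step estimate plus geometric series is immediately recognizable. One small remark on your write-up: the inequality $\sum_{w_k}|N_{w_k}(y)-N_{w_k}(x)|\le\sum_{w_k}N_{w_k}(x)(e^{C_2|y-x|}-1)$ is correct but deserves a word of justification, since a priori the log-Lipschitz bound gives $|N_{w_k}(y)-N_{w_k}(x)|\le\min\big(N_{w_k}(x),N_{w_k}(y)\big)(e^{C_2|y-x|}-1)$, which is indeed $\le N_{w_k}(x)(e^{C_2|y-x|}-1)$.
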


\begin{proof}
Since $N_W(\cdot)$ is the product of 3 positive factors, write 
$\big|N_W(x)-N_W(y)\big|$ as telescopic sum of 6 terms. 
Denoting $C_1>0$ the uniform constant in Lemma~\ref{LemmaLipschitzConstantLogDerivative} we have  
\begin{align*}
|D_yF_W|^s-|D_xF_W|^s
\leq 
|D_xF_W|^s\cdot\big(e^{C_1|y-x|}-1\big)
&
\leq
|D_xF_W|^s\cdot e^{C_1|y-x|}C_1|y-x|
\\
&
\leq
\big(|D_xF_W|^s+|D_yF_W|^s\big)\cdot e^{C_1|y-x|}C_1|y-x|.
\end{align*}
The right hand side of the last inequality is symmetric in $x,y$, thus we have 
\begin{align*}
&
\bigg|\frac{|D_yF_W|^sg(F_Wx)}{g(x)}-\frac{|D_xF_W|^sg(F_Wx)}{g(x)}\bigg|
\leq
\frac{g(F_Wx)}{g(x)}
\cdot
\big(|D_xF_W|^s+|D_yF_W|^s\big)\cdot e^{C_1|y-x|}C_1|y-x|
\\
&
\leq
\widehat{\lambda}\cdot
\bigg(N_W(x)+
\frac{g(F_Wx)g(y)}{g(x)g(F_Wy)}\cdot N_W(y)
\bigg)
\cdot C_2|y-x|
\leq 
\widehat{\lambda}\cdot C_3\big(N_W(x)+ N_W(y)\big)\cdot|y-x|,
\end{align*}
where $C_2$ is an uniform constant and $C_3=C_3(s,T)>0$, depends on $s,T$ via $g$. Modulo increasing $C_3$, Lemma~\ref{LemmaInvariantSetEquicontinuousEquibounded} and 
Corollary~\ref{CorollaryExponentiallyContractingDistance} give
$$
\bigg|\frac{|D_yF_W|^sg(F_Wx)}{g(x)}-\frac{|D_yF_W|^sg(F_Wy)}{g(x)}\bigg|
\leq
\widehat{\lambda}\cdot C_3\big(N_W(x)+ N_W(y)\big)\cdot|y-x|.
$$
The same bound holds for 
$
|D_yF_W|^sg(F_Wy)\cdot|g^{-1}(x)-g^{-1}(y)|
$. 
Modulo increasing $C_3$ we get 
$$
\big|N_W(y)-N_W(x)\big|
\leq
C_3\big(N_W(x)+ N_W(y)\big)\cdot|y-x|.
$$
Equation~\eqref{EquationSumWeightsNormalizedTransferOperator} and 
Corollary~\ref{CorollaryExponentiallyContractingDistance} give 
\begin{align*}
\widehat{L}f(y)-\widehat{L}f(x)
&
\leq
\sum_{W\in\cW(a)}N_W(y)\big|f(F_Wy)-f(F_Wx)\big|+f(F_Wx)\big|N_W(y)-N_W(x)\big|
\\
&
\leq
\theta\cdot\lipschitz(f)+2C_3\cdot\|f\|_\infty.
\end{align*}
The statement follows for $k=1$. An easy inductive argument gives the statement for any $k$, where 
$
C=2C_3\sum_{k=0}^\infty\theta^k
$.
See the proof of Proposition~2.1 in \cite{ParryPollicott}.
\end{proof}

\subsection{Specific construction for $T=\infty$}

Fos $s>1/2$ set $g:=g_{(s,\infty)}$ and 
$
\widehat{\lambda}=\widehat{\lambda}(s,\infty)
$, 
where  $g_{(s,\infty)}$ and $\widehat{\lambda}(s,\infty)$ are as in 
Theorem~\ref{TheoremTransferOperatorCircle}. Set also $L:=L_{(s,\infty)}$.

\medskip

\begin{lemma}
\label{LemmaInvariantMeasureNormalizedOperatorInfinityCase}
There exists a Borel probability measure $\nu$ on $\partial\DD$ such that for any $f\in\cB$ we have 
$$
\int \widehat{L}fd\nu=\int fd\nu.
$$
\end{lemma}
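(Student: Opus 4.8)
The plan is to obtain $\nu$ as a fixed point of the dual normalized operator $\widehat{L}^\ast$ acting on the space $\cM(\partial\DD)$ of Borel probability measures, exactly as in the classical Ruelle–Perron–Frobenius argument (pages 22–24 of \cite{ParryPollicott}), but being careful about two points specific to the case $T=\infty$: the alphabet $\cW(a)$ is infinite, and $\widehat{L}$ acts on the Banach space $\cB$ of piecewise Lipschitz functions on $\partial\DD$ rather than on $C(\Sigma)$. The key structural facts already available are: $\widehat{L}$ is well defined and bounded on $\cB$ (this follows from Lemma~\ref{LemmaTransferOperatorsAreUniformlyBounded} together with the definition $\widehat{L}=\widehat{\lambda}^{-1}\cN^{-1}L\cN$ and the two-sided bound $m(s,\infty)\leq g\leq 1$ from Proposition~\ref{PropositionEigenfunctionTransferOperatorCircle}), and the normalization identity $\widehat{L}1=1$, i.e.\ Equation~\eqref{EquationSumWeightsNormalizedTransferOperator}.

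First I would define the dual operator $\widehat{L}^\ast:C(\partial\DD)^\ast\to C(\partial\DD)^\ast$ by $(\widehat{L}^\ast\mu)(f):=\int \widehat{L}f\,d\mu$ for $f\in C(\partial\DD)$; this requires checking that $\widehat{L}$ maps $C(\partial\DD)$ boundedly into $\cB\subset C(\partial\DD)$ --- note every element of $\cB$ is continuous since each piece extends continuously to $\overline{[a]}$ and the arcs $[a]_\DD$ tile $\partial\DD$, and one checks as in Lemma~\ref{LemmaTransferOperatorsAreUniformlyBounded} (ignoring the Lipschitz part, using only $\|\widehat{L}\|_\infty=1$) that $\widehat{L}$ is a bounded operator on $C(\partial\DD)$ with $\|\widehat{L}f\|_\infty\leq\|f\|_\infty$. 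Then $\widehat{L}^\ast$ preserves $\cM(\partial\DD)$: positivity of the weights $N_W$ gives $\widehat{L}f\geq0$ when $f\geq0$, and $\widehat{L}^\ast\mu(1)=\mu(\widehat{L}1)=\mu(1)=1$ by Equation~\eqref{EquationSumWeightsNormalizedTransferOperator}. The set $\cM(\partial\DD)$ is convex and weak-$\ast$ compact (the unit ball of $C(\partial\DD)^\ast$ is weak-$\ast$ compact by Banach–Alaoglu, and $\cM(\partial\DD)$ is a weak-$\ast$ closed convex subset), and $\widehat{L}^\ast$ is weak-$\ast$ continuous on it. The Schauder–Tychonoff fixed point theorem then yields $\nu\in\cM(\partial\DD)$ with $\widehat{L}^\ast\nu=\nu$, which unwinds to $\int\widehat{L}f\,d\nu=\int f\,d\nu$ for all $f\in C(\partial\DD)$, in particular for all $f\in\cB$.

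The main obstacle I anticipate is the infinite alphabet: one must justify that $\widehat{L}f(\xi)=\sum_{W\in\cW(a,T)}N_W(\xi)f(F_W\xi)$ genuinely defines a continuous function on each arc $[a]_\DD$ and on all of $\partial\DD$, with the series converging uniformly. This is where Corollary~\ref{CorollaryUniformlySummableSeries} is essential: since $s>1/2$, the series $\sum_{W\in\cW(a)}|D_\xi F_W|^s$ is uniformly summable, hence (using $m(s,\infty)\leq g\leq 1$ to bound the ratio $g(F_W\xi)/g(\xi)$ by $m(s,\infty)^{-1}$) so is $\sum_W N_W(\xi)$, and a standard Weierstrass $M$-test argument gives uniform convergence of $\widehat{L}f$ on each $[a]_\DD$ for $f$ bounded continuous; continuity across the (finitely many) boundary points of the arcs is inherited from the continuous extensions. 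Once this is in place, the dual/compactness argument above is entirely routine, and I would simply cite \cite{ParryPollicott} for the general scheme while spelling out only the two modifications (infinite sum, Banach space $\cB$ vs.\ $C(\Sigma)$).
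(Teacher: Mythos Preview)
There is a genuine gap in your argument: the claim that $\cB\subset C(\partial\DD)$ is false. By definition, a function $f\in\cB$ is only required to be Lipschitz on each right-open arc $[a]_\DD$ separately; nothing forces the continuous extension of $f|_{[a]}$ to $\overline{[a]}$ to agree at $\xi^R_a=\xi^L_b$ with the value $f(\xi^L_b)$ coming from the adjacent arc $[b]$. So elements of $\cB$ may have jump discontinuities at the vertices of $\Omega_\DD$ (and indeed the paper later exploits exactly this, distinguishing left and right limits of $g_{(1,\infty)}$ in Lemma~\ref{LemmaExistenceLimit}). Likewise, $\widehat{L}f$ need not be continuous even for $f\in C(\partial\DD)$: the index set $\cW(a)$ over which you sum changes abruptly as $\xi$ passes from $[a]$ to $[b]$, so the two one-sided limits of $\widehat{L}f$ at a vertex are sums over different families of branches. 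Hence you cannot define $\widehat{L}^\ast$ on $C(\partial\DD)^\ast$ as proposed, and the Schauder--Tychonoff argument on $\cM(\partial\DD)$ does not get off the ground.

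The paper circumvents this by passing to the compact disjoint union $\EE_\infty:=\bigsqcup_{a\in\cA}\overline{[a]}$, on which every $f\in\cB$ (via its continuous extensions to each $\overline{[a]}$) becomes a genuine continuous function. The operator $\widehat{L}$ extends to $C(\EE_\infty)$, still satisfies $\widehat{L}1=1$, and now your dual/fixed-point argument goes through verbatim to produce a measure $m$ on $\EE_\infty$; one then pushes $m$ forward along the obvious projection $p:\EE_\infty\to\partial\DD$ to obtain $\nu$. The disjoint-union trick is the missing idea.
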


\begin{proof}
Consider the disjoint union $\EE_\infty:=\bigsqcup_{a\in\cA}\overline{[a]}$, that is the set of pairs $(a,x)$ with $a\in\cA$ and $x\in\overline{[a]}$. A set $E\subset\EE_\infty$ is open if and only if 
$\iota_a^{-1}(E)$ is open in 
$\overline{[a]}$ for any $a\in\cA$, where we consider the maps $\iota_a(x):=(a,x)$, and with this topology $\EE_\infty$ is a compact set homeomorphic to $[0,1]\times\cA$. A function $f:\EE_\infty\to\CC$ is continuous if and only if 
$
f\circ\iota_a:\overline{[a]}\to\CC
$ 
is continuous for any $a\in\cA$. Let $\cC$ be the Banach space of continuos functions 
$f:\EE_\infty\to\CC$ with the sup norm $\|\cdot\|_\infty$. For any $a\in\cA$ and any $W\in\cW(a)$ both $F_W$ and $|DF_W|$ admit a continuous extension to $\overline{[a]}$. The same is true for the functions  $g$ and $g^{-1}$. Therefore the operator $\widehat{L}$ acts on $\cC$. The condition $\widehat{L}1=1$ implies that the dual operator 
$
\widehat{L}^\ast:\cC^\ast\to\cC^\ast
$ 
preserves the convex weakly compact subset $\cM(\EE_\infty)$ of Borel probability measures over 
$\EE_\infty$. Thus there exists 
$m\in\cM(\EE_\infty)$ such that $\widehat{L}^\ast m=m$, according to the Schauder-Tychonov fixed point theorem. Given the inclusion maps $p_a:\overline{[a]}\to\partial\DD$, let 
$
p:\EE_\infty\to\partial\DD
$ 
be the unique continuous map such that $p\circ\iota_a=p_a$ for any $a\in\cA$. Then consider the Borel probability measure 
$\nu:=p^\ast(m)$ over $\partial\DD$. For any $f\in\cB$ we have 
$$
\int fd\nu=
\int f\circ pdm=
\int f\circ pd(\widehat{L}^\ast m)=
\int \widehat{L}(f\circ p)dm=
\int (\widehat{L}f)\circ pdm=
\int \widehat{L}fd\nu.
$$
\end{proof}

\begin{lemma}
\label{LemmaUniformConvergenceNormalizedOperatorInfinityCase}
For any $f\in\cB$ we have uniform convergence
$$
\widehat{L}_{(s,\infty)}^kf\to\int fd\nu_{(s,\infty)}.
$$
\end{lemma}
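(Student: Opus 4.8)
The plan is to adapt the classical argument (see pages~23--24 of \cite{ParryPollicott}): first prove that the orbit $\{\widehat{L}_{(s,\infty)}^{\,n}f\}_{n\in\NN}$ is precompact in the uniform topology, and then prove that its only cluster point is the constant function $\int f\,d\nu_{(s,\infty)}$. I write $\widehat{L}:=\widehat{L}_{(s,\infty)}$, $\nu:=\nu_{(s,\infty)}$, $g:=g_{(s,\infty)}$.

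For precompactness: since $\widehat{L}1=1$ and the weights are positive with $\sum_{W\in\cW(a,\infty)}N_W(x)=1$ by Equation~\eqref{EquationSumWeightsNormalizedTransferOperator}, for $x\in[a]$ the value $\widehat{L}^{\,n}u(x)$ is a convex combination of values of $u$; hence $\min u\le\widehat{L}^{\,n}u\le\max u$ and $\|\widehat{L}^{\,n}\|_\infty=1$. Combining this with the Lasota--Yorke inequality $\lipschitz(\widehat{L}^{\,n}f)\le\theta^{n}\lipschitz(f)+C\|f\|_\infty$ of Proposition~\ref{PropositionLasotaYorke(AllParameters)}, the family $\{\widehat{L}^{\,n}f\}$ is bounded and equi-Lipschitz on every closed arc $\overline{[a]}$, so by Ascoli--Arzel\`a it is precompact in the space $\cC$ of continuous functions on $\bigsqcup_{a\in\cA}\overline{[a]}$ used in the proof of Lemma~\ref{LemmaInvariantMeasureNormalizedOperatorInfinityCase}; convergence in $\cC$ implies uniform convergence on $\partial\DD$. (The same estimates show $\{\widehat{L}^{\,m}h\}$ is precompact for any $h\in\cB$.)

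Now let $h$ be a cluster point; by thinning I may assume $\widehat{L}^{\,n_j}f\to h$ in $\cC$ with $n_{j+1}-n_j\to\infty$. Then $h\in\cB$, and iterating Lemma~\ref{LemmaInvariantMeasureNormalizedOperatorInfinityCase} (valid since $\widehat{L}$ preserves $\cB$) gives $\int h\,d\nu=\int f\,d\nu$. By the convexity observation, $\max\widehat{L}^{\,m}h$ decreases and $\min\widehat{L}^{\,m}h$ increases, both converging, to $\beta_\infty\ge\alpha_\infty$ say; so every cluster point $h''$ of $\{\widehat{L}^{\,m}h\}$ has $\max h''=\beta_\infty$, $\min h''=\alpha_\infty$ and $\int h''\,d\nu=\int h\,d\nu=\int f\,d\nu$. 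The decisive step is $h''\equiv\beta_\infty$. For each $m$, $\widehat{L}^{\,m}h''$ is again a cluster point of $\{\widehat{L}^{\,\cdot}h\}$, so $\max\widehat{L}^{\,m}h''=\beta_\infty$ is attained at some $x_m\in\overline{[a_m]}$, and
\[
\widehat{L}^{\,m}h''(x_m)=\sum_{w_m\in\cW(m,a_m,\infty)}N_{w_m}(x_m)\,h''(F_{w_m}x_m)
\]
is a convex combination with weights strictly positive on $\overline{[a_m]}$ and summing to $1$ there — positivity because $|DF_{w_m}|$ never vanishes and $g\ge m(s,\infty)>0$ by Proposition~\ref{PropositionEigenfunctionTransferOperatorCircle}, and the identity $\sum_{w_m}N_{w_m}=1$ extending to the closure by the uniform summability of Corollary~\ref{CorollaryUniformlySummableSeries} — so equality with the maximum forces $h''(F_{w_m}x_m)=\beta_\infty$ for all such $w_m$. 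Given any admissible block $(W_1,\dots,W_r)$, apply this with $m=r+2$: using $M^{2}>0$ (Proposition~\ref{PropositionAperiodicityTransitionMatrix}) to connect $W_r$ to some fixed word of $\cW(a_{r+2},\infty)$, one obtains $w_{r+2}\in\cW(r+2,a_{r+2},\infty)$ extending $(W_1,\dots,W_r)$ with $F_{w_{r+2}}x_{r+2}\in\overline{[W_1,\dots,W_r]_\EE}$; since these cylinders shrink uniformly in $r$ (Proposition~\ref{PropositionSizeGapsAndIntervals}(1)) and every point of $\partial\DD$ lies in the closure of some level-$r$ cylinder for every $r$, continuity of $h''$ on each $\overline{[a]}$ yields $h''\equiv\beta_\infty$. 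Hence $\beta_\infty=\int h''\,d\nu=\int f\,d\nu$, so $\alpha_\infty=\beta_\infty=\int f\,d\nu$; every cluster point of $\{\widehat{L}^{\,m}h\}$ is the constant $\int f\,d\nu$, and by precompactness $\widehat{L}^{\,m}h\to\int f\,d\nu$ uniformly. Finally $h=\lim_j\widehat{L}^{\,n_{j+1}-n_j}(\widehat{L}^{\,n_j}f)$ with
\[
\big\|\widehat{L}^{\,n_{j+1}-n_j}(\widehat{L}^{\,n_j}f)-\widehat{L}^{\,n_{j+1}-n_j}h\big\|_\infty\le\|\widehat{L}^{\,n_j}f-h\|_\infty\to0,
\]
while $\widehat{L}^{\,n_{j+1}-n_j}h\to\int f\,d\nu$ since $n_{j+1}-n_j\to\infty$; hence $h=\int f\,d\nu$. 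Thus the precompact orbit $\{\widehat{L}^{\,n}f\}$ has the unique cluster point $\int f\,d\nu$ and converges to it uniformly.

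The main obstacle is the propagation step that forces $h''$ to be constant: it marries aperiodicity of the transition matrix (so that forward iterates of the inverse branches, started at any point, visit cylinders of every depth) with the uniform exponential shrinking of cylinders and the piecewise continuity of functions in $\cB$, and one must verify that the convex-combination identity $\sum_W N_W=1$ with strictly positive weights survives on the closed arcs $\overline{[a]}$ — which is exactly where the lower bound on $g$ and the uniform summability estimates enter. A secondary delicate point is the diagonal passage to a subsequence with gaps tending to infinity, which upgrades ``$\widehat{L}^{\,m}h$ converges'' to ``$h$ is the limit''.
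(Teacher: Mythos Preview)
Your proof is correct and follows essentially the same strategy as the paper's: precompactness via the Lasota--Yorke inequality and Ascoli--Arzel\`a, then showing any cluster point is constant using that $\widehat{L}$ acts by convex combinations together with density of forward orbits (aperiodicity), and identifying the constant via $\nu$-invariance. The only structural difference is that the paper works directly with a single cluster point $f_\infty$ of $\widehat{L}^k f$ and proves $\sup\widehat{L}^N f_\infty=\sup f_\infty$ for every $N$ by a short $\epsilon$-argument exploiting the monotone convergence of $\sup\widehat{L}^k f$, whereas you take a second cluster point $h''$ of $\{\widehat{L}^m h\}$ (for which $\max\widehat{L}^m h''=\beta_\infty$ is automatic) and then backtrack to $h$ via the gap trick $n_{j+1}-n_j\to\infty$; the paper's route is slightly more direct but the content is the same.
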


\begin{proof}
Since $\widehat{L}$ is real we can assume that $f$ is real valued (in the general case just consider $\re(f)$ and $\im(f)$). Set $f^{(k)}:=\widehat{L}^kf$. Recalling that $\|\widehat{L}\|_\infty=1$, 
Proposition~\ref{PropositionLasotaYorke(AllParameters)} and the Theorem of 
Ascoli-Arzel\`a imply that there exists $f_\infty\in\cB$ with $f^{(k_n)}\to f_\infty$ uniformly along some subsequence $k_n$ with $n\to\infty$.
Equation~\eqref{EquationSumWeightsNormalizedTransferOperator} implies 
$
\sup f^{(k+1)}\leq \sup f^{(k)}
$ 
for any $k\in\NN$. Moreover $f^{(k)}(x)\geq \inf f$ for any $k\in\NN$ and any $x$, where 
$\inf f>-\infty$ strictly. Fix $N\in\NN$. The last two conditions imply 
$
\sup\widehat{L}^Nf_\infty=\sup f_\infty
$. 
Indeed assume there exists $\epsilon>0$ with 
$
\sup\widehat{L}^Nf_\infty\leq\sup f_\infty-3\epsilon
$. 
For any $n$ big enough we have 
$$
\|f^{(k_n)}-f_\infty\|_\infty<\epsilon
\quad
\textrm{ ; }
\quad
\|\widehat{L}^Nf^{(k_n)}-\widehat{L}^Nf_\infty\|_\infty<\epsilon
\quad
\textrm{ ; }
\quad
|\sup f^{(k_n)}-\sup f^{(k_n+N)}|<\epsilon.
$$ 
Therefore we get an absurd because
$$
\sup \widehat{L}^Nf_\infty>
\sup \widehat{L}^Nf^{(k_n)}-\epsilon=
\sup f^{(k_n+N)}-\epsilon>
\sup f^{(k_n)}-2\epsilon>
\sup f_\infty-3\epsilon.
$$
Consider $x_0$ and $x_N$ with  
$
f_\infty(x_0)=\sup f_\infty=\sup\widehat{L}^Nf_\infty=\widehat{L}^Nf_\infty(x_N)
$, 
where $N$ is the integer fixed above. Such points exist modulo replacing $f_\infty$ and $\widehat{L}^Nf_\infty$ by $f_\infty\circ p$ and $\widehat{L}^Nf_\infty\circ p$, where $p:\EE_\infty\to\partial\DD$ is the continuous map in the proof of 
Lemma~\ref{LemmaInvariantMeasureNormalizedOperatorInfinityCase}. We have 
$$
\widehat{L}^Nf_\infty(x_N)=
\sum_{w_k\in\cW(k,a)}N_{w_k}(x_N)f_\infty(F_{w_k}x_N)=f_\infty(x_0).
$$
Since 
$
\sum_{w_k\in\cW(k,a)}N_{w_k}(x_N)=1
$ 
and any term in the sum is positive, then for any $w_n\in\cW(k,a)$ we have 
$
f_\infty(F_{w_k}x_N)=f_\infty(x_0)
$. 
Thus continuity implies 
$
f_\infty(x)=f_\infty(x_0)
$ 
for any $x$. Finally, setting $\nu^{(k)}:=(\widehat{L}^k)^\ast\nu$ we get 
$$
f_\infty(x_0)=
\int f_\infty d\nu=
\lim_{n\to\infty}\int f^{(k_n)} d\nu=
\lim_{n\to\infty}\int f d\nu^{(k_n)}=
\int f d\nu.
$$
The argument can be repeated for any subsequence of $f^{(k)}$, extracting a sub-subsequence converging to the integral. Thus the Lemma follows. 
\end{proof}

Let $\nu_{(s,\infty)}:=\nu$ be the probability measure as in 
Lemma~\ref{LemmaInvariantMeasureNormalizedOperatorInfinityCase} and 
Lemma~\ref{LemmaUniformConvergenceNormalizedOperatorInfinityCase}. Let $\mu_{(s,\infty)}$ be the probability measure on $\partial\DD$ defined by  
\begin{equation}
\label{EquationDefinitionMeasureParameterInfinity}
\mu_{(s,\infty)}(f):=\int f(x)g_{(s,\infty)}^{-1}(x)d\nu_{(s,\infty)}(x)
\quad
\textrm{ for any }
\quad
f\in C(\partial\DD).
\end{equation}

\subsection{End of the proof of Theorem~\ref{TheoremTransferOperatorCircle}}
\label{SectionEndProofTheoremTransferOperatorCircle}

Equation~\eqref{Equation(1)TheoremTransferOperatorCircle} and simplicity of 
$
\widehat{\lambda}(s,T)
$ 
follow from 
Proposition~\ref{PropositionEigenfunctionTransferOperatorCircle}. 
Proposition~\ref{PropositionLasotaYorke(AllParameters)} and Corollary~1 in \cite{Hennion} imply that the normalized operator satisfies   
$
\rho_{ess}(\widehat{L}_{(s,T)})\leq\theta
$. 
Since $\cN$ is an isomorphism of Banach spaces, then the spectrum of 
$L_{(s,T)}$ is equal to the spectrum of $\widehat{L}_{(s,T)}$ multiplied by 
$
\widehat{\lambda}(s,T)
$, 
and we have 
$$
\rho_{ess}(L_{(s,T)})\leq \theta\cdot\widehat{\lambda}(s,T).
$$ 
Since $0<\theta<1$ and $\widehat{\lambda}(s,T)$ is eigenvalue, then 
$L_{(s,T)}:\cB\to\cB$ is quasi-compact. The same holds for 
$L_{(s,T)}:\cB_T\to\cB_T$. It remains to prove maximality of $\widehat{\lambda}(s,T)$.

Consider $T<\infty$. Point (4) of 
Theorem~\ref{TheoremRuellePerronFrobenius} and 
Equation~\eqref{EquationCommutativeDiagramTransferOperators} imply that for any $f\in C(\EE_T)$ and for 
$n\to\infty$ we have uniform convergence 
\begin{align*}
\lambda(s,T)^{-n}\cdot L_{(s,T)}^n f
=
\lambda(s,T)^{-n}\cdot
H^{-1 }\cL_{(s,T)}^n (Hf)
&
\to
\bigg(\int (Hf)d\widehat{m}_{(s,T)}\bigg)H^{-1}h_{(s,T)}
\\
&
=
\bigg(\int f d\mu_{(s,T)}\bigg)\big(h_{(s,T)}\circ\Pi^{-1}).
\end{align*}
Applying the last result to the function $f=g_{(s,T)}\in\Lambda_T$ in   
Proposition~\ref{PropositionEigenfunctionTransferOperatorCircle} we obtain 
$$
\frac{\widehat{\lambda}(s,T)^n}{\lambda(s,T)^n}\cdot g_{(s,T)}
\to
\bigg(\int g_{(s,T)} d\mu_{(s,T)}\bigg)\big(h_{(s,T)}\circ\Pi^{-1})
\quad
\textrm{ as }
\quad
n\to\infty.
$$
Since both $g_{(s,T)}$ and $h_{(s,T)}\circ\Pi^{-1}$ are positive, then 
$
\widehat{\lambda}(s,T)=\lambda(s,T)
$. 
The normalization 
$
\int g_{(s,T)} d\mu_{(s,T)}=1
$ 
gives also 
$
g_{(s,T)}=h_{(s,T)}\circ\Pi^{-1}
$. 
Equation~\eqref{Equation(2)TheoremTransferOperatorCircle} follows, and it implies maximality of 
$
\lambda(s,T)=\widehat{\lambda}(s,T)
$ 
for $L_{(s,T)}:\cB_T\to\cB_T$. For $L_{(s,T)}:\cB\to\cB$, consider $\lambda'\in\CC$ and $f\in\cB$ with $L_{(s,T)}f=\lambda'\cdot f$. Then set 
$f_T:=f|_{\EE_T}$ and $h_T:=f_T\circ\Pi=H(f_T)$. We have 
$
L_{(s,T)}f_T=\lambda'\cdot f_T
$ 
because $\EE_T$ is invariant under the maps $F_W$ defining $L_{(s,T)}$. 
Moreover $h_T\in\lipschitz(\Sigma,\theta)$, because  
$\Pi:\Sigma\to\EE_T$ is Lipschitz by Lemma~\ref{LemmaCodingLipschitz}. 
Equation~\eqref{EquationCommutativeDiagramTransferOperators} implies  
$$
\lambda'\cdot h_T=
H(\lambda'\cdot f_T)=
HL_{(s_T,T)}f_T=
\cL_{(s_T,T)}Hf_T=
\cL_{(s_T,T)}h_T.
$$
Resuming, any eigenvalue $\lambda'$ of $L_{(s,T)}:\cB\to\cB$ is also eigenvalue of $\cL_{(s,T)}$. Thus 
$\lambda(s,T)$ is maximal for $L_{(s,T)}:\cB\to\cB$ by Point (2) of 
Theorem~\ref{TheoremRuellePerronFrobenius}.

Consider $T=\infty$. 
Equation~\eqref{Equation(2)TheoremTransferOperatorCircle} and maximality of 
$\widehat{\lambda}(s,\infty)$ follow because for any $f\in\cB$ 
Lemma~\ref{LemmaUniformConvergenceNormalizedOperatorInfinityCase} implies 
\begin{align*}
\widehat{\lambda}(s,\infty)^{-k}\cdot L_{(s,\infty)}f
=
\cN\widehat{L}^k\cN^{-1}f
\to
\bigg(\int f(x)g_{(s,\infty)}^{-1}(x)d\nu_{(s,\infty)}(x)\bigg)g_{(s,\infty)}.
\end{align*}

It only remains to prove 
$
\widehat{\lambda}(1,\infty)=1  
$. 
For simplicity set $\lambda:=\widehat{\lambda}(1,\infty)$ and $g:=g_{(1,\infty)}$. Fix any $a\in\cA$ and set also $\cW(k):=\cW(k,a,T=\infty)$. The mean value Theorem and 
Corollary~\ref{CorollaryLipschitzConstantDerivative} imply that there exists a constant $C>0$ such that for any $w_k\in\cW(k,a,\infty)$ the Lebesgue measure $\big|[w_k]\big|$ of $[w_k]$ satisfies 
$$
C^{-1}\cdot\big|[w_k]\big|
\leq 
|D_\xi F_{w_k}|
\leq
C\cdot \big|[w_k]\big|.
$$
We have  
$
\sum_{w_k\in\cW(k,a,\infty)}\big|[w_k]\big|=1
$ 
because $\{[w_k]:w_k\in\cW(k,a,\infty)\}$ is a partition of $\partial\DD$ for any $k\geq2$. 
Equation~\eqref{EquationPropositionEigenfunctionTransferOperatorCircle} implies that for any $k\geq2$ and any $\xi\in[a]$ we have 
\begin{align*}
g(\xi)
&
=
\lambda^{-k}\cdot
\sum_{w_k\in\cW(k,a,\infty)}|D_\xi F_{w_k}|\cdot g(F_{w_k}\cdot\xi)
\geq
\lambda^{-k}\cdot
m\cdot\sum_{w_k\in\cW(k,a,\infty)}|D_\xi F_{w_k}|
\\
&
\geq
\frac{m}{\lambda^{k}\cdot C}\cdot\sum_{w_k\in\cW(k,a,\infty)}\big|[w_k]\big|
=
\frac{m}{\lambda^{k}\cdot C},
\end{align*}
where $m>0$ is such that $m\leq g(\xi)\leq 1$ for any $\xi$. We have also
\begin{align*}
g(\xi)
&
=
\lambda^{-k}\cdot
\sum_{w_k\in\cW(k,a,\infty)}|D_\xi F_{w_k}|\cdot g(F_{w_k}\cdot\xi)
\leq
\lambda^{-k}\cdot
\sum_{w_k\in\cW(k,a,\infty)}|D_\xi F_{w_k}|
\\
&
\leq
\frac{C}{\lambda^{k}}\cdot\sum_{w_k\in\cW(k,a,\infty)}\big|[w_k]\big|
=
\frac{C}{\lambda^{k}}.
\end{align*}
Resuming, we have $m\leq g(\xi)\leq1$ and 
$
m\cdot C^{-1}\leq \lambda^k g(\xi)\leq C
$ 
for any $k\geq 2$. Hence we must have $\lambda=1$. 
Theorem~\ref{TheoremTransferOperatorCircle} is proved. \qed

\section{Perturbative estimate of maximal eigenvalue: proof of Theorem~\ref{TheoremDimensionShiftSpace}}
\label{SectionPerturbativeEstimateMaximalEigenvalue}

In this section we consider only the transfer operators $L_{(s,T)}$ on the Banach space $\cB$, for parameters $s,T$ with $9/10\leq s\leq 1$ and $0<T\leq+\infty$. For such $T$, let $s_T$ be the solution of Equation~\eqref{EquationBowenFormulaDimension}. As in 
\S~\ref{SectionEstimatesContractionDistortion} all constants are uniform, unless explicitly stated. 

\subsection{Expansion of transfer operator in parameter $s$}

Observe that for any $D,s$ with $0<D<1$ and $9/10<s\leq 1$ and any $h\in\RR$ such that 
$9/10<s+h\leq 1$ we have 
\begin{equation}
\label{EquationOrderTwoError(First)}
D^{s+h}=
D^s+\ln(D)\cdot\int_s^{s+h}D^tdt=
D^s+h\cdot D^s\ln(D)+\ln(D)\cdot\int_s^{s+h}(D^t-D^s)dt,
\end{equation}
where we recall that $d/dt(D^t)=(\ln D)\cdot D^t$. The mean value Theorem gives 
\begin{equation}
\label{EquationOrderTwoError(Second)}
\big|D^{s+h}-\big(D^s+h\cdot D^s\ln(D)\big)\big|
\leq 
|h|^2\cdot|\ln(D)|^2\cdot D^{4/5}.
\end{equation}

For $0<T\leq+\infty$ consider the operator $A_T:=\cB\to\cB$ defined by
\begin{equation}
\label{EquationDefinitionDerivativeOperator(A)}
A_Tf(x):=\sum_{W\in\cW(a,T)}
\big(\ln|D_xF_W|\big)|D_xF_W|^{s_T}f(F_Wx)
\quad
\textrm{ if }
\quad
x\in[a].
\end{equation}

For any $W\in\cW$ set 
$$
\|\ln DF_W\|_\infty:=
\sup_{\xi\in\domain(W)}\big|\ln|D_\xi F_W| \big|.
$$

Since $s_T\to1$ as $T\to\infty$, let $T_0$ be such that $9/10<s_T\leq1$ for $T_0\leq T\leq\infty$.

\begin{lemma}
\label{LemmaNormDerivativeOperator(A)}
There exists an uniform constant $C>0$ such that for $T_0\leq T\leq\infty$ we have
$$
\|A_T\|_\ast\leq C.
$$
\end{lemma}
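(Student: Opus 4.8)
The plan is to bound $\|A_Tf\|_\infty$ and $\lipschitz(A_Tf)$ separately by a uniform multiple of $\|f\|_\ast$, following line by line the proof of Lemma~\ref{LemmaTransferOperatorsAreUniformlyBounded}; the only new feature is the extra logarithmic factor $\ln|D_xF_W|$, which is absorbed by the second estimate in Corollary~\ref{CorollaryUniformlySummableSeries}. Throughout, the hypothesis $T_0\le T\le\infty$ guarantees $9/10<s_T\le 1$, so Corollary~\ref{CorollaryUniformlySummableSeries} applies with the fixed exponent $s_0=9/10>1/2$ and all the constants it provides, together with those of \S~\ref{SectionEstimatesContractionDistortion}, are uniform in $T$.

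First I would treat the sup norm. For $x\in[a]$ one has $|A_Tf(x)|\le\sum_{W\in\cW(a,T)}\bigl|\ln|D_xF_W|\bigr|\cdot|D_xF_W|^{s_T}\cdot\|f\|_\infty$, and this series is $\le C\|f\|_\infty$ by the second bound of Corollary~\ref{CorollaryUniformlySummableSeries} applied at $\xi=x\in[a]$; this simultaneously shows that the defining series of $A_T$ converges (so $A_T$ is well defined) and that $\|A_Tf\|_\infty\le C\|f\|_\infty$.

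Next, the Lipschitz bound. Fix $a\in\cA$ and $x,y\in[a]$, and write the difference of the $W$-th summand at $y$ and at $x$ as a telescoping sum of three terms, in each of which only one of the three factors $\ln|D_\bullet F_W|$, $|D_\bullet F_W|^{s_T}$, $f(F_\bullet\,\cdot)$ is varied. The first term is handled by Lemma~\ref{LemmaLipschitzConstantLogDerivative} with $s=1$, which gives $\bigl|\ln|D_yF_W|-\ln|D_xF_W|\bigr|\le C|y-x|$; the second by Part (2) of Corollary~\ref{CorollaryLipschitzConstantDerivative}, using Part (1) of the same corollary to replace $\|DF_W\|_\infty^{s_T}$ by $C\,|D_xF_W|^{s_T}$; and the third by Corollary~\ref{CorollaryExponentiallyContractingDistance} with $k=1$ (contraction factor $\theta$), noting that $F_Wx$ and $F_Wy$ both lie in $[W]_\EE\subset[a_0]_\DD$, where $a_0$ is the first letter of $W$, so that $|f(F_Wy)-f(F_Wx)|\le\lipschitz(f)\cdot\theta\,|y-x|$. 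Summing the three contributions over $W\in\cW(a,T)$ and invoking once more the two estimates of Corollary~\ref{CorollaryUniformlySummableSeries} at $\xi=x\in[a]$ yields $|A_Tf(y)-A_Tf(x)|\le C\bigl(\|f\|_\infty+\lipschitz(f)\bigr)|y-x|$, hence $A_Tf\in\cB$ and $\lipschitz(A_Tf)\le C\|f\|_\ast$. Together with the sup norm bound this gives $\|A_T\|_\ast\le C$.

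This argument is essentially routine — it is the computation behind Lemma~\ref{LemmaTransferOperatorsAreUniformlyBounded} with one additional bounded-on-average factor — so I do not expect a genuine obstacle. The only point demanding care is the bookkeeping of uniformity in $T$: all constants coming from \S~\ref{SectionEstimatesContractionDistortion} and from Corollary~\ref{CorollaryUniformlySummableSeries} are uniform precisely because $s_T$ is confined to $(9/10,1]$ once $T\ge T_0$, and one must apply the summability estimates at a base point lying in $[a]$ (so that the hypotheses ``$\xi\in[a]$'' of Corollary~\ref{CorollaryUniformlySummableSeries} and ``$[a]\subset\domain(W)$'' of the contraction and distortion lemmas are satisfied) rather than at the moving image points $F_Wx$.
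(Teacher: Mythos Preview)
Your proposal is correct and follows essentially the same route as the paper. The only cosmetic difference is that the paper packages the first two factors into $\Phi_W(x):=(\ln|D_xF_W|)\,|D_xF_W|^{s_T}$ and bounds $|\Phi_W(y)-\Phi_W(x)|$ in one step before doing a two-term telescope, whereas you do a three-term telescope directly; the ingredients (Lemma~\ref{LemmaLipschitzConstantLogDerivative}, Corollary~\ref{CorollaryLipschitzConstantDerivative}, Corollary~\ref{CorollaryExponentiallyContractingDistance}, and Corollary~\ref{CorollaryUniformlySummableSeries}) and the uniformity bookkeeping are identical.
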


\begin{proof}
Fix $T$ with $T_0\leq T\leq\infty$. 
Corollary~\ref{CorollaryUniformlySummableSeries} gives directly an uniform upper bound for 
$\|A_T\|_\infty$. Fix $W\in\cW$ and consider the function 
$$
\Phi_W:\domain(W)\to\RR_+
\quad;\quad
\Phi_W(x):=\big(\ln|D_xF_W|\big)|D_xF_W|^{s_T}.
$$
Denote $C_1>0$ and $C_2>0$ be the constants in 
Lemma~\ref{LemmaLipschitzConstantLogDerivative} and 
Corollary~\ref{CorollaryLipschitzConstantDerivative} respectively. For any $x,y$ in $\domain(W)$ we have 
\begin{align*}
&
|\Phi_W(y)-\Phi_W(x)|
\\
&
\leq
\big|\ln|D_yF_W|\big|\big(|D_yF_W|^{s_T}-|D_xF_W|^{s_T}\big)
+
|D_xF_W|^{s_T}\big(\ln|D_yF_W|-\ln|D_xF_W|\big)
\\
&
\leq
\big(
C_2\cdot\|\ln DF_W\|_\infty\cdot\|DF_W\|_\infty^{s_T}
+
C_1\cdot\|DF_W\|_\infty^{s_T}
\big)\cdot|y-x|.
\end{align*}
Therefore for any $f\in\cB$, any $a\in\cA$ and any $x,y$ in $[a]$ we have 
\begin{align*}
&
\big|A_Tf(y)-A_Tf(x)\big|\leq
\\
&
\sum_{W\in\cW(a,T)}
\big|
\Phi_W(y)\big(f(F_Wy)-f(F_Wx)\big)
+
f(F_Wx)\cdot\big(\Phi_W(y)-\Phi_W(x)\big)
\big|\leq
\\
&
\sum_{W\in\cW(a,T)}
\big|\Phi_W(y)\big|\cdot\theta\cdot\lipschitz(f)\cdot|y-x|
+
\|f\|_\infty\cdot\big|\Phi_W(y)-\Phi_W(x)\big|
\leq
\\
&
C_3\cdot\big(\theta\cdot\lipschitz(f)+\|f\|_\infty\big)\cdot|y-x|,
\end{align*}
where the second inequality follows from the estimate above and from 
Corollary~\ref{CorollaryExponentiallyContractingDistance}, and where the last inequality follows from 
Corollary~\ref{CorollaryUniformlySummableSeries}, in terms of some uniform constant $C_3$. The uniform bound for $\|A_T\|_\ast$ follows combining the estimates above.
\end{proof}

\begin{lemma}
\label{LemmaTaylorEspansionOperator(A)}
There exists an uniform constant $C>0$ such that for any $T$ with $T_0\leq T\leq \infty$ and any $s$ with 
$9/10\leq s\leq 1$ we have
$$
\|L_{(s,T)}-L_{(s_T,T)}\|_\ast\leq C\cdot|s-s_T|
\quad
\textrm{ and }
\quad
\|L_{(s,T)}-L_{(s_T,T)}-(s-s_T)A_T\|_\ast\leq C\cdot|s-s_T|^2.
$$
\end{lemma}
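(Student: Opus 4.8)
The plan is to reduce both estimates to pointwise bounds on the scalar multipliers $|D_\xi F_W|^s$ and then sum over $W\in\cW(a,T)$ with Corollary~\ref{CorollaryUniformlySummableSeries}, following the pattern of the proofs of Lemma~\ref{LemmaTransferOperatorsAreUniformlyBounded} and Lemma~\ref{LemmaNormDerivativeOperator(A)}. First I would fix $a\in\cA$, a reference point $\xi_0\in[a]$, and $W\in\cW(a,T)$; for $\xi\in[a]\subset\domain(W)$ write $D=|D_\xi F_W|$, so $0<D\le\theta<1$ by Lemma~\ref{LemmaUniformContractionBranches}. Applying \eqref{EquationOrderTwoError(First)} and \eqref{EquationOrderTwoError(Second)} with the roles of $s$ and $h$ played by $s_T$ and $s-s_T$ (legitimate since $9/10\le s,s_T\le1$ for $T_0\le T\le\infty$, the endpoint $s=9/10$ by continuity, and $D<1$) gives the pointwise estimates
$$
\bigl||D_\xi F_W|^s-|D_\xi F_W|^{s_T}\bigr|\le C\,|s-s_T|\,|D_\xi F_W|^{7/10},\qquad
\bigl||D_\xi F_W|^s-|D_\xi F_W|^{s_T}-(s-s_T)(\ln|D_\xi F_W|)|D_\xi F_W|^{s_T}\bigr|\le C\,|s-s_T|^2\,|D_\xi F_W|^{7/10},
$$
where the factors $|\ln D|$ and $|\ln D|^2$ have been absorbed into a power of $D$ using that $|\ln D|^2D^{1/5}$ is bounded on $(0,\theta]$. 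Since $\sum_{W\in\cW(a,T)}|D_\xi F_W|^{7/10}\le C$ uniformly in $\xi\in[a]$, $s\ge7/10$ and $0<T\le\infty$ by Corollary~\ref{CorollaryUniformlySummableSeries}, multiplying by $|f(F_W\xi)|\le\|f\|_\infty$ and summing immediately yields $\|L_{(s,T)}f-L_{(s_T,T)}f\|_\infty\le C|s-s_T|\|f\|_\infty$ and $\|L_{(s,T)}f-L_{(s_T,T)}f-(s-s_T)A_Tf\|_\infty\le C|s-s_T|^2\|f\|_\infty$.

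The hard part will be the Lipschitz seminorm, because subtracting the two bounded operators $L_{(s,T)}$ and $L_{(s_T,T)}$ directly only produces an $O(1)$ Lipschitz bound, not the required factor $|s-s_T|$. To see that factor one must differentiate the kernel in the exponent: write, for the scalar multiplier,
$$
|D_\xi F_W|^s-|D_\xi F_W|^{s_T}=\int_{s_T}^{s}\Phi_W^{(t)}(\xi)\,dt,\qquad \Phi_W^{(t)}(\xi):=(\ln|D_\xi F_W|)\,|D_\xi F_W|^{t},
$$
and, for the second-order remainder (whose multiplier is $\Psi_W(\xi):=|D_\xi F_W|^s-|D_\xi F_W|^{s_T}-(s-s_T)\Phi_W^{(s_T)}(\xi)$, so that $\Phi_W^{(s_T)}$ is the kernel of $A_T$, cf.\ \eqref{EquationDefinitionDerivativeOperator(A)}),
$$
\Psi_W(\xi)=\int_{s_T}^{s}\!\!\int_{s_T}^{t}\Xi_W^{(u)}(\xi)\,du\,dt,\qquad \Xi_W^{(u)}(\xi):=(\ln|D_\xi F_W|)^2\,|D_\xi F_W|^{u}.
$$

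Next I would bound, for each fixed $t,u\in[9/10,1]$, the Lipschitz constants of $\Phi_W^{(t)}$ and $\Xi_W^{(u)}$ on $\domain(W)$ exactly as $\Phi_W$ is handled in the proof of Lemma~\ref{LemmaNormDerivativeOperator(A)}: splitting $|\Phi_W^{(t)}(y)-\Phi_W^{(t)}(x)|$ (resp.\ $|\Xi_W^{(u)}(y)-\Xi_W^{(u)}(x)|$) into a ``change the exponent'' term, estimated by Corollary~\ref{CorollaryLipschitzConstantDerivative}(2), and a ``change the logarithm'' term, estimated by Lemma~\ref{LemmaLipschitzConstantLogDerivative}, yields
$$
\bigl|\Phi_W^{(t)}(y)-\Phi_W^{(t)}(x)\bigr|\le C\,(1+\|\ln DF_W\|_\infty)\,\|DF_W\|_\infty^{9/10}\,|y-x|,\qquad
\bigl|\Xi_W^{(u)}(y)-\Xi_W^{(u)}(x)\bigr|\le C\,(1+\|\ln DF_W\|_\infty)^2\,\|DF_W\|_\infty^{9/10}\,|y-x|,
$$
uniformly in $t,u$ (using $\|DF_W\|_\infty\le\theta<1$). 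Integrating these in $t$ over a segment of length $|s-s_T|$, respectively in $(t,u)$ over a triangle of area $\le|s-s_T|^2/2$, transfers the right power of $|s-s_T|$ onto the Lipschitz constants of the multipliers $\Delta_W(\xi):=|D_\xi F_W|^s-|D_\xi F_W|^{s_T}$ and $\Psi_W$.

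Finally I would assemble the Lipschitz bound by the usual telescoping: for $x,y\in[a]$,
$$
\bigl|(L_{(s,T)}-L_{(s_T,T)})f(y)-(L_{(s,T)}-L_{(s_T,T)})f(x)\bigr|\le\sum_{W\in\cW(a,T)}\Bigl(|\Delta_W(y)|\,|f(F_Wy)-f(F_Wx)|+|f(F_Wx)|\,|\Delta_W(y)-\Delta_W(x)|\Bigr),
$$
where in the first summand $|f(F_Wy)-f(F_Wx)|\le\theta\,\lipschitz(f)\,|y-x|$ by Corollary~\ref{CorollaryExponentiallyContractingDistance} applied with $k=1$ and $|\Delta_W(y)|\le C|s-s_T|\,|D_yF_W|^{7/10}$, while in the second summand $|\Delta_W(y)-\Delta_W(x)|\le C|s-s_T|(1+\|\ln DF_W\|_\infty)\|DF_W\|_\infty^{9/10}|y-x|$ from the integrated estimate. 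Using the distortion bound Corollary~\ref{CorollaryLipschitzConstantDerivative}(1) to replace the sup-norms over $\domain(W)$ by their values at $\xi_0$, both $(1+\|\ln DF_W\|_\infty)\|DF_W\|_\infty^{9/10}$ and $|D_{\xi_0}F_W|^{7/10}$ are dominated by $C|D_{\xi_0}F_W|^{7/10}$, which is summable over $\cW(a,T)$ by Corollary~\ref{CorollaryUniformlySummableSeries}; hence $\lipschitz\bigl((L_{(s,T)}-L_{(s_T,T)})f\bigr)\le C|s-s_T|\,\|f\|_\ast$, and combined with the $\|\cdot\|_\infty$ estimate this is the first inequality. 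Replacing $\Delta_W$ by $\Psi_W$ and the single integral by the double integral everywhere — now using $(1+\|\ln DF_W\|_\infty)^2\|DF_W\|_\infty^{9/10}\le C|D_{\xi_0}F_W|^{7/10}$ — the identical computation gives the second inequality. All constants are either geometric (uniform) or the constant of Corollary~\ref{CorollaryUniformlySummableSeries}, which is uniform for $s\ge7/10$ and $0<T\le\infty$; since $9/10\le s,s_T\le1$ for $T_0\le T\le\infty$, the bounds are uniform in $s$ and $T$. The main obstacle, as stressed above, is precisely this Lipschitz step: one must resist subtracting the two operators crudely and instead differentiate the kernel in the exponent, recycling the Lipschitz estimates already established for $A_T$.
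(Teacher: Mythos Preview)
Your proof is correct and follows essentially the same approach as the paper: express the scalar multipliers as integrals in the exponent parameter, bound the Lipschitz constants of the integrands uniformly via Lemma~\ref{LemmaLipschitzConstantLogDerivative} and Corollary~\ref{CorollaryLipschitzConstantDerivative}, then sum using Corollary~\ref{CorollaryUniformlySummableSeries}. The only cosmetic differences are that the paper deduces the first inequality from the second plus Lemma~\ref{LemmaNormDerivativeOperator(A)} (rather than proving it directly), and writes the second-order remainder as $(\ln|D_xF_W|)\int_{s_T}^{s}B_W(x,t)\,dt$ with $B_W(x,t)=|D_xF_W|^t-|D_xF_W|^{s_T}$ instead of your equivalent double integral of $\Xi_W^{(u)}$.
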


\begin{proof}
The first bound obviously follows from the second and from 
Lemma~\ref{LemmaNormDerivativeOperator(A)}, thus we only prove that 
$
\|R\|_\ast\leq C\cdot|s-s_T|^2
$, 
where for convenience of notation we set
$$
R:=L_{(s,T)}-L_{(s_T,T)}-(s-s_T)A_T.
$$
Fix $W\in\cW$. In the notation of the proof of Lemma~\ref{LemmaNormDerivativeOperator(A)}, consider the function  
$$
\Psi_W:\domain(W)\to\RR_+
\quad;\quad
\Psi_W(x):=|D_xF_W|^s-|D_xF_W|^{s_T}-(s-s_T)\Phi_W(x).
$$
According to Equation~\eqref{EquationOrderTwoError(Second)} for any $x$ as above we have
$$
|\Psi_W(x)|\leq |s-s_T|^2\cdot\big|\ln|D_xF_W|\big|^2\cdot|D_xF_W|^{4/5}.
$$
Moreover the same argument which gives Part (1) of 
Corollary~\ref{CorollaryUniformlySummableSeries} implies that there exists an uniform constant $C_1>0$ such that for any $a\in\cA$ and any $x\in[a]$ we have
$$
\sum_{W\in\cW(a,T)}\big|\ln|D_xF_W|\big|^2\cdot|D_xF_W|^{4/5}\leq C_1.
$$
It follows immediately that 
$
\|R\|_\infty\leq C_1\cdot|s-s_T|^2
$. 
Moreover for $W\in\cW$ and $x\in\domain(W)$ Equation~\eqref{EquationOrderTwoError(First)} gives
$$
\Psi_W(x)=\big(\ln|D_xF_W|\big)\cdot\int_{s_T}^s B_W(x,t)dt,
$$
where we set $B_W(x,t):=|D_xF_W|^t-|D_xF_W|^{s_T}$. For $s_T<t<s$ or $s<t<s_T$, the first equality in 
Equation~\eqref{EquationOrderTwoError(First)} gives
$$
\big|B_W(x,t)\big|
\leq 
|t-s_T|\cdot \big(\ln|D_xF_W|\big)|D_xF_W|^{4/5}
\leq
|s-s_T|\cdot \|\ln DF_W\|_\infty\cdot\|DF_W\|^{4/5}.
$$
Moreover for $x,y$ in $\domain(W)$ again  the first equality in 
Equation~\eqref{EquationOrderTwoError(First)} gives
\begin{align*}
&
\big|B_W(y,t)-B_W(x,t)\big|=
\\
&
\bigg|
\big(\ln|D_yF_W|\big)\int_{s_T}^t|D_yF_W|^rdr
-
\big(\ln|D_xF_W|\big)\int_{s_T}^t|D_xF_W|^rdr
\bigg|
\leq
\\
&
\bigg|\ln|D_yF_W|\bigg|\int_{s_T}^t\big||D_yF_W|^r-|D_xF_W|^r\big|dr
+
\bigg|\ln|D_yF_W|-\ln|D_xF_W|\bigg|\int_{s_T}^t|D_xF_W|^rdr
\leq
\\
&
\|\ln DF_W\|_\infty \cdot|t-s_T|\cdot C_2\cdot\|DF_W\|^{4/5}\cdot|y-x|
+
C_2\cdot|y-x|\cdot|t-s_T|\cdot \|DF_W\|^{4/5}
\leq
\\
&
C_2\cdot \|DF_W\|^{4/5}\cdot (1+\|\ln DF_W\|_\infty)\cdot|s-s_T|\cdot|y-x|,
\end{align*}
where $C_2$ is some uniform constant and 
$
\big||D_yF_W|^r-|D_xF_W|^r\big|
$ 
is bounded by Part (2) of Corollary~\ref{CorollaryLipschitzConstantDerivative}, while   
$
\big|\ln|D_yF_W|-\ln|D_xF_W|\big|
$ 
is bounded by Lemma~\ref{LemmaLipschitzConstantLogDerivative}. The bounds obtained above for 
$
\big|B_W(y,t)-B_W(x,t)\big|
$ 
and for $\big|B_W(x,t)\big|$ give 
\begin{align*}
&
\big|\Psi_W(y)-\Psi_W(x)\big|=
\\
&
\bigg|
\big(\ln|D_yF_W|\big)\int_{s_T}^sB(y,t)dt
-
\big(\ln|D_xF_W|\big)\int_{s_T}^sB(x,t)dt
\bigg|
=
\\
&
\bigg|\ln|D_yF_W|\bigg|\int_{s_T}^s\big|B(y,t)-B(x,t)\big|dt
+
\bigg|\ln|D_yF_W|-\ln|D_xF_W|\bigg|\int_{s_T}^sB(x,t)dt
\leq
\\
&
\|\ln DF_W\|_\infty\cdot C_2\cdot \|DF_W\|^{4/5}\cdot (1+\|\ln DF_W\|_\infty)\cdot|s-s_T|^2\cdot|y-x|
+
\\
&
C_3\cdot|y-x|\cdot|s-s_T|^2\cdot \|\ln DF_W\|_\infty\cdot\|DF_W\|^{4/5}
\leq
\\
&
C_4\cdot(1+\|\ln DF_W\|_\infty)^2\cdot\|DF_W\|^{4/5}\cdot|y-x|\cdot|s-s_T|^2,
\end{align*}
where $C_4$ is some uniform constant, and where $C_3$ is the constant in 
Lemma~\ref{LemmaLipschitzConstantLogDerivative}. The same argument which gives  
Corollary~\ref{CorollaryUniformlySummableSeries} implies that there is some uniform constant $C_5$ such that 
$$
\sum_{W\in\cW(a,T)}\big|\Psi_W(y)-\Psi_W(x)\big|
\leq
C_5\cdot |y-x|\cdot|s-s_T|^2.
$$
The estimates above and the bound $\|R\|_\infty\leq C_1\cdot|s-s_T|^2$ from the first part of the proof imply the bound on $\|R\|_\ast$, indeed $|Rf(y)-Rf(x)|$ is bounded by 
\begin{align*}
&
\sum_{W\in\cW(a,T)}
|\Psi_W(x)|\cdot\big|f(F_Wy)-f(F_Wx)\big|
+
|f(F_Wx)|\cdot|\Psi_W(y)-\Psi_W(x)|
\\
&
\sum_{W\in\cW(a,T)}
|\Psi_W(x)|\cdot\lipschitz(f)\cdot\theta\cdot|y-x|
+
\|f\|_\infty\cdot|\Psi_W(y)-\Psi_W(x)|.
\end{align*}
\end{proof}

\subsection{Expansion of transfer operator in parameter $T$}

Fix $a\in\cA$ and $0<T<\infty$ and set $\cV(a,T):=\cW(a)\setminus\cW(a,T)$, as in 
\S~\ref{SectionPreliminaryUniformEstimates}. Let $\Delta_{(s,T)}:\cB\to\cB$ be the operator defined by  
\begin{equation}
\label{EquationDefinitionDerivativeOperator(Delta)}
\Delta_{(s,T)}f(x):=
\sum_{W\in\cV(a,T)}|D_xF_W|^{s}f(F_Wx)
\quad
\textrm{ if }
\quad
x\in[a].
\end{equation}

\begin{lemma}
\label{LemmaNormDerivativeOperator(Delta)}
There exists an uniform constant $C>0$ such that for $s,T$ as above we have 
$$
\|\Delta_{(s,T)}\|_\ast\leq C\cdot T^{-(2s-1)}.
$$
\end{lemma}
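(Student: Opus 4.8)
The plan is to run the argument in the proof of Lemma~\ref{LemmaTransferOperatorsAreUniformlyBounded} essentially verbatim, with the single change that the summability input is now the sharp tail bound of Corollary~\ref{CorollaryUniformlySummableSeries}, namely
$$
\sum_{W\in\cV(a,T)}|D_\xi F_W|^s\leq C\cdot\left(\frac{1}{T}\right)^{2s-1}
\quad\textrm{ for }\quad \xi\in[a],
$$
which holds with a uniform constant since $s\geq 9/10>1/2$. A first preliminary step is to replace the pointwise derivative by the supremum over $\domain(W)$: because $[a]\subset\domain(W)$ for every $W\in\cV(a,T)$, Part~(1) of Corollary~\ref{CorollaryLipschitzConstantDerivative} (applied to the block of length $k=1$ given by the single word $W$) yields a uniform constant with $\|DF_W\|_\infty^s\leq C\cdot|D_\xi F_W|^s$ for any $\xi\in[a]$; combining this with the displayed tail bound gives $\sum_{W\in\cV(a,T)}\|DF_W\|_\infty^s\leq C\cdot T^{-(2s-1)}$, with $C$ uniform.

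For the sup-norm estimate, for $f\in\cB$, $a\in\cA$ and $x\in[a]$ one has
$$
|\Delta_{(s,T)}f(x)|\leq\|f\|_\infty\sum_{W\in\cV(a,T)}|D_xF_W|^s\leq C\cdot T^{-(2s-1)}\|f\|_\infty .
$$
For the Lipschitz estimate, for $x,y\in[a]$ I would write, exactly as in Lemma~\ref{LemmaTransferOperatorsAreUniformlyBounded},
\begin{align*}
&\big|\Delta_{(s,T)}f(y)-\Delta_{(s,T)}f(x)\big|\leq\\
&\sum_{W\in\cV(a,T)}|D_xF_W|^s\cdot\big|f(F_Wy)-f(F_Wx)\big|+|f(F_Wy)|\cdot\big||D_yF_W|^s-|D_xF_W|^s\big|\leq\\
&\sum_{W\in\cV(a,T)}|D_xF_W|^s\cdot\lipschitz(f)\cdot\big|F_W(y)-F_W(x)\big|+\|f\|_\infty\cdot\|DF_W\|_\infty^s\cdot C_1\cdot|y-x|\leq\\
&\bigg(\sum_{W\in\cV(a,T)}\|DF_W\|_\infty^s\bigg)\cdot\big(\theta\cdot\lipschitz(f)+C_1\cdot\|f\|_\infty\big)\cdot|y-x|,
\end{align*}
where the contraction factor $\theta$ in $|F_W(y)-F_W(x)|\leq\theta|y-x|$ comes from Corollary~\ref{CorollaryExponentiallyContractingDistance} with $k=1$ (using $[a]\subset\domain(W)$), and $C_1$ is the uniform constant of Part~(2) of Corollary~\ref{CorollaryLipschitzConstantDerivative}, again with $k=1$. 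The summability estimate above then gives $\lipschitz(\Delta_{(s,T)}f)\leq C\cdot T^{-(2s-1)}\|f\|_\ast$, and adding the two bounds yields $\|\Delta_{(s,T)}\|_\ast\leq C\cdot T^{-(2s-1)}$ with $C$ uniform, as claimed.

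There is no genuine difficulty here; the only point requiring care is purely bookkeeping: the tail bound in Corollary~\ref{CorollaryUniformlySummableSeries} is pointwise in $\xi$, so one must pass through the bounded-distortion estimate of Corollary~\ref{CorollaryLipschitzConstantDerivative} to convert $|D_\xi F_W|^s$ into $\|DF_W\|_\infty^s$ before using it in the Lipschitz computation, and one must confirm that every constant produced along the way is independent of $s$ and $T$ in the ranges $9/10\leq s\leq1$, $0<T<\infty$ — which is automatic because all the cited estimates are uniform and the threshold $s_0=9/10$ lies above $1/2$.
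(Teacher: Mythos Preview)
Your proof is correct and follows essentially the same approach as the paper: the paper's proof is simply a terser version of what you wrote, citing Corollary~\ref{CorollaryUniformlySummableSeries} for the $\|\cdot\|_\infty$ bound and then Corollary~\ref{CorollaryLipschitzConstantDerivative} together with Corollary~\ref{CorollaryUniformlySummableSeries} for the Lipschitz bound via the same telescoping. Your explicit remark about passing from the pointwise tail bound to $\sum_{W\in\cV(a,T)}\|DF_W\|_\infty^s$ via bounded distortion is a helpful clarification that the paper leaves implicit.
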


\begin{proof}
Corollary~\ref{CorollaryUniformlySummableSeries} bounds $\|\Delta_{(s,T)}\|_\infty$. 
Moreover for any $a\in\cA$, any $x,y\in[a]$, any $f\in\cB$, the difference 
$
|\Delta_{(s,T)}f(y)-\Delta_{(s,T)}f(x)|
$ 
is bounded by 
$$
\sum_{W\in\cV(a,T)}
|D_yF_W|^s|f(F_Wy)-f(F_Wx)|+|f(F_Wx)|\cdot\big||D_yF_W|^s-|D_xF_W|^s\big|.
$$
Hence Corollary~\ref{CorollaryLipschitzConstantDerivative} and 
Corollary~\ref{CorollaryUniformlySummableSeries} and give the bound on 
$\|\Delta_{(s,T)}\|_\ast$.
\end{proof}

\begin{lemma}
\label{LemmaTaylorEspansionOperator(Delta)}
There exists an uniform constant $C>0$ such that for any $T>0$ we have  
$$
\|\Delta_{(1,T)}-\Delta_{(s,T)}\|_\ast
\leq 
\frac{C\cdot|s-1|}{T^{3/5}}.
$$
\end{lemma}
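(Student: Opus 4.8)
The plan is to follow the scheme of the proof of Lemma~\ref{LemmaTaylorEspansionOperator(A)}, replacing the sum over $\cW(a,T)$ by the tail sum over $\cV(a,T)$ and exploiting that this tail is small of order $T^{-(2s-1)}$, as quantified in Corollary~\ref{CorollaryUniformlySummableSeries}. First I would record the identity furnished by the fundamental theorem of calculus: for $x\in[a]$,
$$
\Delta_{(1,T)}f(x)-\Delta_{(s,T)}f(x)
=
\int_s^1\Bigg(\sum_{W\in\cV(a,T)}\big(\ln|D_xF_W|\big)\,|D_xF_W|^t\,f(F_Wx)\Bigg)\,dt .
$$
Writing $B_{(t,T)}$ for the operator defined by the inner sum, $B_{(t,T)}f(x):=\sum_{W\in\cV(a,T)}\big(\ln|D_xF_W|\big)|D_xF_W|^{t}f(F_Wx)$ for $x\in[a]$, it then suffices to prove the uniform bound $\|B_{(t,T)}\|_\ast\le C\,T^{-3/5}$ valid for all $t\in[9/10,1]$ and all $T>0$, since integrating over the interval between $s$ and $1$, of length $|s-1|$, gives the statement. (Swapping sum and integral is legitimate because the tail series converges uniformly.)

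For $\|B_{(t,T)}\|_\infty$ the key device is the elementary inequality $|\ln y|\le C\,y^{-1/10}$ for $y\in(0,1]$ which, combined with Lemma~\ref{LemmaQuadraticAsymptoticDerivative} (so that $0<|D_xF_W|<1$ for all but finitely many $W$), gives $\big|\ln|D_xF_W|\big|\cdot|D_xF_W|^{t}\le C\,|D_xF_W|^{t-1/10}$. Since $t\ge s\ge 9/10$ one has $t-1/10\ge 4/5>1/2$, so Corollary~\ref{CorollaryUniformlySummableSeries} applies with exponent $t-1/10$ in the role of $s$ and yields $\sum_{W\in\cV(a,T)}|D_xF_W|^{t-1/10}\le C\,T^{-(2(t-1/10)-1)}$ with $2(t-1/10)-1\ge 3/5$. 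For $T\ge 1$ this is at most $C\,T^{-3/5}$; for $0<T<1$ the bound holds trivially because then $T^{-3/5}\ge 1$ while the same series is bounded by $\sum_{W\in\cW(a)}|D_xF_W|^{t-1/10}\le C$ (Corollary~\ref{CorollaryUniformlySummableSeries} with $T=\infty$). This controls the sup-norm.

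For the Lipschitz seminorm I would expand $\big|B_{(t,T)}f(y)-B_{(t,T)}f(x)\big|$ into the two usual telescoping pieces, exactly as in the proofs of Lemmas~\ref{LemmaNormDerivativeOperator(A)} and~\ref{LemmaTransferOperatorsAreUniformlyBounded}. The piece carrying $|f(F_Wy)-f(F_Wx)|$ is handled by $\lipschitz(f)$ and the contraction $|F_Wy-F_Wx|\le\theta|y-x|$ from Corollary~\ref{CorollaryExponentiallyContractingDistance}; the piece carrying the variation of the weight $\Phi_W(x):=\big(\ln|D_xF_W|\big)|D_xF_W|^{t}$ is estimated by $|\Phi_W(y)-\Phi_W(x)|\le C\big(1+\|\ln DF_W\|_\infty\big)\|DF_W\|_\infty^{t}|y-x|$, which comes from Corollary~\ref{CorollaryLipschitzConstantDerivative} and Lemma~\ref{LemmaLipschitzConstantLogDerivative} just as in Lemma~\ref{LemmaNormDerivativeOperator(A)}. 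Summing over $W\in\cV(a,T)$ reduces everything to tail sums of the type $\sum_{W\in\cV(a,T)}\big(1+\|\ln DF_W\|_\infty\big)\|DF_W\|_\infty^{t}$, which again — using $\|\ln DF_W\|_\infty\le C\|DF_W\|_\infty^{-1/10}$ from Lemma~\ref{LemmaQuadraticAsymptoticDerivative}, Part (1) of Corollary~\ref{CorollaryLipschitzConstantDerivative} to pass from $\|DF_W\|_\infty$ to $|D_xF_W|$, and Corollary~\ref{CorollaryUniformlySummableSeries} — are bounded by $C\,T^{-3/5}$, distinguishing $T\ge 1$ and $0<T<1$ as above.

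I expect no serious obstacle here: the only point requiring attention is the bookkeeping of exponents, namely that after paying the small power $y^{-1/10}$ to absorb the logarithm one must still land on an exponent strictly above $1/2$ in Corollary~\ref{CorollaryUniformlySummableSeries} and on a power of $T$ no larger than $-3/5$. This is precisely why the hypothesis range $s\ge 9/10$ is imposed: $9/10-1/10=4/5$ and $2\cdot 4/5-1=3/5$. Everything else is a routine repetition of the estimates already carried out for the operators $A_T$ and $\Delta_{(s,T)}$.
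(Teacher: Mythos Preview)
Your proposal is correct and follows essentially the same route as the paper. The only difference is organizational: you write $\Delta_{(1,T)}-\Delta_{(s,T)}=\int_s^1 B_{(t,T)}\,dt$ and bound the auxiliary operators $B_{(t,T)}$ uniformly in $t$, whereas the paper works termwise with $B_W(x):=|D_xF_W|-|D_xF_W|^s=(\ln|D_xF_W|)\int_s^1|D_xF_W|^t\,dt$ and sums over $\cV(a,T)$ afterwards; both use the same device of absorbing the logarithm by a small power (your $|\ln y|\le Cy^{-1/10}$, the paper's $\ln|W|\le|W|^{1/5}$, which are equivalent via Lemma~\ref{LemmaQuadraticAsymptoticDerivative}) together with the tail estimate of Corollary~\ref{CorollaryUniformlySummableSeries}.
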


\begin{proof}
Recall the fist part of Equation~\eqref{EquationOrderTwoError(First)}, and for any $W\in\cW$ and $x\in\domain(W)$ set 
$$
B_W(x):=
|D_xF_W|-|D_xF_W|^s=
\big(\ln|D_xF_W|\big)\cdot\int_s^1|D_xF_W|^tdt.
$$ 
We have 
$
|B_W(x)|\leq\|\ln DF_W\|_\infty\cdot\|DF_W\|^s\cdot|s-1|
$. 
Moreover $2s>1+3/5+1/5$ because $s>9/10$. Observing that $\ln|W|\leq |W|^{1/5}$ for $|W|$ big enough, 
Corollary~\ref{CorollaryUniformlySummableSeries} implies that there is some uniform constant $C>0$ such that
\begin{align*}
\|\Delta_{(1,T)}-\Delta_{(s,T)}\|_\infty
&
\leq
|s-1|\cdot\sum_{W\in\cV(a,T)}\|\ln DF_W\|_\infty\cdot\|DF_W\|^s
\leq
\frac{C\cdot|s-1|}{T^{3/5}}.
\end{align*}
Moreover for any $W\in\cW$ and any $x,y$ in $\domain(W)$ we have 
\begin{align*}
&
\big|B_W(y)-B_W(x)\big|
=
\bigg|
\big(\ln|D_yF_W|\big)\int_s^1|D_yF_W|^tdt
-
\big(\ln|D_xF_W|\big)\int_s^1|D_xF_W|^tdt
\bigg|
\leq
\\
&
\big|\ln|D_yF_W|\big|\int_s^1\big||D_yF_W|^t-|D_xF_W|^t\big|dt
+
\big|\ln|D_yF_W|-\ln|D_xF_W|\big|\int_s^1|D_xF_W|^tdt\leq
\\
&
C_1\cdot\bigg(
\|\ln DF_W\|_\infty\cdot\|DF_W\|^s+\|DF_W\|^s
\bigg)\cdot|s-1|\cdot|x-y|=
\\
&
C_1\cdot\|DF_W\|^s\cdot(1+\|\ln DF_W\|_\infty)\cdot|s-1|\cdot|x-y|,
\end{align*}
where $C_1$ is some uniform constant and 
$
\big||D_yF_W|^t-|D_xF_W|^t\big|
$ 
is bounded by Corollary~\ref{CorollaryLipschitzConstantDerivative}, while   
$
\big|\ln|D_yF_W|-\ln|D_xF_W|\big|
$ 
is bounded by Lemma~\ref{LemmaLipschitzConstantLogDerivative}. Therefore, for any $f\in\cB$, the bound on 
$
\lipschitz\big((\Delta_{(1,T)}-\Delta_{(s,T)})f\big)
$ 
follows as usual considering any $a\in\cA$ and any $x,y$ in $[a]$ and bounding the difference 
\begin{align*}
&
(\Delta_{(1,T)}-\Delta_{(s,T)})f(y)-
(\Delta_{(1,T)}-\Delta_{(s,T)})f(x)=
\\
&
\sum_{W\in\cV(a,T)}
B_W(y)\big(f(F_Wy)-f(F_Wx)\big)
+
f(F_Wx)\cdot\big(B_W(y)-B_W(x)\big).
\end{align*}
\end{proof}

\subsection{Spectral decomposition for parameters close to $(1,\infty)$}
\label{SectionSpectralPropertiesGeneralParameters}

For basic properties of spectral projectors see 
\S~\ref{AppendixDefinitionSpectralProjectors}. 
Recall from Theorem~\ref{TheoremTransferOperatorCircle} that $L_{(s,T)}$ is quasi-compact with isolated and simple maximal eigenvalue $\widehat{\lambda}(s,T)$, corresponding to the positive eigenfunction $g_{(s,T)}$. In particular 
$
\widehat{\lambda}(1,\infty)=1
$. 
Hence there exists $\epsilon_0>0$ such that the spectrum of $L_{(1,\infty)}$ can be decomposed as
$$
\spectrum(L_{(1,\infty)})=\Sigma\cup\{1\}
\quad
\textrm{ with }
\quad
\Sigma\subset B(0,1-2\epsilon_0).
$$
In terms of such $\epsilon_0$ let $\gamma$ and $\gamma_\ast$ be the loops in $\CC$ defined for $0\leq t<2\pi$ by 
\begin{equation}
\label{EquationLoopResolventSet}
\gamma(t):=1+\epsilon_0\cdot e^{it}
\quad
\textrm{ and }
\quad
\gamma_\ast(t):=(1-\epsilon_0)\cdot e^{it}.
\end{equation}
For parameters $(s,T)$ consider the expression 
\begin{equation}
\label{EquationSpectralProjectorMaximalEigenvalue}
P_{(s,T)}:=
\frac{-1}{2\pi i}\int_\gamma (L_{(s,T)}-\xi\cdot\id)^{-1}d\xi.
\end{equation}
According to \S~\ref{AppendixDefinitionSpectralProjectors}, the operator $P_{(s,T)}$ is defined if and only if $\gamma$ is contained in the resolvent set $\cR(L_{(s,T)})$. In this case $P_{(s,T)}$ is a projection commuting with $L_{(s,T)}$, that is $P_{(s,T)}^2=P_{(s,T)}$ and 
$
L_{(s,T)}P_{(s,T)}=P_{(s,T)}L_{(s,T)}
$, 
and we have an $L_{(s,T)}$-invariant spectral decomposition: 
$$
\cB=N_{(s,T)}\oplus V_{(s,T)}
\quad
\textrm{ where }
\quad
V_{(s,T)}:=\ker(P_{(s,T)}-\id)
\quad
\textrm{ and }
\quad
N_{(s,T)}:=\ker(P_{(s,T)}).
$$
According to Theorem~\ref{TheoremTransferOperatorCircle}, for those $(s,T)$ such that $P_{(s,T)}$ is defined, we have
\begin{equation}
\label{EquationExplicitFormProjection}
P_{(s,T)}(f)=\bigg(\int f(\xi)d\mu_{(s,T)}(\xi)\bigg)\cdot g_{(s,T)}
\quad\text{ for any }\quad f\in\cB.
\end{equation}
The discussion above implies that $P_{(1,\infty)}$ is defined, and the corresponding spectral decomposition 
$
\cB=N_{(1,\infty)}\oplus V_{(1,\infty)}
$ 
satisfies 
$$
V_{(1,\infty)}=\ker(L_{(1,\infty)}-\id)
\quad
\textrm{ and }
\quad
\rho(L_{(1,\infty)}|_{N_{(1,\infty)}})\leq 1-2\epsilon_0.
$$

\begin{lemma}
\label{LemmaSpectralPropertyAllParameters}
There exist $T_0>0$ and $0<s_0<1$ for any 
$
(s,T)\in[s_0,1]\times[T_0,+\infty]
$ 
the following holds. 
\begin{enumerate}
\item
We have $|\widehat{\lambda}(s,T)-1|<\epsilon_0$.
\item
The spectral radius of restriction $L_{(s,T)}:N_{(s,T)}\to N_{(s,T)}$ satisfies 
$$
\rho\big(L_{(s,T)}|_{N_{(s,T)}}\big)\leq1-\epsilon_0.
$$
\item
The projection given by Equation~\eqref{EquationSpectralProjectorMaximalEigenvalue} satisfies
$$
\|P_{(s,T)}-P_{(1,\infty)}\|_\ast
=
O\big(\|L_{(s,T)}-L_{(1,\infty)}\|_\ast\big).
$$
\end{enumerate}
\end{lemma}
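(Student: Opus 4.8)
The plan is to derive all three assertions from two facts: the operator-norm continuity of $(s,T)\mapsto L_{(s,T)}$ at the point $(1,\infty)$, and the general stability of the spectrum and of Riesz projections under small perturbations recalled in \S~\ref{AppendixSpectralProjectors}, applied via the spectral gap of $L_{(1,\infty)}$ supplied by Theorem~\ref{TheoremTransferOperatorCircle}.

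First I would record the continuity bound. Writing $L_{(s,T)}-L_{(s,\infty)}=-\Delta_{(s,T)}$ in the notation of Equation~\eqref{EquationDefinitionDerivativeOperator(Delta)}, Lemma~\ref{LemmaNormDerivativeOperator(Delta)} gives $\|L_{(s,T)}-L_{(s,\infty)}\|_\ast\le C\,T^{-(2s-1)}$, while Lemma~\ref{LemmaTaylorEspansionOperator(A)} applied with $T=\infty$ (so that $s_T=1$, consistently with $\widehat{\lambda}(1,\infty)=1$) gives $\|L_{(s,\infty)}-L_{(1,\infty)}\|_\ast\le C\,|s-1|$. By the triangle inequality there are a uniform constant $C>0$ and a quantity $\eta(s,T):=C\,T^{-(2s-1)}+C\,|s-1|$, which tends to $0$ as $(s,T)\to(1,\infty)$ (uniformly on a neighbourhood of $s=1$, since there $2s-1$ stays bounded below), with $\|L_{(s,T)}-L_{(1,\infty)}\|_\ast\le\eta(s,T)$. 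Only this bound, not a precise rate, is needed for Point (3).

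Next I would bring in the gap. As set up before the statement, $\spectrum(L_{(1,\infty)})=\Sigma\cup\{1\}$ with $\Sigma\subset B(0,1-2\epsilon_0)$, the eigenvalue $1$ is simple, the loops $\gamma,\gamma_\ast$ of Equation~\eqref{EquationLoopResolventSet} lie in $\cR(L_{(1,\infty)})$, and $P_{(1,\infty)}$ of Equation~\eqref{EquationSpectralProjectorMaximalEigenvalue} has rank one. By the stability results of \S~\ref{AppendixSpectralProjectors} — upper semicontinuity of the spectrum, and norm-continuity of the resolvent along a compact contour inside the resolvent set via the second resolvent identity — there is $\delta>0$ such that $\|L-L_{(1,\infty)}\|_\ast<\delta$ forces $\gamma,\gamma_\ast\subset\cR(L)$, $\spectrum(L)\subset B(1,\epsilon_0)\cup B(0,1-\epsilon_0)$, and the Riesz projection $P:=\frac{-1}{2\pi i}\int_\gamma(L-\xi\cdot\id)^{-1}d\xi$ is a projection of the same rank as $P_{(1,\infty)}$, namely one, with $\|P-P_{(1,\infty)}\|_\ast\le C'\|L-L_{(1,\infty)}\|_\ast$. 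I would then fix $T_0$ and $s_0<1$ so that $\eta(s,T)<\delta$ throughout $[s_0,1]\times[T_0,+\infty]$ and apply this to $L=L_{(s,T)}$: the resulting projection is exactly $P_{(s,T)}$, which gives Point (3) at once; moreover $L_{(s,T)}$ then has a single, simple spectral point $\lambda$ enclosed by $\gamma$, while $\spectrum(L_{(s,T)}|_{N_{(s,T)}})=\spectrum(L_{(s,T)})\setminus\{\lambda\}\subset B(0,1-\epsilon_0)$, so $\rho(L_{(s,T)}|_{N_{(s,T)}})\le1-\epsilon_0$, which is Point (2).

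Finally, for Point (1) I must identify $\lambda$ with $\widehat{\lambda}(s,T)$. By Theorem~\ref{TheoremTransferOperatorCircle} the number $\widehat{\lambda}(s,T)$ is a positive eigenvalue exceeding $\rho_{ess}(L_{(s,T)})$ and strictly dominating every other eigenvalue in modulus, hence it equals the spectral radius of $L_{(s,T)}$; were it to lie in $B(0,1-\epsilon_0)$, the whole spectrum would, contradicting the presence of the spectral point $\lambda\in B(1,\epsilon_0)$. Therefore $\widehat{\lambda}(s,T)=\lambda$ and $|\widehat{\lambda}(s,T)-1|<\epsilon_0$. (Equivalently, Equation~\eqref{EquationExplicitFormProjection} identifies the range of $P_{(s,T)}$ with $\CC\cdot g_{(s,T)}$, the $\widehat{\lambda}(s,T)$-eigenspace, so $\lambda=\widehat{\lambda}(s,T)$.) I expect this last identification — matching the abstract perturbed eigenvalue with the Perron–Frobenius eigenvalue of Theorem~\ref{TheoremTransferOperatorCircle} — together with arranging $s_0$ and $T_0$ so that the continuity modulus $\eta$ stays uniformly below the perturbation threshold $\delta$, to be the only genuinely delicate points; the rest is a routine application of the quoted norm estimates and of standard resolvent calculus.
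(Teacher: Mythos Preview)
Your proposal is correct and follows essentially the same route as the paper: both use $L_{(s,T)}=L_{(s,\infty)}-\Delta_{(s,T)}$ together with Lemma~\ref{LemmaNormDerivativeOperator(Delta)} and Lemma~\ref{LemmaTaylorEspansionOperator(A)} (with $s_\infty=1$) to make $\|L_{(s,T)}-L_{(1,\infty)}\|_\ast$ small, then invoke the stability results of \S~\ref{AppendixSpectralProjectors} on the loops $\gamma,\gamma_\ast$ to obtain all three points. Your write-up is in fact more explicit than the paper's on two points the paper leaves terse --- the identification of the perturbed eigenvalue inside $\gamma$ with $\widehat{\lambda}(s,T)$, and the exclusion of spectrum from the annulus between $\gamma_\ast$ and $\gamma$ via upper semicontinuity --- but the underlying argument is the same.
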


\begin{proof}
Since $L_{(s,T)}=L_{(s,\infty)}-\Delta_{(s,T)}$ for any $(s,T)$, then 
Lemma~\ref{LemmaNormDerivativeOperator(Delta)} and 
Lemma~\ref{LemmaTaylorEspansionOperator(A)} imply that 
$
\|L_{(s,T)}-L_{(1,\infty)}\|_\ast
$ 
can be made arbitrarily small for $(s,T)$ close enough to $(1,\infty)$. 
The discussion in \S~\ref{AppendixStabilitySpectralDecomposition} implies that for any such $(s,T)$ both the loops $\gamma$ and $\gamma_\ast$ in Equation~\eqref{EquationLoopResolventSet} are contained in the resolvent set $\cR(L_{(s,T)})$. Thus $P_{(s,T)}$ is defined and Point (3) follows. 
By \S~\ref{AppendixDefinitionSpectralProjectors} we know that 
$
\spectrum\big(L_{(s,T)}|_{N_{(s,T)}}\big)
$ 
is contained in the interior of $\gamma_\ast$. For the same reason $\widehat{\lambda}(s,T)$ is the part of 
$
\spectrum(L_{(s,T)})
$ 
included in the interior of $\gamma$. Finally the spectrum of $L_{(s,T)}$ does not have other components, because $\cB=N_{(s,T)}\oplus V_{(s,T)}$. The Corollary is proved. 
\end{proof}

\subsection{Expansion of maximal eigenvalue in parameters $s$ and $T$}
\label{ExpansionMaximalEigenvalueInParameters}

We follow the approach in \S~7 in \cite{Hensley}. For $(s,T)$ close to $(1,\infty)$ call $\lambda(s,T)$ the eigenvalue in Lemma~\ref{LemmaSpectralPropertyAllParameters}, also for the value $T=\infty$. Consider the spectral decomposition 
$$
\cB=N_{(1,\infty)}\oplus V_{(1,\infty)},
$$ 
where 
$
N_{(1,\infty)}:=\ker(P_{(1,\infty)})
$ 
and 
$
V_{(1,\infty)}:=\ker(\id-P_{(1,\infty)})
$. 
Let $\delta\in\RR$ and $u_{(1,\infty)}\in N_{(1,\infty)}$ such that we have
$$
A_\infty(g_{(1,\infty)})=\delta\cdot g_{(1,\infty)}+u_{(1,\infty)}.
$$
We have $\delta<0$ strictly, indeed $|D_xF_W|\leq\theta<1$ for any $W\in\cW$ and any $x\in\domain(W)$, and the expression of $A_\infty$ in Equation~\eqref{EquationDefinitionDerivativeOperator(A)} gives 
\begin{align*}
\delta
&
=
\int A_\infty g_{(1,\infty)}d\mu_{(1,\infty)}
=
\sum_{a\in\cA,\,W\in\cW(a)}
\int_{[a]}
\big(\ln|D_xF_W|\big)\cdot|D_xF_W|\cdot g_{(1,\infty)}(F_Wx)d\mu_{(1,\infty)}(x).
\end{align*}

\begin{proposition}
\label{PropositionExpansionEigenvalueParameter(s)}
We have
$$
\lambda(s,\infty)=1+\delta(s-1)+O(|s-1|^2).
$$
\end{proposition}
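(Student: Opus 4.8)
The plan is to run first-order eigenvalue perturbation theory on the family $L_{(s,\infty)}$, regarded as a perturbation of $L_{(1,\infty)}$ in the parameter $h:=s-1$. Write $g:=g_{(1,\infty)}$, $P:=P_{(1,\infty)}$, and let $\ell\in\cB^\ast$ be the bounded functional $\ell(f):=\int f\,d\mu_{(1,\infty)}$ (bounded since $|\ell(f)|\leq\|f\|_\infty\leq\|f\|_\ast$). By Equation~\eqref{EquationExplicitFormProjection} we have $P(f)=\ell(f)\cdot g$, and $\ell(g)=1$ by the normalization built into Theorem~\ref{TheoremTransferOperatorCircle}; since $P$ commutes with $L_{(1,\infty)}$ and $L_{(1,\infty)}g=g$, applying $P L_{(1,\infty)}=L_{(1,\infty)}P$ to an arbitrary $f$ gives $\ell\circ L_{(1,\infty)}=\ell$, i.e. $\ell$ is the left eigenfunctional at the eigenvalue $1$.

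First I would record the Taylor expansion $L_{(s,\infty)}=L_{(1,\infty)}+(s-1)A_\infty+R_s$ with $\|R_s\|_\ast=O(|s-1|^2)$ and $\|L_{(s,\infty)}-L_{(1,\infty)}\|_\ast=O(|s-1|)$, which is precisely Lemma~\ref{LemmaTaylorEspansionOperator(A)} specialized to $T=\infty$ (so that $s_T=s_\infty=1$). Next, by Lemma~\ref{LemmaSpectralPropertyAllParameters}(3) the rank-one spectral projection $P_{(s,\infty)}$ onto the $\lambda(s,\infty)$-eigenspace satisfies $\|P_{(s,\infty)}-P\|_\ast=O(|s-1|)$; setting $g_s:=P_{(s,\infty)}(g)$ then produces an eigenfunction, $L_{(s,\infty)}g_s=\lambda(s,\infty)g_s$, with $\|g_s-g\|_\ast=O(|s-1|)$. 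In particular, for $s$ near $1$, $\ell(g_s)=1+O(|s-1|)$ is bounded away from $0$ and $\|g_s\|_\ast$ is bounded.

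The key computation is to apply $\ell$ to $L_{(s,\infty)}g_s=\lambda(s,\infty)g_s$, substitute the Taylor expansion, and use $\ell\circ L_{(1,\infty)}=\ell$ to cancel the leading term:
\[
(\lambda(s,\infty)-1)\,\ell(g_s)=(s-1)\,\ell(A_\infty g_s)+\ell(R_s g_s).
\]
Here $|\ell(R_s g_s)|\leq\|R_s\|_\ast\|g_s\|_\ast=O(|s-1|^2)$; and $\ell(A_\infty g_s)=\ell(A_\infty g)+O(|s-1|)=\delta+O(|s-1|)$, using the uniform bound $\|A_\infty\|_\ast\leq C$ of Lemma~\ref{LemmaNormDerivativeOperator(A)}, the estimate $\|g_s-g\|_\ast=O(|s-1|)$, and the identity $\ell(A_\infty g)=\delta$ — which follows from the decomposition $A_\infty g=\delta g+u_{(1,\infty)}$ together with $u_{(1,\infty)}\in N_{(1,\infty)}=\ker P=\ker\ell$. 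Dividing by $\ell(g_s)=1+O(|s-1|)$ yields $\lambda(s,\infty)=1+\delta(s-1)+O(|s-1|^2)$.

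The argument is standard first-order perturbation theory; the only points requiring care — hence the main obstacle, though a mild one — are (i) obtaining a genuine $O(|s-1|)$ Lipschitz bound on a choice of eigenfunction $g_s$, which is supplied by the stability of the rank-one spectral projection in Lemma~\ref{LemmaSpectralPropertyAllParameters}(3) combined with the operator-norm bound on $L_{(s,\infty)}-L_{(1,\infty)}$, and (ii) tracking the orders of the error terms, for which the uniform bounds $\|A_\infty\|_\ast\leq C$ and $\|R_s\|_\ast\leq C|s-1|^2$ from Lemmas~\ref{LemmaNormDerivativeOperator(A)} and~\ref{LemmaTaylorEspansionOperator(A)} are exactly what is needed.
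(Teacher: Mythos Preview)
Your argument is correct. It differs from the paper's in the mechanism used to isolate the eigenvalue: the paper builds a first-order \emph{approximate eigenvector} $g_{(1,\infty)}+(s-1)v_{(1,\infty)}$, where $v_{(1,\infty)}$ is obtained by inverting $(\id-L_{(1,\infty)})$ on $N_{(1,\infty)}$ applied to $u_{(1,\infty)}$, checks that $L_{(s,\infty)}$ acts on it as multiplication by $1+\delta(s-1)$ up to $O(|s-1|^2)$, and then projects with $P_{(s,\infty)}$; you instead take the exact perturbed eigenvector $g_s=P_{(s,\infty)}(g)$ and pair with the \emph{left eigenfunctional} $\ell$, exploiting $\ell\circ L_{(1,\infty)}=\ell$ to cancel the leading term. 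Both are standard first-order perturbation arguments. Your route is slightly leaner here, since it avoids the resolvent on the complementary subspace; the paper's route has the advantage that the same template (approximate eigenvector plus correction via the resolvent on $N$) is reused verbatim in the companion Proposition~\ref{PropositionExpansionEigenvalueParameter(T)} for the $T$-variable, where a uniform bound on that resolvent becomes the crux.
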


\begin{proof}
According to Lemma~\ref{LemmaSpectralPropertyAllParameters}, the restriction 
$
L_{(1,\infty)}|_{N_{(1,\infty)}}:N_{(1,\infty)}\to N_{(1,\infty)}
$ 
has spectral radius 
$
\rho(L_{(1,\infty)}|_{N_{(1,\infty)}})\leq1-\epsilon_0
$, 
thus its resolvent $\cR(L_{(1,\infty)}|_{N_{(1,\infty)}})$ contains $z=1$. Consider 
$
u_{(1,\infty)}\in N_{(1,\infty)}
$ 
in the decomposition of $A_\infty(g_{(1,\infty)})$ and set 
$$
v_{(1,\infty)}:=
\big((\id-L_{(1,\infty)})|_{N_{(1,\infty)}}\big)^{-1}(u_{(1,\infty)})
=
-R(L_{(1,\infty)}|_{N_{(1,\infty)}},1)(u_{(1,\infty)}).
$$
Since $L_{(1,\infty)}(g_{(1,\infty)})=g_{(1,\infty)}$, the definition of $v_{(1,\infty)}$ and 
Lemma~\ref{LemmaTaylorEspansionOperator(A)} give  
\begin{align*}
&
L_{(s,\infty)}\big(g_{(1,\infty)}+(s-1)v_{(1,\infty)}\big)
=
\\
&
\big(L_{(1,\infty)}+(s-1)A_\infty+O(|s-1|^2)\big)
\big(g_{(1,\infty)}+(s-1)v_{(1,\infty)}\big)
=
\\
&
\big(1+\delta(s-1)\big)g_{(1,\infty)}
+(s-1)\big(u_{(1,\infty)}+
L_{(1,\infty)}(v_{(1,\infty)})\big)+
O(|s-1|^2)=
\\
&
\big(1+\delta(s-1)\big)\big(g_{(1,\infty)}+(s-1)v_{(1,\infty)}\big)+O(|s-1|^2).
\end{align*}
On the other hand we have
$
L_{(s,\infty)}\circ P_{(s,\infty)}=P_{(s,\infty)}\circ L_{(s,\infty)}
$, 
therefore 
\begin{align*}
&
\lambda(s,\infty)P_{(s,\infty)}\big(g_{(1,\infty)}+(s-1)v_{(1,\infty)}\big)=
\\
&
L_{(s,\infty)}P_{(s,\infty)}\big(g_{(1,\infty)}+(s-1)v_{(1,\infty)}\big)=
\\
&
P_{(s,\infty)}L_{(s,\infty)}\big(g_{(1,\infty)}+(s-1)v_{(1,\infty)}\big)=
\\
&
P_{(s,\infty)}
\bigg(
\big(1+\delta(s-1)\big)\big(g_{(1,\infty)}+(s-1)v_{(1,\infty)}\big)+O(|s-1|^2)
\bigg)=
\\
&
\big(1+\delta(s-1)\big)P_{(s,\infty)}\big(g_{(1,\infty)}+(s-1)v_{(1,\infty)}\big)+O(|s-1|^2),
\end{align*}
where the third equality follows from the expression for 
$
L_{(s,\infty)}\big(g_{(1,\infty)}+(s-1)v_{(1,\infty)}\big)
$ 
obtained above, the first two equalities are standard algebra, and the forth inequality follows because $P_{(s,\infty)}$ is close to $P_{(1,\infty)}$, and thus it has norm close to 
$\|P_{(1,\infty)}\|_\ast$. Therefore
$$
\bigg(
\lambda(s,\infty)-\big(1+\delta(s-1)\big)
\bigg)\cdot P_{(s,\infty)}\big(g_{(1,\infty)}+(s-1)v_{(1,\infty)}\big)
=
O(|s-1|^2).
$$
The statement follows because 
$
\|g_{(1,\infty)}+(s-1)v_{(1,\infty)}\|_\ast
\geq
\|g_{(1,\infty)}\|_\ast-|s-1|\cdot\|v_{(1,\infty)}\|_\ast
$ 
and because $P_{(s,\infty)}$ is close to $P_{(1,\infty)}$, which acts on $g_{(1,\infty)}$ as the identity.
\end{proof}

For $0<T\leq\infty$ consider the spectral decomposition  
$$
\cB=N_{(s_T,\infty)}\oplus V_{(s_T,\infty)},
$$ 
where 
$
N_{(s_T,\infty)}:=\ker(P_{(s_T,\infty)})
$ 
and 
$
V_{(s_T,\infty)}:=\ker(\id-P_{(s_T,\infty)})
$. 
Consider $\beta_T\in\RR$ and $u_{(s_T,\infty)}\in N_{(s_T,\infty)}$ such that, in the direct sum above, we have 
$$
\Delta_{(s_T,T)}g_{(s_T,\infty)}
=
\beta_T\cdot g_{(s_T,\infty)}+u_{(s_T,\infty)}.
$$

\begin{proposition}
\label{PropositionExpansionEigenvalueParameter(T)}
We have
$$
\lambda(s_T,T)=\lambda(s_T,\infty)-\beta_T
+
O(T^{-(2s_T-1)})\cdot\big(O(|s_T-1|)+O(T^{-(2s_T-1)})\big).
$$
\end{proposition}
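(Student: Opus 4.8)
The plan is to mimic the perturbative argument of Proposition~\ref{PropositionExpansionEigenvalueParameter(s)}, this time treating $\Delta_{(s_T,T)}$ as a small perturbation of the limit operator $L_{(s_T,\infty)}$. Since $\cW(a)=\cW(a,T)\sqcup\cV(a,T)$ (see \S~\ref{SectionPreliminaryUniformEstimates}) we have $L_{(s_T,T)}=L_{(s_T,\infty)}-\Delta_{(s_T,T)}$, and Lemma~\ref{LemmaNormDerivativeOperator(Delta)} gives $\|\Delta_{(s_T,T)}\|_\ast=O(T^{-(2s_T-1)})$. By Lemma~\ref{LemmaSpectralPropertyAllParameters} applied at $T=\infty$, the operator $L_{(s_T,\infty)}$ is quasi-compact with simple isolated maximal eigenvalue $\lambda(s_T,\infty)$ and spectral decomposition $\cB=N_{(s_T,\infty)}\oplus V_{(s_T,\infty)}$ with $\rho\big(L_{(s_T,\infty)}|_{N_{(s_T,\infty)}}\big)\leq1-\epsilon_0$; since moreover $|\lambda(s_T,\infty)-1|<\epsilon_0$, the point $\lambda(s_T,\infty)$ lies in the resolvent set of the restriction $L_{(s_T,\infty)}|_{N_{(s_T,\infty)}}$, with resolvent norm bounded uniformly for $T$ large by the stability statements recalled in \S~\ref{AppendixStabilitySpectralDecomposition}.

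First I would build an approximate eigenvector. Let $u_{(s_T,\infty)}\in N_{(s_T,\infty)}$ be as in the statement, i.e. the component of $\Delta_{(s_T,T)}g_{(s_T,\infty)}$ transversal to $g_{(s_T,\infty)}$, and set
$$
v_{(s_T,\infty)}:=\big((\lambda(s_T,\infty)\cdot\id-L_{(s_T,\infty)})|_{N_{(s_T,\infty)}}\big)^{-1}\big(u_{(s_T,\infty)}\big).
$$
Using $L_{(s_T,\infty)}g_{(s_T,\infty)}=\lambda(s_T,\infty)g_{(s_T,\infty)}$, the identity $L_{(s_T,\infty)}v_{(s_T,\infty)}=\lambda(s_T,\infty)v_{(s_T,\infty)}-u_{(s_T,\infty)}$, and $\Delta_{(s_T,T)}g_{(s_T,\infty)}=\beta_T g_{(s_T,\infty)}+u_{(s_T,\infty)}$, a direct computation gives
$$
L_{(s_T,T)}\big(g_{(s_T,\infty)}-v_{(s_T,\infty)}\big)=\big(\lambda(s_T,\infty)-\beta_T\big)\big(g_{(s_T,\infty)}-v_{(s_T,\infty)}\big)+\big(\Delta_{(s_T,T)}-\beta_T\cdot\id\big)v_{(s_T,\infty)}.
$$
The remainder term is small: from $\beta_T=\int\Delta_{(s_T,T)}g_{(s_T,\infty)}\,d\mu_{(s_T,\infty)}$ (this is the explicit form of the projection, Equation~\eqref{EquationExplicitFormProjection}) together with Corollary~\ref{CorollaryUniformlySummableSeries} one gets $|\beta_T|=O(T^{-(2s_T-1)})$, hence $\|u_{(s_T,\infty)}\|_\ast=O(T^{-(2s_T-1)})$, and by the uniform resolvent bound $\|v_{(s_T,\infty)}\|_\ast=O(T^{-(2s_T-1)})$; therefore $\|(\Delta_{(s_T,T)}-\beta_T\cdot\id)v_{(s_T,\infty)}\|_\ast=O(T^{-(2s_T-1)})\cdot\big(O(|s_T-1|)+O(T^{-(2s_T-1)})\big)$.

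Then I would pass from this approximate eigenrelation to the conclusion exactly as in Proposition~\ref{PropositionExpansionEigenvalueParameter(s)}: apply the rank-one spectral projector $P_{(s_T,T)}$ of Equation~\eqref{EquationSpectralProjectorMaximalEigenvalue}, use $L_{(s_T,T)}P_{(s_T,T)}=P_{(s_T,T)}L_{(s_T,T)}$ together with the fact that $P_{(s_T,T)}$ maps onto the $\lambda(s_T,T)$-eigenline, take norms and divide by $\|P_{(s_T,T)}\big(g_{(s_T,\infty)}-v_{(s_T,\infty)}\big)\|_\ast$. Here $\|P_{(s_T,T)}\|_\ast=O(1)$ by Lemma~\ref{LemmaSpectralPropertyAllParameters}, while $\|P_{(s_T,T)}\big(g_{(s_T,\infty)}-v_{(s_T,\infty)}\big)\|_\ast$ stays bounded away from $0$ because $g_{(s_T,\infty)}$ is close to $g_{(1,\infty)}$, $v_{(s_T,\infty)}$ is small, and $P_{(s_T,T)}$ is close to $P_{(1,\infty)}$, which fixes $g_{(1,\infty)}$. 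This yields $\lambda(s_T,T)-\big(\lambda(s_T,\infty)-\beta_T\big)=O(T^{-(2s_T-1)})\cdot\big(O(|s_T-1|)+O(T^{-(2s_T-1)})\big)$, which is the claim.

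The hard part is establishing the uniform estimates underlying the argument for $(s,T)$ near $(1,\infty)$: the uniform boundedness of the resolvent of $L_{(s_T,\infty)}|_{N_{(s_T,\infty)}}$ at $\lambda(s_T,\infty)$ and of $P_{(s_T,T)}$ — both following from Lemma~\ref{LemmaSpectralPropertyAllParameters} and the spectral stability recalled in \S~\ref{SectionSpectralPropertiesGeneralParameters} — and, more delicately, the uniform lower bound $\|P_{(s_T,T)}\big(g_{(s_T,\infty)}-v_{(s_T,\infty)}\big)\|_\ast\geq c>0$, which requires a lower bound $g_{(s_T,\infty)}\geq m>0$ uniform in $T$. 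This uniform positivity is obtained as in the proof of Proposition~\ref{PropositionEigenfunctionTransferOperatorCircle}, from $\Lambda$-invariance (Lemma~\ref{LemmaInvarianceLambdaUnderTransferOperator}) and aperiodicity of the transition matrix (Proposition~\ref{PropositionAperiodicityTransitionMatrix}). Everything else reduces to bookkeeping of $O(T^{-(2s_T-1)})$-terms against the spectral stability statements.
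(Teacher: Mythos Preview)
Your proposal is correct and follows the same perturbative scheme as the paper: build an approximate eigenvector $g_{(s_T,\infty)}-v_{(s_T,\infty)}$, compute $L_{(s_T,T)}$ on it, apply $P_{(s_T,T)}$, and divide through. The one notable difference is that you invert $(\lambda(s_T,\infty)\cdot\id-L_{(s_T,\infty)})|_{N_{(s_T,\infty)}}$ whereas the paper inverts $(\id-L_{(s_T,\infty)})|_{N_{(s_T,\infty)}}$ at the fixed point $z=1$; your choice is in fact slightly cleaner, since the paper's choice produces an extra term $(\lambda(s_T,\infty)-1)\cdot v_{(s_T,\infty)}$ in the remainder $\textrm{Err}(T)$, and this is the sole source of the $O(|s_T-1|)$ factor in the error (your remainder $(\Delta_{(s_T,T)}-\beta_T\cdot\id)v_{(s_T,\infty)}$ is already $O(T^{-2(2s_T-1)})$). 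The paper also spells out explicitly the uniform bound on the resolvent of the restriction to $N_{(s_T,\infty)}$, via the conjugating isomorphism $G$ of \S~\ref{AppendixStabilitySpectralDecomposition} and Equation~\eqref{EquationDistanceBaseChanges}, reducing to the fixed operator at $(1,\infty)$; you correctly flag this as the delicate point and cite the right tools, but the argument does require a line or two beyond what the appendix states verbatim, since the subspace $N_{(s_T,\infty)}$ itself varies with $T$.
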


\begin{proof}
According to Lemma~\ref{LemmaSpectralPropertyAllParameters}, the restriction of 
$L_{(s_T,\infty)}$ to $N_{(s_T,\infty)}$ has spectral radius 
$
\rho(L_{(s_T,\infty)}|_{N_{(s_T,\infty)}})\leq1-\epsilon_0
$, 
thus its resolvent set 
$
\cR(L_{(s_T,\infty)}|_{N_{(s_T,\infty)}})
$ 
contains $z=1$. Set $Q_{(s,T)}:=\id-P_{(s,T)}$. For any $u\in N_{(s_T,\infty)}$ we have 
\begin{align*}
&
(L_{(s_T,\infty)}-\id)u=
(L_{(s_T,\infty)}-\id)Q_{(s_T,\infty)}u=
\\
&
\big((L_{(1,\infty)}-\id)+(L_{(s_T,\infty)}-L_{(1,\infty)})\big)
\big(Q_{(1,\infty)}+(Q_{(s_T,\infty)}-Q_{(1,\infty)})\big)u=
\\
&
(L_{(1,\infty)}-\id)Q_{(1,\infty)}u+O(\|L_{(s_T,\infty)}-L_{(1,\infty)}\|_\ast)u,
\end{align*}
where the last inequality follows from 
Equation~\eqref{EquationDistanceSpectralProjectors}. 
Consider the invertible bounded operator $G:\cB\to\cB$ with 
$
GP_{(1,\infty)}G^{-1}=P_{(s_T,\infty)}
$, 
introduced in \S~\ref{AppendixStabilitySpectralDecomposition}. We have $u=G(v)$ for some 
$v\in N_{(1,\infty)}$, and 
Equation~\eqref{EquationDistanceBaseChanges} gives 
\begin{align*}
&
u=G(v)=v+(G-\id)v=
\\
&
v+O(\|P_{(1,\infty)}\|_\ast\cdot\|P_{(s_T,\infty)}-P_{(1,\infty)}\|_\ast)v
=
v+O(\|L_{(s_T,\infty)}-L_{(1,\infty)}\|_\ast)v.
\end{align*}
We get 
$$
\|(L_{(s_T,\infty)}-\id)u\|_\ast
\geq
\|(L_{(1,\infty)}-\id)v\|_\ast-\epsilon\|v\|_\ast
\geq 
c_0\|v\|_\ast\geq c_0'\|u\|_\ast
$$
where $\epsilon>0$ above can me made arbitrarily small for 
$
\|L_{(s_T,\infty)}-L_{(1,\infty)}\|_\ast
$ 
small enough, and $c_0>0$ and $c_0'>0$ are uniform constant not depending on $T$. In conclusion we have the uniform bound (i.e. not depending on $T$):
$$
\|R(L_{(s_T,\infty)}|_{N_{(s_T,\infty)}},1)\|_\ast\leq (c_0')^{-1}.
$$
Consider $u_{(s_T,\infty)}$ in the decomposition of 
$
\Delta_{(s_T,T)}g_{(s_T,\infty)}
$ 
and set 
$$
v_{(s_T,\infty)}:=
\big((\id-L_{(s_T,\infty)})|_{N_{(s_T,\infty)}}\big)^{-1}(u_{(s_T,\infty)})
=
-R(L_{(s_T,\infty)}|_{N_{(s_T,\infty)}},1)(u_{(s_T,\infty)}).
$$
It is practical to introduce the quantity
$$
\textrm{Err}(T):=
\big(\lambda(s_T,\infty)-1\big)\cdot v_{(s_T,\infty)}
-\beta_T\cdot v_{(s_T,\infty)}+\Delta_{(s_T,T)}(v_{(s_T,\infty)}).
$$
Recall that 
$
\Delta_{(s_T,T)}=O(T^{-(2s_T-1)})
$, 
according to Lemma~\ref{LemmaNormDerivativeOperator(Delta)}. Since $P_{(s_T,\infty)}$ is close to 
$P_{(1,\infty)}$, then we have 
$$
\beta_Tg_{(s_T,\infty)}=
P_{(s_T,\infty)}\Delta_{(s_T,T)}g_{(s_T,T)}=O(T^{-(2s_T-1)}).
$$
Hence    
$
\beta_T=O(T^{-(2s_T-1)})
$, 
because $g_{(s_T,\infty)}$ is close to $g_{(1,\infty)}$ for $T$ big enough. The uniform bound on 
$
\|R(L_{(s_T,\infty)}|_{N_{(s_T,\infty)}},1)\|_\ast
$ 
implies  
$
v_{(s_T,\infty)}=O(T^{-(2s_T-1)})
$. 
Proposition~\ref{PropositionExpansionEigenvalueParameter(s)} gives 
$$
\textrm{Err}(T)
=
O(T^{-(2s_T-1)})\cdot\big(O(|s_T-1|)+O(T^{-(2s_T-1)})\big).
$$
Since $L_{(s_T,T)}(g_{(s_T,T)})=g_{(s_T,T)}$, then the definition of $v_{(s_T,\infty)}$ gives 
\begin{align*}
&
L_{(s_T,T)}(g_{(s_T,+\infty)}-v_{(s_T,\infty)})
=
\\
&
(L_{(s_T,\infty)}-\Delta_{(s_T,T)})(g_{(s_T,\infty)}-v_{(s_T,\infty)})=
\\
&
\big(\lambda(s_T,\infty)-\beta_T\big)\cdot g_{(s_T,\infty)}
-u_{(s_T,\infty)}
-L_{(s_T,\infty)}(v_{(s_T,\infty)})
+\Delta_{(s_T,T)}(v_{(s_T,\infty)})=
\\
&
\big(\lambda(s_T,\infty)-\beta_T\big)\cdot \big(g_{(s_T,\infty)}- v_{(s_T,\infty)}\big)
+
\textrm{Err}(T).
\end{align*}

On the other hand we have
$
L_{(s_T,T)}\circ P_{(s_T,T)}=P_{(s_T,T)}\circ L_{(s_T,T)}
$, 
therefore standard algebra gives
\begin{align*}
&
\lambda(s_T,T)P_{(s_T,T)}\big(g_{(s_T,+\infty)}-v_{(s_T,\infty)}\big)=
\\
&
L_{(s_T,T)}P_{(s_T,T)}\big(g_{(s_T,+\infty)}-v_{(s_T,\infty)}\big)=
\\
&
P_{(s_T,T)}L_{(s_T,T)}\big(g_{(s_T,+\infty)}-v_{(s_T,\infty)}\big)=
\\
&
P_{(s_T,T)}
\bigg(
\big(\lambda(s_T,\infty)-\beta_T\big)\cdot \big(g_{(s_T,\infty)}- v_{(s_T,T)}\big)
+
\textrm{Err}(T)
\bigg)=
\\
&
\big(\lambda(s_T,\infty)-\beta_T\big)
\cdot
P_{(s_T,T)}\big(g_{(s_T,\infty)}- v_{(s_T,T)}\big)
+
P_{(s_T,T)}\big(\textrm{Err}(T)\big).
\end{align*}
Since $P_{(S_T,T)}$ is close to $P_{(1,\infty)}$, then its norm admits an uniform bound which does not depend on $T$. It follows that  
$
P_{(s_T,T)}\big(\textrm{Err}(T)\big)=O\big(\textrm{Err}(T)\big)
$. 
Therefore
\begin{align*}
&
\bigg(
\lambda(s_T,T)-\big(\lambda(s_T,\infty)-\beta_T\big)
\bigg)
\cdot 
P_{(s_T,T)}\big(g_{(s_T,\infty)}- v_{(s_T,\infty)}\big)=
\\
&
O\big(\textrm{Err}(T)\big)
=
O(T^{-(2s_T-1)})\cdot\big(O(|s_T-1|)+O(T^{-(2s_T-1)})\big).
\end{align*}
The statement follows because 
$
P_{(s_T,T)}(v_{(s_T,\infty)})=O(T^{-(2s_T-1)})
$, 
while on the other hand $P_{(s_T,T)}(g_{(s_T,\infty)})$ has norm bounded from below, because it is close to $g_{(1,\infty)}$.
\end{proof}

\subsection{Asymptotic of $\beta_T$}
\label{SectionAsymptoticBeta}

\begin{lemma}
\label{LemmaExistenceLimit}
There exists the limit
$$
\beta:=
\lim_{T\to+\infty} T\cdot\int\Delta_{(1,T)}g_{(1,\infty)}d\mu_{(1,\infty)}.
$$
Moreover $\beta>0$ strictly.
\end{lemma}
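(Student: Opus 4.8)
The plan is to rearrange the integral as a single nonnegative sum over cuspidal words, enumerate the cuspidal words of large geometric length by the finitely many parabolic ``rays'' producing them, and compute the contribution of each ray by dominated convergence, using the explicit parabolic coefficients of \eqref{EquationCoefficientsAlphaBetaParabolic}. Write $G:=g_{(1,\infty)}$ and recall that $\domain(W)=\bigsqcup_{a:[a]\subset\domain(W)}[a]$, the overlaps being $\mu_{(1,\infty)}$-null endpoints. Since all terms are nonnegative, Tonelli allows exchanging sums and integrals, so
\[
\int\Delta_{(1,T)}G\,d\mu_{(1,\infty)}
=\sum_{a\in\cA}\int_{[a]}\sum_{\substack{W\in\cW(a)\\|W|>T}}|D_xF_W|\,G(F_Wx)\,d\mu_{(1,\infty)}(x)
=\sum_{\substack{W\in\cW\\|W|>T}}\Psi_W,
\]
where $\Psi_W:=\int_{\domain(W)}|D_xF_W|\,G(F_Wx)\,d\mu_{(1,\infty)}(x)>0$.

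Next, by Lemma~\ref{LemmaCombinatorialPropertiesCuspidal} (and the decomposition used in the proof of Lemma~\ref{LemmaQuadraticAsymptoticDerivative}) there are finitely many pairs $(V_i,P_i)_{i=1}^M$, with $F_{P_i}$ a primitive parabolic conjugate to $z\mapsto z+\mu_{j(i)}$ fixing a vertex $\zeta_i\in\partial\DD$ and $V_i\in\cW$ not ending with $P_i$, such that every cuspidal word of sufficiently large geometric length equals $W_i^{(k)}:=V_i\ast P_i^{\ast k}$ for a unique $i$ and some $k\geq1$ with $k\to\infty$ as $|W|\to\infty$; for such words $F_{W_i^{(k)}}=F_{V_i}\circ F_{P_i}^{\,k}$, the arc $\domain(W_i^{(k)})=D_i$ is independent of $k$ (the last letter of $W_i^{(k)}$ and its type are fixed along the ray), and by the definition \eqref{EquationDefinitionGeometricLenght} of geometric length, together with the fact that appending $P_i$ acts, after conjugating $\xi_{W_i^{(k)}}$ to $\infty$, as a translation by $\mu_{j(i)}$, one has $|W_i^{(k)}|=c_i+k\,\mu_{j(i)}$ for all $k\geq1$ and some $c_i\in\RR$. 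The finitely many remaining cuspidal words have bounded $|W|$ and contribute nothing for $T$ large, so
\[
T\int\Delta_{(1,T)}G\,d\mu_{(1,\infty)}=\sum_{i=1}^{M}\;T\!\!\!\sum_{k>(T-c_i)/\mu_{j(i)}}\!\!\!\Psi_{W_i^{(k)}}.
\]

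The core is the asymptotics of $\Psi_{W_i^{(k)}}$. By \eqref{EquationCoefficientsAlphaBetaParabolic}, $F_{P_i}^{\,k}$ has $\beta=\overline{\zeta_i}\,ik\mu_{j(i)}/2$ and pole $\omega_k\to\zeta_i$, hence $|D_xF_{P_i}^{\,k}|=\frac{4}{k^2\mu_{j(i)}^2|x-\omega_k|^2}$; by Lemma~\ref{LemmaWeakContractionAdmissibleWords} and the chain rule $|D_xF_{W_i^{(k)}}|=|D_{F_{P_i}^{\,k}x}F_{V_i}|\cdot|D_xF_{P_i}^{\,k}|$, where by Lemma~\ref{LemmaDistanceFromPoles} the set $D_i$ stays at uniformly positive distance from $\zeta_i$, so $F_{P_i}^{\,k}x\to\zeta_i$ and $G(F_{W_i^{(k)}}x)\to G(F_{V_i}\zeta_i)=:g^{(i)}$ (the limit exists and is constant on $D_i$ because the parabolic dynamics approaches $\zeta_i$ from one fixed side, so $F_{W_i^{(k)}}x$ eventually lies in one arc $[a]$ on which $G$ is Lipschitz). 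All integrands being uniformly bounded by Lemma~\ref{LemmaQuadraticAsymptoticDerivative}, dominated convergence gives
\[
k^2\,\Psi_{W_i^{(k)}}\longrightarrow
\kappa_i:=\frac{4\,|D_{\zeta_i}F_{V_i}|\,g^{(i)}}{\mu_{j(i)}^2}\int_{D_i}\frac{d\mu_{(1,\infty)}(x)}{|x-\zeta_i|^2}.
\]
Writing $\Psi_{W_i^{(k)}}=\kappa_i/k^2+o(1/k^2)$ and using $T\sum_{k>K}k^{-2}=T/K+O(T/K^2)$ with $K=(T-c_i)/\mu_{j(i)}$, an elementary $\varepsilon$-argument on the tail yields $T\sum_{k>(T-c_i)/\mu_{j(i)}}\Psi_{W_i^{(k)}}\to\kappa_i\mu_{j(i)}$, so the limit exists and $\beta=\sum_{i=1}^{M}\kappa_i\mu_{j(i)}$. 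Strict positivity follows because $g^{(i)}\geq m(1,\infty)>0$ (Proposition~\ref{PropositionEigenfunctionTransferOperatorCircle}), $|D_{\zeta_i}F_{V_i}|>0$, and $\mu_{(1,\infty)}$ has full support $\partial\DD$ (Lemma~\ref{LemmaUniformConvergenceNormalizedOperatorInfinityCase} and density of the orbits $F_{w_k}\cdot\xi$), whence each $\kappa_i>0$ and $M\geq1$; alternatively the lower bounds in Lemma~\ref{LemmaQuadraticAsymptoticDerivative} and $G\geq m(1,\infty)$ already force $\Psi_{W_i^{(k)}}\geq c' k^{-2}$ and hence $\beta\geq c''>0$ directly.

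The step I expect to be the main obstacle is justifying the affine growth $|W_i^{(k)}|=c_i+k\mu_{j(i)}$ of the geometric length along a parabolic ray — which is precisely where the specific normalization of Definition~\eqref{EquationDefinitionGeometricLenght} enters — together with the care needed because $G=g_{(1,\infty)}$ is only piecewise Lipschitz across the vertex $F_{V_i}\zeta_i$; once these are in place, the rest is a routine dominated-convergence computation built on \eqref{EquationCoefficientsAlphaBetaParabolic} and the contraction and distortion estimates of \S\ref{SectionEstimatesContractionDistortion}.
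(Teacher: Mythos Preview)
Your argument is correct and follows essentially the same route as the paper: enumerate the long cuspidal words along finitely many parabolic rays, use \eqref{EquationCoefficientsAlphaBetaParabolic} to see that the derivative behaves like $c/k^{2}$ along each ray, and invoke the elementary fact $N\sum_{k\ge N}k^{-2}\to 1$ together with dominated convergence. The only cosmetic differences are that the paper writes the decomposition as $W=P^{(k)}\ast V$ (partial period at the end) rather than your $W=V_i\ast P_i^{\ast k}$, and it passes to the pointwise limit of the integrand $T\cdot\sum_{W\in\cV(a,T)}|D_\xi F_W|\,g(F_W\xi)$ before integrating, whereas you integrate first to form $\Psi_W$ and then take the limit; your verification that $\zeta_i\notin\overline{D_i}$ and that $G(F_{W_i^{(k)}}x)$ approaches a one-sided value at the vertex $F_{V_i}\zeta_i$ is exactly the care the paper takes as well.
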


\begin{proof}
For simplicity, write $g:=g_{(1,\infty)}$ and $\mu:=\mu_{(1,\infty)}$. 
Fix $a\in\cA$. According to Lemma~\ref{LemmaCombinatorialPropertiesCuspidal}, for any $W\in\cV(a,T)$ there exist $k\in\NN$, a word $P\in\cW$ such that $F_P$ is parabolic fixing a vertex of $\Omega_\DD$ and a cuspidal word $V\in\cW(a)$ which is a final factor of $P$, such that 
\begin{equation}
\label{EquationSectionLemmaExistenceLimit}
W=P^{(k)}\ast V
\quad
\textrm{ where }
\quad
P^{(k)}:=\underbrace{P\ast\dots\ast P}_{k\textrm{ times }}.
\end{equation}
The set $\cP(a)$ of words $P$ as above is finite (with cardinality bounded by twice the number of vertices of $\Omega_\DD$). For $P\in\cP(a)$ let $\xi_P$ be the vertex of $\Omega_\DD$ which is the parabolic fixed point of $F_P$. Let $\cI(P)$ be the set of initial factors of $P$, which is a finite set with cardinality the number of letters of $P=(a_1,\dots,a_p)$ (the empty word counts as a factor). 

Fix $P\in\cP(a)$. For $W=P^{(k)}\ast V$ the arc $[W]_\EE$ shrinks to $F_V^{-1}(\xi_P)$ as $k\to\infty$. Moreover we have $F_W(\xi)\in[W]_\EE$ for any $\xi\in[a]\subset\domain(W)$. Since $g\in\cB$, then there exist the limits of $g(\xi)$ for $\xi\to\xi_P$ either from the left or from the right. Denote these two limits as 
$
g\big(F_V^{-1}(\xi_P),\varepsilon(P)\big)
$, 
in terms of the symbol 
$
\varepsilon(P)\in\{L,R\}
$. 
The discussion above implies that the limit below exists:
$$
\lim_{k\to\infty}g\big(F_{P^{(k)}\ast V}\cdot\xi\big)
=
g\big(F_V^{-1}(\xi_P),\varepsilon(P)\big).
$$
Let $\mu>0$ be such that $F_P$ is conjugated to $z\mapsto 2\mu$. 
Equation~\eqref{EquationCoefficientsAlphaBetaParabolic} implies that for any $k\in\NN$ and any 
$\xi\in\domain(W)$ the $k$-th power has derivative 
$$
|D_\xi F_P^k|
=
\frac{1}{k^2\cdot|\mu|^2}
\cdot
\frac{1}{\big|\xi-\xi_P\big(1+i(k\mu)^{-1}\big)\big|^2}.
$$

For $N\in\NN^\ast$ we have the identity 
$
1=N\cdot\sum_{k=N}^\infty \big(k(k+1)\big)^{-1}
$. 
This implies that if $a_k$ is a sequence in $\CC$ with $a_k\to\lambda$ for $k\to\infty$, then 
$
N\cdot\sum_{k=N}^\infty a_k\cdot k^{-2}\to\lambda
$ 
as $N\to\infty$. The discussions above, and this last remark, imply that for any $P\in\cP(a)$, any $V\in\cI(P)$ and any 
$
\xi\in\domain(W=P^{(k)}\ast V)
$ 
we have 
$$
\lim_{N\to\infty}
N\cdot
\sum_{k=N}^\infty|D_{F_V(\xi)} F_P^k|\cdot g(F_{P^{(k)}V}\xi)
=
\frac
{g\big(F_V^{-1}(\xi_P),\varepsilon(P)\big)}
{|\mu|^2\cdot\big|F_V(\xi)-\xi_P\big|^2}.
$$

The limit in the statement is equal to  
$
\sum_{a\in\cA}\big(\lim_{T\to\infty}\int_{[a]}f_{a,T}d\mu\big)
$, 
where for any fixed $a\in\cA$ we consider the function 
$f_{a,T}:[a]\to\RR_+$ defined by
$$
f_{a,T}(\xi):=
T\cdot\sum_{W\in\cV(a,T)}|D_\xi F_W|\cdot g(F_W\xi).
$$

Fix $a\in\cA$. For any $W\in\cV(a,T)$ consider its decomposition $W=P^{(k)}\ast V$ as in 
Equation~\eqref{EquationSectionLemmaExistenceLimit}. According to their definition in 
Equation~\eqref{EquationDefinitionGeometricLenght}, the geometric lengths $|P|$ and $|W|$ satisfy 
$
\big||W|-k\cdot|P|\big|\leq C
$, 
where $C>0$ is some uniform constant. Let $N(P,T)$ be the integer part of $T\cdot|P|^{-1}$. the discussion above and Corollary~\ref{CorollaryUniformlySummableSeries} give 
\begin{align*}
f_{a,T}(\xi)
&
=
\sum_{P\in\cP(a)}\sum_{V\in\cI(P)}|P|\cdot N(P,T)\cdot\sum_{k=N(P,T)}^\infty
|D_{F_V(\xi)} F_P^k|\cdot |D_\xi F_V|\cdot g(F_{P^{(k)}V}\xi)+O(T^{-1})
\\
&
=
\sum_{P\in\cP(a)}\sum_{V\in\cI(P)}
\frac
{|P|\cdot |D_\xi F_V|\cdot g\big(F_V^{-1}(\xi_P),\varepsilon(P)\big)}
{|\mu(P)|^2\cdot\big|F_V(\xi)-\xi_P\big|^2}+o(1)
\quad
\text{where }o(1)\to0\textrm{ as }T\to\infty.
\end{align*}
Any $V\in\cI(P)$ is a final factor of $P$, thus\footnote{In order to see the implication, set $W=(b_0,\dots,b_m)$ and assume $\varepsilon(W)=R$ without loss of generality. We have 
$
F_V^{-1}(\xi_P)=[\widehat{b_m}]\cap[b^\ast]
$ 
for $b^\ast\in\cA$ with 
$
o(b^\ast)=o(\widehat{b_m})-1
$. 
On the other hand $a\not=\widehat{b_m},b^\ast$.} $F_V^{-1}(\xi_P)$ is not an endpoint of $[a]$. Therefore 
$\big|F_V(\xi)-\xi_P\big|$ is bounded from below for any $\xi\in[a]$. 
The existence of the limit $\beta$ follows because the expression above is bounded and thus integrable. Moreover 
$\beta>0$ strictly because $g$ is bounded from below by a positive constant
(see proof of Proposition~\ref{PropositionEigenfunctionTransferOperatorCircle}).
\end{proof}

\begin{lemma}
\label{LemmaExpansionBeta}
We have 
$$
\beta_T=\beta\cdot T^{-1}+O\big(|s_T-1|\cdot T^{-3/5}\big)+o(T^{-1}).
$$
\end{lemma}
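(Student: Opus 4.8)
The plan is to compare the quantity $\beta_T = \int \Delta_{(s_T,T)} g_{(s_T,\infty)} \, d\mu_{(s_T,T)}$ (which is how $\beta_T$ arises, via $P_{(s_T,\infty)}\Delta_{(s_T,T)}g_{(s_T,\infty)} = \beta_T g_{(s_T,\infty)} + u_{(s_T,\infty)}$ and Equation~\eqref{EquationExplicitFormProjection}) with the quantity $\int \Delta_{(1,T)} g_{(1,\infty)}\, d\mu_{(1,\infty)}$, whose rescaling by $T$ converges to $\beta$ by Lemma~\ref{LemmaExistenceLimit}. So I would write
$$
\beta_T = \int \Delta_{(1,T)} g_{(1,\infty)}\, d\mu_{(1,\infty)} + \Big(\beta_T - \int \Delta_{(1,T)} g_{(1,\infty)}\, d\mu_{(1,\infty)}\Big),
$$
and show the first term equals $\beta T^{-1} + o(T^{-1})$ and the bracketed error term is $O(|s_T-1|\cdot T^{-3/5})$.

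First, for the main term: by definition of $\beta$ in Lemma~\ref{LemmaExistenceLimit} we have $T\cdot \int \Delta_{(1,T)} g_{(1,\infty)}\, d\mu_{(1,\infty)} \to \beta$, i.e. $\int \Delta_{(1,T)} g_{(1,\infty)}\, d\mu_{(1,\infty)} = \beta T^{-1} + o(T^{-1})$, which is exactly what is needed. Second, for the error term I would split it into three pieces by a triangle inequality, replacing one ingredient at a time: replacing $\mu_{(s_T,T)}$ by $\mu_{(1,\infty)}$, replacing $g_{(s_T,\infty)}$ by $g_{(1,\infty)}$, and replacing $\Delta_{(s_T,T)}$ by $\Delta_{(1,T)}$. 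The key inputs are: (i) $\Delta_{(s_T,T)} = O(T^{-(2s_T-1)})$ in $\|\cdot\|_\ast$ by Lemma~\ref{LemmaNormDerivativeOperator(Delta)}, together with the refinement $\|\Delta_{(1,T)}-\Delta_{(s_T,T)}\|_\ast \leq C|s_T-1| T^{-3/5}$ from Lemma~\ref{LemmaTaylorEspansionOperator(Delta)}; (ii) $\|g_{(s_T,\infty)} - g_{(1,\infty)}\|_\ast = O(|s_T-1|)$, which follows from stability of the spectral projector $P_{(s,\infty)}$ (Lemma~\ref{LemmaSpectralPropertyAllParameters}(3)) combined with $\|L_{(s_T,\infty)} - L_{(1,\infty)}\|_\ast = O(|s_T-1|)$ from Lemma~\ref{LemmaTaylorEspansionOperator(A)} and the normalization $g_{(s,\infty)} = P_{(s,\infty)}(\mathbf 1)/(\text{const})$; (iii) $\mu_{(s_T,T)}$ close to $\mu_{(1,\infty)}$ with the same order, again from Equation~\eqref{EquationExplicitFormProjection} and projector stability (here one must also pass from $\mu_{(s_T,T)}$ to $\mu_{(s_T,\infty)}$, absorbing a further $O(T^{-(2s_T-1)})$). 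Since $2s_T - 1 > 3/5$ for $T$ large (as $s_T \to 1$), every occurrence of $T^{-(2s_T-1)}$ is dominated by $T^{-3/5}$, and products such as $O(T^{-(2s_T-1)})\cdot O(T^{-(2s_T-1)})$ are $o(T^{-1})$; the genuinely new contributions all carry a factor $|s_T-1|$ and an extra $T^{-3/5}$, yielding the stated $O(|s_T-1| T^{-3/5})$ error.

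The main obstacle I anticipate is bookkeeping the interplay between the two small parameters $|s_T-1|$ and $T^{-1}$ (recall $1 - s_T = \Theta T^{-1} + o(T^{-1})$ by Theorems~\ref{TheoremDimensionShiftSpace} and the perturbative results), making sure that the cross terms genuinely land in $O(|s_T-1| T^{-3/5}) + o(T^{-1})$ rather than producing an unwanted $O(T^{-1})$ contribution of the wrong constant. In particular, replacing $g_{(s_T,\infty)}$ by $g_{(1,\infty)}$ inside $\int \Delta_{(1,T)}(\cdot)\, d\mu$ produces a term bounded by $\|\Delta_{(1,T)}\|_\ast \cdot \|g_{(s_T,\infty)} - g_{(1,\infty)}\|_\ast = O(T^{-(2s_T-1)})\cdot O(|s_T-1|)$, which is $O(|s_T-1| T^{-3/5})$ since $2s_T-1 > 3/5$; one checks similarly that the $\Delta_{(1,T)} - \Delta_{(s_T,T)}$ replacement gives $O(|s_T-1| T^{-3/5})$ directly by Lemma~\ref{LemmaTaylorEspansionOperator(Delta)}, and the measure replacement gives $O(\|\Delta_{(1,T)}\|_\ast)\cdot O(|s_T-1|) = O(|s_T-1| T^{-3/5})$ plus the harmless $o(T^{-1})$ from passing through $\mu_{(s_T,\infty)}$. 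Collecting these yields the claimed expansion.
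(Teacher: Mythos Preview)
Your approach is essentially the paper's: both telescope from $\beta_T$ to $\int \Delta_{(1,T)}g_{(1,\infty)}\,d\mu_{(1,\infty)}$ by replacing the three ingredients one at a time (the paper phrases this via the projectors $P_{(s_T,\infty)}\to P_{(1,\infty)}$ rather than the measures, but by Equation~\eqref{EquationExplicitFormProjection} that is the same thing), and then invoke Lemma~\ref{LemmaExistenceLimit} for the main term.

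One correction to your setup: the measure in your opening formula should be $\mu_{(s_T,\infty)}$, not $\mu_{(s_T,T)}$. The decomposition defining $\beta_T$ is $\Delta_{(s_T,T)}g_{(s_T,\infty)}=\beta_T\, g_{(s_T,\infty)}+u_{(s_T,\infty)}$ with $u_{(s_T,\infty)}\in N_{(s_T,\infty)}$; applying $P_{(s_T,\infty)}$ kills $u_{(s_T,\infty)}$, and Equation~\eqref{EquationExplicitFormProjection} at parameters $(s_T,\infty)$ then gives $\beta_T=\int\Delta_{(s_T,T)}g_{(s_T,\infty)}\,d\mu_{(s_T,\infty)}$. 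Consequently your step (iii) ``pass from $\mu_{(s_T,T)}$ to $\mu_{(s_T,\infty)}$'' is unnecessary; you only need to go from $\mu_{(s_T,\infty)}$ to $\mu_{(1,\infty)}$, which is $O(|s_T-1|)$ by the projector stability you already cite. With this fix your three replacements match the paper's exactly and the error bounds go through as you describe.
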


\begin{proof}
Recall that 
$
g_{(s_T,\infty)}-g_{(1,\infty)}
=
O\big(\|P_{(s_T,\infty)}-P_{(1,\infty)}\|_\ast\big)
$ 
and that 
$$
P_{(s_T,\infty)}-P_{(1,\infty)}
=
O\big(\|L_{(s_T,\infty)}-L_{(1,\infty)}\|_\ast\big)
=
O(|s_T-1|).
$$
Recall from the proof of 
Lemma~\ref{PropositionExpansionEigenvalueParameter(T)} that $\beta_T$ and $\Delta_{(s_T,T)}$ are both $O(T^{-(2s_T-1)})$. For $T$ big enough we have $s_T>9/10$ and thus 
$$
\beta_T\cdot g_{(s_T,\infty)}=\beta_T\cdot g_{(1,\infty)}+O(|s_T-1|\cdot T^{-4/5}).
$$ 
On the other hand 
$
\beta_T\cdot g_{(s_T,\infty)}=P_{(s_T,\infty)}\Delta_{(s_T,T)}(g_{(s_T,\infty)})
$, 
moreover the three terms 
$$
\Delta_{(s_T,T)}(g_{(s_T,\infty)}-g_{(1,\infty)})
\quad;\quad
(\Delta_{(1,T)}-\Delta_{(s_T,T)})
\quad;\quad
(P_{(s_T,\infty)}-P_{(1,\infty)})\big)\Delta_{(1,T)}
$$
are 
$
O\big(|s_T-1|\cdot T^{-3/5}\big)
$, 
where in particular
$\|\Delta_{(1,T)}-\Delta_{(s_T,T)}\|_\ast$  is bounded by 
Lemma~\ref{LemmaTaylorEspansionOperator(Delta)} and $\Delta_{(1,T)}$ by 
Lemma~\ref{LemmaNormDerivativeOperator(Delta)}. 
The expression for $P_{(1,\infty)}$ in 
Equation~\eqref{EquationExplicitFormProjection} and 
Lemma~\ref{LemmaExistenceLimit} imply 
\begin{align*}
\beta_T\cdot g_{(s_T,\infty)}
&
=
P_{(s_T,\infty)}\Delta_{(s_T,T)}(g_{(s_T,\infty)})
=
P_{(s_T,\infty)}\Delta_{(s_T,T)}(g_{(1,\infty)})+O\big(|s_T-1|\cdot T^{-3/5}\big)
\\
&
=
P_{(s_T,\infty)}\Delta_{(1,T)}(g_{(1,\infty)})+O\big(|s_T-1|\cdot T^{-3/5}\big)
\\
&
=
P_{(1,\infty)}\Delta_{(1,T)}(g_{(1,\infty)})+O\big(|s_T-1|\cdot T^{-3/5}\big)
\\
&
=
(\beta\cdot T^{-1})\cdot g_{(1,\infty)}+o(T^{-1})+O\big(|s_T-1|\cdot T^{-3/5}\big).
\end{align*}
The Lemma follows comparing the two developments for $\beta_T\cdot g_{(s_T,\infty)}$. 
\end{proof}

\subsection{End of the proof of Theorem~\ref{TheoremDimensionShiftSpace}}

Independent arguments give a fist row upper bound on 
$
\big|1-\dim_H\big(\bad(\Gamma,\epsilon)\big)\big|
$.

\begin{lemma}
\label{LemmaRowErrorDimension}
There exists some $\lambda>0$ with 
$
\big|\dim_H(\EE_T)-1\big|=O(T^{-\lambda})
$.
\end{lemma}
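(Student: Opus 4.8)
The plan is to reduce everything to Bowen's formula $\dim_H(\EE_T)=s_T$, where $s_T$ is the unique solution of $\lambda(s_T,T)=1$ (Proposition~\ref{PropositionDimensionAndSpectralRadius}), and to locate $s_T$ near $1$ using the perturbative estimates already assembled in \S~\ref{SectionPerturbativeEstimateMaximalEigenvalue}. The first ingredient is the purely qualitative fact that $s_T\to 1$ as $T\to+\infty$: since $\cW_T\subseteq\cW_{T'}$ for $T\le T'$, the Cantor sets $\EE_T$ increase with $T$, and by Lemma~\ref{LemmaBadAndBoundedContinuedFraction} their union contains $\varphi\big(\bigcup_{\epsilon>0}\bad(\Gamma,\epsilon)\big)$, which has full dimension (and $\varphi$ is smooth, hence preserves Hausdorff dimension); since the Hausdorff dimension of a countable increasing union is the supremum of the dimensions, and $\dim_H(\EE_T)\le 1$ always, one gets $\dim_H(\EE_T)\to 1$.

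Granting this, I would fix once and for all a small $\eta\in(0,1/4)$ with $[1-\eta,1]\subseteq[s_0,1]$, where $s_0,T_0$ are the constants of Lemma~\ref{LemmaSpectralPropertyAllParameters}, and small enough that the quadratic remainder in Proposition~\ref{PropositionExpansionEigenvalueParameter(s)} --- recall $\lambda(s,\infty)=1+\delta(s-1)+O(|s-1|^2)$ with $\delta<0$ --- is dominated by $\tfrac{|\delta|}{2}|s-1|$ on $[1-\eta,1]$. This produces the uniform slope bound $|\lambda(s,\infty)-1|\ge\tfrac{|\delta|}{2}|s-1|$ for $s\in[1-\eta,1]$. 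By the previous paragraph there is $T_1\ge T_0$ with $s_T\in[1-\eta,1]$ for all $T\ge T_1$, so from now on $s_T$ sits in the regime where these expansions are legitimate.

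The next step is to compare $\lambda(s_T,T)$ with $\lambda(s_T,\infty)$. Since $L_{(s_T,T)}=L_{(s_T,\infty)}-\Delta_{(s_T,T)}$ and $\|\Delta_{(s_T,T)}\|_\ast=O\big(T^{-(2s_T-1)}\big)=O\big(T^{-(1-2\eta)}\big)$ by Lemma~\ref{LemmaNormDerivativeOperator(Delta)} (using $s_T\ge 1-\eta$), and since both $(s_T,T)$ and $(s_T,\infty)$ lie in $[s_0,1]\times[T_0,+\infty]$, the stability of the simple isolated eigenvalue enclosed by the contour $\gamma$ --- Lemma~\ref{LemmaSpectralPropertyAllParameters} together with \S~\ref{AppendixStabilitySpectralDecomposition} --- gives $|\lambda(s_T,T)-\lambda(s_T,\infty)|=O\big(\|L_{(s_T,T)}-L_{(s_T,\infty)}\|_\ast\big)=O\big(T^{-(1-2\eta)}\big)$. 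Feeding in $\lambda(s_T,T)=1$ yields $\lambda(s_T,\infty)=1+O\big(T^{-(1-2\eta)}\big)$, and the slope bound then forces $|s_T-1|=O\big(T^{-(1-2\eta)}\big)$. Taking $\lambda:=1-2\eta>0$ completes the proof; I will not try to optimise the exponent, since the statement only asks for some $\lambda>0$ (a bootstrap would push it towards $1$).

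The one point that needs care, and which I would present first, is precisely the logical ordering just described: the sharp expansion of $\lambda(\cdot,\infty)$ around $s=1$, and the perturbation bound relating $\lambda(s,T)$ to $\lambda(s,\infty)$, are only valid in a fixed small neighbourhood of $(1,\infty)$, so the quantitative rate cannot be extracted until one already knows --- qualitatively --- that $s_T$ has entered that neighbourhood. Beyond this, the argument is a routine application of the operator estimates of \S~\ref{SectionPerturbativeEstimateMaximalEigenvalue}, with no new idea required.
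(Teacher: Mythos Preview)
Your proof is correct and there is no circularity: Proposition~\ref{PropositionExpansionEigenvalueParameter(s)}, Lemma~\ref{LemmaNormDerivativeOperator(Delta)} and Lemma~\ref{LemmaSpectralPropertyAllParameters} are all established for general $(s,T)$ near $(1,\infty)$ and nowhere invoke Lemma~\ref{LemmaRowErrorDimension}. The one step you leave implicit --- passing from $\|L_{(s_T,T)}-L_{(s_T,\infty)}\|_\ast$ small to $|\lambda(s_T,T)-\lambda(s_T,\infty)|$ small --- is not literally stated in \S~\ref{AppendixStabilitySpectralDecomposition}, but follows by applying $P_{(s_T,T)}$ to the identity $L_{(s_T,T)}g_{(s_T,\infty)}=\lambda(s_T,\infty)g_{(s_T,\infty)}-\Delta_{(s_T,T)}g_{(s_T,\infty)}$ and using that $P_{(s_T,T)}g_{(s_T,\infty)}$ has norm bounded below, exactly as in the proof of Proposition~\ref{PropositionExpansionEigenvalueParameter(T)}.

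However, your route is genuinely different from the paper's. The paper gives a proof that is \emph{entirely independent of the transfer operator machinery}: it works directly in $\HH$, builds by hand a Cantor set inside $\bad(\Gamma,N^{-3})$ using the separation inequality~\eqref{EquationDistanceDisjointHoroballs} for horoballs, bounds its dimension via Proposition~2.2 of~\cite{McMullenDimension}, and then transfers to $\EE_T$ through Lemma~\ref{LemmaBadAndBoundedContinuedFraction}, obtaining any $\lambda<1/3$. Your approach stays within the spectral framework of \S\S~\ref{SectionTransferOperatorOnTheCircle}--\ref{SectionPerturbativeEstimateMaximalEigenvalue}, needs only the qualitative input $\dim_H(\bad)=1$ from~\cite{Patterson}, and in fact yields any $\lambda<1$. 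The paper's choice buys logical cleanliness --- the lemma that seeds the bootstrap in \S~8.6 stands on its own, with no forward references into the perturbative analysis --- while your argument is more economical, avoids the external citation to~\cite{McMullenDimension}, and gives a sharper exponent.
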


\begin{proof}
Consider $\Gamma$ as a subgroup of $\sltwor$. Fix an integer $N\geq2$.  
For $k\in\NN$ let $\cP_k$ be the set of parabolic fixed points of $\Gamma$ with 
$N^{k-1}<D(\zeta)\leq N^k$. %If $N$ is big enough $\bigcup_{k\in\NN}\cP_k$ is the set of all parabolic fixed points in the real line. 
For any $\zeta\in\cP_k$ let $I(\zeta)$ be the interval centered at $\zeta$ with length 
$|I(\zeta)|=2/(N^{k+2}D(\zeta))$. For $k\geq1$ let $\cJ_k$ be the set of intervals $J\subset\RR$ with 
$|J|=N^{-2k}$ and endpoints in $N^{-2k}\cdot\ZZ$. Let $C_0$ be the collection (non-empty for $N$ big enough) of $J\in\cJ_1$ with 
$
J\cap\cup_{\zeta\in\cP_0}I(\zeta)=\emptyset
$. 
For $k\geq1$ let $C_k$ be the collection of intervals $J\in\cJ_{k+1}$ with $J\subset J'$ for some 
$J'\in C_{k-1}$ and $J\cap I(\zeta)=\emptyset$ for any $\zeta\in\cP_k$. Let $F_k$ be the union of intervals of $C_k$. Equation~\eqref{EquationDistanceDisjointHoroballs} implies that for any $J\in\cJ_k$ we have 
$
\big|J\cap\cup_{\zeta\in\cP_k}I(\zeta)\big|\leq \text{Const}\cdot N^{-1}|J|
$. 
Since $|J'|\leq |I(\zeta)|$ for any $\zeta\in\cP_k$ and $J'\in\cJ_{k+1}$, then
$
|J\cap F_k|\geq |J|\cdot(1-2\text{Const}\cdot N^{-1})
$. 
We have
$
\cap_{k\in\NN}F_k\subset\bad(\Gamma,\epsilon)
$ 
with $\epsilon=N^{-3}$, moreover Proposition~2.2 in~\cite{McMullenDimension} gives 
$$
\big|1-\dim_H\big(\cap_{k\in\NN}F_k\big)\big|
\leq 
\log(1-2\text{Const}\cdot N^{-1})/\log N^2
=
O((N\log N)^{-1}).
$$ 
The statement follows from Lemma~\ref{LemmaBadAndBoundedContinuedFraction} for any $\lambda<1/3$. 
\end{proof}

Combining 
Proposition~\ref{PropositionExpansionEigenvalueParameter(s)} and 
Proposition~\ref{PropositionExpansionEigenvalueParameter(T)} we get
\begin{align*}
&
1=\lambda(s_T,T)
=
\lambda(s_T,\infty)-\beta_T
+
O(T^{-(2s_T-1)})\cdot\big(O(|s_T-1|)+O(T^{-(2s_T-1)})\big)=
\\
&
1+\delta(s_T-1)+O(|s_T-1|^2)-\beta_T
+
O(T^{-(2s_T-1)})\cdot\big(O(|s_T-1|)+O(T^{-(2s_T-1)})\big),
\end{align*}
that is
$$
s_T=1+\frac{\beta_T}{\delta}+
O(|s_T-1|^2)+O(T^{-(2s_T-1)})\cdot\big(O(|s_T-1|)+O(T^{-(2s_T-1)})\big).
$$
Recalling that $s_T>9/10$ for $T$ big enough, Lemma~\ref{LemmaExpansionBeta} implies 
\begin{align}
s_T
&
=
1+\frac{\beta}{\delta\cdot T}+
\left\{
\begin{array}{c}
O(|s_T-1|^2)+O(T^{-(2s_T-1)})\cdot\big(O(|s_T-1|)+O(T^{-(2s_T-1)})\big)\\
+O\big(|s_T-1|\cdot T^{-3/5}\big)+o(T^{-1}).
\end{array}
\right.
\\
&
\label{EquationRecursiveBoundDifferenceDimensionToOne}
=1+(\beta/\delta)\cdot T^{-1}+O(|s_T-1|^2)+O\big(|s_T-1|\cdot T^{-3/5}\big)+o(T^{-1}).
\end{align}

Let $\lambda>0$ as in Lemma~\ref{LemmaRowErrorDimension}. 
Equation~\eqref{EquationRecursiveBoundDifferenceDimensionToOne} implies  
$$
s_T-1=
(\beta/\delta)\cdot T^{-1}+O(T^{-2\lambda})+O\big(T^{-(\lambda+3/5)}\big)+o(T^{-1}).
$$
For any initial value of $\lambda$, applying recursively the last estimate, we get in finite time 
$$
s_T-1=(\beta/\delta)\cdot T^{-1}+o(T^{-1}).
$$ 
The explicit form of the constant $\Theta>0$ is 
\begin{equation}
\label{EquationFormulaConstantTheta}
\Theta
=
\frac{\beta}{-\delta}
=
\frac
{\lim_{T\to\infty} T\cdot\int\Delta_{(1,T)}g_{(1,\infty)}d\mu_{(1,\infty)}}
{-\int A_\infty g_{(1,\infty)}d\mu_{(1,\infty)}}.
\end{equation}
The proof of Theorem~\ref{TheoremDimensionShiftSpace} is complete. This completes the proof of 
Theorem~\ref{TheoremMainTheorem} too. \qed

\appendix

\section{A class of groups with ideal Ford domain}
\label{AppendixExample}

Let $P\subset\DD$ be an ideal polygon, that is a polygon with finitely many sides and all vertices in $\partial\DD$. Assume that a side $L$ of $P$ is a diameter of $\DD$, that is it contains the origin. Let $H$ be the group generated by the reflections in the sides of $P$ and let $\Gamma$ be the index two subgroup of $H$ of orientation preserving elements. Finally let $\sigma_L$ be the reflection with respect to the diameter $L$ of $P$ and set
$$
\Omega:=P\cup\sigma_L(P).
$$ 

\begin{lemma}
$\Gamma$ is a free non uniform Fuchsian lattice. Moreover $\Omega$ is an ideal Ford domain for $\Gamma$. The quotient $\DD\backslash\Gamma$ is a punctured sphere.
\end{lemma}

\begin{proof}
All angles at vertices of $P$ are equal to zero. Therefore $H$ is discrete, according to Theorem 7.1.3 in \cite{Ratcliffe}. Thus $\Gamma$ is discrete too. Label the other sides of $P$ as $s_a$ with $a\in\cA_0$, where $\cA_0$ is an alphabet with the same number of letters as the number of sides of $P$ minus one.  
The polygon $\sigma_L(P)$ is also ideal and shares with $P$ the diameter $L$ as a side. Then introduce an alphabet 
$\widehat{\cA_0}$ with the same number of letters as $\cA_0$ and label the sides of $\sigma_L(P)$ other than $L$ as $s_a$ with $a\in\widehat{\cA}_0$. Finally set 
$
\cA:=\cA_0\cup\widehat{\cA_0}
$ 
and consider the involution $\iota:\cA\to\cA$ such that, writing $\widehat{a}:=\iota(a)$, we have
$$
s_{\widehat{a}}=\sigma_L(s_a)
\quad\text{ for any }a\in\cA.
$$
The sides of $\Omega$ are labelled as $s_a$ with $a\in\cA$. For any such side let $\sigma_a$ be the reflection in $s_a$, then set 
\begin{equation}
\label{Equation(1)IdealFordDomain}
F_a:=\sigma_L\circ\sigma_a.
\end{equation}
According to Theorem 3.3.4 in~\cite{KatokFuchsian} any $F\in\sugroup(1,1)$ is the composition of the reflection in its isometric circle $I_F$ (see \S~\ref{SectionIsometricCircles}) and the reflection in the straight line $L_F$ through the center of $\DD$ and the midpoint between the poles $\omega_F$ and $\omega_{F^{-1}}$ of $F$ and $F^{-1}$ respectively. Therefore for any $a\in\cA$ the map $F_a$ in Equation~\eqref{Equation(1)IdealFordDomain} has isometric circle equal to $s_a$ (more precisely the circle extending $s_a$) and we have $F_a^{-1}=F_{\widehat{a}}$. 
By definition $\Gamma$ is generated by elements of the form $\sigma_1\circ\sigma_2$, with 
$$
\sigma_1,\sigma_2\in\{\sigma_L\}\cup\{\sigma_a;a\in\cA_0\}.
$$ 
For $a,b\in\cA_0$ we have 
$
\sigma_a\circ\sigma_b=(\sigma_L\circ\sigma_a)^{-1}\circ(\sigma_L\circ\sigma_b)
$, 
thus it is clear that $\Gamma$ is generated by the maps $F_a$ in Equation~\eqref{Equation(1)IdealFordDomain} with 
$a\in\cA$. The key observation is that the interiors of the isometric circles of these maps satisfy 
$U_a\cap U_b=\emptyset$ for $a\not=b$, where we denote $U_a$ the interior of the isometric circle of $F_a$, according to the notation in \S~\ref{SectionDistanceFromPoles}. Therefore consider $a_0,\dots,a_n$ so that the concatenation $F_{a_0}\circ\dots\circ F_{a_n}$ is possible, that is $a_{k+1}\not=\widehat{a_k}$ for any $k=1\dots,n-1$. Recalling that $U_{\widehat{a}}=F_a(\CC\setminus U_a)$ for any $a\in\cA$, Equation~\eqref{EquationInclusionsIsometricCirclesLetters} implies
$$
F_{a_0,\dots,a_{n}}(\CC\setminus U_{a_n})
=
F_{a_0,\dots,a_{n-1}}(U_{\widehat{a_n}})
\subset
U_{\widehat{a_0}}.
$$
Since $\Omega=\DD\setminus\bigcup_{a\in\cA}U_a$ then we get
\begin{equation}
\label{Equation(2)IdealFordDomain}
F_{a_0,\dots,a_{n}}(\Omega)\cap\Omega=\emptyset.
\end{equation}
It follows that the origin $0\in\DD$ is not the fixed point of any $F\in\Gamma$. Therefore, according to Theorem 3.3.5 in \cite{KatokFuchsian}, the Ford fundamental domain of $\Gamma$ is defined and given by
$$
\widetilde{\Omega}:=\overline{\DD\setminus\bigcup_{F\in\Gamma\setminus\{\id\}}U_F}.
$$
It is clear that $\widetilde{\Omega}\subset\Omega$. Since $\Omega$ has finite area, than $\widetilde{\Omega}$ has finite area too and $\Gamma$ is a (non uniform) lattice. Moreover Equation~\eqref{Equation(2)IdealFordDomain} implies that for any pair of points $z_1,z_2$ in the interior of $\Omega$ and any $F\in\Gamma\setminus\{\id\}$ we have $F(z_1)\not=z_2$. It follows that $\Omega=\widetilde{\Omega}$ is the Ford fundamental domain of $\Gamma$. We observe that $\Omega$ is an ideal polygon, and that its quotient by the maps $F_a$ in Equation~\eqref{EquationInclusionsIsometricCirclesLetters} is a punctured sphere. Finally for any $a\in\cA$ the element $F_a$ is not elliptic (because $U_a\cap U_{\widehat{a}}=\emptyset$) and we have 
$
F_a(\DD\setminus U_a)\subset U_{\widehat{a}}
$, 
therefore the \emph{Ping Pong Lemma} implies that $\Gamma$ is the free product of the infinite ciclic groups $\langle F_a\rangle$ with $a\in\cA_0$, thus $\Gamma$ is free. The Lemma is proved. 
\end{proof}

\section{Size and separation for horoballs at parabolic fixed points}
\label{AppendixSeparation}

Let $\Gamma$ be a Fuchsian group and $\cP_\Gamma$ be its set of parabolic fixed points. 
Let $P\in\Gamma$ be primitive parabolic fixing $\infty$.  By discreteness there exists a constant $c_\Gamma>0$ with 
\begin{equation}
\label{EquationAppendixEntryC}
|c(G)|\geq c_\Gamma
\quad
\textrm{ for any }
\quad
G\in\Gamma\setminus\langle P\rangle.
\end{equation}
For $T>0$ set $H_T:=\{z\in\HH:\im(z)>T\}$. 
Equation~\eqref{EquationAppendixEntryC} easily implies the next Lemma. 

\begin{lemma}
\label{LemmaAppendixDisjointHoroballs}
For all $T\geq c_\Gamma^{-1}$ and all $G_1,G_2\in\Gamma$ the following holds.
\begin{enumerate}
\item
If 
$
G_2^{-1}G_1\in\langle P\rangle
$ 
then $G_1(H_T)=G_2(H_T)$.
\item
Otherwise $G_1(H_T)\cap G_2(H_T)=\emptyset$.
\end{enumerate} 
\end{lemma}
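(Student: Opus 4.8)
The plan is to reduce both statements to a single assertion about the element $G:=G_2^{-1}G_1$. Since $G_2$ acts on $\HH$ as an isometry, hence a homeomorphism, applying $G_2^{-1}$ shows that the identity $G_1(H_T)=G_2(H_T)$ is equivalent to $G(H_T)=H_T$, and that $G_1(H_T)\cap G_2(H_T)=\emptyset$ is equivalent to $G(H_T)\cap H_T=\emptyset$. Thus it suffices to prove, for every $T\geq c_\Gamma^{-1}$: if $G\in\langle P\rangle$ then $G(H_T)=H_T$, and if $G\in\Gamma\setminus\langle P\rangle$ then $G(H_T)\cap H_T=\emptyset$.

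First I would dispose of the case $G\in\langle P\rangle$. As $P$ is parabolic and fixes $\infty$, it acts on $\HH$ as a horizontal translation $z\mapsto z+\mu$ for some $\mu\in\RR$; hence every power $P^n$ preserves the half-plane $H_T=\{z\in\HH:\im(z)>T\}$, so $G(H_T)=H_T$. This already gives Part (1).

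For the main case I would use the standard identity $\im(G\cdot z)=\im(z)/|c(G)z+d(G)|^2$, valid for any $G\in\sltwor$. Writing $z=x+iy$ with $y>0$ one has $|c(G)z+d(G)|^2=(c(G)x+d(G))^2+c(G)^2y^2\geq c(G)^2y^2$, and therefore $\im(G\cdot z)\leq 1/(c(G)^2 y)$. Now suppose $G\in\Gamma\setminus\langle P\rangle$; then Equation~\eqref{EquationAppendixEntryC} gives $|c(G)|\geq c_\Gamma$ (in particular $c(G)\neq0$, so the formula applies and $G$ does not fix $\infty$), while any $z\in H_T$ has $y>T\geq c_\Gamma^{-1}$. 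Combining these,
$$
\im(G\cdot z)\leq\frac{1}{c(G)^2\,y}<\frac{1}{c_\Gamma^2\,T}\leq\frac{1}{c_\Gamma^2\cdot c_\Gamma^{-1}}=\frac{1}{c_\Gamma}\leq T,
$$
so $G\cdot z\notin H_T$. As $z\in H_T$ was arbitrary, $G(H_T)\cap H_T=\emptyset$, which is Part (2).

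I do not expect a genuine obstacle here: the argument is elementary and self-contained once Equation~\eqref{EquationAppendixEntryC} is in hand. The only point meriting a word of care is that Equation~\eqref{EquationAppendixEntryC} is precisely what rules out elements $G\notin\langle P\rangle$ with $c(G)=0$ (which would otherwise fix $\infty$ and escape the estimate), and this is where the discreteness of $\Gamma$ enters.
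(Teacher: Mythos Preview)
Your proof is correct and is exactly the standard argument the paper has in mind; the paper itself does not spell out a proof but simply states that Equation~\eqref{EquationAppendixEntryC} ``easily implies'' the lemma. Your reduction to $G=G_2^{-1}G_1$ together with the estimate $\im(G\cdot z)\leq 1/(c(G)^2\im z)$ is precisely how one cashes in that implication.
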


For $T\geq c_\Gamma^{-1}$ the punctured disc 
$
\cU:=\langle P\rangle\backslash H_T
$ 
is isometrically embedded in $\Gamma\backslash\HH$ and gives a so-called \emph{Margulis neighbourhood} of the cusp $[\infty]$. Lemma~\ref{LemmaAppendixDisjointHoroballs} implies:

\begin{corollary}
\label{CorollaryParabolicPointsOnlyFixedByParabolicElement}
If $G\in\Gamma$ satisfies $G\cdot\zeta=\zeta$ for some $\zeta\in\cP_\Gamma$, then 
$G$ is parabolic.
\end{corollary}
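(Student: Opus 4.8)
The plan is to deduce the Corollary directly from Lemma~\ref{LemmaAppendixDisjointHoroballs}. First I would reduce to the case $\zeta=\infty$. If $\zeta\in\RR$, pick $B\in\sltwor$ with $B\cdot\infty=\zeta$, taking $B=\id$ when $\zeta=\infty$; pass to the conjugate Fuchsian group $\Gamma':=B^{-1}\Gamma B$ and replace $G$ by $B^{-1}GB$, which fixes $\infty$. Since conjugation in $\sltwor$ changes the trace only by a sign, it preserves the type (elliptic, parabolic, hyperbolic) of an element, so it suffices to treat $\Gamma'$. I would then check that $\Gamma'$ still meets the standing hypotheses of this appendix: it contains the parabolic element $B^{-1}QB$ fixing $\infty$, where $Q\in\Gamma$ is any parabolic with $Q\cdot\zeta=\zeta$, and the parabolics of $\Gamma'$ fixing $\infty$, together with the identity, form the kernel of the homomorphism $H\mapsto|a(H)|$ on the stabilizer of $\infty$; this kernel is a nontrivial discrete — hence infinite cyclic — group of horizontal translations, generated by a primitive parabolic $P\in\Gamma'$.

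Assuming now $\zeta=\infty$, I would observe that $G$ fixing $\infty$ forces $c(G)=0$, so $G\cdot z=a^2z+ab$ with $a=a(G)\neq0$, whence $\im(G\cdot z)=a^2\im(z)$. Therefore $G(H_T)=H_{a^2T}$ for every $T>0$, and in particular $G(H_T)$ and $H_T$ are never disjoint (one contains the other). Fixing $T\geq c_\Gamma^{-1}$ and applying Lemma~\ref{LemmaAppendixDisjointHoroballs} with $G_1=G$ and $G_2=\id$, alternative (2) is ruled out, so alternative (1) holds, i.e. $G=G_2^{-1}G_1\in\langle P\rangle$. Every power of $P$ is the identity or parabolic, and an element with a fixed point on $\partial\HH$ cannot be elliptic, so $G$ is the identity or parabolic; in the situations where the Corollary is invoked $G\neq\id$, giving the claim.

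The computation is entirely routine; the only point needing a little care is the reduction step, namely verifying that the conjugated group $\Gamma'$ still has a primitive parabolic fixing $\infty$ so that Lemma~\ref{LemmaAppendixDisjointHoroballs} applies — this is where the brief discreteness argument about the translation part of the stabilizer of $\infty$ enters. An alternative to the horoball argument, avoiding the appendix's Lemma, is the classical one: if $G$ were hyperbolic fixing $\infty$ and $Q$ parabolic fixing $\infty$, then (after possibly replacing $G$ by $G^{-1}$) the sequence $G^nQG^{-n}$ would tend to $\id$ without being $\id$, contradicting discreteness. Since the appendix sets up horoball separation precisely for this purpose, I would present the proof via Lemma~\ref{LemmaAppendixDisjointHoroballs}.
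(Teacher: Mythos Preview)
Your proposal is correct and follows precisely the approach the paper intends: the paper gives no explicit proof, merely stating that the Corollary follows from Lemma~\ref{LemmaAppendixDisjointHoroballs}, and your argument is exactly the natural way to fill in those details. The reduction by conjugation to $\zeta=\infty$, the observation that $G(H_T)=H_{a^2T}$ forces $G(H_T)\cap H_T\neq\emptyset$, and the appeal to alternative~(1) of the Lemma with $G_1=G$, $G_2=\id$ are all sound; your remark that the statement tacitly assumes $G\neq\id$ is also accurate.
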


Fix a non-uniform lattice $\Gamma$ and $\cS=\{A_1,\dots,A_p\}$ as in 
Equation~\eqref{EquationRepresentativesOfCusps}. For any $k$ set 
$
\Gamma_k:=A_k^{-1}\Gamma A_k
$. 
Observe that $G\in\Gamma$ fixes 
$z_k=A_k\cdot\infty$ if and only if 
$
A_k^{-1}GA_k\in\Gamma_k
$ 
fixes $\infty$. Let $P_k\in\Gamma_k$ be primitive parabolic fixing $\infty$ and $T_k>0$ be such that 
$
\langle P_k\rangle\backslash H_{T_k}
$ 
is a Margulis neighbourhood of the cusp $[\infty]$ in 
$
\Gamma_k\backslash\HH
$. 
The map $A_k:\HH\to\HH$, $z\mapsto A_k\cdot z$ induces a map 
$$
A_k:\Gamma_k\backslash\HH\to\Gamma\backslash\HH
\quad,\quad
\Gamma_kz\mapsto \Gamma A_k\cdot z,
$$
and a Margulis neighbourhood of the cusp $[z_k]$ in $\Gamma\backslash\HH$ is given by the image  
$$
\cU_k:=A_k\big(\langle P_k\rangle\backslash H_{T_k}\big)=
\langle A_kP_kA_k^{-1}\rangle\backslash (A_k H_{T_k}).
$$ 
Set 
$
\widetilde{\cU}_k:=\langle A_kP_kA_k^{-1}\rangle\backslash (A_k H_{S_0})
$ 
for $k=1,\dots,p$, where
$$
S_0\geq\max\{T_1,\dots T_p\}
$$
is a constant big enough so that the Margulis neighbourhoods 
$
\widetilde{\cU}_1,\dots,\widetilde{\cU}_p
$ 
are mutually disjoint in $\Gamma\backslash\HH$. 
By Lemma~\ref{LemmaAppendixDisjointHoroballs}, the balls in the family 
$\{GA_k(H_{S_0}):G\in\Gamma\}$ are mutually disjoint for any fixed $k$. Hence we get disjointness for all $k=1,\dots,p$, that is 
\begin{equation}
\label{EquationAppendixDisjointHoroballs}
GA_k(H_{S_0})\cap FA_j(H_{S_0})\not=\emptyset
\quad
\Rightarrow
\quad
G=F
\textrm{ and }
j=k.
\end{equation}

Let $c=c(G)$ and $d=d(G)$ be the entries of $G\in\sltwor$ as in 
Equation~\eqref{EquationCoefficientsSL(2,C)}. If $c(G)\not=0$ then $G(H_T)$ is a ball tangent to the real line at $x=G\cdot\infty$. For $z=\alpha+iT$ we have  
$$
\im(G\cdot z)=
\frac{\im(z)}{|cz+d|^2}=
\frac{T}{|ciT+c\alpha+d|^2}=
\frac{T}{|c|^2T^2+|c\alpha+d|^2}.
$$
The expression above is maximal for $\alpha=-d/c$, thus the diameter of $G(H_T)$ is given by 
\begin{equation}
\label{EquationDiameterHoroball}
\diameter\big(G(H_T)\big)=\frac{1}{T\cdot c^2(G)}.
\end{equation}

If $B_1,B_2$ are disjoint balls in $\HH$, tangent to $\RR$ at $x_1,x_2$ and with radii $R_1,R_2$ respectively, we have 
$
|x_2-x_1|^2\geq(R_1+R_2)^2-(R_1-R_2)^2=4R_1R_2
$, 
where the equality holds if $B_1,B_2$ are mutually tangent. 
Recall Equation~\eqref{EquationDefinitionDenominator} for the denominator. 
Equation~\eqref{EquationDiameterHoroball} and 
Equation~\eqref{EquationAppendixDisjointHoroballs} imply that if $G\cdot z_i\not=F\cdot z_j$ are two different parabolic fixed points of $\Gamma$, then we have 
\begin{equation}
\label{EquationDistanceDisjointHoroballs}
|G\cdot z_i-F\cdot z_j|
\geq
\frac{1}{S_0}\cdot\frac{1}{D(G\cdot z_i)D(F\cdot z_j)}.
\end{equation}

\section{Some basic facts on spectra and projectors}
\label{AppendixSpectralProjectors}

We recall some facts on spectral properties of bounded linear operator. We follow \S~III.6 in \cite{Kato}.

\subsection{Spectrum and spectral projectors}
\label{AppendixDefinitionSpectralProjectors}

Let $(B,\|\cdot\|)$ be a Banach space and $L:B\to B$ be a bounded linear operator. The \emph{resolvent set} of $L$ is the set $\cR(L)$ of complex numbers $z\in\CC$ such that 
$
L-z\id:B\to B
$ 
is an invertible operator with bounded inverse. The set  
$
\spectrum(L):=\CC\setminus\cR(L)
$ 
is called the \emph{spectrum} of $L$. The \emph{spectral radius} $\rho(L)$ of $L$ is defined by
$$
\rho(L):=\sup\{|z|:z\in\spectrum(L)\}.
$$ 

We have $\rho(L)\leq\|L\|$. The spectrum $\spectrum(L)$ of a bounded linear operator $L$ is always a compact subset of $\CC$. The resolvent set $\cR(L)$ is thus open and we have the \emph{resolvent map}
$$
R(L,\cdot):\cR(L)\to\cL(B,B)
\quad;\quad
R(L,z):=(L-z\id)^{-1},
$$ 
where $\cL(B,B)$ denotes the space of bounded linear maps from $B$ to $B$. For $z\in\cR(L)$ the bounded linear operator $R(L,z):B\to B$ is called the resolvent of $L$. The resolvent map is holomorphic (in the terminology of \cite{Kato}, page 37), meaning that for any $z\in\cR(L)$ and any $h\in\CC$ small enough we have 
$$
R(L,z+h)=R(L,z)+h\cdot R^2(L,z)+ho(h).
$$
This follows from the \emph{first resolvent equation}: for any  
$
z,z'\in\cR(L)
$ 
we have 
$$
R(L,z)-R(L,z')=(z-z')R(L,z)R(L,z').
$$

\medskip

Assume that $\Sigma,\Sigma'$ are two compact subsets of $\CC$ with 
$\Sigma\cap\Sigma'=\emptyset$ and $\spectrum(L)=\Sigma\cup\Sigma'$. Then assume also that we have a smooth oriented loop $\gamma$ in $\cR$ containing $\Sigma$ in its interior and having $\Sigma'$ in the exterior. Then the \emph{spectral projector} for $\Sigma$ is the linear operator let $P:B\to B$ defined by 
\begin{equation}
\label{EquationDefinitionSpectralProjector}
P:=\frac{-1}{2\pi i}\int_\gamma R(L,z)dz.
\end{equation}

Since $z\mapsto R(L,z)$ is holomorphic, the map $P:B\to B$ does not depend on the specific choice of 
$\gamma$. Moreover $P$ is a projection, that is $P^2=P$, we have 
$P\circ L=L\circ P$, and $P$ is bounded. More precisely the continuity of the resolvent map implies  
$
M:=\sup_{z\in\gamma}\|R(L,z)\|<+\infty
$, 
and thus $\|P\|\leq |\gamma|\cdot M$. Obviously $(\id-P):B\to B$ is also a projection which commutes with $L$, with norm satisfying
$\|\id-P\|\leq1+\|P\|$. Thus setting $N:=\ker(P)$ and $V:=\ker(\id-P)$ we have
$$
B=N\oplus V
\quad
\textrm{ with }
\quad
L(N)\subset N
\quad
\textrm{ and }
\quad
L(V)\subset V.
$$

For $z\in\cR(L)$ the map $R(L,z):\cB\to\cB$ is bijective continuous and satisfies the relation    
$P\circ R(L,z)=R(L,z)\circ P$. Hence the restrictions $R(L,z)|_N:N\to N$ and $R(L,z)|_V:V\to V$ are bijective continuous and it follows that 
$$
R(L|_N,z)=R(L,z)|_N
\quad
\textrm{ and }
\quad
R(L|_V,z)=R(L,z)|_V 
\quad
\textrm{ for any  }
\quad
z\in\cR(L).
$$ 
One can prove that $\spectrum(L|_N)=\Sigma'$ and $\spectrum(L|_V)=\Sigma$. 
Details are in \S~III.6.4 in \cite{Kato}.

\subsection{Stability of spectral decompositions}
\label{AppendixStabilitySpectralDecomposition}

Let $L_0,A:B\to B$ be bounded linear operators. Assume that $L_0$ is invertible and that 
$
\|A\|\cdot \|L_0^{-1}\|<1
$. 
Then $\|A\circ L_0^{-1}\|<1$ and thus $-1\in\cR(A\circ L_0^{-1})$, so that 
$(\id+A\circ L_0^{-1}):B\to B$ is bounded invertible. It follows that $L_0+A$ is bounded invertible, indeed 
$$
L_0+A=(\id+A\circ L_0^{-1})\circ L_0.
$$
Let $L:B\to B$ be a bounded linear operator. Apply the construction above to $A:=L-L_0$. In particular, if $z\in\cR(L_0)$ and $\|L-L_0\|\cdot\|R(L_0,z)\|<1$, then $z\in\cR(L)$ too. If 
$\gamma$ is a closed smooth path in $\cR(L_0)$, then continuity of $z\mapsto R(L_0,z)$ implies  
$$
M_0:=\sup_{z\in\gamma}\|R(L_0,z)\|<+\infty.
$$
Therefore $\gamma\subset\cR(L_0)\cap\cR(L)$ provided that $\|L-L_0\|\leq M_0^{-1}$. Furthermore, it is easy to check that for any 
$z\in\cR(L_0)\cap\cR(L)$ we have
$$
(L_0-z\id)(L-z\id)\big(R(L_0,z)-R(L,z)\big)
=
(L-L_0)+(L_0L-LL_0)R(L_0,z).
$$
From  
$
L_0L-LL_0=L_0L-L_0^2+L_0^2-LL_0
$ 
we get 
$
\|L_0L-LL_0\|\leq 2\|L_0\|\cdot\|L-L_0\|
$, 
so that 
$$
\|R(L_0,z)-R(L,z)\|
\leq
\|R(L,z)\|\cdot\|R(L_0,z)\|\cdot(1+2\|L_0\|\cdot\|R(L_0,z)\|)\cdot\|L-L_0\|.
$$
Moreover, if $\|L-L_0\|<(2M_0)^{-1}$, then for any $z\in\gamma$ and any $v\not=0$ we have 
$$
\|(L-z\id)(v)\|
\geq 
\|(L_0-z\id)(v)\|-\|(L-L_0)(v)\|
\geq
\frac{\|v\|}{\|R(L_0,z)\|}-\frac{\|v\|}{2M_0}
\geq 
\frac{\|v\|}{2M_0},
$$
that is 
$
\sup_{z\in\gamma}\|R(L,z)\|\leq 2M_0
$. 
Resuming, if $\|L-L_0\|<(2M_0)^{-1}$, then we have 
$$
\sup_{z\in\gamma}
\|R(L_0,z)-R(L,z)\|
\leq
M_1\cdot\|L-L_0\|,
\quad\text{ where }\quad
M_1:=2M_0^2\cdot(1+2M_0\cdot\|L_0\|).
$$
Therefore if $P_0$ and $P$ are the spectral projectors along $\gamma$ associated respectively to $L_0$ and to $L$ in Equation~\ref{EquationDefinitionSpectralProjector}, then we have 
\begin{equation}
\label{EquationDistanceSpectralProjectors}
\|P-P_0\|\leq |\gamma|\cdot M_1\cdot\|L-L_0\|.
\end{equation}
Finally, we have $\|P-P_0\|<1$ for $\|L-L_0\|$ small enough. In this case there exists an isomorphism $G:B\to B$ such that $P=GP_0G^{-1}$. See \cite{Kato}, pages 33-34. In particular $G$ sends the spectral decomposition $B=N_0\oplus V_0$ onto the spectral decomposition $B=N\oplus V$, thus $\dim N=\dim N_0$ and $\dim V=\dim V_0$. Moreover we have
\begin{equation}
\label{EquationDistanceBaseChanges}
G-\id=O(\|P_0\|\cdot\|P-P_0\|)
\quad\textrm{ and }\quad 
G^{-1}-\id=O(\|P_0\|\cdot\|P-P_0\|).
\end{equation}

\subsection{Quasi compact operators}
\label{AppendixQuasiCompactOperators}

Recall from \cite{Hennion} the notion of \emph{quasi compact operator}. The essential spectral radius 
$\rho_{ess}(L)$ of a bounded linear operator $L:B\to B$ is the infimum of those $r\geq0$ such that there exists subspaces $N,V$ of $B$ such that the following holds.
\begin{enumerate}
\item
We have $B=N\oplus V$ with $L(N)\subset N$ and $L(V)\subset V$.
\item
We have $1\leq \dim V<+\infty$ and the restriction $L|_V$ has only eigenvalues $\lambda$ of modulus 
$|\lambda|\geq r$.
\item
The subspace $N$ is closed and the restriction has spectral radius $\rho(L|_N)<r$.
\end{enumerate}
The operator $L:B\to B$ is said \emph{quasi compact} if $\rho_{ess}(L)<\rho(L)$ strictly.

\end{document}